\documentclass[11pt,reqno]{amsart}
\usepackage{amsmath,amsthm,amsfonts,amssymb,mathrsfs,bm,graphicx,stmaryrd}
\usepackage{mathtools}
\usepackage{dsfont}
\usepackage{multicol}
\usepackage[colorlinks=true,linkcolor=blue]{hyperref}
\setcounter{tocdepth}{1}

\usepackage[letterpaper,hmargin=1.0in,vmargin=1.0in]{geometry}
\parindent	1pc
\parskip 	\smallskipamount

\newtheorem{theorem}{Theorem}[section]
\newtheorem{lemma}[theorem]{Lemma}
\newtheorem{corollary}[theorem]{Corollary}
\newtheorem{proposition}[theorem]{Proposition}
\newtheorem{observation}[theorem]{Observation}
\newtheorem{remark}[theorem]{Remark}
\newtheorem{maintheorem}{Theorem}
\def\N{\mathbb{N}}
\def\L{\mathbb{L}}
\def\P{\mathbb{P}}
\def\Z{\mathbb{Z}}
\def\R{\mathbb{R}}
\def\E{\mathbb{E}}
\def\Var{{\rm Var}}
\newcommand{\cf}{\mathcal{F}}
\newcommand{\cA}{\mathcal{A}}
\newcommand{\cB}{\mathcal{B}}

\newcommand{\cD}{\mathcal{D}}
\newcommand{\cE}{\mathcal{E}}

\newcommand{\sL}{\mathcal{L}}
\newcommand{\ce}{\mathcal{E}}
\newcommand{\cce}{\mathscr{E}}

\newcommand{\cg}{\mathcal{G}}

\newcommand{\ch}{\mathcal{H}}
\newcommand{\Cov}{\mathrm{Cov}}
\newcommand{\br}{\mathbf{r}}
\newcommand{\bn}{\mathbf{n}}
\newcommand{\si}{\lambda}
\newcommand{\li}{\Lambda}

\newcommand{\don}{\mathds 1}

\DeclareMathOperator*{\argmax}{\arg\! \max}

\begin{document}
\title[Time Correlations in LPP]{Temporal Correlation  in Last Passage Percolation with Flat Initial Condition via Brownian Comparison}

\author{Riddhipratim Basu}
\address{Riddhipratim Basu, International Centre for Theoretical Sciences, Tata Institute of Fundamental Research, Bangalore, India}
\email{rbasu@icts.res.in}
\author{Shirshendu Ganguly}
\address{Shirshendu Ganguly, Department of Statistics, UC Berkeley, Berkeley, CA, USA}
\email{sganguly@berkeley.edu}
\author{Lingfu Zhang}
\address{Lingfu Zhang, Department of Mathematics, Princeton University, Princeton, NJ, USA}
\email{lingfuz@math.princeton.edu}

\date{}

\begin{abstract}
We consider directed last passage percolation on $\Z^2$ with exponential passage times on the vertices. A topic of great interest is the coupling structure of the weights of geodesics as the endpoints are varied spatially and temporally. A particular specialization is when one considers geodesics to points varying in the time direction starting from a given initial data. This paper considers the flat initial condition which corresponds to line-to-point last passage times. Settling a conjecture in \cite{FS16}, we show that for the passage times from the line $x+y=0$ to the points $(r,r)$ and $(n,n)$, denoted $X_{r}$ and $X_{n}$ respectively, as $n\to \infty$ and $\frac{r}{n}$ is small but bounded away from zero, the covariance satisfies $$\mbox{Cov}(X_{r},X_{n})=\Theta\left((\frac{r}{n})^{4/3+o(1)} n^{2/3}\right),$$ thereby establishing $\frac{4}{3}$ as the temporal covariance exponent. This differs from the corresponding exponent for the droplet initial condition recently rigorously established in \cite{FO18,BG18} and requires novel arguments. Key ingredients include the understanding of geodesic geometry and recent advances in quantitative comparison of geodesic weight profiles to Brownian motion using the Brownian Gibbs property. The proof methods are expected to be applicable for a wider class of initial data.
\end{abstract}

\maketitle
\tableofcontents

\section{Introduction}\label{first}
Interface models in one dimension that exhibit Kardar-Parisi-Zhang (KPZ) growth has been a topic of major interest both in statistical physics and probability theory in recent decades, with the large time scaling exponents for height fluctuations and spatial correlation decay predicted in the original work of KPZ \cite{KPZ86}  being verified in only a handful of exactly solvable models.
More recently, there has been interest in understanding the scaling limit of the full space time evolution of such growth models leading to fundamental works such as the construction of the KPZ fixed point \cite{MQR17} and more recently the space time Airy Sheet \cite{DOV18}. While much of these works use remarkable bijections from integrable probability that lead to exact distributional formulae for statistics of interest, in parallel, a more probabilistic approach, often coupled with limited integrable inputs, has proven to be quite fruitful. The present work falls in the latter category.

By now, the joint distribution of the associated height functions at different spatial locations at a given time has been studied to some depth, and going beyond, more recently, significant recent interest has been devoted to understanding the joint distribution of the profile at two (on-scale separated) time points starting from a general initial condition. There has been a number of recent works obtaining exact formulae for the two time joint distribution for a number of models in the KPZ universality class \cite{Joh17, Joh19, JR19,  BL19, L19+}. 
While there have been attempts at asymptotic analysis of these formulae \cite{LD18, johansson2019long},
they are typically quite involved, and it does not appear to be straightforward to extract asymptotic properties of the time evolution of the interface from such information. A natural and fundamental question one can ask about the two time distribution is to evaluate the correlation of the height at a given spatial location. This was considered in \cite{FS16} and predictions about the correlation exponents (when the two time points are close or far) were made using heuristic arguments backed by {experiments \cite{takeuchi2012evidence} and numerical simulations \cite{singha2005persistence, takeuchi2012statistics}}. For the step (droplet or narrow wedge) initial condition this prediction has now been rigorously confirmed using a number of different methods. In an unpublished work \cite{CH14+}, this was established for the model of Brownian last passage percolation using Brownian resampling (this approach has recently been extended to the KPZ equation in \cite{CGH19}). More recently, in two parallel and independent works \cite{FO18,BG18} this was established for the exponential last passage percolation. 

Among the various settings addressed non-rigorously in \cite{FS16}, of fundamental importance is the case of flat initial data which was predicted to behave differently than the droplet case yielding a different temporal correlation exponent when the two time points have large separation. 
The primary contribution of this paper is to establish rigorously the exponent alluded to above. Nonetheless, the arguments are rather robust and are expected to be useful in analyzing a broader class of initial data satisfying certain growth conditions. Study of general initial data has been central to several advances. We would particularly emphasize two separate approaches that are of key importance to this article.  The first one is \cite{MQR17} where the authors relying on Fredholm determinants characterized the Markov kernel describing the evolution of the height function in the well known Totally Asymmetric Exclusion Process (TASEP). The second line of works is for a Brownian model of last passage percolation where Hammond in a series of four papers culminating in \cite{H17b} established strong Brownian regularity properties for the height function started from a rather general class of initial data. Very recently the results were sharpened in \cite{HHJ+} and we shall make use of this recent progress.

The approach in this paper takes inspiration from several of the above works and at a broad level employs a method which is a hybrid of the routes taken in \cite{CH14+}, relying on fine Brownian comparison estimates for the Airy$_2$ process obtained by resampling arguments recently developed in \cite{HHJ+}, and the more geometric arguments implemented by the first two authors in \cite{BG18} to treat the droplet case. The methods have minimal dependence on the specific details of the model and are expected to work for other exactly solvable examples including Brownian last passage percolation (the model under consideration in \cite{CH14+,H17b}).  We shall not elaborate on this further, and instead move towards model definitions and statements of our main results.

\subsection{Model Definition and Main Results:}
We consider directed last passage percolation (LPP) on $\Z^2$ with i.i.d.\ exponential weights on the vertices, i.e., we have a random field $$\omega=\{\omega_{v}: v\in \Z^2\}$$ where $\omega_v$ are i.i.d.\ standard exponential variables. For any up/right path $\gamma$ from $u$ to $v$ where $u\preceq v$ (i.e., $u$ is co-ordinate wise smaller than $v$)  the weight of $\gamma$, denoted $\ell(\gamma)$ is defined by 
$$\ell(\gamma):=\sum_{w\in \gamma\setminus \{v\}} \omega_{w}.$$
For any two points $u$ and $v$ with $u\preceq v$, we shall denote by $T_{u,v}$ the last passage time from $u$ to $v$; i.e., the maximum weight among weights of all directed paths from $u$ to $v$\footnote{Notice that our definition is slightly non-standard as we exclude the last vertex in the weight, but this does not change the asymptotics and will be ignored from now on. We use this definition as it conveniently ensures that the weight of the concatenation of two paths is the sum of the individual weights.}. By $\Gamma_{u,v}$, we shall denote the almost surely unique path that attains the maximum, and this will be called a (point-to-point) polymer or a geodesic. 

Let us now introduce the necessary notations for the line-to-point last passage percolation. For $r\in \Z$, let $\L_{r}$ denote the line 
\begin{equation}\label{linedef}
\{(x,y)\in \Z^2:x+y=2r\}.
\end{equation}
For $v=(v_1,v_2)\in \Z^2$, we shall say that $v\succ \L_{r}$ if $v_1+v_2> 2r$. For $v\succ \L_{r}$, the line-to-point last passage time from $\L_{r}$ to $v$, denoted $X^r_{v}$ is defined as 
$$X^r_{v}:=\max_{u\in \L_{r}} T_{u,v}.$$ 
The almost surely unique path achieving this maximum will be denoted by $\Gamma^{r}_{v}$ and called the line-to-point polymer or geodesic. To avoid notational overhead we shall drop the superscript $r$ in the above notations for the special case $r=0$. Further, for $n\in \N$ and $v=\mathbf{n}$ ($\mathbf{n}$ will denote the point $(n,n)$ throughout) we shall denote $X^{0}_{\mathbf{n}}$ simply by $X_{n}$.

\begin{figure}[htbp!]
\includegraphics[width=.5\textwidth]{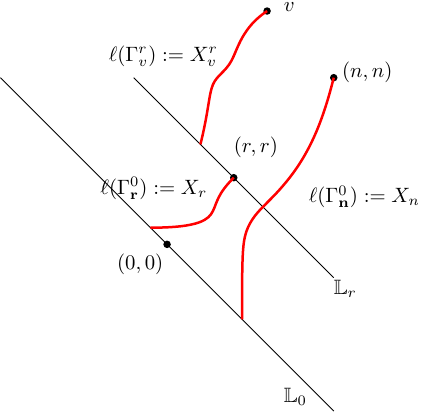} 
\caption{The figure illustrates various definitions and notations for line-to-point LPP that occur throughout the article. $\L_{r}$ denotes the line $\{x+y=2r\}.$ $\Gamma_{v}^{r}$ denotes the geodesic between the vertex $v$ and the line $x+y=2r$ while $\ell(\Gamma_{v}^{r})$ denotes its weight.}
\label{f:defn1}
\end{figure}

It has been of interest to understand the correlation structure of the growing profiles $\{T_{\mathbf{0},v}\}_{v\in \L_{n}}$, $\{X_{v}\}_{v\in \L_{n}}$ as $n$ varies (or indeed for any other generic initial condition, i.e., curve-to-point LPP for some suitable curve). In this context, we consider, for the flat initial data (i.e., line-to-point LPP), the covariance between the last passage times to two points on the main diagonal; specifically between $X_{r}$ and $X_{n}$, denoted $\mbox{Cov}(X_{r},X_{n})$, for $1\ll r \ll n$.

To state our main result, let us first consider the scaling $r=\tau n$. We shall first send $n\to \infty$ and consider the $\tau\to 0$ asymptotics of $\mbox{Cov}(X_{\tau n},X_{n})$. It was shown in \cite{FO18} (this also follows from the results in e.g.\ \cite{MQR17, DOV18}) that 
$$ \rho(\tau):= \lim_{n\to \infty} n^{-2/3}\mbox{Cov}(X_{\tau n},X_{n})$$
exists for all $\tau\in [0,1]$. One naturally conjectures that as $\tau\to 0$, $\rho(\tau)\to 0$ is a power law and we are interested in the exponent. Let $\chi:=\lim_{\tau\to 0}  \frac{\log \rho(\tau)}{\log \tau}$ provided the limit exists. The value of $\chi$ predicted in \cite{FS16, takeuchi2012evidence} is confirmed by our main result.

\begin{maintheorem}
\label{t:main}
In the above set-up, $\chi$ exists and is equal to $\frac{4}{3}$.
\end{maintheorem}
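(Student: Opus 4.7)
\textit{Plan of proof.} The goal is to establish matching bounds
\[
c\, \tau^{4/3+o(1)}\, n^{2/3} \;\leq\; \Cov(X_r, X_n) \;\leq\; C\, \tau^{4/3-o(1)}\, n^{2/3}
\]
as $n\to\infty$ and $\tau := r/n \to 0$, from which $\chi(\rho) = 4/3$ follows. The starting point is the Markov-type decomposition through the intermediate line,
\[
X_n \;=\; \max_{v\in\L_r}\bigl(X_v + T_{v,\mathbf{n}}\bigr),
\]
which separates the flat-to-line profile $\{X_v\}_{v\in\L_r}$ (measurable with respect to the weights in $\{x+y\leq 2r\}$ and hence carrying all information correlated with $X_r$) from the independent family $\{T_{v,\mathbf{n}}\}_{v\in\L_r}$ of point-to-point weights. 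Let $v^*\in\L_r$ be the a.s.\ unique maximizer; then $v^*$ coincides with the crossing point of the flat-to-point geodesic $\Gamma^0_{\mathbf{n}}$ with $\L_r$, whose transversal displacement from $\mathbf{r}$ has KPZ scale $n^{2/3}$.

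\textit{Lower bound.} I would isolate the positive contribution from the event $\cE := \{|v^*-\mathbf{r}|\leq r^{2/3}\}$. One-point density estimates for the crossing position of the flat-to-point geodesic give $\P(\cE) \gtrsim \tau^{2/3+o(1)}$. On $\cE$,
\[
X_n - X_r \;=\; (X_{v^*}-X_r) + T_{v^*,\mathbf{n}},
\]
where the first summand is an Airy$_1$-type local increment of size $O(r^{1/3})$ and the second is essentially independent of the weights in $\{x+y\leq 2r\}$. Hence the conditional covariance on $\cE$ is of order $\Var(X_r)\asymp r^{2/3}$, giving
\[
\Cov(X_r,X_n) \;\gtrsim\; \P(\cE)\cdot r^{2/3} \;\asymp\; \tau^{4/3+o(1)} n^{2/3}.
\]
The nontrivial step here is ruling out a cancelling negative contribution from $\cE^c$ and from the bias terms in the conditional variance decomposition; geodesic coalescence and ordering ingredients in the spirit of \cite{BG18} are the natural tools.

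\textit{Upper bound.} Writing $X_n = X_r + M$ with $M := \max_{v\in\L_r}(Y_v + T_{v,\mathbf{n}})$ and $Y_v := X_v - X_r$ gives
\[
\Cov(X_r,X_n) \;=\; \Var(X_r) + \Cov(X_r,M).
\]
Both summands are individually of order $r^{2/3}$, so one must demonstrate the near-cancellation $\Cov(X_r,M)\approx -\Var(X_r)$ with residual at scale $\tau^{4/3} n^{2/3}$. Heuristically, because $v^*$ typically lies at transversal distance $\Theta(n^{2/3})\gg r^{2/3}$ from $\mathbf{r}$, a shift of the profile value at $\mathbf{r}$ by $\delta$ propagates to a shift of $Y_v$ by $-\delta$ at all $v$ far from $\mathbf{r}$ (including near the maximizer), so that $M$ shifts by essentially $-\delta$. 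To make this quantitative I would invoke the Brownian comparison of \cite{HHJ+}: on any transversal window of size $\ell\leq n^{2/3}$, the centered profile $Y$ is comparable, with sharply controlled Radon--Nikodym density, to a Brownian motion whose local variance is calibrated by the Airy$_1$ scaling. This, together with KPZ geodesic geometry (transversal fluctuation, parabolic limit-shape curvature, coalescence) in the style of \cite{BG18}, reduces the computation of $\Cov(X_r,M)$ to a Brownian one, in which the interplay between the parabolic decay of $T_{v,\mathbf{n}}$ away from the diagonal and the Brownian fluctuations of $Y$ produces the $\tau^{4/3}$ scaling.

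\textit{Main obstacle.} The principal difficulty is the upper bound: two terms of magnitude $r^{2/3}$ must cancel to leave a residual smaller by a factor $\tau^{2/3+o(1)}$, which precludes any merely qualitative use of the Brownian comparison and requires the sharp quantitative estimates of \cite{HHJ+}. A secondary challenge is coordinating the two natural length scales --- the $r^{2/3}$ window encoding the local Airy$_1$ structure around $\mathbf{r}$ and the $n^{2/3}$ window hosting the global maximizer --- while accounting for the parabolic drift of the point-to-point weights; the hybrid geometric-plus-Brownian strategy outlined in the introduction is designed precisely to manage these multi-scale considerations.
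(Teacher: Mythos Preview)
Your proposal captures the correct heuristics but has genuine gaps in both directions, and in each case the paper's route is structurally different from yours.

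\textbf{Upper bound.} Your decomposition $\Cov(X_r,X_n)=\Var(X_r)+\Cov(X_r,M)$ forces you to exhibit a cancellation of two $O(r^{2/3})$ terms down to $O(\tau^{4/3}n^{2/3})$; you say this ``reduces to a Brownian computation'' but give no mechanism for executing such a cancellation to the required precision. The paper avoids this cancellation entirely. It conditions on $\cf_r$, the $\sigma$-algebra generated by the weights \emph{above} $\L_r$; since $X_r$ is independent of $\cf_r$, one has $\Cov(X_r,X_n)=\E[\Cov_{\cf_r}(X_r,X_n)]$. Under this conditioning the profile $\{T_{v,\bn}\}_{v\in\L_r}$ and its argmax $u_{\max}$ are deterministic, and the paper decomposes according to the location of $u_{\max}$. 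On the typical event that $u_{\max}$ is far from $\br$ with suitable decay of the profile near $\br$, one can approximate $X_r$ by a weight constrained to a thin rectangle around $\br$ and $X_n$ by a weight avoiding that rectangle; these approximants are conditionally independent, so the conditional covariance vanishes up to controllable error---no delicate cancellation arises. A second issue: you invoke Airy$_1$-type comparison for $\{X_v\}_{v\in\L_r}$, but the quantitative Radon--Nikodym bounds of \cite{HHJ+} you cite are for Airy$_2$, and indeed the paper applies Brownian comparison only to the point-to-line profile $\{T_{v,\bn}\}_{v\in\L_r}$ (which converges to Airy$_2$ minus a parabola), never to the Airy$_1$ side.

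\textbf{Lower bound.} Your event $\cE=\{|v^*-\br|\le r^{2/3}\}$ depends on the crossing point $v^*$ of the full geodesic, which is not measurable with respect to any $\sigma$-algebra that would let you separate the covariance into a contribution from $\cE$ and one from $\cE^c$ in a useful way. You correctly flag ``ruling out a cancelling negative contribution from $\cE^c$'' as nontrivial but offer no tool for it. The paper's key idea is to condition on the $\sigma$-algebra $\cg_{r,\theta}$ generated by all weights \emph{except} those in a thin rectangle $R_\theta$ of width $\theta r^{2/3}$ between $\L_0$ and $\L_r$. Conditioned on $\cg_{r,\theta}$, both $X_r$ and $X_n$ are increasing in the remaining weights; two applications of the FKG inequality then give $\Cov(X_r,X_n)\ge \int_{\cE}\Cov(X_r,X_n\mid\cg_{r,\theta})\,d\omega$ with no contribution from $\cE^c$ whatsoever. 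The paper's event $\cE$ (combining a condition on the location and decay of the Airy$_2$-type profile $\{T_{v,\bn}\}$ near $\br$ with a barrier event depleting the environment outside $R_\theta$) is $\cg_{r,\theta}$-measurable by construction, and on it the conditional covariance is shown to exceed $r^{2/3}$ by comparing both $X_r$ and $X_n-X_n^r$ to a constrained passage time through $R_\theta$.
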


Proof of Theorem \ref{t:main} follows from separate upper and lower bounds to $\mbox{Cov}(X_{r},X_{n})$ when $\frac{r}{n}$ is bounded away from $0$ and $n$ is sufficiently large. We start with the upper bound. 

\begin{theorem}[Upper Bound]
\label{t:upper}
There exist absolute constants $C_1,C_2>0$ such that for any $\delta\in (0,1/2)$ there exists $n_0(\delta) \in \R_+$ with the following property: for any $n, r \in \Z_+$ with $\delta n < r < \frac{n}{2}$ and $n > n_0(\delta)$ we have 
$$\Cov(X_{r},X_{n}) \leq  C_1\left(\frac{r}{n}\right)^{4/3} \exp\left(-C_2\log(r/n)^{5/6}\right) n^{2/3}.$$
\end{theorem}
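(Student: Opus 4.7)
The approach is to decompose $X_n$ through its restriction to $\L_r$ and then combine a geometric localization of the flat geodesic with Brownian comparison for the weight profile along $\L_r$. Writing $X_n = \max_{v \in \L_r}(X_v + T_{v,\bn})$, letting $\cf$ denote the $\sigma$-algebra generated by the weights below $\L_r$, and using that $X_r$ is $\cf$-measurable while the weights strictly above $\L_r$ are independent of $\cf$, one obtains
$$\Cov(X_r, X_n) = \Cov\bigl(X_r,\, G(\{X_v\}_{v \in \L_r})\bigr), \qquad G(\{f_v\}) := \E\bigl[\max_v (f_v + T_{v,\bn})\bigr].$$
Since $X_r = X_{(r,r)}$, only the part of the profile in a window of transversal size $O(r^{2/3})$ around $(r,r)$ is materially correlated with $X_r$. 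The task thus reduces to bounding the sensitivity of the soft-max functional $G$ to perturbations of the profile inside this window.

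This sensitivity is captured by the argmax $V^* \in \L_r$ of $X_v + T_{v,\bn}$: the envelope identity yields $\partial G/\partial f_{v_0} = \P(V^* = v_0)$, so a first-order expansion of $G$ around its mean-profile linearization produces the heuristic
$$\Cov(X_r, X_n) \;\approx\; \sum_{v \in \L_r} \P(V^* = v)\,\Cov(X_r, X_v).$$
For flat initial data the centered profile $\{X_v - \E X_v\}_{v \in \L_r}$ is stationary, with Airy$_1$-type fluctuations at scale $r^{1/3}$ and correlation length $r^{2/3}$, so $\Cov(X_r, X_v)$ is a non-negative function of $v - (r,r)$ with amplitude $O(r^{2/3})$ and total $\ell^1$-mass $O(r^{4/3})$. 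Meanwhile the parabolic cost $-(v_1-r)^2/(n-r)$ in the mean of $T_{v,\bn}$ localizes $V^*$ around $(r,r)$ on transversal scale $\Theta(n^{2/3})$, giving peak density $\P(V^* = (r,r)) = O(n^{-2/3})$. Multiplying the peak density by the $\ell^1$-mass gives the desired bound $(r/n)^{4/3} n^{2/3}$.

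The main technical work is to turn the above heuristic into an actual inequality. I would use the Brownian Gibbs / resampling estimates of \cite{HHJ+} to couple the centered, rescaled profile on the $r^{2/3}$-window around $(r,r)$ to a Brownian motion, with a sub-polynomial tail bound on the $L^\infty$-discrepancy; this simultaneously controls the second-order error in the expansion of $G$ and the decay of $\Cov(X_r, X_v)$ for $v$ outside the window, and the tail exponent in these Brownian estimates is the source of the correction $\exp(-C_2 \log(r/n)^{5/6})$. The peak-density bound $\P(V^* = (r,r)) \lesssim n^{-2/3}$ is extracted from transversal-fluctuation and polymer-geometry estimates for flat-to-point geodesics in the spirit of \cite{BG18}. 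The delicate step, and the one that genuinely distinguishes the flat from the droplet regime, is the scale counting in the last display: in the droplet case the profile itself carries a parabolic drift that biases $V^*$ toward the diagonal, inflating the effective window probability and giving the exponent $1/3$ of \cite{BG18,FO18}; in the flat case the stationarity removes this bias, the window probability is the bare scale ratio $(r/n)^{2/3}$, and combining it with the $\Theta(r^{2/3})$ $\ell^1$-mass of $\Cov(X_r, X_v)$ yields the stronger exponent $4/3$ announced in the theorem.
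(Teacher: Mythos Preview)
Your heuristic for the exponent $4/3$ is essentially the one laid out in Section~\ref{iop}, but the technical route you propose has two genuine gaps. First, the envelope-theorem step $\Cov(X_r,X_n)\approx\sum_v \P(V^*=v)\,\Cov(X_r,X_v)$ is only a heuristic: the functional $G(f)=\E[\max_v(f_v+T_{v,\bn})]$ is convex but not differentiable, and the error in a first-order expansion is governed by exactly the two-scale quantity you are trying to bound (how likely the argmax is to jump when the profile is perturbed at scale $r^{1/3}$ over a window of size $r^{2/3}$ inside a profile living on scale $n^{2/3}$). You flag this as ``the main technical work'' but give no concrete mechanism. Second, the comparison result of \cite{HHJ+} is for the parabolic Airy$_2$ process; the profile $\{X_v\}_{v\in\L_r}$ you want to couple to Brownian motion is Airy$_1$-type (line-to-point, stationary, no parabola), and no analogous Radon--Nikodym bound for it is cited or available. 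Relatedly, the $\ell^1$-mass claim $\sum_v\Cov(X_r,X_v)=O(r^{4/3})$ requires spatial correlation decay for the pre-limiting Airy$_1$ profile, which is not off-the-shelf.

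The paper sidesteps both issues by conditioning the other way: on $\cf_r$, the $\sigma$-algebra of weights \emph{above} $\L_r$, so that $X_r$ is independent of the conditioning and $\Cov(X_r,X_n)=\E[\Cov_{\cf}(X_r,X_n)]$. The conditioned quantity is decomposed according to the location of $u^{\rm{res}}_{\max}=\argmax_{v\in\L_{r,Mn^{2/3}}}T_{v,\bn}$, a purely $\cf_r$-measurable object whose law is governed by the Airy$_2$ profile, to which \cite{HHJ+} \emph{does} apply (Proposition~\ref{prop:bdi}, Lemma~\ref{lem:bdac}). On the event $B_j$ that $u^{\rm{res}}_{\max}$ sits at distance of order $j^{101}r^{2/3}$ from $\br$ and the profile has the expected decay near $\br$, one replaces $X_r$ by $X_{r,j}$ (paths confined to a strip of width $(\log(j+2))^{10}r^{2/3}$ around the diagonal) and $X_n$ by $X_{n,\bar j}$ (paths avoiding that strip); these are independent by disjointness of supports, and the approximation errors are controlled by transversal-fluctuation and concentration estimates (Lemmas~\ref{l:transversal1}--\ref{l:njconc}). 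No linearization, no Airy$_1$ input, and the subpolynomial factor $\exp(C|\log(r/n)|^{5/6})$ enters solely through the \cite{HHJ+} bound on $\P(A_j)$ and $\sum_j\P(C_j)$.
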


Next we have the lower bound result with the same exponent. 

\begin{theorem}[Lower Bound]
\label{t:lower}
There exists an absolute constant $C_3>0$ such that for any $\delta\in (0,1/2)$ there exists $n_0(\delta) \in \R_+$ with the following property: for any $n, r \in \Z_+$ with $\delta n < r < \frac{n}{2}$ and $n > n_0(\delta)$ we have 
$$\Cov(X_{r},X_{n}) \geq  C_3\left(\frac{r}{n}\right)^{4/3}n^{2/3}.$$
\end{theorem}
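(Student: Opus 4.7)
The plan starts from the variational decomposition
\[
X_n = \max_{v \in \L_r,\, v \preceq \mathbf{n}}\bigl(X_v + T_{v, \mathbf{n}}\bigr),
\]
where $\{X_v\}_{v \in \L_r}$ is measurable with respect to the weights in $R_1 := \{w \in \Z^2: w_1 + w_2 < 2r\}$, while $\{T_{v, \mathbf{n}}\}_{v \in \L_r}$ is measurable with respect to the (independent) weights in $R_2 := \{w: 2r \le w_1 + w_2 < 2n\}$. Plugging $v = \mathbf{r}$ into the max gives $X_n \ge X_r + T_{\mathbf{r}, \mathbf{n}}$, so setting $Z := X_n - X_r - T_{\mathbf{r}, \mathbf{n}} \ge 0$ and using $T_{\mathbf{r}, \mathbf{n}} \perp X_r$,
\[
\Cov(X_r, X_n) = \Var(X_r) + \Cov(X_r, Z).
\]
Since $\Var(X_r) = \Theta(r^{2/3})$ exceeds the target $(r/n)^{4/3}n^{2/3}$ by the factor $(r/n)^{-2/3}$, the task reduces to ruling out that $\Cov(X_r, Z)$ negates all but a $(r/n)^{2/3}$-fraction of $\Var(X_r)$.

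The guiding heuristic is an envelope-style calculation. In a Gaussian proxy where $\{X_v\}_{v \in \L_r}$ is replaced by a centered stationary Gaussian with normalized covariance $f(v - \mathbf{r}) := \Cov(X_{\mathbf{r}}, X_v)/\Var(X_{\mathbf{r}})$, Stein's lemma gives the identity
\[
\Cov(X_r, X_n) = \Var(X_r) \cdot \E[f(v^* - \mathbf{r})], \qquad v^* := \argmax_{v \in \L_r}(X_v + T_{v, \mathbf{n}}).
\]
The function $f$ has correlation length $\asymp r^{2/3}$ (Airy$_1$-type stationarity of the flat profile), while the parabolic Airy$_2$-type envelope of $T_{\cdot, \mathbf{n}}$ localizes $v^*$ near $\mathbf{r}$ at scale $(n-r)^{2/3} \asymp n^{2/3}$ with density of order $n^{-2/3}$. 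Hence $\P(v^* \in I) \gtrsim (r/n)^{2/3}$ for the window $I := \mathbf{r} + [-r^{2/3}, r^{2/3}]$, so $\E[f(v^* - \mathbf{r})] \gtrsim (r/n)^{2/3}$, giving $\Cov(X_r, X_n) \gtrsim r^{2/3}(r/n)^{2/3} = (r/n)^{4/3}n^{2/3}$.

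The rigorous implementation proceeds in three steps. First, apply the quantitative Brownian comparison of \cite{HHJ+} to the profile $v \mapsto T_{v, \mathbf{n}}$ on $\L_r$ to construct a positive-probability event in $R_2$ on which this profile is sandwiched between two parabolas of curvature $\asymp 1/(n-r)$ centered at $\mathbf{r}$ with Brownian-type residual fluctuations; on this event a direct argmax computation yields $\P(v^* \in I) \ge c(r/n)^{2/3}$. Second, use the analogous Brownian regularity estimates for the flat-initial-condition profile $\{X_v\}_{v \in \L_r}$ (also obtained via Brownian Gibbs) to show that $|X_{v^*} - X_{\mathbf{r}}| = O(r^{1/3})$ with high probability on $\{v^* \in I\}$, so that $X_n = X_r + T_{\mathbf{r}, \mathbf{n}} + O(r^{1/3})$ on this event. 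Third, convert the conditional information into an unconditional lower bound on $\Cov(X_r, X_n)$ by decomposing $\E[X_r X_n]$ according to the location of $v^*$ and exploiting stationarity of $\{X_v\}$ to control the cross-terms, in the spirit of the geometric conditioning arguments of \cite{BG18}.

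The main obstacle is the third step, namely the translation of the clean Gaussian envelope identity into a rigorous LPP statement. Stein's lemma is not directly available since the joint law of $(X_r, \{X_v\}_{v \in \L_r})$ is only approximately Gaussian via Brownian comparison; the route is to use the quantitative Radon--Nikodym control in \cite{HHJ+} to transfer Gaussian-proxy bounds to the LPP profile with explicit error, combined with BG18's geometric conditioning to disentangle the dependence of $v^*$ on $R_1$ from its dependence on $R_2$. The flat initial data makes this substantially more subtle than the droplet case treated in \cite{BG18}, where the covariance is driven by geodesic overlap (both geodesics share the same starting point $\mathbf{0}$); here $\Gamma^0_{\mathbf{r}}$ and $\Gamma^0_{\mathbf{n}}$ typically start at different points of $\L_0$ (each at transverse scale $\sim n^{2/3}$) and are generically disjoint, so the covariance arises instead from the argmax-proximity mechanism described above, which is precisely why the exponent shifts from $2/3$ to $4/3$.
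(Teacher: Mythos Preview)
Your heuristic is sound and the Gaussian envelope identity correctly predicts the exponent, but Step~3 is a genuine gap rather than a technicality. You yourself flag it as ``the main obstacle'' and then offer only a sketch (Radon--Nikodym transfer plus BG18-style conditioning) without saying what statement you would actually prove or how. Concretely: decomposing $\E[X_r X_n]$ according to the location of $v^*$ does not by itself produce a covariance lower bound, because $v^*$ depends on \emph{both} the $R_1$-weights (through $\{X_v\}$) and the $R_2$-weights (through $\{T_{v,\bn}\}$), so $\mathbf{1}_{v^*\in I}$ is correlated with $X_r$ in a way that you never control. Stein's lemma is unavailable outside the Gaussian proxy, and the Radon--Nikodym bounds from \cite{HHJ+} govern \emph{probabilities of events} on compact intervals, not second moments or covariances; there is no mechanism in your outline for transferring the identity $\Cov(X_r,X_n)=\Var(X_r)\,\E[f(v^*-\br)]$ with a controlled error. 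You would also need to rule out a negative contribution from the region $\{v^*\notin I\}$, and for the flat profile the two-point correlation can in fact be negative at intermediate scales. (As a side remark, the Airy$_1$ process governing the flat profile $\{X_v\}$ does \emph{not} enjoy the Brownian Gibbs property; that tool is specific to Airy$_2$.)

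The paper circumvents the conditional-to-unconditional passage by a different, geometric mechanism. Rather than decomposing by $v^*$, it conditions on the $\sigma$-algebra $\cg_{r,\theta}$ generated by all weights \emph{outside} a thin rectangle $R_\theta$ of width $\theta r^{2/3}$ between $\L_0$ and $\L_r$. It then builds an event $\cE\in\cg_{r,\theta}$ of probability $\gtrsim (r/n)^{2/3}$ on which (i) the argmax of $T_{\cdot,\bn}$ on $\L_r$ lies within $\theta r^{2/3}$ of $\br$ with the correct decay away from it, and (ii) depleted ``barrier'' regions flanking $R_\theta$ force both $\Gamma_\br$ and the sub-$\L_r$ part of $\Gamma_\bn$ to stay inside $R_{2\theta}$. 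On $\cE$ the conditional covariance $\Cov(X_r,X_n\mid\omega_\theta)$ is then shown to be $\gtrsim r^{2/3}$, essentially because both quantities are driven by the same constrained passage time through $R_\theta$. The crucial step you are missing is the conversion to an unconditional bound: since $X_r$ and $X_n$ are both coordinatewise increasing in the weights, two applications of the FKG inequality give
\[
\Cov(X_r,X_n)\ \ge\ \int_{\cE}\Cov(X_r,X_n\mid\omega_\theta)\,d\omega_\theta\ \gtrsim\ \P(\cE)\cdot r^{2/3}\ \gtrsim\ (r/n)^{2/3}\,r^{2/3}.
\]
This FKG-plus-barrier device is precisely what replaces Stein's lemma in the non-Gaussian setting and is absent from your proposal.
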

\begin{remark}
Using the scaling of $r=\tau n$ above and as in the literature, these theorems can also be stated in the following way.
With the same $C_1,C_2,C_3>0$, $\delta\in (0,1/2)$ and $n_0(\delta) \in \R_+$, for any $n\in \Z_+$, $n > n_0(\delta)$, and $\delta < \tau < \frac{1}{2}$, such that $n\tau \in \Z_+$, we have 
$$C_3\tau^{4/3} \leq n^{-2/3}\Cov(X_{\tau n},X_{n}) \leq  C_1\tau^{4/3} \exp\left(-C_2\log(\tau)^{5/6}\right),$$
\end{remark}

Clearly Theorem \ref{t:upper} and Theorem \ref{t:lower} together imply Theorem \ref{t:main}. Observe that the upper and lower bounds in  Theorem \ref{t:upper} and Theorem \ref{t:lower} respectively differ by a sub-polynomial (in $\tau=\frac{r}{n}$) factor. As predicted in \cite{FS16, takeuchi2012evidence}, we believe that the lower bound is tight up to a constant, and the $\exp\left(-C_2\log(\tau)^{5/6}\right)$ term in the upper bound is merely an artifact of our proof. This term appears from an estimate (quoted from \cite{HHJ+}) of the Radon-Nikodym derivative of the Airy$_2$ process with respect to Brownian motion. In contrast, for the droplet initial condition, where the covariance scales as $\tau^{2/3}n^{2/3}$, the upper and lower bounds in \cite{BG18,FO18} differ only by a constant factor (actually, in \cite{FO18}, even the constant in front of $\tau^{2/3}$ is evaluated in the $n\to \infty$ limit up to a multiplicative error term that is $1+o(1)$ as $\tau\to 0$). The primary difficulty in treating the case of flat initial data, as compared to the droplet case is that here one needs to deal with Brownian like fluctuations of the line-to-point profile $\{T_{v, \mathbf{n}}\}_{v\in \L_{r}}$ at very short scales. Only upper bound of such fluctuations were enough for the work \cite{BG18} which, in turn, required only the moderate deviation estimates for point-to-point passage times. Here, however, we need finer comparisons. These additional difficulties are tackled by using the aforementioned result from \cite{HHJ+} (also using translation invariance of the Airy$_2$ process) which, however, leads to the non-optimal sub-polynomial factor in the upper bound of Theorem \ref{t:upper}. 

\subsection{Background and Related Results}
Exponential LPP is one of the canonical examples of exactly solvable models in the KPZ universality class. It is classical \cite{Ro81} that $T_{\mathbf{0},\mathbf{n}}\sim 4n$ and $2^{-4/3}n^{-1/3}(T_{\mathbf{0},\mathbf{n}}-4n)$ converges weakly to the GUE Tracy-Widom distribution \cite{Jo99}. Finite dimensional  distributions for the line-to-point profile $\{T_{v, \mathbf{n}}\}_{v\in \L_{0}}$ has also been classically studied \footnote{One usually writes the point-to-line profile $\{T_{\mathbf{0},v}\}_{v\in \L_{n}}$ which has the same law by obvious symmetries of the lattice.}. It is known that the correlation length of this profile is $n^{2/3}$, and after a spatial scaling by the correlation length it converges weakly to the Airy$_2$ process, which is a stationary ergodic process, minus a parabola. More precisely, 
\begin{equation}\label{profile12}
\sL_{n}(x):=2^{-4/3}n^{-1/3}\biggl(T_{(x(2n)^{2/3},-x(2n)^{2/3}), \bn}-4n\biggr) \Rightarrow \cA_{2}(x)-x^2
\end{equation}
in the sense of finite dimensional distributions where $\cA_2$ denotes the Airy$_2$ process on $\R$ \cite{BF08,BP08} (note that the LHS in \eqref{profile12} is defined only for $|x(2n)^{2/3}|\le n$, but this does not cause problems in the $n\to \infty$ limit). Using a tightness result from \cite{Pim17}, it also follows that the weak convergence holds in the topology of uniform convergence on compact sets \cite{FO17}. {For completeness we shall also provide a proof of this fact later in the article (Theorem \ref{t:lpptoairy})}.

One also has an exact solution for the flat initial condition and it turns out that in this case $2^{-2/3}n^{-1/3}(X_{n}-4n)$ converges weakly to the GOE Tracy-Widom distribution \cite{Sas05, BFPS07}.
The profile, $\{X_{v}\}_{v\in \L_{n}}$ after appropriate scaling, converges in this case to the Airy$_1$ process $\cA_1$: 
{$$x \mapsto 2^{-4/3}n^{-1/3}\biggl(X_{(n+x(2n)^{2/3},n-x(2n)^{2/3})}-4n\biggr) \Rightarrow 2^{1/3}\cA_{1}(2^{-2/3}x)$$
in the sense of convergence of finite-dimensional distributions \cite{BFP07, BFPS07, BPS08}.}
To obtain formulae for the weak scaling limits for more generic initial conditions were open until the recent work \cite{MQR17} where formulae for the finite dimensional distributions of such limiting objects were obtained for a general class of initial data. 

Apart from the formulae for finite dimensional distributions, there has been work in understanding the local behavior of the line-to-point profile. One can embed $\cA_{2}(x)$ as the top curve of a non-intersecting line ensemble (Airy line ensemble) which after parabolically adjusting (subtracting $x^2$) exhibits the \emph{Brownian Gibbs property} \cite{CH14}. In recent works of Hammond and co-authors, this and its pre-limiting analogues for the exactly solvable model of Brownian last passage percolation together with one point moderate deviation estimates has been used to great effect in obtaining local Brownian behavior of the Airy$_2$ process \cite{H16, HHJ+}.
A different approach, using comparisons with stationary LPP has been taken in \cite{Pim17}. One-sided Brownian fluctuation estimate using coalescence of geodesics is also obtained in \cite{BG18}. Such estimates have found applications in many geometric questions about last passage percolation \cite{BG18, BGH19, H17a,H17b,H17c}.

As already mentioned, the interest in understanding two-time distributions of the passage time profile started from different initial conditions is rather recent. 
It started from studies of the time correlations, experimentally by Takeuchi and Sano \cite{takeuchi2012evidence}, and numerically by Singha \cite{singha2005persistence} for the step initial condition. The first published mathematical study on this problem was by Ferrari and Spohn \cite{FS16}, who studied the time correlations for exponential LPP started from step, flat or stationary initial conditions. Using a variational problem involving Airy processes they conjectured two asymptotic expansions (in the $\tau\to 0$ and $\tau\to 1$ limits in our notation) for the two time covariance for the step and flat initial conditions and an exact expression for the stationary initial condition. 
Around the same time, for the step initial condition, the exponents conjectured in \cite{FS16, takeuchi2012evidence, singha2005persistence} was rigorously obtained in the related model of Brownian last passage percolation in the unpublished work \cite{CH14+}. This employed the Brownian Gibbs property of the associated line ensemble as mentioned above. For the step and stationary initial conditions, the conjectured expansion of \cite{FS16} was made rigorous in \cite{FO18}, who in particular showed, among other things, that for the correlation coefficient ${\rm Corr}(T_{\mathbf{0},\mathbf{n}}, T_{\mathbf{0},\tau\mathbf{n}}) = \frac{\Cov(T_{\mathbf{0},\mathbf{n}}, T_{\mathbf{0},\tau\mathbf{n}})}{\sqrt{\Var(T_{\mathbf{0},\mathbf{n}})\Var(T_{\mathbf{0},\tau\mathbf{n}})}}$,
\begin{align*}
\lim_{n\to \infty}{\rm Corr}(T_{\mathbf{0},\mathbf{n}},T_{\mathbf{0},\tau\mathbf{n}})=\begin{cases}
\Theta(\tau^{1/3}) &~\text{as}~\tau\to 0\\
1-\Theta((1-\tau)^{2/3})& ~\text{as}~\tau\to 1.
\end{cases}
\end{align*}
Here and throughout the paper, by $\Theta(x)$ we denote a positive quantity whose ratio to $x$ is bounded away from zero and infinity.

A finite $n$  version of the same result was independently obtained in \cite{BG18} for the step initial condition using moderate deviation estimates for point-to-point passage times in exponential LPP together with the understanding of transversal fluctuations and coalescence of geodesics. For the flat initial condition, the $\tau\to 1$ limit also gives the same asymptotics as above and this was also established in \cite{FO18}. It remained to get the $\tau\to 0$ asymptotics for the flat initial condition, which was conjectured to have a different exponent. This is accomplished in Theorem \ref{t:main}. 

On a related, but different, line of recent works, efforts have also been directed to obtain exact formulae for the one-point or multi-point joint distribution of the profile at two (on-scale) separated time points. This originated with Johansson's work in Brownian LPP \cite{Joh17} and has been continued for geometric LPP and discrete polynuclear growth \cite{Joh17, Joh19, JR19} (see also \cite{LD18} for a replica calculation).
In parallel, exact asymptotic formulae for the two time distribution for the height function of TASEP (related to exponential LPP by the standard coupling) with different initial conditions have been obtained, first by Baik and Liu \cite{BL19} on periodic domains, then by Liu \cite{L19+} on $\Z$. 
In principle, all the statistics of the two-time distribution could be obtained from these formulae; however, it does not appear easy to extract the correlation out of the impressive but complicated formulae obtained in these works.

Our approach, in contrast, eschews exact formulae on behalf of more geometric arguments, in principle combining the  approaches taken in \cite{CH14+} and \cite{BG18}. We construct geometric events about optimal paths, their weights and transversal fluctuations to control the correlation between the line-to-point last passage times. In contrast to \cite{BG18}, to rule out the contributions of certain atypical events to the correlation, we require a strong control of the line-to-point profile around its maxima. This is obtained by resorting to strong quantitative estimates of Brownian regularity of the Airy processes based on Brownian Gibbs property and resampling techniques, which has been developed in a series of works by Hammond \cite{H16,H17a,H17b,H17c}. Of particular relevance to us is the recent work \cite{HHJ+} which obtains quantitative comparisons between Brownian motion and Airy$_2$ process, rather than the Brownian bridge comparisons in the earlier work \cite{H16}. 

Even though our results (see Theorem \ref{t:upper} and Theorem \ref{t:lower}) are formulated in a non-asymptotic form, we require $n$ to be sufficiently large depending on a uniform lower bound on $\frac{r}{n}$, since we rely on estimates from \cite{HHJ+} for the Airy$_2$ process. In contrast, the corresponding results in \cite{BG18} only required $r\gg 1$ as certain regularity estimates there were directly proved (using only one point estimates) for the pre-limiting line-to-point profile in exponential LPP, leading to uniformly non-asymptotic results. Here we need stronger control on the line-to-point profile which, so far, is only available in either Brownian LPP or in the Airy limit. Finite $n$ versions of our results without the restriction $r>\delta n$ can presumably be proved in the context of Brownian LPP using our techniques; however we do not explore that question here. 

Finally, it is also worth mentioning that the correlation across two times have recently been investigated beyond the zero temperature case. Using a different Gibbs property from \cite{CH16}, and one point estimates from \cite{CG18b, CG18a}, correlation exponents have recently been established in \cite{CGH19} for the KPZ equation.

\section{Outline of the proof and the key technical ingredients}
\label{iop}

In this section, we present an outline of the proof of Theorem \ref{t:main} as well as a review of the  various technical components involved, and also discuss some possible extensions. See Figure \ref{f:outline} for an illustration of the relevant objects discussed. Recall that our goal is to prove tight bounds on $\mathrm{Cov}(X_{r},X_{n})$. The heuristics behind the exponent $4/3$ is rather natural and was already presented in \cite{FS16}. Let the geodesic $\Gamma_{\mathbf{n}}^{0}$ from $\mathbb{L}_{0}$ to $\mathbf{n}$  intersect $\mathbb{L}_{r}$ at $u_0$.  It is known (see \cite{BSS17++}) that along a journey of length $r$, the transversal fluctuation witnessed by a geodesic is typically $O(r^{2/3})$. Hence one expects that on the event $|u_0-\br|\gg r^{2/3}$ there would likely be no interaction between the geodesics $\Gamma_{\bn}^0$ and $\Gamma^0_{\br}$ (i.e., they pass through disjoint parts of the space) and hence the contribution to covariance between the geodesic weights $X_{r}$ and $X_{n}$ coming from this event will be negligible.

\begin{figure}[htbp!]
\includegraphics[width=.7\textwidth]{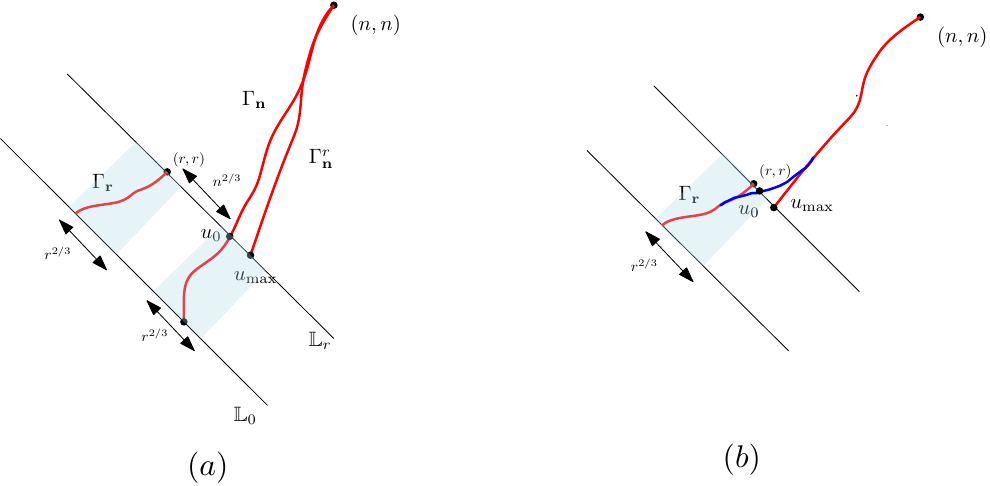} 
\caption{An illustration of the objects featuring in the proof strategies. $\Gamma_{\br}$, $\Gamma_{\bn}$ and $\Gamma^r_{\bn}$ denote the line to point polymers from $\L_0$ to $\br$ and $\bn$ and $\L_r$ to $\bn$ respectively; $u_0$ denotes the intersection of $\Gamma_{\bn}$ with $\L_r$ while $u_{\max}$ denotes the intersection of $\Gamma^r_{\bn}$ with $\L_r.$ (a) emphasizes the typical separation between $(r,r)$ and $u_0$ to be of order $n^{2/3}$ which prevents $\Gamma_{\br}$ and $\Gamma_{\bn}$ from interacting while $(b)$ indicates the rare event $|u_{0}-\br|\approx r^{2/3}$ on which the geodesics would typically coalesce leading to a contribution for the covariance.}
\label{f:outline}
\end{figure}

However, on the event $|u_0-\br|=\Theta(r^{2/3}),$ one expects that the restriction of $\Gamma^0_{\bn}$ between $\L_r$ and $\L_0$ interacts and typically overlaps significantly with $\Gamma^0_{\br}$ as both these paths have transversal fluctuations of the same order as the separation of their endpoints on $\L_r$ (i.e., $u_0$ and $\br$). This causes the covariance conditional on the above mentioned small probability event, to be of the same order as the variance of $X_{r}$ which is $r^{2/3}$.  Now, since $r\ll n$, one also expects that $u_0$ is close to $u_{\max}$ where $u_{\max}$ denotes the starting point of the geodesic $\Gamma_{\mathbf{n}}^{r}$ from $\mathbb{L}_{r}$ to $\mathbf{n}$. Owing to the nature of the polymer weight profile $\{T_{v, \bn}\}_{v\in \L_r}$ which decays parabolically away from $(r,r)$ with local Brownian fluctuations, the location of $u_{\max}$ is roughly uniformly distributed on an interval of size $n^{2/3}$. Hence one expects the probability that $u_{\max}$ (and hence $u_0$) lies within $\Theta(r^{2/3})$ of $\br$ is approximately 
$\Theta((\frac{r}{n})^{2/3})$. This, together with the above heuristic leads to the predicted value of the covariance between $X_r$ and $X_n$ to be $\Theta((\frac{r}{n})^{2/3}r^{2/3})$.  

Let us now provide some further detail on how the argument described above is made rigorous to prove the upper and lower bounds. 

\noindent
\textbf{Upper bound:} The proof of the upper bound is provided in Section \ref{s:upper}. For the upper bound, to make the above heuristic precise one needs two ingredients: 
\begin{itemize}
\item[(i)] $u_{\max}$ is roughly uniformly distributed on an interval of size $n^{2/3}$ centered at $\br$, the probability that $r^{-2/3}|u_{\max}-\br|\approx x$ does not decay with $x$ (as long as $x\leq n^{2/3}r^{-2/3}$).
\item[(ii)] $u_0$ is indeed close to $u_{\max}$. 
\end{itemize}
To establish (ii), we need to know that the line-to-point profile $\{T_{v, \bn}\}_{v\in \L_r}$ is unlikely to be close to its maxima at some point $v$ with $|v-u_{\max}|\gg r^{2/3}$, as such an event could make it possible for the point $v$ to be a potential candidate for the point $u_0$. 
As already alluded to before, it is well known that the line-to-point profile, after suitable scaling, converges to a parabolically adjusted Airy$_2$ process. It is also well known that Airy$_2$ process is locally Brownian, and it is this property that we exploit. Notice that heuristically this should imply for the (pre-limit of) parabolically adjusted Airy$_2$ process:
\begin{itemize}
\item[(a)] The location of the maximum of the weight profile is roughly uniformly distributed on an interval on length $n^{2/3}$, and 
\item[(b)] 
The process around its maxima looks like two-sided Brownian motion conditioned to stay below zero. 
\end{itemize} 
 
To establish precise statements along the lines of (a) and (b) we use the recent advances in \cite{HHJ+} which gives a strong control of Radon-Nikodym derivative of parabolically adjusted Airy$_2$ process with respect to Brownian motion (see Theorem \ref{t.airytail}). The particular consequence of this result which is relevant for our upper bound result is established in Proposition \ref{p:twopeaks} (which is a precise statement encompassing (a) and (b) above). 
To go from (a) and (b) above to (i) and (ii) which feeds into the geometric argument described before, we use the uniformly on compact sets weak convergence of the line-to-point profile in exponential LPP to Airy$_2$ process minus a parabola (Theorem \ref{t:lpptoairy}) and pull back the Brownian regularity results for the Airy$_2$ process to the geodesic weight profile for exponential LPP; see Proposition \ref{prop:bdi} for a precise statement.  

Given the above inputs, the technical geometric argument proving the upper bound involves  decomposing the total covariance  according to the location of $u_{\max}$ (This is illustrated later in Figure \ref{f:jpaths1}). The $j^{th}$ event $A_j$ in this decomposition is that $|u_{\max}-\br| \in  [j^{101}, (j+1)^{101}] r^{2/3}$ ($101$ here is an arbitrary large enough number). The proof then proceeds by proving that the covariance contribution from the above event is (up to a multiplicative sub-polynomial in $\frac{r}{n}$ error) $f(j)r^{4/3}n^{-2/3}$ where $f(j)$ is decaying polynomially in $j$ with a large enough rate (depending on $101$) making it summable. This is formally carried out in Section \ref{covdec} (see in particular Lemmas \ref{l:bjbound} and \ref{l:cjbound}). 

\noindent
\textbf{Lower bound:} The corresponding lower bound is proved in Section \ref{s:lower}. The argument in this section develops upon the ideas present in an earlier work of the first two authors on understanding temporal correlation starting from narrow-wedge initial conditions \cite{BG18}, albeit requiring significant new ingredients. We present below a slightly simplified high level idea. Consider a strip (say $R_{\theta}$) of width $\theta r^{2/3}$ and height $r$ with its shorter sides being segments of the lines $\L_0$ and $\L_r,$ centered at $\mathbf{0}$ and $\br$. We next condition on the entire environment except $R_{\theta}$.  Let $\cE$ denote the set of conditioned environments which satisfy the following  properties:
\begin{itemize}
\item[(i)] The argmax $u_{\max}$ of geodesic weight profile $\{T_{v, \bn}\}_{v\in \L_r}$ lies on $R_{\theta}\cap \L_r$.
\item[(ii)] The profile has parabolic decay away from the maxima.
\item[(iii)] The environment between $\L_0$ and $\L_r$ outside $R_{\theta}$ is depleted on a region of width $O(r^{2/3})$.
\end{itemize}

The argument has two main steps. 

\noindent
\textbf{Step 1} entails showing $\P(\cE)\ge \Theta((\frac{r}{n})^{2/3})$. This follows from noticing that (i) and (ii) above is independent of (iii); the depleted region can be constructed with a uniformly positive probability; and the fact that (i) and (ii) hold with probability proportional to $r^{2/3}/n^{2/3}$ follows from a further Brownian comparison estimate for Airy$_2$ process (Proposition \ref{prop:lbae}) pulled pack to  exponential LPP geodesic weight profile (Proposition \ref{p:maxdecay}).  The Brownian comparison in this section does not rely on the results of \cite{HHJ+}, and instead uses stationarity and strong mixing properties of the Airy$_2$ process.

\noindent
\textbf{Step 2} involves showing that conditioned on any sample point in $\cE,$ both $\Gamma_{\br}$ and the journey of $\Gamma_{\bn}$ from $\L_{r}$ to $\L_{0}$ would typically be constrained within $R_{\theta}$. This causes the two paths to overlap significantly and makes a conditional covariance contribution of the same order as the variance of the weight of the path between $\br$ and $\mathbf{0}$ constrained to stay within $R_{\theta}$. This has variance of the order of $\theta^{-1/2}r^{2/3}$. Thus choosing $\theta$ small enough allows us the variance term to dominate all other correction terms from various approximating statements. In particular this gives a lower bound of $cr^{2/3}$ (for some $\theta$ dependent constant $c>0$) for the conditional covariance of $X_r$ and $X_n$.

Finally a simple consequence of the FKG inequality allows us to lower bound the unconditional covariance with the conditional covariance averaged over $\cE$ to finish the proof of the lower bound.

\subsection{General Boundary Conditions}\label{gbc}
Although the primary focus of this article was to establish the temporal covariance exponent $4/3$ conjectured in \cite{FS16, takeuchi2012evidence} for the flat initial condition, the argument is sufficiently robust to allow for more general initial data. Although the formal pursuing of this direction will be taken up in the future, we provide a brief outline of a possible approach to highlight the generality of the method. Let us define a generic class of initial conditions given by $\pi:=\{\pi(v)\}_{v\in \L_0}$. Then the last passage times of  interest become $$X^{\pi}_r:=\max_{u\in \L_0} (T_{u,\br}+\pi(u)) \text{ and }  X^{\pi}_{n}:=\max_{u\in \L_0} (T_{u,\bn}+\pi(u)).$$ Observe that the flat initial condition considered in Theorem \ref{t:main} corresponds to the case $\pi \equiv 0$ whereas the droplet case considered in \cite{BG18, FO18} corresponds to the case $\pi(\mathbf{0})=0$ and $\pi(v)=-\infty$ for all $v\neq \mathbf{0}$. In \cite{FS16,FO18} another special initial condition, namely the stationary (with slope $0$) initial condition, was also considered where $\{X_{i}\}_{i\in \Z}$ and $\{Y_{i}\}_{i\in \Z}$ are independent sequences of i.i.d.\  $\rm{Exp}(1/2)$ variables and $\pi$ is a two sided random walk profile with $\pi(\mathbf{0})=0$ and increments $X_{i}-Y_{i}$. For this special initial condition (let us denote it by $\pi^*$) it was shown in \cite{FS16} that we have 
$$\lim_{n\to \infty} n^{-2/3}\Cov(X^{\pi^*}_{\tau n},X^{\pi^*}_{n})=\frac{\Var(\xi_{\mathrm{BR}})}{2}\left(1+\tau^{2/3}-(1-\tau)^{2/3}\right)$$ 
where $\Var(\xi_{\mathrm{BR}})$ denotes the variance of the Baik-Rains distribution \cite{BR}. Observe that the covariance above is annealed  i.e.,  the randomness in the initial condition is also averaged over. Simple Taylor expansion shows that in the $\tau\to 0$ limit, the asymptotic covariance is of the order $\tau^{2/3}n^{2/3}$ which matches with the asymptotics of the droplet initial condition \cite{BG18, FO18}. However the root of such an exponent is quite different from that of the droplet case, with the primary contribution to the covariance coming from the fluctuation in the initial condition. In particular,  one does not expect the same exponent to guide the covariance behavior if the initial data is quenched: i.e., the covariance with a typical deterministic initial condition with diffusive growth.  

In fact, as the reader might have already noticed, the proof strategy described above indicates an exponent of $4/3$ for any  deterministic initial profile with sub-diffusive growth. More formally let us define a deterministic initial profile $\pi$ with the following two properties: 
\begin{enumerate}
\item[(i)] $\pi(0)=0$. 
\item[(ii)] For any $v\in \L_0$ we have $|\pi(v)|\leq C|v|^{1/2-s}$ for some $s\in (0,1/2)$ and some $C>0$.  \end{enumerate}

We believe that the arguments in this paper, upon suitable modifications, might be used to show that for any $\pi$ as above and $r,n$ as in the set-up of Theorem \ref{t:main},
$$\Cov(X^{\pi}_r, X^{\pi}_n)=\Theta\left((\frac{r}{n})^{4/3+o(1)}n^{2/3}\right).$$
To see why this is plausible, recall that the overall strategy from the beginning of the section did not explicitly depend on the initial data $\pi$ being identically zero. Rather it only implicitly used the fact that on the event $|u_{\max}-\br|\gg r^{2/3}$ there would likely be no interaction between the geodesics $\Gamma_{\bn}^0$ and $\Gamma^0_{\br}$ (i.e., they pass through disjoint parts of the space) and hence the contribution to covariance between the geodesic weights $X_{r}$ and $X_{n}$ coming from this event will be negligible.  This still continues to hold for $\pi$ as above, since we expect the profile around $u_{\max}$ to behave like a Brownian motion conditioned to stay below zero, and hence the weight deficit between $u_{\max}$ and $\br$ to be of order $\sqrt{|u_{\max}-\br|}$ which dominates $|\pi(u_{\max}-\br)|$ due to the sub-diffusive assumption on the latter. Hence the $\pi$ terms in $X^{\pi}_r=\max_{u\in \L_0} (T_{u,\br}+\pi(u))$ and $X^{\pi}_{n}=\max_{u\in \L_0} (T_{u,\bn}+\pi(u))$ are negligible compared to the fluctuations of the $T_{u,\br}$ and $T_{u,\bn}$ terms, leading to the same behavior as when $\pi\equiv 0$.

In the rest of the discussion, we shall merely try to give some pointers to the interested reader to the relevant estimates appearing later in the article and how one might modify them to construct a complete proof.  Thus the reader might find it beneficial to come back to this discussion on having gone through the rest of the paper.

The argument for the lower bound should go through almost verbatim. One merely needs to take $\tau<s$ in the definition of $\ce_{\mathrm{dec}}$ in \eqref{e:defdec}. This ensures that as before on the event $\ce,$ the geodesic from $\L_0$ to $\bn$ passes close to $R_{\theta}$ ((ii) above ensures that $\pi$-values on the line $\L_0$ is not sufficiently high so that the geodesic will either pass through the depleted region, or have an atypically high transversal fluctuation). 

The upper bound requires some modifications to the statements proved later in the article.  The key property driving the results in the article is the decay of the geodesic weight profile $\{T_{u,\bn}\}_{u\in \L_r}$ around its maxima $u_{\max}$ resembling a two sided Brownian meander below zero. However for our purposes here a rather weak version of the above as stated formally in \eqref{decayneeded} suffices; instead of the expected diffusive decay we are content with a poly-logarithmic decay. This is because stretched exponential tails for geodesic weights make the low probability events appearing in our analysis super-polynomially rare rendering them summable. But to compete with the almost diffusive fluctuation of the initial data in the general setting, this is not enough any more and one has to exploit the full Brownian decay of the weight profile away from its maxima (see in particular the discussion appearing after \eqref{decayneeded}). We shall not expand on this further and the question of more general initial conditions will be taken up in a future project. 

\subsection*{Organisation of the paper}
The rest of the paper is organized as follows with all the remaining sections devoted to proofs. In Section \ref{sec:browncomp} we obtain estimates for the local Brownian properties of the Airy$_2$ process, and the line-to-point profile in exponential LPP. Additional ingredients derived as consequences of well known one point moderate deviation estimates are recorded in Section \ref{s:prelim}.
The proofs of Theorem \ref{t:upper} and \ref{t:lower} are presented in Sections \ref{s:upper} and \ref{s:lower} respectively.
Some of the technical but straightforward or well known arguments omitted from the main body of the article including several Brownian computations and consequences of the convergence of geodesic weight profiles to the parabolically adjusted Airy$_2$ process are presented in Appendix \ref{s:appa} and Appendix \ref{s:appb}. In Appendix \ref{s:appc} we present the technical arguments about estimates on last passage times and transversal fluctuation of paths, adapted from similar arguments in the literature.

\subsection*{Acknowledgements}
The authors thank Patrik Ferrari for bringing this question to their attention and explaining the heuristic from \cite{FS16}. We also thank Alan Hammond and Milind Hegde for useful discussions about the Brownian comparison result in \cite{HHJ+}. We are grateful to two anonymous referees for a careful reading of the manuscript and numerous helpful comments and suggestions. RB is partially supported by a Ramanujan Fellowship (SB/S2/RJN-097/2017) from the Science and Engineering Research Board, an ICTS-Simons Junior Faculty Fellowship, DAE project no. 12-R\&D-TFR-5.10-1100 via ICTS and Infosys Foundation via the Infosys-Chandrasekharan Virtual Centre for Random Geometry of TIFR. SG is partially supported by a Sloan Research Fellowship in Mathematics and NSF Award DMS-1855688. Part of this research was performed at ICTS while SG and LZ were attending the program -Universality in random structures: Interfaces, Matrices, Sandpiles (ICTS/urs2019/01), we acknowledge the hospitality.

\section{Brownian Comparison Estimates for the \texorpdfstring{Airy$_2$}{Airy2} process}   \label{sec:browncomp}
As indicated in Section \ref{iop}, we shall need several estimates for the Airy$_2$ process in particular relating to its locally Brownian nature.  We collect together the required results in this section. A general method proof for such results is via comparison: namely to bound the probability of a certain event under the Airy$_2$ process, we first bound the same  for Brownian motion, and then rely on strong estimates on the Radon-Nikodym derivative of Airy$_2$ process with respect to Brownian motion to obtain the sought estimate. Although there have been several recent breakthroughs in this sphere a particularly useful one for our purpose that we rely on is the following very recent result. 

Recall that (see e.g., \cite{CH14}) $\cA_{2}(\cdot)$ denotes the Airy$_2$ process on $\R$. Let us define the stochastic process $\sL:\R\to \R$ by
$$\sL(x):=\cA_{2}(x)-x^2.$$ For $K\in \R, d>0$, let $\cB^{[K,K+d]}$ denote the law of a Brownian motion with diffusivity $2$ taking value $0$ at $K$ and restricted to $[K,K+d]$. Let $\sL^{[K,K+d]}$ denote the random function on $[K,K+d]$ defined by 
$$\sL^{[K,K+d]}(x):=\sL(x)-\sL(K), ~\forall x\in [K,K+d].$$ 
The next result is the crucial  comparison estimate of the laws of $\sL^{[K,K+d]}$ and $\cB^{[K,K+d]}$. Let $\mathcal{C}\big([K,K+d],\mathbb{R} \big)$ denote the set of all real valued continuous functions defined on $[K,K+d]$ which vanish at $K$. The following result is quoted from \cite{HHJ+}, however we have rephrased the statement slightly. The result in \cite{HHJ+} compares $2^{-1/2}\sL$ to a Brownian motion with diffusivity $1$, and also gives a comparison estimate for the full profile on an interval $[-M,M]$ containing $[K,K+d]$ (see below). The form we state here, tailored to our applications later in the article, is an immediate corollary. 
\begin{theorem}\label{t.airytail}\cite[Theorem 1.1]{HHJ+} There exists an universal constant $G>0$ such that the following holds.
For any fixed $M>0,$ there exists $a_0=a(M)$ such that for all intervals $[K,K+d]\subset [-M,M]$ and for all measurable $A \subset \mathcal{C}\big([K,K+d],\mathbb{R} \big)$ with $0<\cB^{[K,K+d]}(A)=a\le a_0,$
$$\P\left(\sL^{[K,K+d]} \in A\right) \leq a     \exp \Big\{ G {M} \big( \log a^{-1} \big)^{5/6} \Big\}.$$
\end{theorem}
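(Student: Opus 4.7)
The plan is to exploit the Brownian Gibbs property of the Airy line ensemble established in \cite{CH14}: conditional on the values of $\mathcal{A}_2$ at the endpoints of any interval $[K,K+d]$ and on the trajectory of the second curve $\mathcal{A}_3$ on that interval, the restriction of $\mathcal{A}_2$ has the law of a Brownian bridge with diffusivity $2$ between the specified endpoint values, conditioned to stay strictly above $\mathcal{A}_3$. This is the mechanism that makes any meaningful comparison between $\sL$ and Brownian motion possible. The quadratic correction $-x^2$ in the definition of $\sL$ is what keeps the ensemble tight on the compact window $[-M,M]$ and is the source of the $M$-dependence in the bound.

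I would first condition on the boundary data $\bigl(\mathcal{A}_2(K),\mathcal{A}_2(K+d),\mathcal{A}_3|_{[K,K+d]}\bigr)$ and write
\[
\P\bigl(\sL^{[K,K+d]} \in A\bigr) \;=\; \E\!\left[\int_{A} \frac{d\mathcal{L}\bigl(\sL^{[K,K+d]}\mid \mathrm{bdry}\bigr)}{d\cB^{[K,K+d]}}(\omega)\, d\cB^{[K,K+d]}(\omega)\right].
\]
The inner Radon--Nikodym derivative factors as a product of (a) the bridge-vs-motion density, essentially $\sqrt{\pi d}\exp(-\Delta^2/(4d))$ with $\Delta=\sL(K+d)-\sL(K)$, reflecting the pinning at the right endpoint, and (b) the non-crossing indicator $\mathbf{1}_{\mathsf{NC}}$ for the bridge remaining above $\mathcal{A}_3-x^2-\sL(K)$, divided by the conditional probability $\P(\mathsf{NC}\mid\mathrm{bdry})$. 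I would bound each factor separately: for (a) I use one-point moderate deviation estimates for $\mathcal{A}_2$ (controlling $\Delta$) together with the parabolic drift, while for (b) I use rigidity/tightness estimates for $\mathcal{A}_3$ on compacts to show that, outside an exponentially rare bad event for the boundary data, the barrier sits comfortably below zero on $[K,K+d]$, so that $\P(\mathsf{NC}\mid\mathrm{bdry})$ is bounded away from zero uniformly over typical bridge endpoints.

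The main obstacle is recovering the precise sub-polynomial factor $\exp\bigl(GM(\log a^{-1})^{5/6}\bigr)$; a naive application of the above would only give a polynomial-in-$a^{-1}$ loss. The clean way to obtain $5/6$ is through a truncation argument, splitting $A$ according to the sup-norm of the Brownian increment $\omega$: on the bulk event $\{\|\omega\|_\infty \le t\}$, the combined bridge/non-crossing factor is deterministically bounded by $\exp(c t^{\alpha})$ for some $\alpha$ dictated by the Gibbs comparison, while the tail event $\{\|\omega\|_\infty > t\}$ is absorbed using the Airy upper-tail moderate deviations (with exponent $3/2$, reinforced by the parabolic drift on $[-M,M]$) to give a bound of order $\exp(-c' t^\beta)$. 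One then optimizes $t$ as a function of $a$ to balance $a\exp(ct^\alpha)$ against $\exp(-c't^\beta)$, which yields a penalty of order $(\log a^{-1})^{\alpha/\beta}$; the technical heart of \cite{HHJ+} is identifying the right truncation threshold and establishing the sharp constants $\alpha,\beta$ from the Brownian Gibbs resampling so that $\alpha/\beta=5/6$. Getting this exponent in a form uniform over all sub-intervals of $[-M,M]$, rather than just a fixed one, requires combining the above with a union bound over a discretization, and this is where the linear dependence on $M$ in the exponent arises.
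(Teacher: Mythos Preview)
The paper does not prove this theorem; it is quoted verbatim from \cite{HHJ+}. The only commentary the paper offers on the proof is a single sentence: ``Theorem \ref{t.airytail} is proved in \cite{HHJ+} by first proving a quantitative version in the pre-limiting model of Brownian LPP followed by taking a large system limit.'' So there is nothing in the present paper to compare your sketch against beyond that one line.

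That said, your outline differs from the route the paper attributes to \cite{HHJ+}: you work directly with the Airy line ensemble via its Brownian Gibbs property, whereas the actual argument establishes the estimate first for the finite Brownian LPP line ensemble (where the ensemble has finitely many curves and the resampling and ``jump ensemble'' machinery is cleaner to set up) and then passes to the limit. Your high-level decomposition into an endpoint (bridge-vs-motion) factor and a non-crossing acceptance probability is in the right spirit, but the derivation of the exponent $5/6$ is not substantiated: you posit unspecified exponents $\alpha,\beta$ and declare that the optimization yields $\alpha/\beta=5/6$, without identifying what mechanism produces those particular values. In \cite{HHJ+} the $5/6$ arises from a specific ``jump ensemble'' comparison combined with quantitative control on how far the second curve sits below the first; it is not simply a matter of balancing a sup-norm truncation against a one-point upper tail. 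Your sketch also has a notational slip: in the Airy line ensemble the top curve is $\mathcal{A}_1$ (the Airy$_2$ process), with $\mathcal{A}_2$ the second curve, so your conditioning should read $\bigl(\mathcal{A}_1(K),\mathcal{A}_1(K+d),\mathcal{A}_2|_{[K,K+d]}\bigr)$.
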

Theorem \ref{t.airytail} is proved in \cite{HHJ+} by first proving a quantitative version in the pre-limiting model of Brownian LPP followed by taking a large system limit.
We also need the following easy corollary that compares Airy$_2$ process directly to a two sided Brownian motion.

\begin{corollary}
\label{c:BrownianAiry}
Let $M>0$ be fixed and let $W$ denote a standard two sided standard Brownian motion on $[-M,M]$ {with diffusivity 2}. Then if for some measurable sequence of sets $A_n \subset C([-M,M],\R),$ $\P[\{W(x)-W(-M):x\in [-M,M]\}\in A_{n}]\to 0$ then $\P[\{\cA_2(x)-\cA_2(-M): x\in [-M,M]\}\in A_n]$ also goes to $0$.
\end{corollary}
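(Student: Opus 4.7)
The plan is to apply Theorem \ref{t.airytail} on the interval $[K, K+d] = [-M, M]$ after absorbing the parabola that distinguishes $\cA_{2}$ from $\sL$. Setting $q(x) := M^2 - x^2$ (so that $q(-M) = 0$), the identity
$$\sL^{[-M, M]}(x) \;=\; \cA_{2}(x) - \cA_{2}(-M) + q(x)$$
shows that $\{\cA_{2}(x) - \cA_{2}(-M) : x \in [-M, M]\} \in A_n$ if and only if $\sL^{[-M, M]} \in \tilde{A}_n$, where $\tilde{A}_n := \{f + q : f \in A_n\} \subset \mathcal{C}([-M, M], \R)$. Writing $a_n := \cB^{[-M, M]}(\tilde{A}_n)$, Theorem \ref{t.airytail} then gives, once $a_n \le a_0(M)$,
$$\P\bigl[\sL^{[-M, M]} \in \tilde{A}_n\bigr] \;\le\; a_n \exp\!\bigl(GM (\log a_n^{-1})^{5/6}\bigr),$$
which tends to zero as soon as $a_n \to 0$, since $5/6 < 1$. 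The task is therefore reduced to showing $\cB^{[-M, M]}(\tilde{A}_n) \to 0$.

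For this step, I would realise $\cB^{[-M,M]}$ by a Brownian motion $B$ of diffusivity $2$ on $[-M, M]$ vanishing at $-M$, so that $\cB^{[-M, M]}(\tilde{A}_n) = \P[B - q \in A_n]$, while the hypothesis reads $\P[B \in A_n] \to 0$ (since $W(\cdot) - W(-M)$ has the same law as $B$). Because $q$ is smooth and deterministic with $q(-M) = 0$ and $\int_{-M}^{M} q'(x)^2 \, dx = 8 M^3 / 3 < \infty$, the Cameron-Martin theorem applies: the laws of $B$ and $B - q$ are mutually absolutely continuous, with a Radon-Nikodym derivative $D := d\mathcal{L}(B-q)/d\mathcal{L}(B)$ that is log-normal and hence has finite moments of all orders. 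Writing $\P[B - q \in A_n] = \E[\mathbf{1}_{A_n}(B)\, D]$ and splitting on $\{D \le K\}$ yields
$$\P[B - q \in A_n] \;\le\; K\, \P[B \in A_n] + \P[D > K]$$
for every $K > 0$. Taking $\limsup_{n \to \infty}$ gives $\limsup_n \P[B - q \in A_n] \le \P[D > K]$, and sending $K \to \infty$ forces $\cB^{[-M, M]}(\tilde{A}_n) = \P[B - q \in A_n] \to 0$, as required.

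I do not foresee a genuine obstacle: the corollary is in effect the statement that the deterministic parabolic shift between $\cA_{2}$ and $\sL$ is harmless, which is the standard Cameron-Martin observation, while the qualitative Brownian-to-Airy passage is exactly Theorem \ref{t.airytail}. The only bookkeeping is to arrange the shift so that $q$ vanishes at $-M$, ensuring that $\tilde A_n$ still lies in the space $\mathcal{C}([-M, M], \R)$ on which Theorem \ref{t.airytail} is stated.
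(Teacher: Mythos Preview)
Your proof is correct and follows essentially the same approach as the paper: both use the Cameron--Martin theorem to absorb the deterministic parabolic shift $q(x)=M^2-x^2$ and then invoke Theorem~\ref{t.airytail} on the shifted events. Your version simply spells out the Cameron--Martin step and the limiting argument in more detail than the paper's two-line proof.
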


\begin{proof}
By Cameron-Martin Theorem, we have that a Brownian motion plus a parabola is absolutely continuous with respect to a Brownian motion (of the same diffusivity) on any compact interval.
Thus
$\P[\{W(x)+x^2-W(-M)-M^2:x\in [-M,M]\}\in A_{n}]\to 0$,
and our conclusion follows by applying Theorem \ref{t.airytail} to events $\{f \in C([-M,M],\R): x\mapsto f(x)+x^2-M^2 \in A_n \}$.
\end{proof}

\subsection{Location and Behavior around Maxima}
Settling Johansson's conjecture, it was  established in \cite{CH14} that almost surely there is a unique maximizer of $\sL$ who also obtained  tail estimates  of the maxima of the kind $\P(|\argmax \sL|\geq s)\leq e^{-cs^3}$. One expects (from e.g.\ Theorem \ref{t.airytail}) that $|\argmax \sL|$ has a uniformly positive and bounded density on any compact interval and $\sL$ around its maxima (re-centered) should behave like a Brownian motion conditioned to stay below $0$. We shall need some of the consequences of such behavior. The first result we need is the following. 

\begin{proposition}
\label{p:twopeaks}
For $M>0$, there exists $C$ depending only on $M$ such that the following holds. For any $\varepsilon\in (0,\frac{1}{2})$ and any interval $I\subseteq [-M,M]$ with $|I|\leq \varepsilon$ we have 
\begin{equation}  \label{eq:lem:bdairy}
\P\left[\max_{x \in I} \sL(x) > \max_{x \in [-2M, 2M]} \sL(x)-\sqrt{\varepsilon}\right] \leq C 
\varepsilon\exp\left(C|\log (\varepsilon)|^{5/6}\right).
\end{equation}
\end{proposition}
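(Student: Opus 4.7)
The strategy is to reduce the event to a Brownian probability via the comparison Theorem \ref{t.airytail}, then bound the latter by an explicit conditioning argument. Let $E$ denote the event on the left-hand side of \eqref{eq:lem:bdairy}. The key point is that $E$ is invariant under constant shifts of $\sL$: writing $\sL^{[-2M,2M]}(x) := \sL(x) - \sL(-2M)$, the event becomes $\{\sL^{[-2M,2M]} \in A\}$ where
\[
A := \{ f \in \mathcal{C}([-2M,2M],\mathbb{R}) : f(-2M)=0,\ \max_I f \geq \max_{[-2M,2M]} f - \sqrt{\varepsilon}\}.
\]
Applying Theorem \ref{t.airytail} with the outer parameter taken to be $2M$ (so the resulting constant depends only on $M$), $K=-2M$, $d=4M$, yields
\[
\P[E] \leq a \exp\!\bigl(2GM \,|\log a^{-1}|^{5/6}\bigr), \qquad a := \cB^{[-2M,2M]}(A),
\]
provided $a \leq a_0(2M)$. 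Thus the problem reduces to the Brownian estimate $a \leq C(M)\,\varepsilon$.

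\textbf{Brownian bound via conditioning.} Let $W$ be Brownian motion with diffusivity $2$ on $[-2M,2M]$ with $W(-2M)=0$, and denote $M^* := \max_{[-2M,2M]}W$, $I = [a_I, a_I+\varepsilon]$. I condition on $W\vert_{I^c}$, which fixes the boundary values $u_1 := W(a_I)$, $u_2 := W(a_I+\varepsilon)$ and the external maximum $M_{\mathrm{out}} := \max_{I^c} W \geq u_1\vee u_2$; conditionally, $W\vert_I$ is a Brownian bridge from $u_1$ to $u_2$. Writing $\max_I W = (u_1 \vee u_2) + R$ where $R \geq 0$ is the bridge excess with explicit tail
\[
\P\bigl[R \geq r \,\big|\, u_1,u_2\bigr] \;=\; \exp\!\bigl(-r(r+|u_1-u_2|)/\varepsilon\bigr),
\]
the event $\{\max_I W \geq M^* - \sqrt{\varepsilon}\}$ reduces to $\{R \geq D - \sqrt{\varepsilon}\}$ with $D := M_{\mathrm{out}} - (u_1\vee u_2) \geq 0$. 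This gives
\[
\P[E] \;=\; \P\bigl[D \leq \sqrt{\varepsilon}\bigr] \;+\; \E\!\left[\mathbf{1}_{D>\sqrt{\varepsilon}} \exp\!\Bigl(-\tfrac{(D-\sqrt{\varepsilon})(D-\sqrt{\varepsilon}+|u_1-u_2|)}{\varepsilon}\Bigr)\right].
\]

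\textbf{Estimating both terms.} For the first term, conditionally on $u_1,u_2$, the maxima $M_1 := \max_{[-2M,a_I]} W$ and $M_3 := \max_{[a_I+\varepsilon,2M]} W$ are independent; $M_1$ is the maximum of a Brownian bridge from $0$ to $u_1$ on an interval of length $\Theta(M)$ (whose density above $u_1$ has scale $|u_1|/M$ at $0$), and $M_3$ is the maximum of free Brownian motion starting at $u_2$ (with density above $u_2$ of scale $1/\sqrt{M}$ at $0$, by reflection). Integrating the product of these two single-sided estimates against the joint Gaussian density of $(u_1,u_2)$, and crucially using that $|u_1-u_2|$ is a Brownian increment of scale $\sqrt{\varepsilon}$ (rather than $\sqrt{M}$), produces $\P[D \leq \sqrt{\varepsilon}] \leq C(M)\,\varepsilon$. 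For the second term, bounding the exponent by $-(D-\sqrt{\varepsilon})^2/\varepsilon$ and using that the density of $D$ on $[\sqrt{\varepsilon},\infty)$ is $O((\sqrt{\varepsilon}+r)/M)$ (derived from the same bridge and reflection computations), the Gaussian integral $\int_0^\infty e^{-r^2/\varepsilon}(\sqrt{\varepsilon}+r)\,dr = O(\varepsilon)$ yields the desired $O(\varepsilon/M)$ bound.

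\textbf{Conclusion and main obstacle.} Combining yields $a \leq C(M)\,\varepsilon$; substituting back (and using $|\log a^{-1}| \leq |\log\varepsilon| + O(1)$) produces the claim, with the regime $\varepsilon > \varepsilon_0(M)$ where Theorem \ref{t.airytail} does not apply handled trivially by adjusting the constant. The main obstacle is the Brownian computation: tracking how the small-scale factors of order $\sqrt{\varepsilon}/\sqrt{M}$ from each side of $I$ combine multiplicatively (rather than additively), against the $\Theta(\sqrt{\varepsilon})$ scale of $|u_1-u_2|$, to produce the sharp exponent $\varepsilon$ rather than the naive $\sqrt{\varepsilon}$ one might initially expect from a single-point computation.
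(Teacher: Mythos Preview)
Your proposal is correct and follows essentially the same approach as the paper: reduce to a Brownian probability via Theorem \ref{t.airytail}, then show that probability is $O(\varepsilon)$. The only difference is in the Brownian computation: the paper conditions on the pair $(h_1,h_2)=(\max_I W - W(x_1),\,\max_I W - W(x_2))$ and integrates the explicit joint density against the two independent ``outside'' maximum tails, whereas you condition on $W|_{I^c}$ and use the bridge-maximum formula for the inside---these are dual decompositions leading to the same $C(M)\varepsilon$ bound.
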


\begin{proof}
Let $W: [-2M,2M]\to \R$ denote a two sided Brownian motion with {diffusivity $2$}. Notice that, by Theorem \ref{t.airytail} it suffices to prove that 
there exists a constant $C_1$ depending on $M$ such that for all $I$ and $\varepsilon$ as in the statement of the proposition we have
\begin{equation}  
 \label{bcomp}
\P\left[\max_{x \in I} W(x) > \max_{x \in [-2M, 2M]} W(x)-\sqrt{\varepsilon}\right] \leq C_1 \varepsilon,
\end{equation}
since then by Theorem \ref{t.airytail}, \eqref{eq:lem:bdairy} follows. The standard Brownian  calculation proving \eqref{bcomp} is provided in Appendix \ref{s:appa}.
\end{proof}

The other result that we require concerns lower bounds of such events. As already mentioned, one expects from Theorem \ref{t.airytail} that the probability that the maxima (of $\sL$) is within a particular sub-interval of a compact set is proportional to the length of the interval. We show below that the same remains true, if we also ask in addition that the profile decays nicely around the maxima.  We emphasize that the proof of the lower bound unlike the proof of Proposition \ref{p:twopeaks} relies only on the stationarity of the Airy$_2$ process coupled with one point tail bounds along with known strong mixing properties of the same 
derived in \cite[(5.15)]{prahofer2002scale} and \cite[Proposition 1.13]{corwin2014ergodicity}.

For $\lambda>0$, let $\sL^*_{\lambda}:=\max_{|x|\leq \lambda} \sL(x)$. Further, for $k\in \N$, let $$\sL^*_{\lambda,k}:=\max_{2^{k-1} \lambda \leq |x|\leq 2^{k}\lambda} \sL(x).$$ We have the following result
which although is not stated in the most natural way, as we introduce two parameters $\si < \si'$,
is what we precisely need in later applications (e.g., Proposition \ref{p:maxdecay} below).

\begin{proposition} 
\label{prop:lbae}
For any $\tau> 0$, there exists $\alpha > 0$ such that the following is true. For any $0 < \si < \si' < 1$, 
\begin{equation}
\label{eq:lbae}
\P\left[
\sL^*_{\lambda}=\sL^*_{\lambda'}{\leq \sL(0)+\alpha^{-1}\sqrt{\si'}},  \qquad \sL^*_{\lambda',k}\leq \sL^*_{\lambda'}- \alpha \sqrt{\si'} 2^{k(\frac{1}{2}- \tau)} ~ \forall~k\in \N \right] > \alpha \si.
\end{equation}
\end{proposition}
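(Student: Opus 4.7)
My plan is to apply the Brownian Gibbs property of the Airy line ensemble (established in \cite{CH14}) on a large compact window $[-M,M]$, with $M=M(\tau)$ chosen suitably, to compare the top line $\cA_2$ on $[-M,M]$ to a free Brownian bridge of diffusivity $2$, and then verify the required event by direct Brownian calculation. The constant $\alpha$ will be taken small only at the end, depending only on $\tau$.

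First, I would split the index set $k \in \N$ into a tail regime $2^{k-1}\lambda' > M$ and a bulk regime $2^{k-1}\lambda' \leq M$. In the tail regime, a one-point tail bound for the stationary process $\cA_2$ together with a union bound over the $O(\log(1/\lambda'))$ relevant scales gives $\cA_2(y) \leq C_1$ uniformly on $|y|\leq 1$ (and mildly larger bounds on larger scales) with high probability; combining with the parabola yields $\sL(y) \leq C_1 - (2^{k-1}\lambda')^2$ on the annulus, which is well below $\sL^*_{\lambda'} - \alpha\sqrt{\lambda'} 2^{k(1/2-\tau)}$ once $M$ is large enough that the parabola dominates the slack. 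In the bulk regime I would invoke Brownian Gibbs: conditioning on the boundary values $\cA_2(\pm M)$ and on the lower lines of the ensemble on $[-M,M]$, the top line $\cA_2|_{[-M,M]}$ is distributed as a Brownian bridge of diffusivity $2$ from $\cA_2(-M)$ to $\cA_2(M)$ conditioned not to cross the second line on $[-M,M]$. I would restrict to a favourable event $F$ on the conditioning data under which (i) $\cA_2(\pm M)$ and the second line's endpoint values are in a bounded range, and (ii) the second line stays below $-C_0$ on $[-M,M]$ for a large constant $C_0$. One-point tail bounds for the Airy line ensemble give $\P(F) \geq q_0 > 0$, and on $F$ a free bridge with matching endpoints avoids the second line with probability $\geq p_0 > 0$, so every event has conditional probability at least $p_0$ times its free-bridge probability.

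Next, under the free Brownian bridge measure $\P^W$ on $[-M,M]$ I would estimate the required event piecewise. Let $\xi$ be the argmax of $W(y)-y^2$ on $[-M,M]$; the density of $\xi$ is bounded below on $[-1,1]$, so $\P^W[\xi \in [-\lambda,\lambda]] \geq c_1\lambda$ uniformly in $\lambda' \in (\lambda,1)$, which is precisely the event $\sL^*_{\lambda}=\sL^*_{\lambda'}$. Conditioning on $\xi$ and using the standard decomposition of Brownian motion around its argmax into two nonnegative meanders emanating from $\xi$: the upper bound $\sL^*_{\lambda'} \leq \sL(0) + \alpha^{-1}\sqrt{\lambda'}$ fails with conditional probability $O(\alpha)$ by a standard maximal bound at scale $\sqrt{\lambda'}$, while the annular decay at bulk-regime scale $k$ fails with conditional probability $O(\alpha 2^{-k\tau})$, since on the annulus the meander is of typical size $\sqrt{\lambda' 2^k}$ and the required slack $\alpha 2^{-k\tau}\sqrt{\lambda' 2^k}$ is a small fraction of this (the density of the meander near zero is bounded). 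Summing over bulk $k$ yields total conditional failure probability $O(\alpha/(1-2^{-\tau}))$, which is at most $1/2$ for $\alpha$ small depending only on $\tau$. Combining with the Gibbs factor $p_0 q_0$, the unconditional probability of the full event is at least $(p_0 q_0)(c_1 \lambda/2) \geq \alpha\lambda$ for suitably small $\alpha$.

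The principal technical obstacle is the scale-by-scale meander calculation: one must verify that conditional on $\xi$, the complementary process around $\xi$ really does decompose into two Brownian meanders amenable to sharp small-value estimates, and that the linear-in-slack failure $O(\alpha 2^{-k\tau})$ is both correct at each scale and summable (hence the necessity of the strictly positive $\tau$). A secondary delicacy is the Brownian Gibbs step, where the favourable event $F$ must be constructed to make the non-intersection constraint into a uniformly bounded Radon--Nikodym factor independent of $\lambda,\lambda'$; this uses the fact that the second line of the ensemble lies well below the first on any fixed compact interval with positive probability.
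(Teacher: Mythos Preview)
Your approach has two genuine gaps. First, the Brownian Gibbs step goes in the wrong direction for a lower bound: from $\mu(NI)\ge p_0$ (where $\mu$ is the free bridge law and $NI$ the non-intersection event) you obtain $\mu(\,\cdot\mid NI)\le \mu(\cdot)/p_0$, an \emph{upper} bound on the conditional measure. The inequality you assert, $\mu(\,\cdot\mid NI)\ge p_0\,\mu(\cdot)$, is false in general (take any event contained in $NI^c$). To salvage this you would need the target event itself to imply $NI$---for instance by adding the requirement that the top curve stays above $-C_0+1$ on $[-M,M]$---and then verify that this enlarged event still has free-bridge probability $\ge c\lambda$; this is plausible but is substantial extra work your sketch does not address. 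Second, Denisov's meander decomposition applies to the argmax of Brownian motion, not to the argmax of $W(y)-y^2$ for $W$ a Brownian bridge with prescribed endpoints. Conditional on that argmax, the two pieces are bridges conditioned to lie below a shifted parabola, not meanders, and the scale-$k$ small-value estimate $O(\alpha 2^{-k\tau})$ you invoke must be re-derived for this non-standard object, uniformly in the boundary data and in $\lambda,\lambda'$.

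The paper's proof avoids both issues by exploiting stationarity of $\cA_2$ together with a pigeonhole argument. One fixes a window $[-\Lambda,\Lambda]$ and a high-probability event $\mathscr{A}$ (built from one-point tail bounds and a Brownian Gibbs argument showing $\sup_{[0,\Lambda]}\cA_2\to\infty$) on which the global maximizer of $\sL$ lies in $[-\Lambda,\Lambda]$. Partitioning into $\Theta(\Lambda/\lambda)$ subintervals and recentering, the events $\mathscr{F}_i$ (the target event with center shifted to $2i\lambda$) all have \emph{equal} probability by translation invariance of $\cA_2$. A covering argument gives $\mathscr{A}\subset\bigcup_i(\mathscr{F}_i\cup\mathscr{C}_i)$, and Brownian comparison---used only for \emph{upper} bounds---shows $\P(\bigcup_i\mathscr{C}_i)\le \tfrac{1}{4}$ for $\alpha$ small. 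Averaging then yields $\P(\mathscr{F}_0)\ge c\lambda$ directly, with no argmax density or meander computation and with uniformity in $\lambda,\lambda'$ automatic.
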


Before getting in to the details let us try to motivate the statement. Notice that by local Brownian behavior it is expected that the global maximum is equal to $\sL^*_{\lambda}$ with probability proportional to $\lambda$. It is also expected that the decay of the profile around the global maxima should be like that of a (two sided) Brownian meander, and hence the profile at $2^k\lambda'$ distance from the maxima should roughly be $ \sL^*_{\lambda}- \Theta((2^k\lambda')^{1/2})$. What the proposition says is that by asking for a slightly weaker decay ($2^{k(\frac{1}{2}- \tau)}$ instead of $2^{k/2}$) we can show that the decay condition for all scales $k$ along with the prescribed location simultaneously holds with probability proportional to $\lambda$.
We need some preparatory lemmas before proving Proposition \ref{prop:lbae}. The first of these lemmas shows that $\sup_{x\in \R} \sL(x)+\frac{x^2}{2}$ is tight. 

\begin{lemma}
\label{l:goe}
Given any $\varepsilon>0$ (small), there exists $\mathscr{M}>0$ (large) such that  
$$\P\left( \sup_{x\in \R} \sL(x)+ \frac{x^2}{2} \geq \mathscr{M}\right)\leq \varepsilon.$$
\end{lemma}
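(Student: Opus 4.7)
The identity $\sL(x) + \frac{x^2}{2} = \cA_{2}(x) - \frac{x^2}{2}$ reduces the claim to a tightness statement for $\sup_{x \in \R}\bigl(\cA_{2}(x) - \tfrac{x^2}{2}\bigr)$. My plan is to bound this supremum by a union bound over unit intervals $[k,k+1]$, $k\in\Z$, using stationarity of $\cA_{2}$ to reduce every local supremum to a single distribution, and then to exploit the quadratic penalty $-x^2/2$ to produce a summable series in $k$.

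The main single-interval input I would establish is
$$\P\Bigl(\sup_{x\in[0,1]} \cA_{2}(x) \ge t\Bigr) \le C\, e^{-c\, t^{3/2}} \quad \text{for all } t\ge 1.$$
To prove it I would split $\sup_{[0,1]}\cA_{2} \le \cA_{2}(0) + \sup_{x\in[0,1]}\bigl(\cA_{2}(x)-\cA_{2}(0)\bigr)$. The first term has an $\exp(-ct^{3/2})$ upper tail since $\cA_{2}(0)$ has the GUE Tracy--Widom distribution. For the second I would invoke Theorem \ref{t.airytail} with $M=1$ and $[K,K+d]=[0,1]$: the reflection principle gives $\P(\sup_{[0,1]} W \ge s) \le 2e^{-s^2/4}$ for a Brownian motion $W$ of diffusivity $2$ started at $0$, and the Radon--Nikodym factor $\exp\bigl(G(\log a^{-1})^{5/6}\bigr)$ in Theorem \ref{t.airytail} is sub-exponential and gets absorbed into the Gaussian tail, giving an $\exp(-c s^2)$ bound for $\sup_{[0,1]}(\cA_{2}-\cA_{2}(0))$. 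For large $t$ the weaker $t^{3/2}$ tail from $\cA_{2}(0)$ dominates, yielding the displayed estimate.

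The global bound then follows from a direct union bound. Setting $m_k := \tfrac{1}{2}(|k|-1)_+^2$ so that $x^2/2 \ge m_k$ on $[k,k+1]$, and using translation invariance of $\cA_{2}$, I get
$$\P\Bigl(\sup_{x\in\R}\bigl(\cA_{2}(x)-\tfrac{x^2}{2}\bigr)\ge \mathscr{M}\Bigr) \le \sum_{k\in\Z} \P\Bigl(\sup_{x\in[0,1]} \cA_{2}(x) \ge \mathscr{M}+m_k\Bigr) \le C\sum_{k\in\Z} e^{-c(\mathscr{M}+m_k)^{3/2}}.$$
Using $(a+b)^{3/2}\ge \max(a,b)^{3/2}\ge \tfrac{1}{2}(a^{3/2}+b^{3/2})$ and the summability of $\sum_{k\in\Z} e^{-c m_k^{3/2}/2}$, the right-hand side is bounded by $C' e^{-c\mathscr{M}^{3/2}/2}$, which drops below any prescribed $\varepsilon$ once $\mathscr{M}$ is large enough. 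The only mildly non-routine step is the single-interval estimate, where the GUE Tracy--Widom one-point tail must be married with the Brownian comparison of Theorem \ref{t.airytail} and the sub-exponential Radon--Nikodym factor absorbed; everything else is a standard union bound that uses stationarity and the parabolic penalty.
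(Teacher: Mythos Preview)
Your proof is correct and follows essentially the same architecture as the paper's: union bound over unit intervals, stationarity of $\cA_2$, a tail bound on the local supremum, and summability via the quadratic penalty. The one difference is in how the single-interval bound $\P(\sup_{[0,1]}\cA_2 \ge t)\le Ce^{-ct^{3/2}}$ is obtained: the paper bypasses Theorem~\ref{t.airytail} entirely by noting $\sup_{[-1,1]}\cA_2 \le 1+\sup_{\R}\sL$ and invoking the fact that $\sup_{\R}\sL$ has (scaled) GOE Tracy--Widom law with stretched-exponential tails, whereas you split into $\cA_2(0)$ plus the increment and control the latter via the Brownian comparison---a heavier tool than needed here, but perfectly valid.
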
 

\begin{proof}
{We know that $\sup_{x\in \R} 4^{-1/3}\sL(x)$ has the same law as the GOE Tracy-Widom distribution (see e.g. \cite[Corollary 1.3]{Joh03})}. It follows from the standard tail bounds on the GOE Tracy-Widom distribution {(e.g. \cite{ramirez2011beta})}, that $$\sup_{x\in [-1,1]} \sL(x)+ x^2= \sup_{x\in [-1,1]} \cA_2(x),$$ is stochastically dominated by $1+E$ where $E$ has stretched exponential tails. By translation invariance of the Airy$_2$ process it follows that for all $t\in 2\Z \setminus \{0\}$, and for $\mathscr{M} >0$ we have
$$\P\left( \sup_{x\in [t-1,t+1]} \sL(x)+ \frac{x^2}{2} \geq \mathscr{M}\right)= \P\left( \sup_{x\in [-1,1]} \cA_2(x) \geq \mathscr{M}+ \frac{|t-1|^2}{2}\right)\leq e^{-c(\mathscr{M}+t^2)}$$
for some $c>0$. Choosing $\mathscr{M}$ sufficiently large depending on $\varepsilon$ and summing over all $t$ gives the result.
\end{proof}

The next result shows that the maximum of an Airy$_2$ process on an unbounded interval is almost surely infinite.

\begin{lemma}  
\label{lem:lbae2}
Given any $\varepsilon>0$ (small), and $\mathscr{M}>0$ (large), there is $\li=\li(\varepsilon,\mathscr{M})> 0$ (large), such that
\begin{equation}
\P\left[\sup_{x\in (0, \li)} \cA_2(x) <\mathscr{M} \right] < \varepsilon.
\end{equation}
\end{lemma}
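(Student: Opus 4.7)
The plan is to combine stationarity of $\cA_{2}$ (which gives a uniformly positive one-point probability of exceeding $\mathscr{M}$) with decorrelation of the process at large spatial separations, forcing the supremum over a long enough interval to exceed $\mathscr{M}$ with high probability.

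By stationarity of $\cA_{2}$ together with its GUE Tracy--Widom one-point marginal, the constant $q := \P(\cA_{2}(0) \geq \mathscr{M}) > 0$ depends only on $\mathscr{M}$. For parameters $T > 0$ and $N \in \N$ to be chosen, and $\li \geq NT$, placing evaluation points $x_{k} = kT$ for $k = 1, \ldots, N$ inside $(0, \li)$ yields the elementary inclusion
$$\Big\{ \sup_{x \in (0, \li)} \cA_{2}(x) < \mathscr{M} \Big\} \subseteq \bigcap_{k=1}^{N} \{ \cA_{2}(x_{k}) < \mathscr{M} \}.$$

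The crucial step will be to show that for $T$ large, the events $\{\cA_{2}(x_{k}) < \mathscr{M}\}$ are approximately independent, so that the above intersection has probability decaying geometrically in $N$. I plan to establish this via the Brownian Gibbs property of the Airy line ensemble, of which $\sL$ is the top curve: conditioning on all the curves outside small neighborhoods $I_{k}$ of each $x_{k}$, the conditional law of the top curve on each $I_{k}$ is a Brownian bridge subject to the non-intersection constraint with the second curve, and these bridges are independent across $k$. As $T \to \infty$, the marginal law of the endpoint configuration of the $I_{k}$'s itself decorrelates across $k$, so the unconditional joint law of $(\cA_{2}(x_{k}))_{k}$ approaches a product Tracy--Widom law. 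An alternative route is to apply Theorem \ref{t.airytail} on each $I_{k}$ to compare with a Brownian bridge and then use the independence of Brownian increments on disjoint intervals.

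With this in hand, for $T$ sufficiently large (depending on $N$ and $q$), one obtains
$$\P\big[\cA_{2}(x_{k}) < \mathscr{M} \text{ for every } k\big] \leq (1 - q/2)^{N}.$$
Choosing $N$ so that $(1 - q/2)^{N} < \varepsilon$ and setting $\li = NT$ then completes the proof. The main obstacle will be making the decorrelation step rigorous and quantitative: one must control how much the conditional marginal of $\cA_{2}(x_{k})$ deviates from its unconditional Tracy--Widom law as the barrier interval between consecutive $x_{k}$'s grows, which requires careful management of the conditioning in the Brownian Gibbs resampling. A conceptually cleaner but less self-contained alternative is to invoke mixing of the Airy$_{2}$ process (a consequence of the decay of the Airy kernel and the determinantal structure), which immediately yields the conclusion by stationarity.
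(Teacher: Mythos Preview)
Your strategy is sound in spirit but the decorrelation step is a genuine gap, and neither of the tools you propose closes it cleanly. The Brownian Gibbs property does make the resampled bridges on disjoint $I_k$ conditionally independent given the boundary data and the second curve, but the claim that ``the marginal law of the endpoint configuration of the $I_k$'s itself decorrelates across $k$'' is precisely the mixing statement you are trying to prove, so that route is circular. Theorem~\ref{t.airytail} gives Radon--Nikodym control of $\sL$ against a single Brownian motion on one interval $[K,K+d]\subset[-M,M]$; it says nothing about joint behaviour across far-separated intervals, so it does not manufacture approximate independence either. Invoking mixing of $\cA_2$ via the determinantal structure would indeed finish the argument, but that imports a substantial external fact and makes the proof less self-contained than it needs to be.

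The paper's proof sidesteps decorrelation entirely by a short contradiction argument that uses only stationarity, one-point tails, and a \emph{single} application of the Brownian Gibbs property on a unit interval. Assume $p:=\lim_{\Lambda\to\infty}\P[\sup_{(0,\Lambda)}\cA_2<\mathscr{M}]>0$ and pick $\Lambda$ so the probability is within $p\kappa$ of $p$. By stationarity, translate so the interval becomes $(-\Lambda,0)$ and ask that additionally $\cA_2(0),\cA_2(1)>-\mathscr{M}'$; this still has probability at least $p/2$. Brownian Gibbs plus monotonicity then lower-bounds the conditional law of $\sL$ on $(0,1)$ by a Brownian bridge above $-\mathscr{M}'-1$, giving a fixed probability $p'>0$ (independent of $\Lambda$) that $\sup_{(0,1)}\cA_2\geq\mathscr{M}$. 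Hence $\P[\sup_{(0,\Lambda+1)}\cA_2<\mathscr{M}]\leq p(1+\kappa)-pp'/2<p$ for $\kappa<p'/2$, contradicting the definition of $p$. The trick is that one never needs several approximately independent trials: one additional unit of length, combined with a single resampling, already forces a strict decrease that is incompatible with a positive limit.
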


\begin{proof}
Let $N \in \Z_+$ be the smallest number such that $\P[\cA_2(0)<\mathscr{M}]^N < \frac{\varepsilon}{2}$.
By the strong mixing condition of the Airy$_2$ process (see \cite[(5.15)]{prahofer2002scale} or \cite[Proposition 1.13]{corwin2014ergodicity}), 
we can inductively find $0=x_1<x_2<\cdots <x_N$, such that for any $2\leq k \leq N$, $$\P\left[\max_{1\leq i \leq k}\cA_2(x_i) < \mathscr{M}\right] < \P\left[\max_{1\leq i \leq k-1}\cA_2(x_i) < \mathscr{M}\right]\P[\cA_2(x_k) < \mathscr{M}]+\frac{\varepsilon}{2N}.$$
Thus using that $\cA_2$ is stationary, we have, by taking $\Lambda = x_N$,
$$\P\left[\sup_{x\in (0, \li)} \cA_2(x) <\mathscr{M} \right]\leq \P\left[\max_{1\leq i \leq N}\cA_2(x_i) < \mathscr{M}\right] < \prod_{i=1}^N\P[\cA_2(x_i) < \mathscr{M}]+\frac{\varepsilon}{2} = \P[\cA_2(0) < \mathscr{M}]^N+\frac{\varepsilon}{2} < \varepsilon,$$
completing the proof.
\end{proof}
Let us explain at this point, our strategy for the proof of Proposition \ref{prop:lbae}. Let $\mathscr{A}=\mathscr{A}(\li,\mathscr{M})$ denote the event 
\begin{equation}\label{keyevent}
\mathscr{A}:=\left\{\sup_{|x|\leq \li}\cA_2(x)\geq 2\mathscr{M}, \sup_{|x|>\Lambda} {\cA_2(x)-\frac{(|x|-\li)^2}{2} \leq \mathscr{M}}\right\}.
\end{equation}
Observe that on $\mathscr{A}$, one gets that $\sup_{x\in \R} \sL(x) =\sup_{|x|<\li} \sL(x)$. Further observe that on $\mathscr{A}$ we also have, $\sup_{|x|\leq \li}\cA_2(x)=\sup_{|x|\leq \li+\sqrt{2\mathscr{M}}} \cA_2(x)$.

Using Lemmas \ref{l:goe}, \ref{lem:lbae2} and translation invariance of $\cA_2$ it follows that one can choose $\mathscr{M}$ and $\Lambda$ such that $\P(\mathscr{A})\geq \frac{1}{2}$. We fix one such choice of $\mathscr{M}$ and $\Lambda$ for the remainder of this proof with $\mathscr{M}$ and $\Lambda$ sufficiently large. The idea now is to show that one can approximately cover the event $\mathscr{A}$ by union of $O(\frac{\Lambda}{\lambda})$ many events each of which has the same probability as the event in the statement of Proposition \ref{prop:lbae}. 

Let $0<\lambda<\lambda'<1$ be fixed and without loss of generality we shall assume that $\frac{\Lambda}{\lambda}$ is an even integer. Divide the interval $[-\li-\si,\li+\si]$ into intervals of size $2\lambda$ each: let $\li_i:=[2i\lambda-\lambda,2i\lambda+\lambda]$ for $|i|\leq \frac{\Lambda}{2\lambda}$. Let $\cce_{i}={\cce_{i}(\alpha)}$ denote the event that the maximum of the process $\cA_2(x)-(x-2i\lambda)^2$ (i.e. $\sL(x-2i\lambda)$ ) on the interval $|x-2i\lambda|\leq \lambda'$ is attained on the interval $|x-2i\lambda|\leq \lambda$. Formally,
$$\cce_{i}:=\left\{\sup_{|x-2i\lambda|\leq \lambda} \biggl(\cA_2(x)-(x-2i\lambda)^2\biggr)=\sup_{|x-2i\lambda|\leq \lambda'} \biggl(\cA_2(x)-(x-2i\lambda)^2\biggr)
\right\}.$$
{For technical reasons we also let $\cce_{i,*}=\cce_{i,*}(\alpha)$ be the event that $\cA_2(2i\si)$ is close to the maximum, (this requirement will be useful for a later application in Section \ref{s:lower}) i.e.,
$$\cce_{i,*}:=\left\{\sup_{|x-2i\lambda|\leq \lambda'} \biggl(\cA_2(x)-(x-2i\lambda)^2\biggr)
{\leq \cA_2(2i\si) + \alpha^{-1}\sqrt{\si'}}
\right\}.$$}
Further, for $k\in \N$, let $\cce_{i,k}=\cce_{i,k}(\alpha)$ be the event capturing the almost diffusive decay condition,
\begin{equation}\label{eventdecomp}
\cce_{i,k}:=\left\{\sup_{|x-2i\lambda|\leq \lambda'} \biggl( \cA_2(x)-(x-2i\lambda)^2 \biggr) \geq \sup_{2^{k-1}\lambda'\leq |x-2i\lambda|\leq 2^{k}\lambda'} \biggl( \cA_2(x)-(x-2i\lambda)^2\biggr)+2^{k(\frac{1}{2} - \tau)} \alpha\sqrt{\lambda'} \right\}.
\end{equation}
Finally let  $$\mathscr{F}_{i}:={\cce_{i,*}\cap} \cce_i \cap \bigcap_{k\in \N} \cce_{i,k}.$$ Notice that  $\mathscr{F}_0$ is exactly the event whose probability needs to be lower bounded in Proposition \ref{prop:lbae}. Further observe that  by translation invariance of the Airy$_2$ process, $\P(\mathscr{F}_i)$ is equal for each $i$. What remains to be shown is that the event $\bigcup_{|i|\leq \frac{\li}{2\si}} \mathscr{F}_{i}$ approximately covers the event $\mathscr{A}$. 
To this end, for $|i|\leq \frac{\Lambda}{2\lambda}$ and $k\in \N$, let $\mathscr{C}_{i,k}$ denote the event 
\begin{equation}\label{loweventdecomp}
\mathscr{C}_{i,k}:=\left\{\sup_{|x-2i\lambda|\leq \lambda}\cA_2(x)=\sup_{|x|\leq \li+\sqrt{2\mathscr{M}}} \cA_2(x) \leq \sup_{2^{k-1}\lambda'\leq |x-2i\lambda|\leq 2^{k}\lambda'} \cA_2(x)+2^{k(\frac{1}{2} - \tau)} \alpha\sqrt{\lambda'} \right\},
\end{equation}
and let $\mathscr{C}_{i,*}$ denote the event
\begin{equation}\label{loweventdecomp2}
\mathscr{C}_{i,*}:=\left\{\sup_{|x-2i\lambda|\leq \lambda}\cA_2(x)=\sup_{|x|\leq \li+\sqrt{2\mathscr{M}}} \cA_2(x) \geq  \cA_2(2i\lambda)+\alpha^{-1}\sqrt{\lambda'} \right\},
\end{equation}
and finally let  {$\mathscr{C}_{i}:=\mathscr{C}_{i,*}\cup \left(\bigcup_{k\leq k_*} \mathscr{C}_{i,k}\right)$} where $k_*$ is the largest integer where $2^{k}\lambda' \leq 8$. 
To see how this would be used, for $|i|\leq\frac{\li}{2\si}$, let $\mathscr{A}_i$ denote the event 
$$\mathscr{A}_i:=\left\{\sup_{|x-2i\lambda|<\lambda} \cA_2(x)=\sup_{|x|<\Lambda + \sqrt{2\mathscr{M}}} \cA_2(x){>}\sup_{|x|>\Lambda} \left[\cA_2(x)-\frac{(|x|-\li)^2}{2}\right]+\mathscr{M}\right\}.$$
Decomposing the event $\mathscr{A}$ according to which interval of the type $|x-2i\lambda| <\lambda$ contains the maxima of $\cA_2$ restricted to $[-\li,\li]$, we easily see $\mathscr{A}\subseteq \cup_{|i|\leq\frac{\li}{2\si}} \mathscr{A}_{i}$.
We will show that $\mathscr{A}_{i}\subseteq \mathscr{F}_i\cup \mathscr{C}_i$ in the proof of Proposition \ref{prop:lbae}, and hence $$\mathscr{A}\subseteq \cup_{|i|\leq\frac{\li}{2\si}} \mathscr{F}_i\cup \mathscr{C}_i.$$
We next show that $\cup_{|i|\leq\frac{\li}{2\si}} \mathscr{C}_i$ has small probability thereby establishing the claim that $\bigcup_{|i|\leq \frac{\li}{2\si}}\mathscr{F}_{i}$ approximately covers the event $\mathscr{A}$.

\begin{lemma}
\label{l:error}
Given $\varepsilon>0$ there exists (small) $\alpha>0$ such that uniformly in $0<\lambda<\lambda'<1$ we have $\P(\cup_{|i|\leq \frac{\Lambda}{2\lambda}}\mathscr{C}_i)\leq \varepsilon$. 
\end{lemma}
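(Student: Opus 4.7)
The starting observation is that the events $\{\mathscr{C}_{i,k}\}_{|i|\leq \Lambda/(2\lambda)}$ for each fixed $k$, and likewise $\{\mathscr{C}_{i,*}\}_i$, are pairwise disjoint: their defining condition forces the argmax $p^*$ of $\cA_2$ on $[-\Lambda',\Lambda']$, with $\Lambda' := \Lambda + \sqrt{2\mathscr{M}}$, to lie in the disjoint bin $[2i\lambda-\lambda,\, 2i\lambda+\lambda]$. Hence
\[
\P\!\Big(\bigcup_{|i|\leq \Lambda/(2\lambda)} \mathscr{C}_i\Big) \leq \P\!\Big(\bigcup_i \mathscr{C}_{i,*}\Big) + \sum_{k \leq k_*} \P\!\Big(\bigcup_i \mathscr{C}_{i,k}\Big),
\]
and it suffices to bound each term of the right-hand side. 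For each bin $i$ I would factor
\[
\P(\mathscr{C}_{i,k}) \leq \P\!\big(p^* \in \text{bin } i\big) \cdot \P\!\big(\text{annulus condition} \,\big|\, p^* \in \text{bin } i\big),
\]
and similarly for $\mathscr{C}_{i,*}$, where $\delta_k := 2^{k(1/2-\tau)}\alpha\sqrt{\lambda'}$. The first factor is at most $C\lambda/\Lambda'$ uniformly in $i$, by the uniform boundedness of the argmax density of the stationary process $\cA_2$ on the compact interval $[-\Lambda',\Lambda']$ (a consequence of stationarity and the Brownian Gibbs property of the Airy line ensemble; see \cite{CH14}).

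For the conditional probability, I would invoke the Brownian Gibbs property: conditioning on $\cA_2$ outside a window containing bin $i$ and the relevant annulus, together with the second line of the Airy line ensemble, the conditional law of $\sL$ inside is a Brownian bridge of diffusivity $2$ conditioned to lie above the second line. By the stochastic monotonicity used in the proof of Lemma \ref{lem:lbae2}, this can be stochastically upper-bounded by an unconditional Brownian bridge. Under this bridge, Williams' path decomposition at its maximum shows that the post-maximum process is a Brownian meander, and the annulus condition in $\mathscr{C}_{i,k}$ translates to the meander returning to within $\delta_k$ of zero at some time $t \in [2^{k-1}\lambda'-\lambda,\, 2^k\lambda'+\lambda]$; a standard Rayleigh-density estimate of the meander together with a Markov-type argument across the dyadic scale yields $\P(\mathscr{C}_{i,k}\mid p^*\in\text{bin }i) \leq C\alpha\, 2^{-k\tau}$. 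The event $\mathscr{C}_{i,*}$, on the other hand, requires the meander to reach height $\alpha^{-1}\sqrt{\lambda'}$ within time $\lambda$, which by Gaussian meander tails has probability at most $C\exp(-c\alpha^{-2}\lambda'/\lambda) \leq Ce^{-c/\alpha^2}$ (using $\lambda'/\lambda \geq 1$). The routine Brownian meander calculations I would defer to Appendix \ref{s:appa}.

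Assembling the bounds and summing over $i$ (via the bound on the argmax density) and $k$ (geometric series),
\[
\sum_i \P(\mathscr{C}_{i,k}) \leq \frac{\Lambda}{\lambda}\cdot \frac{C\lambda}{\Lambda'} \cdot C\alpha\, 2^{-k\tau} \leq C\alpha\, 2^{-k\tau}, \qquad \sum_i \P(\mathscr{C}_{i,*}) \leq C e^{-c/\alpha^2},
\]
so $\P(\bigcup_i \mathscr{C}_i) \leq C\alpha/(1-2^{-\tau}) + Ce^{-c/\alpha^2}$, which is $\leq \varepsilon$ once $\alpha$ is chosen sufficiently small depending only on $\varepsilon$ and $\tau$, uniformly in $0 < \lambda < \lambda' < 1$. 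The chief technical obstacle is the passage from $\P(\text{annulus condition} \mid p^*\in\text{bin }i)$ to a computable Brownian meander probability: one must carefully justify that the Brownian Gibbs conditioning together with monotonicity produces a genuine stochastic upper bound capturing all relevant constraints, and that the argmax density bound $C\lambda/\Lambda'$ is truly uniform in $\lambda, \lambda'$ — neither is automatic, and both rest on the delicate interplay of stationarity, parabolic correction, and resampling via the Airy line ensemble.
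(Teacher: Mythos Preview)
Your approach differs substantially from the paper's. The paper first invokes Corollary~\ref{c:BrownianAiry} to replace $\cA_2$ by a two-sided Brownian motion $W$, reducing the task to bounding $\P(\cup_i \mathscr{C}'_i)$ for the Brownian analogues $\mathscr{C}'_i$. Because $W$ has stationary increments, each $\P(\mathscr{C}'_i)$ is dominated by a single reference probability $\P(\mathscr{C}')$ (the version recentered at $0$ with the supremum taken over the smaller window $[-\sqrt{2\mathscr{M}},\sqrt{2\mathscr{M}}]$), and a union bound over the $O(\Lambda/\lambda)$ bins leaves only the Brownian statement $\lambda^{-1}\P(\mathscr{C}')\to 0$ as $\alpha\to 0$, which is established by explicit reflection-principle calculations in Appendix~\ref{s:appa}. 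No argmax-density bound, no conditioning on the argmax location, and no Brownian Gibbs resampling enter.

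Your route---working directly with $\cA_2$ via disjointness across $i$ and a Brownian Gibbs comparison---has a genuine gap at the conditional-probability step. The monotonicity you invoke points the wrong way: conditioning a Brownian bridge to stay above the second curve of the Airy line ensemble produces a process that stochastically \emph{dominates} the unconditioned bridge, so you obtain a lower bound on the resampled top line, not the upper bound you claim. More fundamentally, the events $\mathscr{C}_{i,k}$ and $\mathscr{C}_{i,*}$ are not monotone in the process (they compare a supremum over one set to a value or supremum over another), so a one-sided stochastic comparison does not control their probabilities in either direction. Finally, conditioning on the argmax of $\cA_2$ over $[-\Lambda',\Lambda']$ lying in bin~$i$ is \emph{not} the Brownian Gibbs conditioning (which fixes boundary values and the second line); converting between the two---what you yourself flag as the chief technical obstacle---is precisely where the argument is incomplete, and I do not see how to close it without effectively passing to a genuine Brownian motion as the paper does. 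A minor remark: your argmax-density bound is in fact unnecessary, since the disjointness you observe already gives $\sum_i \P(p^*\in\text{bin }i)\le 1$; but this economy does not repair the conditional step.
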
 

\begin{proof}
The proof will again be based on  Brownian comparison. Let $W$ denote a two sided Brownian motion {with diffusivity $2$}. Let $\mathscr{C}'_i$ denote the same event as $\mathscr{C}_i$ replacing $\cA_2$ by $W$. By Corollary \ref{c:BrownianAiry} it suffices to bound the probability of $\cup_{|i|\leq \frac{\Lambda}{2\lambda}} \mathscr{C}'_i$. Let $\mathscr{C}'$ denote the event 
\begin{multline*}
\mathscr{C}':=\left\{\sup_{|x|\leq \lambda} W(x)= \sup_{|x|\leq \sqrt{2\mathscr{M}}} W(x) \leq \sup_{2^{k-1}\lambda'\leq |x|\leq 2^{k}\lambda'} W(x)+2^{k(\frac{1}{2} - \tau)} \alpha\sqrt{\lambda'}~\text{for some}~k\leq k_* \right\}
\\
{\bigcup \left\{\sup_{|x|\leq \lambda} W(x)= \sup_{|x|\leq \sqrt{2\mathscr{M}}} W(x) \geq W(0) + \alpha^{-1}\sqrt{\si'} \right\}.}
\end{multline*}
By translation invariance, {for each $i$, the law of $W$ and $x\mapsto W(x + 2i\si)-W(2i\si)$ are the same,
and this implies that $\P(\mathscr{C}'_i)\leq \P(\mathscr{C}')$.
}
Therefore it suffices to show that 
$$\lim_{\alpha \to 0} \lambda^{-1}\P(\mathscr{C}')\to 0$$
uniformly in $0<\lambda<\lambda'<1$. The remaining details of the proof involve Brownian computations that are deferred to Appendix \ref{s:appa}.
\end{proof}

We are now ready to prove Proposition \ref{prop:lbae}.

\begin{proof}[Proof of Proposition \ref{prop:lbae}]
We show that $\mathscr{A}_{i}\subseteq \mathscr{F}_i\cup \mathscr{C}_i$.
First, on $\mathscr{A}_i$, since $\mathscr{M}>1$ (by our choice of $\mathscr{M}$ to be large enough) we have
\begin{align*}
\sup_{|x-2i\lambda|\leq \lambda}
\biggl(\cA_2(x)-(x-2i\lambda)^2\biggr)
&\geq
\sup_{|x-2i\lambda|\leq \lambda}
\cA_2(x)-\si^2
\geq
\sup_{\si < |x-2i\lambda|\leq \si'}
\cA_2(x)-\si^2
\\
&\geq
\sup_{\si < |x-2i\lambda|\leq \lambda'} \biggl(\cA_2(x)-(x-2i\lambda)^2\biggr),
\end{align*}
where in the first inequality we used 
$\sup_{|x-2i\lambda|<\lambda} \cA_2(x)=\sup_{|x|<\Lambda + \sqrt{2\mathscr{M}}}\cA_2(x)$ and $\mathscr{M}>1$ and hence since $\lambda'<1,$ $$\sup_{|x-2i\lambda|\leq \lambda}
\cA_2(x)\ge \sup_{\lambda< |x-2i\lambda|\leq \lambda'}
\cA_2(x).$$
Thus the event $\cce_i$ is automatically satisfied.
Second, on $\mathscr{A}_i$, if $\alpha<1$, $\mathscr{M}>1$, and $2^{k}\lambda'\geq 4$ (which is satisfied by $k\ge k_*$), we have
\begin{align*}
   \sup_{|x-2i\lambda|\leq \lambda'}
\biggl(\cA_2(x)-(x-2i\lambda)^2\biggr)
&\geq
\sup_{|x-2i\lambda|\leq \lambda'} \cA_2(x)-1\\
&\geq
\sup_{2^{k-1}\si'\leq|x-2i\lambda|\leq 2^k\si'} \cA_2(x)
-(2^{k-1}\si')^2/2,\,\,\, {\text{ by definition of } \mathscr{A}_i}\\
&\geq
\sup_{2^{k-1}\si'\leq|x-2i\lambda|\leq 2^k\si'} \cA_2(x)-(2^{k-1}\si')^2
+ \sqrt{2^{k}\si'}\\
&\geq
\sup_{2^{k-1}\lambda'\leq |x-2i\lambda|\leq 2^{k}\lambda'} \biggl( \cA_2(x)-(x-2i\lambda)^2\biggr)+2^{k(\frac{1}{2} - \tau)} \alpha\sqrt{\lambda'},
\end{align*}
where in the penultimate inequality we have used the simple fact,  $$-(2^{k-1}\si')^2/2\geq -(2^{k-1}\si')^2
+ \sqrt{2^{k}\si'}
$$ when $2^k\si'\ge 4,$ and the last inequality is rather straightforward.
The above thus shows that the event $\cce_{i,k}$ is  
satisfied as well. 
Thus by definition on $\mathscr{A}_{i}\setminus \mathscr{F}_i$, {either $\cce_{i,*}$ is violated, or $\cce_{i,k}$ is violated for some $k< k_*$.}  It follows that $\mathscr{A}_i\setminus \mathscr{F}_{i}\subseteq \mathscr{C}_{i}$ which immediately implies $\mathscr{A}_{i}\subseteq \mathscr{F}_i\cup \mathscr{C}_i$, as desired. We now have 
$$\frac{1}{2}\leq \P(\mathscr{A}) \leq \sum_{|i|\leq\frac{\li}{2\si}}\P(\mathscr{F}_{i})+\P(\cup_{|i|\leq\frac{\li}{2\si}} \mathscr{C}_{i}).$$
By Lemma \ref{l:error}, we can choose $\alpha$ sufficiently small (uniformly in $0<\lambda<\lambda'<1$) such that $\P(\cup_{|i|\leq\frac{\li}{2\si}} \mathscr{C}_{i})\leq \frac{1}{4}$. By noticing that translation invariance implies that $\P(\mathscr{F}_i)$ is same across all $i$, it follows that $\left(\frac{\li}{\si}+1\right)\P(\mathscr{F}_0)\geq \frac{1}{4}$. Recalling that $\mathscr{F}_0$ is the event in the statement of the proposition, and reducing the value of $\alpha$ if necessary, we get $\P(\mathscr{F}_0)>\alpha \lambda$, completing the proof of the proposition. 
\end{proof}

\subsection{Possible alternate approach}
\label{s:kpz}
As indicated in the introduction, there have been several recent advances in obtaining quantitative Brownian comparison estimates for the geodesic weight profile. We have used the estimates of \cite{HHJ+}, namely Theorem \ref{t.airytail}, as the strong control on the Radon-Nikodym derivatives present there was sufficient for our purpose of obtaining the temporal correlation exponent. However the price we pay is the diverging, albeit subpolynomial correction factor in the upper bound of Theorem \ref{t:upper}. It is possible that using some of the other approaches one might be able to get an improved upper bound. Below we informally report our findings exploring one such possible route following the approach taken in \cite{MQR17} in constructing the KPZ fixed point. Recalling $\sL(x):=\cA_2(x)-x^2,$ using the determinantal formulae for the Airy kernel and the general approach of \cite{MQR17} it seems possible to obtain estimates of the form 
\begin{equation}
\label{e:sz}
\P\left[\max_{x \in I} \sL(x) > \max_{x \in [-2M, 2M]} \sL(x)-\sqrt{\varepsilon},\; \max_{x\in [-2M, 2M]} \sL(x) \in [-M', M']\right] \leq C 
\varepsilon
\end{equation}
for all $I\subseteq [-M,M]$ with $|I|\leq \varepsilon\in (0,\frac{1}{2})$ and $C$ is a constant depending on $M,M'$.

Note that comparing \eqref{e:sz} with Proposition \ref{p:twopeaks}, while we do not need the additional factor $\exp\left(C|\log (\varepsilon)|^{5/6}\right)$ on the right hand side, we require the extra assumption that $\max_{x\in [-2M, 2M]} \sL(x)$ lies in a compact interval. Since to compute correlations one would need to control the contribution even from rare events where $\max_{x\in [-2M, 2M]} \sL(x)$ behaves atypically, the above falls slightly short of being enough for our applications. We do not pursue this further precisely but would like to point the reader to a discussion of a similar flavor set to appear in the forthcoming work of the third author with James Martin and Allan Sly \cite{MSZ}. To conclude this discussion we believe this line of work is of independent interest and whether one can sharpen the above result to be applicable in our setting remains a problem for further research. 
\subsection{Finite $n$ bounds for exponential LPP}
\label{s:finite}
As indicated in Section \ref{iop}, we are working with  non-asymptotic quantities in the exponential LPP models and hence we shall need to extract estimates for certain events involving the line-to-point profile $\sL_n$ (see \eqref{profile12} for the formal definition) from the corresponding results for $\sL$ proved in the previous subsection. This follows by invoking the previously mentioned convergence of $\sL_{n}$ to $\sL$, which we now state formally. 
{We start by extending the definition of $\sL_n$ to all of $\R$ in the following trivial manner: when $|x(2n)^{2/3}|>n$ we let $\sL_n(x)=-\infty$, and we linearly interpolate between points in $(2n)^{-2/3}\Z$.}

\begin{theorem}
\label{t:lpptoairy}
As $n\to \infty,$ the profile $\sL_{n}$ converges weakly to $\sL$ in the topology of uniform convergence on compact sets.
\end{theorem}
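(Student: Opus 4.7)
The plan is to decompose the proof into two standard ingredients: (a) convergence of finite-dimensional distributions of $\sL_n$ to those of $\sL$, and (b) tightness of $\{\sL_n\}_{n\geq 1}$ in $C([-M,M],\R)$ for every fixed $M>0$. Combining these gives weak convergence in the topology of uniform convergence on compact sets, which is what is claimed. Ingredient (a) is classical and already well-documented in the literature: the multi-point Fredholm determinant formulas for line-to-point last passage times due to \cite{BF08, BP08} imply convergence in distribution of $(\sL_n(x_1),\ldots,\sL_n(x_k))$ to $(\sL(x_1),\ldots,\sL(x_k))$ for any fixed finite set of points $x_1,\ldots,x_k$. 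The truncation to $-\infty$ outside $|x(2n)^{2/3}|\leq n$ and the linear interpolation are harmless on compact sets since for any $M>0$, once $n$ is sufficiently large one has $[-M,M]\subset(2n)^{-2/3}[-n,n]$.

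For ingredient (b), I would apply a Kolmogorov-Chentsov type tightness criterion on $C([-M,M],\R)$. Concretely, it suffices to establish an increment estimate of the form
\[
\P\bigl(|\sL_n(x)-\sL_n(y)|\ge s\sqrt{|x-y|}\bigr) \le C\exp(-cs^{\eta})
\]
for some $\eta>0$ and all $n$ large, $x,y\in[-M,M]$, and $s$ in a suitable range. Such an estimate can be derived by writing $\sL_n(x)-\sL_n(y)$ as a centered and rescaled difference of point-to-point passage times $T_{u_x,\bn}-T_{u_y,\bn}$ with $u_x,u_y\in\L_0$ the corresponding endpoints, and then appealing to the moderate deviation bounds for Exponential LPP together with a comparison with stationary last passage percolation in the spirit of Cator-Pimentel and Balazs-Cator-Seppalainen. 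This is exactly the content of Pimentel's tightness result \cite{Pim17}, and has also been revisited in \cite{FO17, BSS17++}. Once the increment estimate is in hand, Arzelà-Ascoli plus a modulus of continuity bound yields tightness on every compact interval.

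The main technical point to be careful with will be verifying the increment estimate uniformly for $x,y$ in a compact set and $n$ large, keeping track of the correct $n^{1/3}$ normalization so that after the $2^{-4/3}n^{-1/3}$ scaling the Brownian-scale bound $\sqrt{|x-y|}$ emerges. The boundary effects coming from the region where $|x|(2n)^{2/3}$ approaches $n$ do not enter for $n$ large enough since we have fixed $M$ in advance. Having both (a) and (b), Prokhorov's theorem combined with the uniqueness of the limit identifies the weak limit as $\sL$ in $C([-M,M],\R)$ for every $M$, and hence in the topology of uniform convergence on compact sets. I will defer the detailed verification of the increment estimate to the appendix, since the argument is essentially a straightforward adaptation of the references above.
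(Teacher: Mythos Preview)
Your proposal is correct and follows essentially the same two-step architecture as the paper's proof: finite-dimensional convergence plus tightness via a modulus-of-continuity estimate. The only differences are in the specific tools invoked for each ingredient. For finite-dimensional convergence, the paper routes through the TASEP height function convergence of \cite{BF08} together with a slow-decorrelation argument (equation \eqref{e:lpptoairy1}), whereas you invoke the multi-point formulas directly; both are standard and lead to the same conclusion. For tightness, the paper uses the increment bound of \cite[Theorem~3]{BG18} rather than the stationary-LPP comparison of \cite{Pim17} that you cite; either estimate yields a stretched-exponential tail on $|\sL_n(x)-\sL_n(y)|$ at Brownian scale, which is more than enough for equicontinuity on compacts.
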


Using Theorem \ref{t:lpptoairy} one can now get versions of Propositions \ref{p:twopeaks} and \ref{prop:lbae} for the profile $\sL_{n}(\cdot)$. We start with the pre-limiting version of the former.

\begin{proposition} 
\label{prop:bdi}
{For each $M\geq 1$,there exists a  constant $C>0$ depending only on $M$ with the following property: for any
$0 < \iota < \varepsilon < \frac{1}{18}$, for all $n\geq n_0(\iota, M)$ and for all discrete intervals $I\subset \llbracket-Mn^{2/3},Mn^{2/3}\rrbracket$ with $\iota n^{2/3}\leq  |I| \leq \varepsilon n^{2/3}$,}
$$\P\left[\max_{u\in I} T_{(u,-u),\mathbf{n}} > \max_{u\in \llbracket-2Mn^{2/3}, 2Mn^{2/3}\rrbracket} T_{(u,-u),\mathbf{n}}-\sqrt{\varepsilon}n^{1/3}\right] \leq C\varepsilon \exp\left(C|\log(\varepsilon)|^{5/6}\right).$$
\end{proposition}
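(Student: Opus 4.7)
The plan is to transfer the bound of Proposition \ref{p:twopeaks} from the limiting Airy$_2$ profile $\sL$ to the pre-limit profile $\sL_n$, using the uniform-on-compacts weak convergence supplied by Theorem \ref{t:lpptoairy}. First, by the definition of $\sL_n$ from \eqref{profile12}, dividing the event in question by $2^{4/3}n^{1/3}$ rewrites the probability to be bounded as
\[
\P\left[\max_{x\in I'}\sL_n(x) > \max_{x\in J}\sL_n(x) - 2^{-4/3}\sqrt{\varepsilon}\right],
\]
where $I' := (2n)^{-2/3}I \subset [-2^{-2/3}M, 2^{-2/3}M]$ has length in $[2^{-2/3}\iota, 2^{-2/3}\varepsilon]$, and $J := (2n)^{-2/3}\cdot\llbracket-2Mn^{2/3},2Mn^{2/3}\rrbracket$ is the lattice discretization of $[-2^{1/3}M, 2^{1/3}M]$. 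Because $\sL_n$ is linearly interpolated between lattice points, these maxima coincide with the maxima over the continuous intervals.

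The next step is to invoke Skorokhod's representation theorem together with Theorem \ref{t:lpptoairy} so as to realize the entire sequence $(\sL_n)$ and $\sL$ on a common probability space with $\sL_n \to \sL$ uniformly on $[-2^{1/3}M, 2^{1/3}M]$ almost surely. Set $\eta := 2^{-7/3}\sqrt{\iota}$, which depends only on $\iota$; then a.s.\ convergence lets us pick $n_0(\iota,M)$ so that $\P(\|\sL_n-\sL\|_\infty \geq \eta) \leq \iota^{2}$ for every $n \geq n_0$. Choosing $\eta$ in terms of $\iota$ alone (rather than $\varepsilon$) is precisely what ensures that $n_0$ can be taken independent of $\varepsilon$.

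On the coupling-good event $\{\|\sL_n-\sL\|_\infty < \eta\}$, the displayed pre-limit event implies
\[
\max_{x\in I'}\sL(x) > \max_{x\in J}\sL(x) - 2^{-4/3}\sqrt{\varepsilon} - 2\eta \geq \max_{x\in J}\sL(x) - \sqrt{\hat\varepsilon},
\]
where $\hat\varepsilon := 2^{-2/3}\varepsilon$ and we have used $2\eta \leq 2^{-4/3}\sqrt{\iota} \leq 2^{-4/3}\sqrt{\varepsilon}$ to obtain $2^{-4/3}\sqrt{\varepsilon}+2\eta \leq 2^{-1/3}\sqrt{\varepsilon} = \sqrt{\hat\varepsilon}$. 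Since $|I'| \leq \hat\varepsilon$, $I' \subset [-M', M']$ and $J = [-2M', 2M']$ with $M' := 2^{-2/3}M$, Proposition \ref{p:twopeaks}, applied with its $M$ equal to $M'$ and its $\varepsilon$ equal to $\hat\varepsilon$, bounds this Airy event by $C \hat\varepsilon \exp(C|\log\hat\varepsilon|^{5/6}) \leq C' \varepsilon \exp(C'|\log\varepsilon|^{5/6})$, where the constants depend only on $M$. Adding the coupling error $\iota^{2} \leq \varepsilon$ yields the claimed bound.

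I do not foresee a serious obstacle: the hard analytic content is entirely absorbed by Proposition \ref{p:twopeaks} (which is uniform in $I$) and Theorem \ref{t:lpptoairy}. The only mildly subtle point is the decoupling of the roles played by $\iota$ and $\varepsilon$---handled, as above, by setting the Skorokhod tolerance $\eta$ based on $\iota$ alone.
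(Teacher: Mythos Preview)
Your approach is correct and genuinely different from the paper's. The paper discretises the family of admissible pairs $(I,\varepsilon)$ into a finite grid (with mesh governed by $\iota$), applies the weak convergence of Theorem \ref{t:lpptoairy} separately to each of the finitely many resulting events so as to choose a common threshold $n_0$, and then covers a general $(I,\varepsilon)$ by a nearby grid member at the cost of harmless constant factors. Your Skorokhod-coupling argument bypasses this discretisation entirely: a single coupling tolerance $\eta=\eta(\iota)$ transfers \emph{all} admissible $(I,\varepsilon)$ simultaneously, making the uniformity of $n_0$ in $\varepsilon$ transparent. The trade-off is that you invoke Skorokhod's representation theorem, whereas the paper only uses convergence of finitely many scalar probabilities.

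One small technical point is glossed over. Piecewise linearity of $\sL_n$ does identify the discrete maximum over $J$ with the maximum over the \emph{convex hull} of $J$, but that hull falls short of $[-2^{1/3}M,2^{1/3}M]$ by up to $(2n)^{-2/3}$ on each side. When you then pass to $\sL$ via the coupling, you need $\max_J\sL_n$ to control $\max_{[-2^{1/3}M,2^{1/3}M]}\sL$ from below; but $\sL$ is not piecewise linear on the pre-limit mesh, so $\max_J\sL$ may be strictly smaller than $\max_{[-2^{1/3}M,2^{1/3}M]}\sL$, and this is exactly the direction your implication requires. The fix is immediate: enlarge the bad event to include $\{\omega_\sL((2n)^{-2/3})>\eta/3\}$, where $\omega_\sL$ is the modulus of continuity of $\sL$ on the relevant compact. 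Its probability tends to zero by a.s.\ continuity of $\sL$, the threshold depends only on $\iota$, and $n_0$ remains a function of $(\iota,M)$ alone.
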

For the next result, 
we fix $\tau \in (0,1/2)$ (say $\frac{1}{4},$ for concreteness) and take $\alpha$ as in the statement of Proposition \ref{prop:lbae}, which now is also an absolute constant.
Take $\theta, \delta \in (0,1)$, and $n, r\in \N$, with $\delta n< r <n$, and let $H_0=H_0(n,r)$ be the event where
$$
\max_{|u| < r^{2/3}} T_{(u,-u), \bn} =\max_{|u| < \theta r^{2/3}} T_{(u,-u), \bn} {< T_{\mathbf{0}, \bn} + 2\alpha^{-1}r^{1/3}}.
$$
For each $j\in \Z_+$, let $H_j=H_j(n,r)$ denote the event where
$$
\max_{2^{j-1} r^{2/3} \leq |u| < 2^{j} r^{2/3}} T_{(u,-u), \bn} < \max_{|u| < r^{2/3}} T_{(u,-u), \bn} - 2\alpha\cdot 2^{j\left(\frac{1}{2}-\tau\right)}r^{1/3}.
$$
We define $\cE_{n, r}:=\bigcap_{j\in\Z_{\geq 0}}H_j$; this event ensures that the maxima of the profile $\sL_{n}(\cdot)$ is on an interval of size $\theta r^{2/3}$ around $0$, and the profile decays sufficiently fast away from the maxima. The following pre-limiting analogue of Proposition \ref{prop:lbae} gives a lower bound of this probability.

\begin{proposition}
\label{p:maxdecay}
There exist absolute constants $c_0$ such that for all $\delta, \theta \in (0,1)$, there exists $n_0=n_0(\delta,\theta)$ such that for all $n\geq n_0$ and all $r$ with $\delta n < r <n$ we have $\P(\cE_{n, r})\geq c_0 \theta r^{2/3}n^{-2/3}$.
\end{proposition}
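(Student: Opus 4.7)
\noindent The plan is to deduce Proposition~\ref{p:maxdecay} from its Airy$_2$ counterpart, Proposition~\ref{prop:lbae}, via the weak convergence Theorem~\ref{t:lpptoairy}, after an open-event approximation and a tail truncation to handle the features (equality, closed inequalities, countable intersection) of the limiting event that are not directly preserved by weak convergence.

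Setting $\lambda := 2^{-2/3}\theta(r/n)^{2/3}$ and $\lambda' := 2^{-2/3}(r/n)^{2/3}$, we have $0<\lambda<\lambda'<1$ and (since $r>\delta n$) $\lambda'>2^{-2/3}\delta^{2/3}$ is bounded away from $0$. Under the substitution $u=x(2n)^{2/3}$, so that $T_{(u,-u),\bn}=4n+2^{4/3}n^{1/3}\sL_n(x)$ and $\sqrt{\lambda'}=2^{-1/3}(r/n)^{1/3}$, an arithmetic check (e.g., $2\alpha^{-1}r^{1/3}/(2^{4/3}n^{1/3})=\alpha^{-1}\sqrt{\lambda'}$ and $2\alpha\cdot 2^{j(1/2-\tau)}r^{1/3}/(2^{4/3}n^{1/3})=\alpha\cdot 2^{j(1/2-\tau)}\sqrt{\lambda'}$) shows that the defining inequalities of $H_0$ and $H_j$ ($j\geq 1$), divided by $2^{4/3}n^{1/3}$, coincide exactly with those of $\cce_{0,*}\cap\cce_0$ and $\cce_{0,j}$ (with $\tau=1/4$ and $k=j$) in the proof of Proposition~\ref{prop:lbae}. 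Hence $\cE_{n,r}=\{\sL_n\in\mathscr{F}_0\}$ with $\mathscr{F}_0$ as in~\eqref{eq:lbae} at $i=0$, and Proposition~\ref{prop:lbae} supplies $\P(\sL\in\mathscr{F}_0)>\alpha\lambda$ for an absolute constant $\alpha$.

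To transfer this bound to $\sL_n$, I would construct, for small $\eta>0$ and $K\in\N$, an open sub-event $\mathscr{F}_0^{\circ,K}$ obtained from $\mathscr{F}_0$ by: (i) replacing the equality $\sL^*_\lambda=\sL^*_{\lambda'}$ by the strict separation
\[
\sup_{|x|\leq(1-\eta)\lambda}\sL(x)\;>\;\sup_{(1-\eta/2)\lambda\leq|x|\leq\lambda'}\sL(x)+\eta\sqrt{\lambda'},
\]
which forces the argmax on $[-\lambda',\lambda']$ strictly inside $[-(1-\eta/2)\lambda,(1-\eta/2)\lambda]\subset[-\lambda,\lambda]$; (ii) replacing the $\leq$ in $\cce_{0,*}$ and $\cce_{0,k}$ by strict $<$ with the coefficients $\alpha^{-1}$ and $\alpha$ replaced by $\alpha^{-1}-\eta$ and $\alpha+\eta$; and (iii) truncating the intersection in $k$ to $\{1,\ldots,K\}$. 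Each condition is a continuous functional of $f\in C([-2^K\lambda',2^K\lambda'],\R)$, so the finite intersection $\mathscr{F}_0^{\circ,K}$ is open. By a.s.\ uniqueness of the Airy argmax on compacts (\cite{CH14}) and atomlessness of the relevant gap distributions, $\P(\sL\in\mathscr{F}_0^{\circ,K})\nearrow\P(\sL\in\mathscr{F}_0^K)\geq\P(\sL\in\mathscr{F}_0)>\alpha\lambda$ as $\eta\to 0^+$, where $\mathscr{F}_0^K$ denotes the $k\leq K$ truncation of $\mathscr{F}_0$. Fixing $\eta=\eta(\delta,\theta)$ small gives $\P(\sL\in\mathscr{F}_0^{\circ,K})\geq\alpha\lambda/2$, and Portmanteau applied to Theorem~\ref{t:lpptoairy} then yields $\P(\sL_n\in\mathscr{F}_0^{\circ,K})\geq\alpha\lambda/4$ for $n\geq n_0(\delta,\theta)$.

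Since $\{\sL_n\in\mathscr{F}_0^{\circ,K}\}\subset\bigcap_{j=0}^K H_j$ by construction, it remains to bound the tail $\bigcup_{j>K}H_j^c$. For $j>j_0(\delta)$, the event $H_j^c$ forces the passage time at transverse displacement $\Theta(2^j r^{2/3})$ to exceed its parabolic mean value (which is $\sim 4n-c\cdot 4^j r^{4/3}/n$) by at least a constant times $4^j r^{4/3}/n$; by the one-point moderate deviation estimates quoted in Section~\ref{s:prelim} this has probability bounded by a summable expression of stretched-exponential type, giving $\sum_{j>K}\P(H_j^c)<\alpha\lambda/8$ for $K=K(\delta,\theta)$ sufficiently large (and $n$ large). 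Combining,
\[
\P(\cE_{n,r})\;\geq\;\P(\sL_n\in\mathscr{F}_0^{\circ,K})-\sum_{j>K}\P(H_j^c)\;\geq\;\alpha\lambda/8\;=\;c_0\,\theta(r/n)^{2/3}
\]
with absolute constant $c_0=\alpha\cdot 2^{-11/3}$. The main obstacle is the open-event construction -- needed to apply Portmanteau despite the equality and countably many closed inequalities in the Airy event -- combined with the summable-tail control on $\P(H_j^c)$ uniformly in $n$, which requires quantitative moderate-deviation input and not just distributional convergence.
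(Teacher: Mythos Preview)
Your proposal follows the same two-step template as the paper's proof: handle finitely many $H_j$ via weak convergence to Airy$_2$, then control the tail $j>K$ by moderate-deviation estimates. The tail argument is essentially identical to the paper's, though the paper is slightly more careful in splitting $H_j^c$ into ``$\max$ at displacement $\sim 2^j r^{2/3}$ is too large'' versus ``$\max$ near the center is too small'' before invoking Theorem~\ref{t:onepoint} and Theorem~\ref{t:supinf}.

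Where your argument has a gap is uniformity in $r/n$. Both your choice of $\eta$ and your Portmanteau step are stated pointwise: for each fixed pair $(\lambda,\lambda')$, the open sub-event $\mathscr{F}_0^{\circ,K}$ has Airy probability close to that of $\mathscr{F}_0^K$ as $\eta\to 0$, and for each fixed open set Portmanteau gives $\liminf_n\P(\sL_n\in\cdot)\geq\P(\sL\in\cdot)$. But $(\lambda,\lambda')=2^{-2/3}(\theta(r/n)^{2/3},(r/n)^{2/3})$ varies with $r/n\in(\delta,1)$, and so does the event itself. You need $\eta$ and $n_0$ that work simultaneously for all such $(\lambda,\lambda')$, and nothing in your sketch supplies this --- the rate at which $\P(\sL\in\mathscr{F}_0^{\circ,K})\nearrow\P(\sL\in\mathscr{F}_0^K)$ could in principle depend on $(\lambda,\lambda')$, and likewise for the $n_0$ threshold in Portmanteau.

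The paper sidesteps building an explicit open sub-event altogether: it argues by contradiction, taking a putative bad sequence $(n_k,r_k)$, extracting a subsequence with $r_k/n_k\to\iota\in[\delta,1]$, and invoking Theorem~\ref{t:lpptoairy} only along this subsequence where the parameters converge. This compactness-of-the-parameter-range device is precisely what absorbs the uniformity issue. Your Portmanteau route can be repaired the same way --- or by proving that the Airy probability $\P(\sL\in\mathscr{F}_0^{\circ,K}(\lambda,\lambda',\eta))$ is jointly continuous in its parameters and appealing to compactness of $[\delta,1]$ --- but as written the uniformity is missing.
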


The proofs of the above three results are not central to this paper and hence postponed to Appendix \ref{s:appb}. As already mentioned in the introduction Theorem \ref{t:lpptoairy} has already appeared in the literature (see e.g.\ \cite{FO17}), but we shall provide a proof for completeness. Proposition \ref{prop:bdi} and Proposition \ref{p:maxdecay} are then deduced from Theorem \ref{t:lpptoairy} using standard  Portmanteau type weak convergence statements.

Given the above propositions at our disposal we will not be further needing any other property of the Airy process and the remainder of the arguments will only rely on one point moderate deviation estimates.

\section{Estimates on the Geometry and Weight of Optimal Paths}
\label{s:prelim}

In this section we shall gather some technical results about the geometry of geodesics and weights of optimal paths in various constrained and unconstrained settings. These results will be crucially used throughout for the rest of the paper. All of these results depend only on one point moderate deviation estimates for point-to-line and point-to-point passage times. These types of estimates have recently proved to be useful in a number of different settings and a number of them (and their variants) have already featured in the literature \cite{BSS14, BSS17++, BG18, BGH18}.
However, for the sake of completeness and to remove any dependence on results as yet unpublished, we shall provide complete proofs of all the estimates used in this paper. We shall point out the relevant places in the literature where our arguments are adapted, and in some cases reproduced, from as and when required.

As a word of caution, the proofs appearing in this section are somewhat technical; however skipping those will not hamper the reader's ability to follow the arguments in the rest of the paper. 

We start by recalling the one point estimates and some of their immediate consequences.

\subsection{Moderate Deviation Estimates and Consequences}

Recall that $T_{u,v}$ denotes the last passage time from $u$ and $v$ and $X_{r}$ denote the last passage time from $\L_0$ to $\br$. Let us denote by $T^{(*)}$ and $X^{(*)}$ respectively where the passage times also include the weight of the last vertex. The one point estimates are obtained from the following correspondences (see \cite[Proposition 1.4]{Jo99} and \cite[Proposition 1.3]{BGHK19}):
\begin{itemize}
 \item[(i)] $T^{(*)}_{\mathbf{0},(m,n)}$ has the same law as the largest eigenvalue of $X^*X$ where $X$ is an $(m+1)\times (n+1)$ matrix of i.i.d.\ standard complex Gaussian entries, and 
 \item[(ii)] $2X^{(*)}_r$ has the same law as the largest eigenvalue of $X'X$ where $X$ is a $(2r+2)\times (2r+1)$ matrix of i.i.d.\ real standard Gaussian entries. 
 \end{itemize}
 It is easy to see that ignoring the weight of the last vertex does not change any estimates for, large $m,n$ and $r$ and hence we get the following one point estimates from \cite[Theorem 2]{LR10}. 

\begin{theorem}
\label{t:onepoint}
For each $\psi>1$ There exists $C,c>0$ depending on $\psi$ such that for all $m,n,r\geq 1$ with $\psi^{-1}<\frac{m}{n}< \psi$ and all $x>0$ we have the following:
\begin{enumerate}
\item[(i)] $\P(T_{\mathbf{0}, (m,n)}-(\sqrt{m}+\sqrt{n})^{2} \geq xn^{1/3}) \leq Ce^{-c\min\{x^{3/2},xn^{1/3}\}}$.
\item[(ii)] $\P(T_{\mathbf{0}, (m,n)}-(\sqrt{m}+\sqrt{n})^{2} \leq -xn^{1/3}) \leq Ce^{-cx^3}$.
\item[(iii)] $\P(X_r-4r \geq xr^{1/3}) \leq Ce^{-c\min\{x^{3/2},xr^{1/3}\}}$.
\item[(iv)] $\P(X_r-4r \leq -xr^{1/3}) \leq Ce^{-cx^3}$.
\end{enumerate}
\end{theorem}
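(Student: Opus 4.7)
The plan is to derive all four bounds by translating directly from the matrix-model identifications recorded immediately before the theorem statement to known spectral tail bounds, specifically \cite[Theorem 2]{LR10}.

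First I would observe that for parts (1)--(2) the exact identification $T^{(*)}_{\mathbf{0},(m,n)} \stackrel{d}{=} \lambda_{\max}(X^*X)$ with $X$ a complex Gaussian matrix of size $(m+1)\times(n+1)$ realises $T^{(*)}_{\mathbf{0},(m,n)}$ as the top eigenvalue of a Laguerre Unitary Ensemble (LUE). The quoted result of Ledoux--Rider gives precisely the two-sided moderate deviation bounds for this top eigenvalue around the soft edge $(\sqrt{m+1}+\sqrt{n+1})^2$ on the $n^{1/3}$ scale: an upper tail of order $\exp(-c\min\{x^{3/2},xn^{1/3}\})$ (the min reflecting the transition from the Tracy--Widom regime $x \ll n^{2/3}$ to the large-deviation regime) and a lower tail of order $\exp(-cx^3)$. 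For parts (3)--(4) the identification $2X^{(*)}_r \stackrel{d}{=} \lambda_{\max}(Y'Y)$ with $Y$ a real Gaussian matrix of size $(2r+2)\times(2r+1)$ similarly realises $2X^{(*)}_r$ as the top eigenvalue of a Laguerre Orthogonal Ensemble (LOE), and the same reference supplies the analogous bounds around $(\sqrt{2r+2}+\sqrt{2r+1})^2 = 8r + O(1)$.

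Next I would handle two small reconciliations. The first is the shift from the starred quantities to the unstarred ones: $T^{(*)}_{u,v}-T_{u,v}$ and $X^{(*)}_r - X_r$ each equal the weight of a single vertex, a standard exponential random variable, and such an $O(1)$ perturbation is absorbed into the constants $C,c$ for all $x>0$ and $n,r$ large (in the upper tail one uses that an Exp(1) has exponential tails dominated by the claimed bounds; in the lower tail an $O(1)$ additive error is negligible for $xn^{1/3}\gg 1$). The second is the centering: $(\sqrt{m+1}+\sqrt{n+1})^2 - (\sqrt{m}+\sqrt{n})^2 = O(1)$ uniformly over $\psi^{-1}< m/n <\psi$, and similarly $(\sqrt{2r+2}+\sqrt{2r+1})^2/2 = 4r+O(1)$, so replacing $(\sqrt{m+1}+\sqrt{n+1})^2$ by $(\sqrt{m}+\sqrt{n})^2$ and $(\sqrt{2r+2}+\sqrt{2r+1})^2/2$ by $4r$ is likewise absorbed.

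The one detail requiring actual attention is the uniformity of the Ledoux--Rider constants as $m/n$ ranges over a compact subset of $(0,\infty)$; inspection of their argument (which proceeds through Laplace/Fredholm determinantal identities for the LUE and LOE correlation kernels together with explicit soft-edge kernel bounds) shows that all constants may indeed be taken uniform on any such range, which is the content of the parameter $\psi$ in our statement. Beyond this bookkeeping there is no genuine obstacle: the theorem is a packaging of \cite[Theorem 2]{LR10} in the LPP variables via the Wishart identifications and the trivial reductions above.
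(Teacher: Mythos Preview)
Your proposal is correct and follows essentially the same route as the paper: the paper derives Theorem~\ref{t:onepoint} by invoking the Wishart/Laguerre identifications (i) and (ii) recorded just before the statement, noting that the single-vertex discrepancy between $T^{(*)}, X^{(*)}$ and $T, X$ is negligible for large $m,n,r$, and then citing \cite[Theorem 2]{LR10} for the spectral tail bounds. Your write-up is slightly more explicit about the centering and uniformity-in-$\psi$ bookkeeping, but the argument is the same.
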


Observe that Theorem \ref{t:onepoint} implies that 
\begin{equation}
\label{e:mean}
|\E T_{\mathbf{0}, (m,n)} -(\sqrt{m}+\sqrt{n})^2|\leq C'n^{1/3}
\end{equation}
for some constant $C'$ depending only on $\psi$. Although we shall be primarily caring about passage times between pairs of points with slope bounded away from $0$ and $\infty$, the result in \cite{LR10} also gives some amount of control without the slope condition which will be enough for our purposes: namely we have, for $m\geq n\geq 1$ and for all $x>0$
\begin{equation}
\label{e:steep}
\P(T_{\mathbf{0}, (m,n)}-(\sqrt{m}+\sqrt{n})^{2} \geq xm^{1/2}n^{-1/6}) \leq Ce^{-cx}.
\end{equation}

We shall next quote a result about last passage times across parallelograms. These were proved in \cite{BSS14} for Poissonian LPP (see Proposition 10.1, Proposition 10.5 and Proposition 12.2 in \cite{BSS14}). For completeness, in Appendix \ref{s:appc} we include complete proofs in our setting, i.e., for exponential LPP, using the point-to-point estimates of Theorem \ref{t:onepoint} as the input.

Consider the parallelogram $U$ whose one pair of sides lie on $\L_0$ and $\L_r$ with length $2r^{2/3}$ and midpoints $(mr^{2/3},-mr^{2/3})$ and $\br$ respectively. 
Let $U_1$ (resp.\ $U_2$) denote the intersections of $U$ with the strips $\{0\leq x+y\leq 2r/3\}$ and $\{4r/3\leq x+y\leq 2r\}$ respectively.
For $u,v\in U$, let $T^{U}_{u,v}$ denote the weight of the highest weight path between $u$ and $v$ that does not exit $U$.
Further, for $u=(u_1,u_2),$ let $d(u):=u_1+u_2.$

The next set of results show that it is unlikely that there exists $u,v\in U$ that are well separated in the time direction (i.e., $|d(u)-d(v)|$ is sufficiently large) that wither $T_{u,v}$ is much larger that its expectation or $T_{u,v}^{U}$ is much smaller than $\E T_{u,v}$. 
\begin{theorem}
\label{t:supinf}
For each $\psi<1$, there exists $C,c>0$ depending only on $\psi$ such that for all $|m|<\psi r^{1/3}$ and $U$ as above we have
\begin{enumerate}
\item[(i)] 
for all $x, L>0$ and $r$ sufficiently large depending on $L$,
$$\P\left( \inf_{u,v\in U: d(v)-d(u)\geq \frac{r}{L}}  (T_{u,v}-\E T_{u,v}) \leq -xr^{1/3}\right)\leq Ce^{-cx^3}.$$
\item[(ii)]
for all $x>0$ and $r\geq 1$,
$$\P\left( \sup_{u\in U_1,v\in U_2}  (T_{u,v}-\E T_{u,v}) \geq xr^{1/3}\right)\leq Ce^{-c\min\{x^{3/2},xr^{1/3}\}}.$$
\item[(iii)] 
for all $x, L>0$ and $r$ sufficiently large depending on $L$,
$$\P\left( \inf_{u,v\in U: d(v)-d(u)\geq \frac{r}{L}}  (T_{u,v}^U-\E T_{u,v}) \leq -xr^{1/3}\right)\leq Ce^{-cx}.$$
\end{enumerate}
\end{theorem}
The corresponding results in \cite{BSS14} are stated in a slightly more general form; there all pairs $u,v\in U$ such that the line joining $u$ and $v$ has slope bounded away from $0$ and $\infty$ are considered. The proof we provide can be adapted to also cover this more general case, but the above statement will be sufficient for all our applications. Analogous results have also been subsequently developed under rather general assumptions in the recent work \cite{BGHH} by the first two named authors along with Alan Hammond and Milind Hegde.

Note that by taking $r$ sufficiently large, all pairs $u,v$ considered in the statements above satisfy the slope condition. As mentioned above, the proof of Theorem \ref{t:supinf} is deferred to Appendix \ref{s:appc}. Specifically, part (i) is proved in Appendix \ref{s.thm4.2(i)}, part (ii) is proved in Appendix \ref{s.thm4.2(ii)} and part (iii) is proved in Appendix \ref{s.thm4.2(iii)}.

The next result is similar to Theorem \ref{t:supinf} but it controls the maximum passage time from a line segment to a line. Let $A$ denote the line segment on $\L_r$ with endpoints $(r+r^{2/3},r-r^{2/3})$ and $(r-r^{2/3},r+r^{2/3})$. We have the following result. 

\begin{proposition}
\label{p:l2l}
There exists $C,c>0$ such that for all $r\geq 1$, $x>0$, and $A$ as above we have
$$\P(\sup_{u\in A} X_{u}\geq 4r+xr^{1/3}) \leq Ce^{-c\min\{x^{3/2},xr^{1/3}\}}.$$
\end{proposition}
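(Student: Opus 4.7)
The plan is to write $\sup_{u\in A} X_u = \sup\{T_{v,u}:u\in A,\, v\in \L_0,\, v\preceq u\}$ and decompose $\L_0$ into segments $B_k$ of length $2r^{2/3}$ centered at $(-kr^{2/3},kr^{2/3})$, for $k\in\Z$ with $|k|\leq r^{1/3}+O(1)$ (the partial-order constraint $v\preceq u$ with $u\in A$ forces this range). It then suffices to bound each $\P(\sup_{v\in B_k,u\in A}T_{v,u}\geq 4r+xr^{1/3})$ so that the resulting sum over $k$ is at most $Ce^{-cx}$.

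For the main range $|k|<\psi r^{1/3}$, where $\psi\in(0,1)$ is a small absolute constant chosen so that every pair $(v,u)$ with $v\in B_k$, $u\in A$ has slope $(u-v)_2/(u-v)_1\in[1/10,10]$, I would apply Theorem \ref{t:supinf}(1) to the parallelogram whose short sides are $B_k\subset\L_0$ and $A\subset\L_r$ (this is precisely the setup of the theorem with $m=-k$). Writing $u-v=(r+s,r-s)$, one has $|s|\geq (|k|-O(1))r^{2/3}$ uniformly over the pairs, and the elementary identity $(\sqrt{r+s}+\sqrt{r-s})^2 = 2r+2\sqrt{r^2-s^2}\leq 4r - s^2/r$ combined with \eqref{e:mean} gives $\E T_{v,u}\leq 4r - c_0(|k|-O(1))_+^2 r^{1/3} + O(r^{1/3})$ uniformly on $B_k\times A$. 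Plugging this into Theorem \ref{t:supinf}(1) yields
\begin{equation*}
\P\Bigl(\sup_{v\in B_k,\,u\in A} T_{v,u} \geq 4r + xr^{1/3}\Bigr)
\leq C\exp\bigl(-c(x+(|k|-O(1))_+^2)\bigr),
\end{equation*}
which sums over $|k|<\psi r^{1/3}$ to $Ce^{-cx}$.

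For the residual range $\psi r^{1/3}\leq |k|\leq r^{1/3}+O(1)$, the same mean computation now gives $\E T_{v,u}\leq 4r - c\psi^2 r$, so $\{T_{v,u}\geq 4r + xr^{1/3}\}$ demands a deviation of order $r$ from the mean. Since the slope remains bounded between positive constants depending only on $\psi$, Theorem \ref{t:onepoint}(1) provides a bound of the form $e^{-cr}$ per pair, and a crude union bound over the $O(r^{5/3})$ relevant pairs renders this contribution negligible compared to $Ce^{-cx}$ for large $r$. The main obstacle is precisely that Theorem \ref{t:supinf} assumes ambient slopes in $[1/10,10]$ while the admissible starting points on $\L_0$ can extend a macroscopic distance of order $r$ from the origin; the decomposition resolves this by applying the supremum bound only within the slope-friendly range, where the Gaussian-type decay $e^{-c(|k|-O(1))^2}$ produced by the parabolic drop of the mean absorbs the $O(r^{1/3})$ terms, and handling the remaining slope-unfriendly $k$ through the much sharper one-point tail of Theorem \ref{t:onepoint}.
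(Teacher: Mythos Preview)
Your decomposition of $\L_0$ into segments $B_k$ is a workable strategy---it is essentially how the paper later proves Lemma~\ref{l:transversal-general}---but it is \emph{not} the route taken for this proposition, and your residual-range step contains a genuine error.

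The paper's proof is a two-line reduction that avoids decomposing $\L_0$ altogether. For every $u\in A$ one has $X_u + T_{u,2\br} \leq X_{2r}$ (concatenate the optimal $\L_0$-to-$u$ path with the geodesic from $u$ to $2\br$), whence $\sup_{u\in A} X_u \leq X_{2r} - \inf_{u\in A} T_{u,2\br}$. Splitting the target $4r+xr^{1/3}$ as $(8r+\tfrac{x}{2}r^{1/3})-(4r-\tfrac{x}{2}r^{1/3})$, the first probability is bounded by Theorem~\ref{t:onepoint}(3) (the line-to-point upper tail for $X_{2r}$) and the second by Theorem~\ref{t:supinf}(2) (the infimum lower tail over the on-scale segment $A$). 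No slope issues arise because the only point-to-point quantities involved are $T_{u,2\br}$ with $u\in A$, all of which have slope within $O(r^{-1/3})$ of $1$.

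In your argument the main range $|k|<\psi r^{1/3}$ is fine. The error is in the residual range: the assertion that ``the slope remains bounded between positive constants depending only on $\psi$'' is false. For $|k|$ close to $r^{1/3}$ the displacement $u-v$ can have one coordinate of order $r^{2/3}$ or smaller while the other is $\Theta(r)$, so neither Theorem~\ref{t:onepoint}(1) nor \eqref{e:mean} applies with uniform constants, and the per-pair bound $e^{-cr}$ is not established. A repair is possible---further split off the genuinely steep pairs and combine \eqref{e:steep} with a direct path-counting Chernoff argument, as the paper in effect does when proving Lemma~\ref{l:transversal-general}---but this requires nontrivial extra care (in particular to retain uniformity in $x$, since \eqref{e:steep} alone yields only $e^{-c\sqrt r}$ for the steepest pairs). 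The paper's super-additivity trick is both shorter and cleanly sidesteps the steep-slope regime.
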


\begin{proof}
Since $X_{u} + T_{u, 2\br} \leq X_{2r}$ for any $u \in A$, we have
\begin{eqnarray*}
\P\left[\max_{u \in A} X_{u} \geq 4r+xr^{1/3} \right] 
&\leq &
\P\left[X_{2r} - \min_{u \in A} T_{u, 2\mathbf{r}} \geq 4r+xr^{1/3} \right]
\\
&\leq &
\P\left[X_{2r} \geq 8r+\frac{xr^{1/3}}{2} \right]
+
\P\left[\min_{u \in A}T_{u, 2\mathbf{r}} \leq 4r-\frac{xr^{1/3}}{2} \right].
\end{eqnarray*}
Using Theorem \ref{t:onepoint} (iii) for the first term and Theorem \ref{t:supinf} (i) for the second term we see that both the terms are upper bounded by $Ce^{-c\min\{x^{3/2},xr^{1/3}\}}$ for some $C,c>0$, thus completing the proof. 
\end{proof}

As a consequence of Proposition \ref{p:l2l}, we can also control the passage time from a line to a larger line segment. For $\phi>1$, Let $A_{\phi}$ denote the line segment on $\L_r$  given by $|x-y|\leq \phi r^{2/3}$. We have the following result. 

\begin{lemma}
\label{l:l2lbd}
There exists $C,c>0$ such that for all $r\geq 1$, $x>0$, and $A_{\phi}$ as above we have
$$\P(\sup_{u\in A_{\phi}} X_{u}\ge 4r+x  r^{1/3})\leq C\phi e^{-c\min\{x^{3/2},xr^{1/3}\}}.$$
\end{lemma}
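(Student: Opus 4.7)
The plan is to extend Proposition~\ref{p:l2l}, which controls $\sup X_u$ over the length-$O(r^{2/3})$ segment $A$, to the longer segment $A_\phi$ of length $O(\phi r^{2/3})$ via a union bound over $O(\phi)$ translates, then absorb the resulting factor of $\phi$ into the exponential by tuning the threshold with $\log\phi$.

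The key distributional input is the translation invariance of the model: since $\{\omega_v\}_{v\in\Z^2}$ is i.i.d.\ and $\L_0$ is preserved by the shift $v \mapsto v + (k,-k)$ for any $k \in \Z$, we have
$$\sup_{u \in A + (k,-k)} X_u \stackrel{d}{=} \sup_{u \in A} X_u.$$
So I would choose $K = \lceil\phi\rceil + 1$ integer shifts $k_1,\dots,k_K$ with $|k_i| \leq \phi r^{2/3}$, spaced by at most $r^{2/3}$, so that the translated segments $A^{(i)} := A + (k_i,-k_i)$ cover $A_\phi$. Applying Proposition~\ref{p:l2l} to each $A^{(i)}$ via the distributional identity above, and taking a union bound, yields for every $y > 0$
$$\P\!\left(\sup_{u \in A_\phi} X_u \geq 4r + y r^{1/3}\right) \;\leq\; K \cdot C e^{-cy} \;\leq\; C(\phi + 1)\, e^{-cy}.$$

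Second, I would substitute $y = x\log\phi$, which turns the right-hand side into $C(\phi+1)\phi^{-cx}$. For $\phi$ bounded away from $1$ so that $\log\phi$ is bounded below by a positive constant, and for $cx \geq 2$ (so that $1 - cx \leq -cx/2$), this is at most $C'\phi^{-cx/2} = C' e^{-(c/2)x\log\phi} \leq C' e^{-c'x}$ after adjusting constants, which is the desired bound.

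The main obstacle is the uniformity of the constants $C, c$ as $\phi \downarrow 1$: in that regime $\log\phi$ is small and the union-bound loss of a factor $\phi$ is not cleanly absorbed into the exponential. This is handled separately by noting that for $\phi$ in a bounded range above $1$ one only needs a bounded number of translates, so Proposition~\ref{p:l2l} applies essentially directly and the claimed bound (which is weaker there, since $\log\phi$ is small) follows after readjusting $C$ and $c$. Together with the large-$\phi$ argument above this yields a uniform statement for $\phi > 1$.
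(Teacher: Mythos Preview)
Your proposal is correct and matches the paper's proof exactly: cover $A_\phi$ by $O(\phi)$ translates of $A$, apply Proposition~\ref{p:l2l} to each via translation invariance, take a union bound, and absorb the resulting factor of $\phi$ into the exponential using the $\log\phi$ in the threshold. Your worry about $\phi\downarrow 1$ is unnecessary---the paper does not address it either (and indeed uniform constants are not attainable in that regime), and the lemma is only invoked with $\phi$ large.
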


\begin{proof}
Divide $A_{\phi}$ into $\phi$ many line segments $A_{j,\phi}$ each of length $r^{2/3}$ each. Write 
$$\sup_{u\in A_{\phi}} X_{u}=\max_{j} \sup_{u\in A_{j,\phi}} X_{u}.$$
The result follows by Proposition \ref{p:l2l}, translation invariance of the underlying field and a union bound. 
\end{proof}

Then next result controls the passage times constrained to be in a thin cylinder. For $r\in \N$ and $\theta>0$ (small) let $R=R_{\theta}$ denote the rectangle 
$$R=R_{\theta}:= \{(u_1,u_2)\in \Z^2: 0\leq u_1+u_2 \leq 2r: |u_1-u_2|\leq \theta r^{2/3}\}.$$
Let $A$ and $B$ denote the sides of $R$ aligned with $\L_r$ and $\L_0$ respectively. For $u\in B$ and $v\in A$, recall that $T_{u,v}^{R}$ denotes the maximum passage time from $u$ to $v$ constrained to not exit $R$. Let us denote $Y=T_{\mathbf{0},\br}^{R}$ and $Y_{*}=\inf_{u\in B,v\in A } T_{u,v}^{R}$. We have the following proposition. 

\begin{proposition}
\label{p:constrained}
There exists $C_1,C_2>0$ such that for all $\theta$ sufficiently small and all $r\geq 1$ we have 
\begin{enumerate}
\item[(i)] $\E[Y_{*}]\geq 4r-C_1\theta^{-1}r^{1/3}$.
\item[(ii)] $\E[(Y_{*}-4r)^{4}], \E[|Y-4r|^{4}]\leq C_2\theta^{-4}r^{4/3}$.
\end{enumerate}
\end{proposition}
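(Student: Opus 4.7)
The plan is to tile the thin rectangle $R_\theta$ into $K := \lfloor \theta^{-3/2}\rfloor$ essentially congruent sub-parallelograms $P_1,\ldots,P_K$, stacked along the diagonal, where $P_i$ has its two short sides on $\L_{a_{i-1}}$ and $\L_{a_i}$ with $a_i := \lfloor ir/K\rfloor$, and inherits the full transversal extent $\{|x-y|\leq \theta r^{2/3}\}$ of $R_\theta$. The point of this calibration is that each $P_i$ has the \emph{typical} LPP aspect ratio: writing $r' := r/K \approx \theta^{3/2}r$, its transversal size $2\theta r^{2/3}$ equals $2(r')^{2/3}$, and the midpoints of its two short sides both lie on the main diagonal. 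Theorem~\ref{t:supinf}(3) thus applies to each $P_i$ at scale $r'$ (with midpoint-displacement parameter $m=0$), and combined with \eqref{e:mean} and the elementary identity
\begin{equation*}
(\sqrt{\alpha}+\sqrt{\beta})^2 = 4r' - O(\theta^{1/2}r^{1/3}) \quad \text{uniformly over} \quad \alpha+\beta=2r',\ |\alpha-\beta|\leq 2\theta r^{2/3},
\end{equation*}
this yields, writing $W_i := \inf\{T^{P_i}_{u,v} : u\in \L_{a_{i-1}}\cap P_i,\, v\in \L_{a_i}\cap P_i\}$, the tail bound
\begin{equation*}
\P\bigl(W_i \leq 4r' - (C+x)\theta^{1/2}r^{1/3}\bigr)\leq e^{-cx^{1/2}}, \qquad x>0,
\end{equation*}
with $C,c$ absolute. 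Integrating then gives the per-piece moment bound $\E[(4r'-W_i)_+^k]\leq C_k(\theta^{1/2}r^{1/3})^k$ for every $k\geq 1$.

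For Part~(1), decomposing any $R_\theta$-constrained path from $B$ to $A$ at its unique crossings of the interfaces $\L_{a_1},\ldots,\L_{a_{K-1}}$ and combining the min-max inequality with the fact that the infimum of a sum dominates the sum of infimums gives
\begin{equation*}
Y_*\geq \sum_{i=1}^{K}W_i,
\end{equation*}
hence $(4r-Y_*)_+\leq \sum_i (4r'-W_i)_+$. Taking expectations and applying the per-piece bound with $k=1$ yields $\E[4r-Y_*]\leq \E[(4r-Y_*)_+]\leq K\cdot C\theta^{1/2}r^{1/3}=C\theta^{-1}r^{1/3}$, which is Part~(1).

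For Part~(2), the upper tails of both $Y_*$ and $Y$ are handled via $Y_*\leq Y\leq T_{\mathbf{0},\br}$ and Theorem~\ref{t:onepoint}(1), giving $\E[((Y-4r)_+)^4]\leq Cr^{4/3}$. For the lower tails, the decomposition above gives $(4r-Y_*)_+\leq \sum_{i=1}^K Z_i$ with $Z_i:=(4r'-W_i)_+$, while for $Y$ the analogous chain through the midpoint vertices $v_i:=(a_i,a_i)$ (with $v_0=\mathbf{0}$, $v_K=\br$) gives $Y\geq \sum_i T^{P_i}_{v_{i-1},v_i}\geq \sum_i W_i$, hence $(4r-Y)_+\leq \sum_i Z_i$ as well. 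Combining the elementary inequality $\bigl(\sum_{i=1}^K Z_i\bigr)^4\leq K^3\sum_i Z_i^4$ with the $k=4$ per-piece estimate $\E[Z_i^4]\leq C\theta^2 r^{4/3}$ yields $\E[((4r-Y)_+)^4],\,\E[((4r-Y_*)_+)^4]\leq K^4\cdot C\theta^2 r^{4/3}=C\theta^{-4}r^{4/3}$, completing Part~(2).

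The calibration $K\asymp\theta^{-3/2}$ is what makes both estimates tight: any coarser tiling would violate the slope hypothesis of Theorem~\ref{t:supinf}, while any finer one would force larger $K$ and degrade the $\theta$-exponents. The main technical delicacy I anticipate is the scale reduction $r\to r'=\theta^{3/2}r$, which requires verifying that the constants in Theorem~\ref{t:supinf} and \eqref{e:mean} are uniform in the relevant range of $\theta$ (they are, since the constants depend only on a lower bound on $\psi$ in $|m|<\psi r^{1/3}$, trivially satisfied for $m=0$), and that the per-piece mean correction $4r'-\E T_{u,v}$ indeed sits on the natural LPP scale $(r')^{1/3}=\theta^{1/2}r^{1/3}$.
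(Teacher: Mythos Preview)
Your proposal is correct and follows essentially the same decomposition as the paper: tile $R_\theta$ into $K\asymp\theta^{-3/2}$ on-scale sub-parallelograms, apply Theorem~\ref{t:supinf} to each piece at scale $r'=\theta^{3/2}r$, and sum. Part~(1) is identical in substance.

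For Part~(2) there is a minor but worthwhile difference. The paper sandwiches $\sum Z_i\le Y_*\le Y\le\sum Z'_i$ and then invokes \emph{independence} of the $Z_i$ (and $Z'_i$) together with concentration for sums of independent subexponential variables. You instead bound the upper tail directly via $Y\le T_{\mathbf{0},\br}$ and control the lower tail by the crude power-mean inequality $(\sum_i Z_i)^4\le K^3\sum_i Z_i^4$, which requires no independence. Both routes land at $C\theta^{-4}r^{4/3}$ because the bias $(\E[4r-Y_*])^4\asymp\theta^{-4}r^{4/3}$ already saturates the bound; the variance contribution that independence would sharpen is sub-leading here. Your argument is thus slightly more elementary at no cost in the stated exponent.
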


\begin{proof}
For $i=0, 1,2,\ldots , \theta^{-3/2}$ (assume without loss of generality that $\theta^{-3/2}$ is an integer) let $A_{i}$ denotes the line segment given by the intersection of the line $x+y=2i\theta^{3/2}r$ with $R$. Let $Z_{i}:=\inf _{u\in A_{i}, v\in A_{i+1}} T_{u,v}^{R}$ and $Z'_{i}:=\sup _{u\in A_{i}, v\in A_{i+1}} T_{u,v}^{R}$. Clearly we have 
$$\sum Z_{i} \leq Y_{*} \leq Y \leq \sum Z'_{i}.$$
Observe that if follows from Theorem \ref{t:supinf} (i) that $\P(Z_{i}\leq 4\theta^{3/2}r-x\theta^{1/2}r^{1/3})\leq e^{-cx^3}$ and hence $\E Z_{i}\geq 4\theta^{3/2}r-C_1\theta^{1/2}r^{1/3}$. Item (i) of the proposition immediately follows. Notice that Theorem \ref{t:supinf} (ii) also implies that 
$\P(Z'_i\geq 4r+xr^{1/3})\leq e^{-c\min\{x^{3/2},xr^{1/3}\}}$. Item (ii) now follows by noticing that $Z_{i}$ (also $Z'_i$) are independent across $i$ and invoking standard concentration inequalities for sums of independent subexponential variables (see e.g.\ \cite[Theorem 2.8.1]{V18}).
\end{proof}

The next lemma controls the second moment of the difference of passage times from nearby points. Recall that for $u=(u_1,u_2)\in \Z^2$. 

\begin{lemma}
\label{l:sideregularity}
Take $m=0$ in the definition of $U$, i.e. $U=\{(u_1, u_2) : 0 \leq u_1+u_2 \leq 2r, |u_1-u_2| \leq 2r^{2/3}\}$.
Let $U'$ be the rectangle 
$\{(x, y) \in U: x+y\geq r\}$. 
Then we have for all $r\geq 1$
\begin{enumerate}
\item[(i)] $\E[\displaystyle{(\sup_{u\in U'} (X_u-2d(u)))^2}]=O(r^{2/3}).$
\item[(ii)] $\E[\displaystyle{(\inf_{u\in U'} (T^{U}_{\mathbf{0},u}-2d(u)))}^2]=O(r^{2/3}).$
\end{enumerate} 
\end{lemma}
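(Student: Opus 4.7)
The plan is to reduce both items to the parallelogram fluctuation estimates of Theorem~\ref{t:supinf}, combined with the elementary identity
$$\bigl(\sqrt{u_1-w_1}+\sqrt{u_2-w_2}\bigr)^{2} \;=\; (d(u)-d(w)) + 2\sqrt{(u_1-w_1)(u_2-w_2)} \;\le\; 2\bigl(d(u)-d(w)\bigr),$$
whose gap is the quadratic-in-transverse-displacement penalty $\bigl(\sqrt{u_1-w_1}-\sqrt{u_2-w_2}\bigr)^{2}=\frac{((u_1-w_1)-(u_2-w_2))^{2}}{(\sqrt{u_1-w_1}+\sqrt{u_2-w_2})^{2}}$, together with the $O(r^{1/3})$ error in the mean formula \eqref{e:mean}.

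For item~(2) I would decompose $T^{U}_{\mathbf{0},u}-2d(u)=(T^{U}_{\mathbf{0},u}-\E T_{\mathbf{0},u})+(\E T_{\mathbf{0},u}-2d(u))$. Each pair $(\mathbf{0},u)$ with $u\in U'$ has slope in $[1/3,3]$, hence lies in $S(U)$, so Theorem~\ref{t:supinf}(3) yields $\P(\inf_{u\in U'}(T^{U}_{\mathbf{0},u}-\E T_{\mathbf{0},u})\le -xr^{1/3})\le e^{-cx^{1/2}}$. The deterministic inequality above together with \eqref{e:mean} gives $\E T_{\mathbf{0},u}\ge 2d(u)-Cr^{1/3}$ uniformly on $U'$, since $(u_1-u_2)^{2}/d(u)\le 4r^{1/3}$ there. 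Integrating the stretched-exponential tail controls the negative half of the squared inf; the positive half follows from $\inf\le T_{\mathbf{0},\br}-4r$ and Theorem~\ref{t:onepoint}(1).

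For item~(1) I would write $X_u-2d(u)=\sup_{w\in\L_0}(T_{w,u}-2d(u))$ and decompose the joint supremum over $(w,u)$ according to the transverse coordinate $w_t:=w_1-w_2$. In the typical range $|w_t|\le Mr^{2/3}$ (with $M$ a large but absolute constant), the relevant pairs are covered by $O(1)$ translates $U_m$ of $U$ with $|m|\le M$, all within the regime $|m|\le \psi r^{1/3}$ of Theorem~\ref{t:supinf}(1). That theorem gives $\sup_{(w,u)\in S(U_m)}(T_{w,u}-\E T_{w,u})\le xr^{1/3}$ with probability $1-Ce^{-cx}$, while $\E T_{w,u}\le 2d(u)+Cr^{1/3}$ deterministically, so this contribution to $\E[(\sup)^{2}]$ is $O(r^{2/3})$. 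For atypical scales $|w_t|\in[2^{j}Mr^{2/3},2^{j+1}Mr^{2/3}]$ with $1\le j\le \frac{1}{3}\log_2 r$, I would again invoke Theorem~\ref{t:supinf}(1) on shifted parallelograms; now the deterministic penalty sharpens to $\E T_{w,u}-2d(u)\le -c\,2^{2j}r^{1/3}+Cr^{1/3}$, so the scale-$j$ contribution is positive with probability at most $\exp(-c\,2^{2j})$, giving a summable $L^{2}$-series in $j$. For $j>\frac{1}{3}\log_2 r$ the transverse coordinate of $w$ is of order $r$ and the steep-slope estimate \eqref{e:steep} together with a crude union bound suffices, the mean penalty being of order $r\gg r^{1/3}$. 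The negative part of the sup is then bounded by $\sup\ge X_{\br}-4r$ and Theorem~\ref{t:onepoint}(3).

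The main obstacle will be to avoid losing a factor of $r$ in item~(1): a naive union bound over the $\Theta(r)$ level sets $\{d(u)=2k\}_{k\in[r/2,r]}$ would overwhelm the target $O(r^{2/3})$. The joint parallelogram estimate Theorem~\ref{t:supinf}(1), which provides uniform fluctuation control over all pairs in $S(U)$ simultaneously, is what circumvents this loss; the dyadic decomposition over transverse scales of $w$ then handles atypical optimizers in $\L_0$ with super-exponential decay in~$j$.
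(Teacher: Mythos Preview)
Your treatment of item~(2) is correct and is essentially what the paper does. For item~(1), however, the parallelogram covering has a gap: the parallelograms $U_m$ of Theorem~\ref{t:supinf} have transverse width $O(r^{2/3})$, so when $w\in\L_0$ has $|w_1-w_2|\asymp 2^jMr^{2/3}$ while $u\in U'$ sits near the diagonal with $d(u)$ close to $r$, no single $U_m$ contains both points (at height $r$ the sloped $U_m$ is centred near transverse coordinate $mr^{2/3}$, far from $u$ once $|m|$ is large). Hence your claim that the relevant pairs $(w,u)$ lie in $S(U_m)$ for $O(1)$ translates fails, and Theorem~\ref{t:supinf}(1) cannot be invoked as stated. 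This is repairable --- apply Theorem~\ref{t:supinf} with $r$ replaced by $c\,2^{3j/2}r$, so the parallelogram has width $\asymp 2^jr^{2/3}$ and contains both $w$ and all of $U'$ --- but that rescaling step is absent from your sketch, and without it you are back to the level-set union bound you correctly flagged as fatal.

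The paper bypasses the dyadic decomposition over $w$ entirely. From $X_u+T_{u,2\br}\le X_{2r}$ one gets
\[
\sup_{u\in U'}\bigl(X_u-2d(u)\bigr)\;\le\;(X_{2r}-8r)\;-\;\inf_{u\in U'}\bigl(T_{u,2\br}-2(4r-d(u))\bigr);
\]
the first term is a one-point estimate (Theorem~\ref{t:onepoint}) and the second is a single application of Theorem~\ref{t:supinf}(2) on one parallelogram of scale $2r$ containing $U'\cup\{2\br\}$. Together with the pointwise inequality $X_u\ge T^U_{\mathbf{0},u}$, which supplies the opposite tail for each item from the other, the whole lemma is a few lines --- no decomposition over the starting point of the line-to-point geodesic is needed.
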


\begin{proof}
Observe that the same argument as in the proof of Proposition \ref{p:l2l} implies
$$\P\left(\sup_{u\in U'} (X_u-2d(u)) \geq xr^{1/3}\right)\leq e^{-cx}.$$
On the other hand Theorem \ref{t:supinf}(iii) gives
$$\P\left(\inf_{u\in U'} (T^{U}_{\mathbf{0},u}-2d(u)) \leq -xr^{1/3}\right)\leq e^{-cx}.$$
The proof of the lemma is completed by noting that for each $u\in U'$ we have $X_{u}\geq T^{U}_{\mathbf{0},u}$.
\end{proof}
The next estimate is designed to establish that: (i) paths with large transversal fluctuations are unlikely to be geodesics, and (ii) paths with large transversal fluctuations typically have much smaller weight than the geodesics. Denote $\L_{r,r^{2/3}}:=\{v \in \L_{r}: v=\br+ (v_1,-v_1),\,\, |v_1|\leq r^{2/3}\}$.
\begin{proposition}
\label{l: prep1-tf}
There exist constants $c_1,c_2>0$ such that for all sufficiently large $\phi$, and all $r\geq 1$, the event that there exists a path $\gamma$ from {$\L_0$ to $\L_{r,r^{2/3}}$} which exits the strip $|x-y|\leq \phi r^{2/3}$ and has $\ell(\gamma)\ge 4r-c_1\phi^2 r^{1/3}$, denoted by $\mathrm{LargeTF}{(\phi, r)}$ satisfies $$\P(\mathrm{LargeTF}{(\phi, r)})\leq e^{-c_2\phi^3}.$$ 
\end{proposition}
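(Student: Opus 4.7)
My plan is to decompose any offending path at its first crossing of the strip boundary and then control the weight of each half separately by moderate deviation estimates. A path $\gamma$ from $\L_0$ to $\L_{r,r^{2/3}}$ that exits the strip must cross one of the two lines $\ell^{\pm}:=\{v:v_1-v_2=\pm\phi r^{2/3}\}$, and by reflection symmetry of the lattice it is enough to control paths crossing $\ell^+$. Let $w$ be such a first crossing point and set $s=(w_1+w_2)/2 \in [0,r]$; splitting $\gamma$ at $w$ yields
\[
\ell(\gamma) \leq X^0_w + Z_w, \qquad Z_w:=\max_{u\in\L_{r,r^{2/3}}} T_{w,u}.
\]
Translation invariance along $\L_0$ gives $X^0_w\stackrel{d}{=}X_s$ and hence $\E X^0_w\approx 4s$ by Theorem \ref{t:onepoint}(3) and \eqref{e:mean}. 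For each $u\in\L_{r,r^{2/3}}$, the difference $u-w$ has anti-diagonal offset $\approx \phi r^{2/3}$; the bound $(\sqrt{a-b}+\sqrt{a+b})^2 \leq 4a - b^2/a$ with $a=r-s$, $b\approx\phi r^{2/3}/2$ gives $\E T_{w,u}\leq 4(r-s)-\phi^2 r^{4/3}/\bigl(16(r-s)\bigr)$. A union bound over the $O(r^{2/3})$ points of $\L_{r,r^{2/3}}$ contributes only lower-order corrections, so
\[
\E[X^0_w + Z_w] \leq 4r - \frac{\phi^2 r^{4/3}}{16(r-s)},
\]
whose minimum over $s\in[0,r]$ is $\phi^2 r^{1/3}/16$, attained as $s\to 0$.

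\textbf{Multi-scale over the true transversal fluctuation.} Rather than only using $\phi$, I partition the event by the dyadic scale of the actual maximum fluctuation $M(\gamma):=\max_{v\in\gamma}|v_1-v_2|$. For $k\geq 0$, let
\[
E_k := \bigl\{\exists \gamma:\, M(\gamma)\in [2^k\phi r^{2/3}, 2^{k+1}\phi r^{2/3}] \text{ and } \ell(\gamma)\geq 4r - c_1 \phi^2 r^{1/3}\bigr\},
\]
so $\mathrm{LargeTF}(\phi,r)\subseteq\bigcup_{k\geq 0}E_k$. Repeating the computation with $\phi$ replaced by $2^k\phi$ shows that on $E_k$ the expected weight of the path is at most $4r - 4^k\phi^2 r^{1/3}/16$ (for typical exit height), so $E_k$ requires an upward deviation of at least $(4^k/16 - c_1)\phi^2 r^{1/3} \geq 4^k \phi^2 r^{1/3}/32$ once $c_1\leq 1/32$. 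Standard moderate deviation estimates then give per-scale tails $\leq C\exp(-c\cdot 4^k\phi^2)$, and summing the geometric series yields $\sum_{k\geq 0} C\exp(-c\cdot 4^k\phi^2) \leq 2C\exp(-c\phi^2)$, which is stronger than the claimed bound $\exp(-c_2\phi)$.

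\textbf{Turning pointwise tails into the sup bound; main obstacle.} To control $E_k$ we must upper bound the supremum of $X^0_w+Z_w$ over $w$ on $\ell^+ \cap \{0\leq d(w)\leq 2r\}$. I would discretize the exit height $s$ at scale $r^{2/3}$ (giving $O(r^{1/3})$ blocks) and apply, for each block separately, the slanted parallelogram estimate Theorem \ref{t:supinf}(1) to each of the two pieces $\gamma_1$ (from $\L_0$ up to the block) and $\gamma_2$ (from the block to $\L_{r,r^{2/3}}$). Theorem \ref{t:supinf} applies whenever the slant parameter satisfies $|m|<\psi r^{1/3}$, which translates to requiring both $s$ and $r-s$ to be at least of order $\phi r^{2/3}$; for the remaining small-height and near-$r$ regimes the steep tail estimate \eqref{e:steep} plays the role of the sup bound, and since those regimes contribute only $O(\phi)$ many heights the resulting sum is negligible. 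The principal technical difficulty is precisely this sup control: the naive bound $\sup_{w\in B}(X^0_w+Z_w)\leq \sup_w X^0_w+\sup_w Z_w$ obtained from monotonicity of each term introduces an additive overshoot of order $r^{2/3}$ between the top and bottom of a block, which would overwhelm the signal $c_1\phi^2 r^{1/3}$ whenever $\phi \ll r^{1/6}$; using the two-piece parallelogram sup estimate jointly (so that the heights of the common endpoint are tied together) is what avoids this loss and delivers the uniform tail $C\exp(-c\cdot 4^k\phi^2)$ per block. With this in hand, summing over the $O(r^{1/3})$ blocks and the dyadic scales $k$ yields $\P(\mathrm{LargeTF}(\phi,r)) \leq \exp(-c_2\phi)$ for $\phi$ and $r$ sufficiently large, as required.
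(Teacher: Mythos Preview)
Your approach has a genuine gap in the final ``summing over the $O(r^{1/3})$ blocks'' step. With a per-block tail of $C\exp(-c\cdot 4^k\phi^2)$, summing over $O(r^{1/3})$ blocks and over $k$ yields at best $Cr^{1/3}\exp(-c\phi^2)$, which is \emph{not} bounded by $\exp(-c_2\phi)$ uniformly in $r$ for fixed large $\phi$. This is not a technicality: the deficit in your second piece is $D(s)\sim 4^k\phi^2 r^{4/3}/(r-s)$, so that the normalized deviation $D(s)/r^{1/3}\sim 4^k\phi^2\, r/(r-s)$ stays of order $4^k\phi^2$ for \emph{all} $s\in[0,r/2]$. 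Thus $\Theta(r^{1/3})$ of your blocks have essentially the same tail, and no sharpening of the per-block estimate (not even the $3/2$-power tails of Theorem~\ref{t:onepoint}) will remove the $r$-dependent prefactor. The underlying reason is structural: by splitting at the crossing of the diagonal line $\ell^+$, you leave the height $s$ of the crossing as a free parameter over a range of length $r$, while the first piece $X^0_w$ carries no deficit at all (translation invariance gives $X^0_w\stackrel{d}{=}X_s$), so the exponent cannot grow with $s$ fast enough to make the sum convergent.

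The paper's argument avoids this by intersecting the path with \emph{anti-diagonal} lines $\L_{\ell r/2^j}$ at dyadic heights and running a chaining in the scale $j$. On $\cB_{\ell,j}\cap\cB_{j-1}^c$ the offending excursion is localized to a sub-path of diagonal extent $\sim r/2^{j}$ whose endpoints lie inside the strip of width $c_{j-1}\phi r^{2/3}$ but which reaches width $c_j\phi r^{2/3}$ at its midpoint; now \emph{both} halves of that sub-path carry a curvature deficit, measured at the sub-path's own scale $(r/2^{j})^{1/3}$. With the choice $a_j=2^{-(j+1)/3}$ for the widening factors, each $\P(\cB_{\ell,j}\cap\cB_{j-1}^c)\le 4^{-j}e^{-c\phi}$, and summing over $\ell\le 2^{j-1}$ and then $j\le \log_2 r^{1/3}$ gives a geometric series bounded by $e^{-c\phi}$, with no $r$-dependence. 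In short, the key missing idea is to decompose on anti-diagonals (where both resulting pieces have deficits that scale correctly) and to chain in scale, rather than to decompose at the boundary of the strip.
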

The proof of Proposition \ref{l: prep1-tf} is accomplished by applying Theorem \ref{t:supinf} (ii) together with a chaining argument which first appeared in \cite{BSS14} to obtain the upper bound for transversal fluctuation in Poissonian LPP. Our proof of Proposition \ref{l: prep1-tf} is a refinement of the same argument and we defer the proof to Appendix \ref{s.ppn4.7}.

Together with Theorem \ref{t:onepoint}, Proposition \ref{l: prep1-tf} immediately gives the following result. Let $U_{\phi}$ denote the strip $|x-y|\leq \phi r^{2/3}$. Let $T_{\mathbf{0},\br}^{U_{\phi}}$ denote the weight of the highest weight path from $\mathbf{0}$ to $\br$ that does not exit $U_{\phi}$ and let $X_{u}^{U_{\phi}}$ denote the weight of the best path with one endpoint on $\L_0$ and the other end point $u=(u_1,u_2)$ on $\L_{r}$ satisfying $|u_1-u_2|\leq r^{2/3}$ that does not exit $U_{\phi}$.

\begin{lemma}
\label{l:transversal}
There exists constants $C,c>0$ such that for all $\phi>0$ and $r\geq 1$, and for $u$ as above we have
\begin{enumerate}
\item[(i)] $\P(T_{\mathbf{0},\br}^{U_{\phi}}\neq T_{\mathbf{0},\br}) \leq Ce^{-c\phi^3}$.
\item[(ii)] $\P(X_{u}^{U_{\phi}}\neq X_u) \leq Ce^{-c\phi^3}$.
\end{enumerate}
\end{lemma}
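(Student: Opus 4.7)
The plan is to deduce both statements from Proposition \ref{l: prep1-tf} together with the lower-tail bounds in Theorem \ref{t:onepoint}, via a two-event decomposition. The workhorse observation is that on the complement of $\mathrm{LargeTF}(\phi, r)$ every path from $\L_0$ to $\L_{r,r^{2/3}}$ which exits the strip $|x-y|\le \phi r^{2/3}$ has weight at most $4r-c_1\phi^2 r^{1/3}$, so a mismatch between constrained and unconstrained passage times forces either $\mathrm{LargeTF}(\phi,r)$ to occur or the unconstrained passage time to be atypically small.

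For part (1), I would argue that $\{T^U_{\mathbf{0},\br}\neq T_{\mathbf{0},\br}\}$ is precisely the event that the a.s.\ unique geodesic $\Gamma_{\mathbf{0},\br}$ exits $U$. Since $\mathbf{0}\in \L_0$ and $\br \in \L_{r,r^{2/3}}$, such a geodesic itself lies in the class of paths controlled by Proposition \ref{l: prep1-tf}. Hence
\[
\{T^U_{\mathbf{0},\br}\neq T_{\mathbf{0},\br}\} \subseteq \mathrm{LargeTF}(\phi,r) \cup \{T_{\mathbf{0},\br}\le 4r- c_1\phi^2 r^{1/3}\}.
\]
The first event has probability at most $e^{-c_2 \phi}$ by Proposition \ref{l: prep1-tf}, while Theorem \ref{t:onepoint}(2) bounds the second by $Ce^{-c(c_1\phi^2)^3}$. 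For $\phi$ large, both terms are dominated by $Ce^{-c\phi}$.

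Part (2) follows along essentially identical lines. The event $\{X^U_u\neq X_u\}$ implies that the line-to-point geodesic from $\L_0$ to $u$ exits $U$, and again belongs to the class in Proposition \ref{l: prep1-tf}. For the matching lower bound on the unconstrained weight I would use $X_u \ge T_{\mathbf{0},u}$, together with the estimate $\E T_{\mathbf{0},u} \ge 4r - C'' r^{1/3}$. The latter is obtained from \eqref{e:mean} and the elementary identity $(\sqrt{u_1}+\sqrt{u_2})^2 = 4r - ((u_1-u_2)^2)/(4r) + O(1)$ under $u_1+u_2=2r$, $|u_1-u_2|\le r^{2/3}$. A direct application of Theorem \ref{t:onepoint}(2) then yields $\P(X_u \le 4r-c_1\phi^2 r^{1/3}) \le Ce^{-c\phi^6}$ for $\phi$ sufficiently large, and the conclusion follows as in part (1).

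The substantive input is already carried by Proposition \ref{l: prep1-tf}; the remaining work is a routine two-term union bound paired with one-point moderate deviation estimates, so I do not anticipate any real obstacle in writing out the details.
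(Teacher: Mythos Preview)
Your proposal is correct and matches the paper's approach exactly: the paper states that Lemma \ref{l:transversal} follows immediately from Proposition \ref{l: prep1-tf} together with Theorem \ref{t:onepoint}, without spelling out the details, and your two-event decomposition is precisely how one fills this in. A minor simplification for part (2): by translation invariance along $\L_0$, $X_u$ has the same law as $X_r$, so you can invoke Theorem \ref{t:onepoint}(4) directly rather than passing through $T_{\mathbf{0},u}$ and \eqref{e:mean}.
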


\subsection{Lower Bounds}
In this subsection we shall show that two events, even though not typical, does hold with probability uniformly bounded away from $0$. Recall the thin rectangle $R=R_{\theta}$ from the previous subsection. We have the following result which shows that the best path from $\mathbf{0}$ to $\br$ constrained to not exit $R$ still can take arbitrarily large values with small but uniformly positive probabilities.

\begin{lemma}
\label{l:uppertail}
For every fixed value of $\theta$, there exists $C,c>0$ such that for every $x>0$ we have for all sufficiently large $r$ (depending on $x$) 
$$\P(T^{R}_{\mathbf{0},\mathbf{r}}>4r+xr^{1/3})\geq Ce^{-cx^{3/2}}.$$
\end{lemma}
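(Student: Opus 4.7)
Plan: The strategy is to bound $T^R_{\mathbf{0},\mathbf{r}}$ from below by a sum of independent passage times across many stacked sub-rectangles of $R$ with a controlled aspect ratio, and then to extract the desired $e^{-cx^{3/2}}$ upper-tail lower bound from a multi-scale large deviation event in which every sub-rectangle contributes an anomalously large weight.

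Concretely, I would fix an auxiliary parameter $K\geq 1$ (to be tuned depending on $x$ and $\theta$), set $L=(\theta/K)^{3/2}r$ and $N=r/L=(K/\theta)^{3/2}$ (integer, up to rounding), and partition $R$ into stacked sub-rectangles $R_1,\dots,R_N$ with $R_i=R\cap\{(i-1)L\leq (x+y)/2\leq iL\}$. Each $R_i$ has diagonal length $L$, width $\theta r^{2/3}$, and KPZ aspect ratio $\theta r^{2/3}/L^{2/3}=K$. With $c_i=(iL,iL)$ (so $c_0=\mathbf{0}$, $c_N=\mathbf{r}$) and $Z_i:=T^{R_i}_{c_{i-1},c_i}$, the random variables $Z_1,\dots,Z_N$ are independent (by disjointness of the $R_i$) and $T^R_{\mathbf{0},\mathbf{r}}\geq\sum_i Z_i$ by path concatenation.

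The first key input is a one-point lower bound: for $y$ in the moderate deviation regime and $K\geq C_0(y^{3/2}+1)$,
$$\P\bigl(Z_i>4L+yL^{1/3}\bigr)\geq c_1 e^{-c_2 y^{3/2}}.$$
This would follow by combining the standard companion lower bound $\P(T_{c_{i-1},c_i}>4L+yL^{1/3})\geq 2c_1 e^{-c_2 y^{3/2}}$ to Theorem~\ref{t:onepoint}(1) (cf.\ \cite{LR10}) with Lemma~\ref{l:transversal}(1) applied with $\phi=K$, which gives $\P(Z_i\neq T_{c_{i-1},c_i})\leq Ce^{-cK}$; the condition on $K$ ensures the exit error is dominated by the one-point lower bound.

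I would then choose $y:=C_\ast+x\theta/K$ (with $C_\ast$ absorbing the $O(L^{1/3})$ slack in $\E T_{c_{i-1},c_i}-4L$) and $K:=K_0\vee C_1(x\theta)^{3/5}$ so that the condition $K\geq C_0(y^{3/2}+1)$ holds for all $x>0$. On the event $\bigcap_i\{Z_i>4L+yL^{1/3}\}$ one has
$$T^R_{\mathbf{0},\mathbf{r}}\geq 4NL+NyL^{1/3}=4r+y(K/\theta)r^{1/3}\geq 4r+xr^{1/3},$$
and by independence its probability is at least $c_1^N\exp(-c_2 Ny^{3/2})$. A direct computation gives $Ny^{3/2}\leq C\bigl(K^{3/2}\theta^{-3/2}+x^{3/2}\bigr)$: for $x\geq\theta^{-1}$, $K=C_1(x\theta)^{3/5}$ yields $K^{3/2}\theta^{-3/2}=C'x^{9/10}\theta^{-3/5}\leq C'x^{3/2}$ and $N\leq Cx^{3/2}$, so both factors are $\geq e^{-C(\theta)x^{3/2}}$; for $x<\theta^{-1}$, $K=K_0$ and $N=(K_0/\theta)^{3/2}$ are constants depending only on $\theta$, making the probability a positive $\theta$-dependent constant. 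Either way we get the claimed $Ce^{-cx^{3/2}}$ bound.

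The main obstacle is the calibration of $K$: too small and the transversal fluctuation error $e^{-cK}$ swamps the one-point lower bound, while too large and the entropic factor $c_1^N$ erodes the target $e^{-cx^{3/2}}$ rate. The optimal choice $K\sim(x\theta)^{3/5}$ balances these two competing effects and is what produces the sharp $x^{3/2}$ exponent; this is a genuinely multi-scale feature of the argument. A secondary but essential ingredient is the matching lower bound on the upper tail of unconstrained point-to-point LPP, which is classical but not explicitly recorded in the excerpt.
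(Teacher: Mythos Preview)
Your argument is correct, but it is considerably more intricate than the paper's, and the added machinery is not needed. The paper also decomposes $R$ into $k$ stacked sub-rectangles and uses concatenation plus independence, but it makes the much simpler choice $y=1$ throughout: one asks only that each sub-passage time $T^R_{\mathbf{0},\frac{1}{k}\mathbf{r}}$ exceeds $4r/k+(r/k)^{1/3}$. Since the aspect ratio of each sub-rectangle is $k^{2/3}$, Lemma~\ref{l:transversal} makes the constrained and unconstrained passage times coincide with probability $\to 1$ as $k\to\infty$, and the Tracy--Widom limit alone then guarantees a uniform lower bound $\P(T^R_{\mathbf{0},\frac{1}{k}\mathbf{r}}\geq 4r/k+(r/k)^{1/3})\geq\beta>0$ for all large $k$. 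The sum of $k$ such deviations is $k^{2/3}r^{1/3}$, so setting $k=x^{3/2}$ yields $\P(T^R_{\mathbf{0},\mathbf{r}}\geq 4r+xr^{1/3})\geq\beta^{x^{3/2}}$ directly.

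In your parametrization this corresponds to taking $K\asymp x\theta$ and $y=O(1)$, giving $N\asymp x^{3/2}$; the ``entropic factor'' $c_1^N$ is then \emph{exactly} the $e^{-cx^{3/2}}$ rate, not something that erodes it. Your balancing $K\sim(x\theta)^{3/5}$, $y\sim x^{2/5}$ also lands on the right exponent, but requires the companion lower bound $\P(T_{\mathbf{0},\mathbf{L}}>4L+yL^{1/3})\geq ce^{-cy^{3/2}}$ uniformly in growing $y$---an external input not recorded in the paper. The paper's route avoids this by using only that the one-point limiting law has full support, making it both shorter and more self-contained; your route, on the other hand, would survive in settings where one has quantitative moderate-deviation control but not an explicit distributional limit.
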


\begin{proof}  
Without loss of generality we shall prove this result for $\theta=1$. Clearly it suffices to prove the lemma for $x$ sufficiently large, fix such an $x$. Observe that for any $k\in \N$,  $T^{R}_{\mathbf{0},\mathbf{r}}$ is stochastically larger than the sum of $k$ independent copies of $T^R_{\mathbf{0},\frac{1}{k}\mathbf{r}}$. Now observe that,
$$\P\left(T^R_{\mathbf{0},\frac{1}{k}\mathbf{r}} \geq \frac{4r}{k}+ (\frac{r}{k})^{1/3}\right) \geq \P\left(T_{\mathbf{0},\frac{1}{k}\mathbf{r}} \geq \frac{4r}{k}+ (\frac{r}{k})^{1/3}\right)-\P(T^R_{\mathbf{0},\frac{1}{k}\mathbf{r}}\neq T_{\mathbf{0},\frac{1}{k}\mathbf{r}}).$$
By Lemma \ref{l:transversal}, the second term goes to $0$ as $k\to \infty$ uniformly in all large $r$. Using this, the fact that $r^{-1/3}(T_{\mathbf{0},\mathbf{r}}-4r)$ converges to a scalar multiple of the GUE Tracy-Widom distribution and that the support of the GUE Tracy-Widom distribution is all of $\R$, it follows that there exists $\beta>0$ such that for all $k$ large and all $r$ large (depending on $k$) we have
$$\P\left(T^R_{\mathbf{0},\frac{1}{k}\mathbf{r}} \geq \frac{4r}{k}+ (\frac{r}{k})^{1/3}\right) \geq \beta.$$
This in turn implies that 
$$\P\left(T^R_{\mathbf{0},\mathbf{r}} \geq 4r+ k^{2/3}r^{1/3}\right) \geq \beta^{k}.$$ 
Setting $k=x^{3/2}$ completes the proof. 
\end{proof}

The next result shows that there is a positive probability for an on-scale rectangle (i.e., an $r\times r^{2/3}$ rectangle) to act as a ``barrier", i.e., any path crossing that rectangle will be heavily penalized. Constructing such an event has been useful to localize geodesics in several settings \cite{BSS14, BSS17++, BG18}. We start with a preliminary result which has already appeared before (see e.g.\ \cite[Lemma 8.3]{BSS14}) but we shall provide a proof for completeness. 

For $\Delta>0$ and $r\in \N$, let $R_{\Delta}$ denote the rectangle 
$$R_{\Delta}=\{(u_1,u_2)\in \Z^2: 0\leq u_1+u_2 \leq 2r: |u_1-u_2|\leq \Delta r^{2/3}\}.$$
Let $A$ and $B$ denote the intersection of $R_{\Delta}$ with $\L_0$ and $\L_r$ respectively. We have the following lemma. 

\begin{lemma}
\label{l:barbasic1}
For any $\Delta, M>0$, and for all $r$ sufficiently large (depending on $\Delta, M$) there exists $\beta=\beta(\Delta, M)>0$ such that
$$\P\left(\sup_{u\in A, v\in B} T_{u,v}\leq 4r-Mr^{1/3}\right)\geq \beta.$$
\end{lemma}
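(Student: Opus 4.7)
The strategy is to dominate the line-to-line supremum by a line-to-point supremum, and then invoke weak convergence of the flat-initial-data profile to a multiple of the stationary Airy$_1$ process. For the first step, note that for any $u \in A \subset \L_0$ and $v \in B \subset \L_r$, one has the trivial bound $T_{u,v} \leq X_v = \max_{u' \in \L_0} T_{u',v}$, so
\begin{equation*}
\sup_{u \in A,\, v \in B} T_{u,v} \;\leq\; \sup_{v \in B} X_v.
\end{equation*}
It thus suffices to produce a uniform lower bound on $\P\!\bigl(\sup_{v \in B} X_v \leq 4r - Mr^{1/3}\bigr)$.

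Next, recall from the paragraph following \eqref{profile12} that the centred and scaled point-to-line profile $\{X_v\}_{v \in \L_r}$ converges in the finite-dimensional sense to a multiple of the stationary Airy$_1$ process $\cA_1$; the paper invokes an analogous tight convergence for the point-to-point profile in Theorem \ref{t:lpptoairy}, and parallel tightness arguments upgrade the flat-profile convergence to the topology of uniform convergence on compacts. Note also that the lattice points in $B$, after rescaling by $(2r)^{-2/3}$, become dense in a compact interval $[-c_2\Delta, c_2\Delta]$ as $r \to \infty$, so by continuity of $\cA_1$ the discrete sup converges to the continuum sup. We conclude
\begin{equation*}
r^{-1/3}\bigl(\sup_{v \in B} X_v - 4r\bigr) \;\Longrightarrow\; c_1 \sup_{|x| \leq c_2 \Delta} \cA_1(x) \;=:\; Z
\end{equation*}
for explicit constants $c_1, c_2 > 0$. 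By the Portmanteau theorem applied to the closed half-line $(-\infty, -M/c_1]$, it then suffices to show $\P(Z \leq -M/c_1) > 0$; the desired $\beta = \beta(\Delta, M)$ will follow for all $r$ sufficiently large by taking, say, half of this limiting probability.

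The main obstacle is showing that the supremum of $\cA_1$ over any compact interval has a distribution whose support is all of $\R$, in particular that $\P(Z \leq -M') > 0$ for every $M' > 0$. This combines two ingredients: (a) the one-point marginals of $\cA_1$ are GOE Tracy-Widom, whose lower tail satisfies $\P(\cA_1(0) \leq -t) \gtrsim e^{-ct^3}$ (see e.g.\ \cite{ramirez2011beta}); and (b) local Brownian regularity of $\cA_1$ on compact intervals, which permits a low value at a single point to be chained, with positive probability, to a uniformly low value on the whole interval $[-c_2\Delta, c_2\Delta]$. Assertion (b) can be derived from a Brownian-type comparison for $\cA_1$ parallel to Theorem \ref{t.airytail} for $\cA_2$, combined with stationarity of $\cA_1$. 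Since the statement only requires positive probability, no quantitative control on $\beta$ as a function of $\Delta, M$ is needed, and the argument terminates there.
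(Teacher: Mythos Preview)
Your approach is valid in outline but takes a genuinely different route from the paper, and it leans on external inputs whose justification you partly misidentify.

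The paper does not pass to the Airy$_1$ limit at all. Instead it uses the sandwich inequality
\[
\sup_{u\in A,\,v\in B} T_{u,v} \;\le\; T_{-\br,2\br}\;-\;\inf_{u\in A} T_{-\br,u}\;-\;\inf_{v\in B} T_{v,2\br},
\]
and observes that the three terms on the right depend on disjoint regions of the lattice, hence are independent. The events $\cB_2=\{\inf_{u\in A} T_{-\br,u}\ge 4r-(M+\Delta^3)r^{1/3}\}$ and $\cB_3=\{\inf_{v\in B} T_{v,2\br}\ge 4r-(M+\Delta^3)r^{1/3}\}$ each have probability at least $\tfrac12$ by Theorem~\ref{t:supinf}(2) (for $M,\Delta$ large), while $\cB_1=\{T_{-\br,2\br}\le 12r-(3M+2\Delta^3)r^{1/3}\}$ has positive probability because $r^{-1/3}(T_{-\br,2\br}-12r)$ converges to (a multiple of) GUE Tracy--Widom, whose support is all of~$\R$. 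Multiplying gives $\beta>0$. This argument is entirely self-contained within the paper's toolbox: point-to-point one-point estimates, the parallelogram infimum bound, and the classical one-point limit.

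Your route via $\sup_{v\in B}X_v$ is conceptually clean but imports two facts not established here. First, uniform-on-compacts convergence of the flat profile to $\cA_1$: the paper only proves the analogous statement for the point-to-line profile and $\cA_2$ (Theorem~\ref{t:lpptoairy}); the flat version is true and the tightness step is the same, but it is not recorded. Second, and more seriously, your claim that the lower-tail positivity of $\sup_{|x|\le K}\cA_1(x)$ ``can be derived from a Brownian-type comparison for $\cA_1$ parallel to Theorem~\ref{t.airytail}'' is not correct as stated: Theorem~\ref{t.airytail} and the machinery of \cite{HHJ+,CH14} rely on the Brownian Gibbs property of the Airy \emph{line ensemble}, whose top curve is $\cA_2$; the Airy$_1$ process does not sit inside such an ensemble, so there is no direct analogue. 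The fact you need is nonetheless true---local Brownian absolute continuity of $\cA_1$ is available via other mechanisms (Pimentel's stationary coupling \cite{Pim17}, or the KPZ fixed point \cite{MQR17})---but you should cite one of those rather than gesture at a nonexistent $\cA_1$ version of Theorem~\ref{t.airytail}. With those inputs supplied your proof goes through; the paper's proof has the advantage of needing none of them.
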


\begin{proof}
Clearly it suffices to prove the result for $\Delta, M$ sufficiently large. Let $\mu$ be a small positive constant to be chosen appropriately later depending on $M$. Let us divide the line segments $A$ and $B$ into line segments of length $(\mu r)^{2/3}$. Let these line segments be denoted $A_{i}: i=1,2,\ldots, 2\Delta \mu^{-2/3}$, and $B_{i}: i=1,2,\ldots, 2\Delta \mu^{-2/3}$. Notice that for each $i,j$ the events 
$$\left\{\sup_{u\in A_i, v\in B_j} T_{u,v}\leq 4r-Mr^{1/3}\right\} $$
are decreasing in the vertex weights. If we could show that for all $i,j$ we have 
\begin{equation}
\label{e:ijlb}
\P\left(\sup_{u\in A_i, v\in B_j} T_{u,v}\leq 4r-Mr^{1/3}\right)\geq \beta_{*}
\end{equation}
for some $\beta_*(\Delta,M,\mu)>0$, then the FKG inequality, which implies positive correlation of monotone events on product spaces (a more detailed discussion can be found immediately following Proposition \ref{prop:event}), would imply the conclusion of the lemma for $\beta= (\beta_*)^{4\Delta^2\mu^{-4/3}}$. We shall now prove \eqref{e:ijlb}. 

Without loss of generality, let us assume that $A_{i}$ has endpoints $(ir^{2/3},-ir^{2/3})$ and $((i+\mu^{2/3})r^{2/3},-(i+\mu^{2/3})r^{2/3})$ and $B_{j}$ has endpoints $\br+ (jr^{2/3},-jr^{2/3})$ and $\br+ ((j+\mu^{2/3})r^{2/3},-(j+\mu^{2/3})r^{2/3})$. Let us set $u_i=-\mu \br+ (ir^{2/3},-ir^{2/3})$ and $v_j=(1+\mu)\br+(jr^{2/3},-jr^{2/3}).$
Notice that 
$$\sup_{u\in A_i, v\in B_j} T_{u,v} \leq T_{u_i, v_j} -\inf_{u\in A_i} T_{u_i,u} -\inf_{v\in B_j} T_{v,v_j},$$
and consider the events
$$\cB_1:= \{T_{u_i,v_j} \leq (1+2\mu)4r -3M r^{1/3}\};$$
$$\cB_2:=\{\inf_{u\in A_i} T_{u_i,u} \geq 4\mu r-Mr^{1/3}\};$$
$$\cB_3:=\{\inf_{v\in B_j} T_{v,v_j} \geq 4\mu r-Mr^{1/3}\}.$$

Clearly, on $\cB_1\cap \cB_2\cap \cB_3$, we have 
$$\left\{\sup_{u\in A_i, v\in B_j} T_{u,v}\leq 4r-Mr^{1/3}\right\}.$$
We are left to provide a uniform, across $i,j$, lower bound for $\P(\cB_1\cap \cB_2\cap \cB_3)$. We start with a lower bound of $\P(\cB_1)$. Notice first that by \eqref{e:mean}, we have $\E T_{u_i,v_j}-(1+2\mu)4r=O(r^{1/3})$ and  by \eqref{profile12}, $r^{-1/3}(T_{u_i,v_j}-(1+2\mu)4r)$ weakly converges to the GUE Tracy-Widom distribution upto a shift and scaling as $r\to \infty$ for a fixed $\Delta$ and fixed $i,j$. Since the Tracy-Widom distribution has support on all of $\R$, one expects that $\P(r^{-1/3}(T_{u_i,v_j}-(1+2\mu)4r)\leq -3M)$ is uniformly bounded away from $0$ for large $r$. While the process convergence result Theorem \ref{t:lpptoairy} can be used to make the above precise, instead we rely on recently established probability tail bounds in \cite[Theorem 1.2]{BGHK19} which proves a  bound for point-to-line passage times. Namely, a direct consequence of the latter is that for any $\mu>0$, $M>0,$ there exists $\beta_*=\beta_*(M)>0$, such that for all large  $r$, $$\P\left(r^{-1/3}(X_{(1+2\mu)r}-(1+2\mu)4r)\leq -3M\right)\ge 3\beta_*.$$
Since by definition $X_{(1+2\mu)r}$ stochastically dominates $T_{u_i,v_j}$, it follows that for all $\mu,$ and $i,j$ we have $$\P(\cB_1)\geq 3\beta_{*}$$ for all sufficiently large $r$. Observe next that $\inf_{u\in A} \E T_{u_i,u} \geq 4\mu r- C(\mu r)^{1/3}$ for some $C>0$ ({it follows from Theorem \ref{t:onepoint}} and \eqref{e:mean} in particular) and and hence it follows from Theorem \ref{t:supinf}(i) that $\P(\cB_2^c)\leq \beta_*$ for $\mu$ sufficiently small. An identical reasoning gives $\P(\cB_3^c)\leq \beta_*$ for $\mu$ sufficiently small. This completes the proof of \eqref{e:ijlb} for $\mu$ sufficiently small and an application of the FKG inequality as explained above completes the proof of the lemma. 
\end{proof}

Next we need a slightly stronger variant of Lemma \ref{l:barbasic1}. Fix $L$ sufficiently large (without loss of generality assume $r/4L$ is an integer). For $i=0,2,\ldots, 4L-1$ let $U_{(i)}$ denote the sub-rectangle of $R_{\Delta}$ consisting of points $u=(u_1,u_2)$ with $d(u)=u_1+u_2\in [\frac{ir}{2L}, \frac{(i+1)r}{2L})$. For $0\leq i<j\leq 4L-1$ with $j\geq i+2$ let us denote by $\mathcal{B}_{i,j}$ the event that 
$$\mathcal{B}_{i,j}:= \left\{ T^{R_{\Delta}}_{u,u'}-\E T_{u,u'} \leq -Lr^{1/3}, ~\forall u\in U_{(i)}, u'\in U_{(j)}\right\}.$$  
Let $\mathcal{B}$ denote the event that 
$$\mathcal{B}:=\left\{T^{R_{\Delta}}_{u,u'}-\E T_{u,u'} \leq -Lr^{1/3}, ~\forall u,u' \in  R_{\Delta} ~\text{with}~ |d(u)-d(u')|\geq \frac{r}{L}\right\}.$$ 
Clearly, for any $u,u' \in  R_{\Delta}$ with $d(u')-d(u)\geq \frac{r}{L}$ there exists $j\geq i+2$ such that $u\in U_{(i)}$ and $u'\in U_{(j)}$ and hence
\begin{equation}
\label{e:incl}
\cB \supseteq \bigcap \cB_{i,j}.
\end{equation}
We have the following lemma. 

\begin{lemma}
\label{l:barbasic}
There exists a constant $\rho=\rho(L,\Delta)$ such that the following hold for all sufficiently large $r$ (depending on $L$ and $\Delta$):
\begin{enumerate}
\item[(i)] For all $i,j$ as above $\P(\mathcal{B}_{i,j})\geq \rho$. 
\item[(ii)] We have $\P(\mathcal{B})\geq \rho^{16L^2}$.
\end{enumerate}
\end{lemma}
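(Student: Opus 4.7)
The plan is to prove Part (1) by a two-dimensional adaptation of the argument for Lemma \ref{l:barbasic1}, and then deduce Part (2) from it via the FKG inequality.

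For Part (1), I would fix $i<j$ with $j\geq i+2$ and pick auxiliary points $u_-:=-\br$ and $u_+:=2\br$, which satisfy $u_-\preceq u\preceq u'\preceq u_+$ for every $u\in U_{(i)}$ and $u'\in U_{(j)}$. Superadditivity then yields
\[
T^U_{u,u'}\leq T_{u,u'}\leq T_{u_-,u_+}-T_{u_-,u}-T_{u',u_+}.
\]
For parameters $x,y>0$ to be tuned, introduce $\cB_1:=\{T_{u_-,u_+}\leq \E T_{u_-,u_+}-yr^{1/3}\}$ together with uniform events $\cB_2:=\{T_{u_-,u}-\E T_{u_-,u}\geq -xr^{1/3}\text{ for all }u\in U_{(i)}\}$ and $\cB_3$ defined analogously for $(u',u_+)$ over $u'\in U_{(j)}$. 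On $\cB_1\cap\cB_2\cap\cB_3$ the displayed inequality rearranges to
\[
T^U_{u,u'}-\E T_{u,u'}\leq E(u,u')+(2x-y)r^{1/3},
\]
where $E(u,u'):=\E T_{u_-,u_+}-\E T_{u_-,u}-\E T_{u,u'}-\E T_{u',u_+}\geq 0$ is the (nonnegative) mean excess.

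The key step is a uniform estimate $E(u,u')\leq C_1\Delta^2 L r^{1/3}$ for some absolute $C_1$. Using \eqref{e:mean} and expanding $(\sqrt{a}+\sqrt{b})^2$ around the on-axis displacement, the linear-in-longitudinal-distance contributions telescope to zero, leaving three quadratic transversal corrections, each of the form $(\text{transversal shift})^2/(2\cdot\text{longitudinal length})$. Transversal shifts are bounded by $2\Delta r^{2/3}$; the two outer longitudinal lengths are $\Theta(r)$, and the middle length $d(u')-d(u)$ is at least $(j-i-1)r/(2L)\geq r/(2L)$ by the hypothesis $j\geq i+2$. Choosing $y:=2x+C_1\Delta^2 L+L$ then forces $T^U_{u,u'}-\E T_{u,u'}\leq -Lr^{1/3}$ uniformly, so $\cB_{i,j}$ holds on the intersection. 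For the probability, $\P(\cB_1)\geq c(y)>0$ by the positive-density left tail of the GUE Tracy-Widom distribution together with the weak convergence of $r^{-1/3}(T_{u_-,u_+}-12r)$, exactly as in the $\cB_1$-step of Lemma \ref{l:barbasic1}; and $\P(\cB_2^c),\P(\cB_3^c)\leq C'e^{-cx}$ follows from covering $U_{(i)},U_{(j)}$ by $O(\Delta)$ parallelograms of the standard $2r^{2/3}$-length form, applying Theorem \ref{t:supinf}(2) to each, and union-bounding. Picking $x$ large enough (depending on $y,L,\Delta$) so that $2C'e^{-cx}<c(y)/2$ gives $\P(\cB_{i,j})\geq \P(\cB_1)-\P(\cB_2^c)-\P(\cB_3^c)\geq c(y)/2=:\rho(L,\Delta)$.

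Part (2) is then an immediate FKG application. Each $\cB_{i,j}$ is a decreasing event in the i.i.d.\ Exponential weight field, since $T^U_{u,u'}$ is an increasing function of the weights. Applying the FKG inequality inductively to the $\binom{4L}{2}\leq 8L^2$ admissible events,
\[
\P\Bigl(\bigcap_{i,j}\cB_{i,j}\Bigr)\geq \prod_{i,j}\P(\cB_{i,j})\geq \rho^{8L^2}\geq \rho^{16L^2},
\]
the last step using $\rho<1$. Any pair $(u,u')\in U^2$ with $|d(u)-d(u')|\geq r/L$ must lie in some $U_{(i)}\times U_{(j)}$ with $|i-j|\geq 2$, because the half-open $d$-intervals defining $U_{(k)}$ force neighbouring blocks to contribute strictly less than $r/L$ to any $d$-separation; hence $\bigcap_{i,j}\cB_{i,j}\subseteq\cB$, completing the proof. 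I expect the main obstacle to be the uniform excess estimate $E(u,u')\leq C_1\Delta^2 L r^{1/3}$: the middle quadratic correction is of order $\Theta(\Delta^2 L r^{1/3})$ in the worst case, and the constraint $j\geq i+2$ is just tight enough to keep it bounded by an acceptable multiple of $r^{1/3}$.
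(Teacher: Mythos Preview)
Your Part (2) via FKG is correct and matches the paper. The gap is in Part (1). You fix the relation $y=2x+C_1\Delta^2 L+L$ (needed for the containment $\cB_1\cap\cB_2\cap\cB_3\subset\cB_{i,j}$) and then ask to pick $x$ large enough that $2C'e^{-cx}<c(y)/2$. But with $y$ tied to $x$ this cannot be arranged: $c(y)=\P\bigl(T_{u_-,u_+}\le \E T_{u_-,u_+}-yr^{1/3}\bigr)$ is governed by the Tracy--Widom left tail and decays like $e^{-c'' y^3}\le e^{-8c''x^3}$, so as $x$ grows the target $c(y)/2$ shrinks cube-exponentially while $C'e^{-cx}$ shrinks only linearly-exponentially; and for $x$ bounded, $2C'e^{-cx}$ remains of order $\Delta$ (from your covering), which already exceeds the tiny $c(y_0)$. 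Hence $\P(\cB_1)-\P(\cB_2^c)-\P(\cB_3^c)$ is never positive. FKG does not help either, since $\cB_1$ is decreasing and $\cB_2,\cB_3$ are increasing, yielding only the wrong-direction inequality $\P(\cB_1\cap\cB_2\cap\cB_3)\le\P(\cB_1)\P(\cB_2\cap\cB_3)$.

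The paper's remedy is to make the three sub-events \emph{independent}: it inserts line segments $L_1,L_2$ strictly between $U_{(i)}$ and $U_{(j)}$, and lets $\cB^1$ control $\sup_{u\in L_1,\,u'\in L_2}T^U_{u,u'}$ (via Lemma~\ref{l:barbasic1}) while $\cB^2,\cB^3$ control $\sup T^U$ from $U_{(i)}$ to $L_1$ and from $L_2$ to $U_{(j)}$ (via Theorem~\ref{t:supinf}(1)). These depend on disjoint vertex weights, so the product formula applies and $\P(\cB^1)\P(\cB^2)\P(\cB^3)>0$ regardless of how small $\P(\cB^1)$ is, once $\P(\cB^2),\P(\cB^3)\ge\tfrac12$. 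Your containment argument and the mean-excess bound $E(u,u')\le C_1\Delta^2 L\, r^{1/3}$ are both sound; only the probability step breaks, and it is repaired by localizing the events rather than routing them through the far-away auxiliary points $u_-=-\br$, $u_+=2\br$.
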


\begin{proof}
Observe first that each event $\mathcal{B}_{i,j}$ is a decreasing event in the weights of the vertices on $R_{\Delta}$. Hence (ii) follows from (i) by using \eqref{e:incl} and an application of the FKG inequality. For (i), fix $0\leq i<j\leq 4L-1$ with $j\geq i+2$. Let $L_1$ be the line segment given by the intersection of $R_{\Delta}$ with the straight line $\{u\in \Z^2: d(u)=\frac{(i+1)r}{2L}+\frac{r}{100L}\}$ and let $L_2$ be the line segment given by the intersection of $R_{\Delta}$ with the straight line $\{u\in \Z^2: d(u)=\frac{jr}{2L}-\frac{r}{100L}\}$. Let $\mathcal{B}^1_{i,j}$ denote the event that 
$$ \sup_{u\in L_1,u'\in L_2} T_{u,u'}^{R_{\Delta}}-2|d(u')-d(u)| \leq -(L+C^*\sqrt{\Delta})r^{1/3}.$$
Let $\mathcal{B}^2_{i,j}$ denote the event 
$$\sup_{u\in U_{(i)},u'\in L_1} T_{u,u'}^{R_{\Delta}}-2|d(u')-d(u)| \leq  \frac{1}{3}C^*\sqrt{\Delta}r^{1/3}$$
and similarly let 
$\mathcal{B}^3_{i,j}$ denote the event 
$$\sup_{u\in L_2,u'\in U_{(j)}} T_{u,u'}^{R_{\Delta}}-2|d(u')-d(u)| \leq  \frac{1}{3}C^*\sqrt{\Delta}r^{1/3}.$$
Observing that $\E T_{u,u'}\leq 2|d(u)-d(u')|+\frac{1}{3}C^*r^{1/3}$ for all $u,u'$ for $C^*$ sufficiently large (this is a consequence of Theorem \ref{t:onepoint}, for example) it follows that $\mathcal{B}_{i,j}$ contains $\mathcal{B}^1_{i,j}\cap \mathcal{B}^2_{i,j}\cap \mathcal{B}^3_{i,j}$. Notice also that these three events are independent. {It follows from Theorem \ref{t:supinf}(ii)} that for $C^*$ sufficiently large we have 
$\P(\mathcal{B}^2_{i,j}), \P(\mathcal{B}^3_{i,j}) \geq \frac{1}{2}$ for all $r$ sufficiently large. It also follows from Lemma \ref{l:barbasic1} that $$\P(\mathcal{B}^1_{i,j})\geq \rho' (L,C^*,\Delta)>0,$$ for all $r$ sufficiently large. This completes the proof of (i).
\end{proof}

Given the above preparation we are now ready to dive into the proof of Theorem \ref{t:upper}.

\section{Covariance Upper Bound: Proof of Theorem \ref{t:upper}}
\label{s:upper}
Let us first recall the basic set up. Let $\delta\in (0,\frac{1}{2})$ be fixed and let $M$ be a sufficiently large absolute constant chosen appropriately large later with the only criteria being that \eqref{e:mchoice} is satisfied. It is useful to emphasize that the largeness of $M$ does not depend on $\delta$. Let $n\geq n_0(\delta,M)$ be a sufficiently large positive integer and let a positive integer $r\in (\delta n,\frac{n}{2})$ be also fixed. All constants appearing in the proofs of this section will be independent of $\delta$ but will depend on $M$ unless explicitly mentioned otherwise. As $M$ is an absolute constant, constants depending on $M$ will also be referred to as absolute constants. Recall that $X_n$ (resp.\ $X_r$) denote the line-to-point last passage time from $\L_0$ to $\mathbf{n}$ (resp.\ $\mathbf{r}$). Let $\cf_{r}$ denote the $\sigma$-algebra generated by the random variables $\omega_{v}$ for the set of vertices $v$ on or above the line $\L_{r}$, i.e., $v=(v_1,v_2)$ with $v_1+v_2\geq2r$. We shall also need to consider the line-to-point profile from $\L_r$ to $\mathbf{n}$, i.e., $\{T_{u,\mathbf{n}}:u\in \L_r\}$. Observe that this profile is measurable with respect to the $\sigma$-algebra $\cf_r$. 

\noindent
Recall that $\Gamma_{\bn}$ denotes the line-to-point geodesic from $\L_0$ to $\bn$ achieving the weight $X_n$. Let $u_0$ denote the unique point at which $\Gamma_{\bn}$ intersects $\L_{r}$. As indicated in Section \ref{iop},  our strategy is to show that the events $|u_0-\br|\gg r^{2/3}$ have insignificant contribution to the covariance as in that case, the paths $\Gamma_{\bn}$ and $\Gamma_{\br}$ are likely to pass through disjoint regions. However, notice that the point $u_0$ is not measurable with respect to $\cf_r$, and hence we shall use the following proxy for $u_0$. Let 
\begin{equation}\label{argmax1}
u_{\max}:=\arg\max \{T_{u,\bn}: u\in \L_r\};
\end{equation}
i.e., $u_{\max}$ is the starting point of the line-to-point geodesic $\Gamma_{\bn}^r$ from $\L_r$ to $\bn$. Notice that $u_{\max}$ is measurable with respect to $\cf_r$ and one would expect that if $r\ll n$, $u_{\max}$ and $u_0$ are unlikely to be too far from each other, making the former a natural candidate for the proxy for $u_0$. 
See Figure \ref{f:jpaths} for an illustration.

\begin{figure}[htbp!]
\includegraphics[width=.5\textwidth]{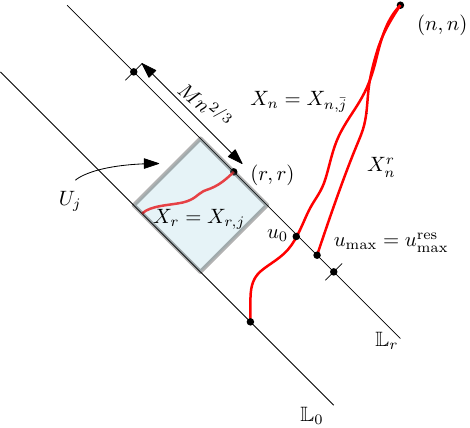} 
\caption{Illustration of relevant notations:  $U_j$ is a rectangle defined in \eqref{recdef20} with width $\log^{10}(j+2)r^{2/3}$.  $u_{\max}^{{\rm{res}}}$ denotes the location at which the geodesic weight to $\bn$ gets maximized in the interval around $(r,r)$ of size $2Mn^{2/3}.$ 
Recalling that $X_n=\ell(\Gamma^{0}_{\br})$, the notation $X_{r,j}$ is used to denote the maximal weight of a path between $\br$ and $\L_0$ restricted to stay within $U_j$ and similarly $X_{n,\bar j}$ denotes the maximal weight path between $\bn$ and $\L_0$ that does not intersect $U_j.$}
\label{f:jpaths}
\end{figure}
\noindent
We shall consider several events depending on the location of $u_{\max}$. The first step is to restrict within a large compact region. As it is unlikely that $|u_{\max}-\br|\gg n^{2/3}$, we make the following definition.
For each $x \in \R_+$, let 
\begin{equation}\label{finitelinesegment}
\L_{r,x}:=\{v \in \L_{r}: v=\br+ (v_1,-v_1),\,\, |v_1|\leq x\}
\end{equation}
denote the line segment centered at $\br$ of length $2x$. Let $M$ be a large absolute constant to be chosen appropriately later. 
Let $u^{\rm{res}}_{\max}$ denote the maxima of the restricted profile $\{T_{v,\bn}: v\in \L_{r,Mn^{2/3}}\}$ and choose $j_0$ such that  
\begin{equation}\label{indexchoice}
j_0^{101}r^{2/3}=Mn^{2/3};
\end{equation}
and without loss of generality let us assume that $j_0$ is an integer. 
(The number $101$ is not anything special, and just an arbitrary choice to ensure that probabilities of certain events, that we decompose our covariance into, are small and summable (see Lemma \ref{lem:bdac} (ii)). This gets applied in the proofs of Lemma \ref{l:bjbound} and \ref{l:cjbound}.) For $j=0,1,2,\ldots, j_0-1$, let $S_j$ denote the subset of $\L_r$ (union of two line segments) consisting of all points $u=\br+(u_1,-u_1)$ such that $|u_1|\in [\frac{1}{2}j^{101}r^{2/3}, \frac{1}{2}(j+1)^{101}r^{2/3})$. 
For $j=0,1,\ldots , j_0-1$, let  $$A_j:=\{u^{\rm{res}}_{\max}\in S_j\}$$  and $B_j \subset A_j$ denote the event where in addition,

\begin{equation}\label{decayneeded}
\max_{u \in \L_{r,(\log(j+2))^{10} r^{2/3}}} T_{u, \bn} < T_{u^{\rm{res}}_{\max}, \bn} - 1000(\log(j+2))^2r^{1/3},
\end{equation}
and finally let us set $C_j:=A_j \setminus B_j$. 
Note that Brownian decay away from the maxima, should imply that the above inequality should typically hold with the factor $1000(\log(j+2))^2r^{1/3}$ replaced by $j^{50.5}r^{1/3}$ (and can be replaced by $j^{50.5-s}r^{1/3}$ for $s>0,$ to ensure a high probability event). However for our purposes the above logarithmic factor suffices. Nonetheless as outlined in Section \ref{gbc}, to handle general initial data, such a crude definition is no longer useful and one has to exploit the full diffusive decay property.    
\begin{figure}[htbp!]
\includegraphics[width=.5\textwidth]{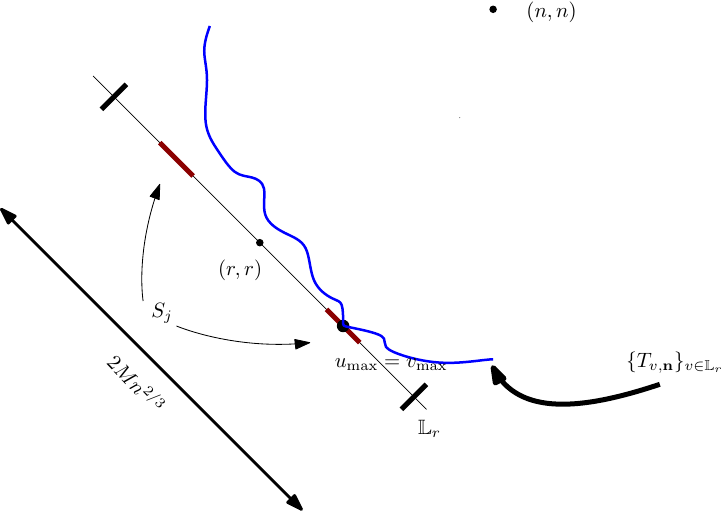} 
\caption{Illustration of the event $A_j$ which says that the maximum of the restricted profile $\{T_{v,\mathbf{n}}\}$ when $v=\mathbf{r}+(u,-u)$ and $u$ varies in the interval $|u|\leq Mn^{2/3}$ occurs at the point $u^{\rm{res}}_{\max}$ in the interval $S_j.$}
\label{f:jpaths1}
\end{figure}

We point out again that the events $A_j, B_j, C_j$ are all measurable with respect to the $\sigma$-algebra $\cf_r$. Our first order of business is to obtain estimates for probabilities of these events using Proposition \ref{prop:bdi}.

\begin{lemma}  
\label{lem:bdac}
There exists an absolute constant $C$ such that in the above set-up we have
\begin{enumerate}
\item[(i)] $\P[A_j] \leq C(j+1)^{100}r^{2/3}n^{-2/3}\exp\left( C\left(\log(n/r)\right)^{5/6} \right)$, for each $j \in \{0,1,,\ldots, j_0-1\}$.
\item[(ii)] $\sum_{j=0}^{j_0-1}\P[C_j] \leq C r^{2/3}n^{-2/3}\exp\left( C\left(\log(n/r)\right)^{5/6} \right)$.
\end{enumerate}
\end{lemma}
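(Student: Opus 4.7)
\textit{Proof plan.} Both parts follow from applications of Proposition~\ref{prop:bdi} to the line-to-point profile $\{T_{u,\bn}\}_{u\in \L_r}$. By translation invariance, shifting $\L_r$ to $\L_0$ gives a profile with the same law as $\{T_{u, (n-r,n-r)}\}_{u\in\L_0}$, so Proposition~\ref{prop:bdi} applies after replacing $n$ by $n-r$, which is comparable to $n$ since $r<n/2$. In both parts, the critical preliminary step is to replace the reference maximum $\max_{\L_{r,Mn^{2/3}}}T$ appearing in the definitions of $A_j,B_j$ by the larger $\max_{\L_{r,2Mn^{2/3}}}T$ that the proposition compares against, at the cost of the escape event $\mathcal{E}_{\mathrm{esc}}=\{\argmax_{\L_{r,2Mn^{2/3}}} T\notin \L_{r,Mn^{2/3}}\}$. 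For $M$ a sufficiently large absolute constant (which is the content of \eqref{e:mchoice}), the parabolic decay of the profile (Lemma~\ref{l:goe} together with Theorem~\ref{t:onepoint}, applied to a dyadic cover of $\L_{r,2Mn^{2/3}}\setminus \L_{r,Mn^{2/3}}$) gives $\P[\mathcal{E}_{\mathrm{esc}}]\leq Ce^{-cM^{2}}$, which will dominate nothing in what follows.

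\textit{Part (1).} On $A_j\setminus \mathcal{E}_{\mathrm{esc}}$ we have $\max_{u\in S_j}T_{u,\bn}=\max_{u\in \L_{r,2Mn^{2/3}}}T_{u,\bn}$, so $A_j\setminus \mathcal{E}_{\mathrm{esc}}$ is contained in $\{\max_{S_j}T\geq \max_{\L_{r,2Mn^{2/3}}}T-\sqrt{\varepsilon}n^{1/3}\}$ for any $\varepsilon>0$. Since $S_j$ consists of at most two intervals with total length bounded by $C(j+1)^{100}r^{2/3}$, applying Proposition~\ref{prop:bdi} to each component with the smallest admissible choice $\varepsilon=C(j+1)^{100}(r/n)^{2/3}$ and a union bound yields $\P[A_j]\leq C\varepsilon\exp(C|\log\varepsilon|^{5/6})+\P[\mathcal{E}_{\mathrm{esc}}]$. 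By the definition $j_0^{101}r^{2/3}=Mn^{2/3}$, for $j\leq j_0-1$ we have $(j+1)^{100}(r/n)^{2/3}\leq M$, hence $|\log\varepsilon|\leq C\log(n/r)+C$ and $|\log\varepsilon|^{5/6}\leq C(\log(n/r))^{5/6}$, giving the claimed bound (the escape term is absorbed into the constant $C$).

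\textit{Part (2).} On $C_j$, the argmax condition $A_j$ gives $T_{u^{\rm{res}}_{\max},\bn}=\max_{\L_{r,Mn^{2/3}}}T$, and the failure of \eqref{decayneeded} gives $\max_{u\in I_1^j}T_{u,\bn}\geq T_{u^{\rm{res}}_{\max},\bn}-s_j$, where $I_1^j:=\L_{r,(\log(j+2))^{10}r^{2/3}}$ and $s_j:=1000(\log(j+2))^2r^{1/3}$. Hence $C_j\setminus \mathcal{E}_{\mathrm{esc}}\subseteq \{\max_{I_1^j}T\geq \max_{\L_{r,2Mn^{2/3}}}T-s_j\}$. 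Applying Proposition~\ref{prop:bdi} with $I=I_1^j$ and $\varepsilon_j=\max(|I_1^j|/n^{2/3},s_j^2/n^{2/3})$ gives $\P[C_j]\leq C\varepsilon_j\exp(C|\log\varepsilon_j|^{5/6})+\P[\mathcal{E}_{\mathrm{esc}}]$. The key point is that the exponents $10$ and $2$ are chosen so that $s_j^2\geq |I_1^j|$ precisely when $(\log(j+2))^6\leq 5\cdot 10^5$, which covers the relevant range of $j$ (and in particular all of $\{0,\ldots,j_0-1\}$ when $n/r$ is not extremely large); in this regime $\varepsilon_j=10^6(\log(j+2))^4(r/n)^{2/3}$, giving $\P[C_j]\leq C(\log(j+2))^4(r/n)^{2/3}\exp(C(\log(n/r))^{5/6})$. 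The bounded sum $\sum_{j}(\log(j+2))^4$ (combined with the summed escape contribution $j_0\cdot Ce^{-cM^2}$, which is negligible for large $M$) then yields the asserted bound.

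\textit{Main obstacle.} The delicate point in Part~(2) is the deliberately mismatched scaling in the definition of $B_j$: the slack $s_j\sim (\log(j+2))^2 r^{1/3}$ is (by Brownian scaling) much smaller than $\sqrt{|I_1^j|}\sim (\log(j+2))^5 r^{1/3}$, so a naive application of Proposition~\ref{prop:bdi} with $\varepsilon=|I_1^j|/n^{2/3}$ would produce a $(\log(j+2))^{10}$ factor that does not sum well against $j_0$. The balance described above, which forces $\varepsilon_j=s_j^2/n^{2/3}$ throughout the relevant range, is what makes the sum converge to the desired $(r/n)^{2/3}\exp(C(\log(n/r))^{5/6})$ bound; managing this balance and ensuring the escape event is handled uniformly in $j$ by taking $M$ a large absolute constant are the main technical ingredients.
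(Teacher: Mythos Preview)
Your approach to Part~(1) is essentially the paper's, modulo one technicality: applying Proposition~\ref{prop:bdi} directly to $S_j$ with $\varepsilon\asymp (j+1)^{100}(r/n)^{2/3}$ runs into the hypothesis $\varepsilon<\tfrac{1}{18}$ once $j$ is near $j_0$ (where $\varepsilon$ is of order $M$). The paper sidesteps this by subdividing $S_j$ into $\sim(j+1)^{100}$ intervals of length $r^{2/3}$ and applying the proposition with a fixed small $\varepsilon\asymp (r/n)^{2/3}$ to each, then summing. Your version is recoverable by observing that for those large $j$ the claimed bound already exceeds $1$ and is therefore vacuous, but this should be said explicitly.

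Part~(2), however, has a genuine gap. Your per-term bound $\P[C_j]\leq C(\log(j+2))^4(r/n)^{2/3}\exp\bigl(C(\log(n/r))^{5/6}\bigr)$ is fine, but the sum $\sum_{j=0}^{j_0-1}(\log(j+2))^4$ is \emph{not} bounded: it is of order $j_0(\log j_0)^4$, and since $j_0^{101}=M(n/r)^{2/3}$ one has $j_0\asymp (n/r)^{2/303}$. The resulting extra factor $(n/r)^{2/303}=\exp\bigl(\tfrac{2}{303}\log(n/r)\bigr)$ grows faster than any $\exp\bigl(C(\log(n/r))^{5/6}\bigr)$ and so cannot be absorbed; the constant in your final bound would necessarily depend on $\delta$ (through $n/r\leq 1/\delta$), contradicting the ``absolute constant'' requirement of the lemma. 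The same defect afflicts your summed escape contribution $j_0\cdot Ce^{-cM^2}$, since $M$ is a fixed absolute constant while $j_0$ can be arbitrarily large as $\delta\to 0$.

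The paper's argument for Part~(2) is structurally different and avoids any summation. Since the $A_j$ (hence the $C_j$) are pairwise disjoint, $\sum_j\P[C_j]=\P\bigl[\bigcup_j C_j\bigr]$. Moreover every $C_j$ is contained in the single event
\[
\max_{u\in \L_{r,(\log(j_0+1))^{10}r^{2/3}}}T_{u,\bn}\ \geq\ T_{u^{\rm{res}}_{\max},\bn}-1000(\log(j_0+1))^2r^{1/3},
\]
because the interval $\L_{r,(\log(j+2))^{10}r^{2/3}}$ and the slack $1000(\log(j+2))^2r^{1/3}$ are both monotone in $j$. One then applies Proposition~\ref{prop:bdi} \emph{once}, with $\varepsilon=2(\log(j_0+1))^{10}r^{2/3}(n-r)^{-2/3}$. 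The exponents $10$ and $2$ in \eqref{decayneeded} are chosen precisely so that, for $M$ a sufficiently large absolute constant (forcing $\log(j_0+1)$ to exceed a universal threshold), one has $\sqrt{\varepsilon}\,(n-r)^{1/3}>1000(\log(j_0+1))^2r^{1/3}$ --- this is exactly \eqref{e:mchoice}. The lone prefactor $(\log(j_0+1))^{10}$ is polylogarithmic in $n/r$ and is harmlessly absorbed into the sub-polynomial correction $\exp\bigl(C(\log(n/r))^{5/6}\bigr)$.
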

\begin{figure}[htbp!]
\includegraphics[width=.7\textwidth]{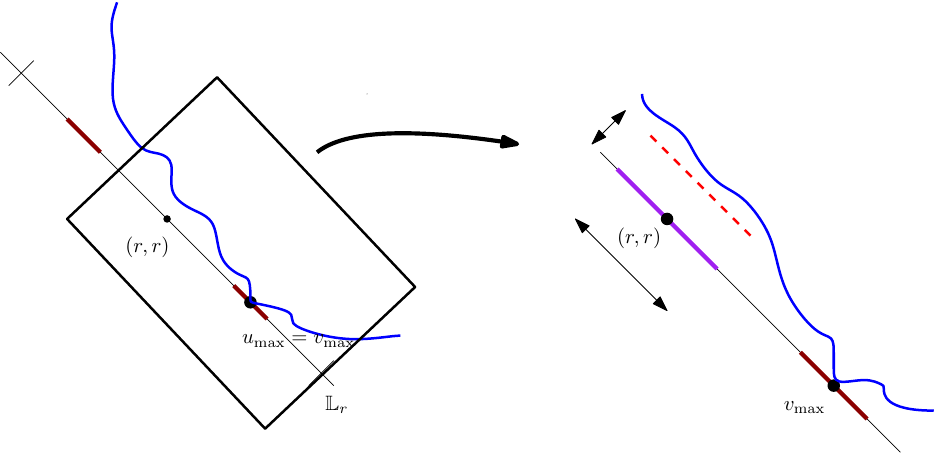} 
\caption{The figure depicts the event $B_j$ which asks that in addition to $A_j,$ the profile $\{T_{v,\mathbf{n}}\}$ has a characteristic ``almost" diffusive  drop characterized by a Brownian motion around its maxima $u^{\rm{res}}_{\max}$ and in particular takes significantly smaller values around $v=\mathbf{r}$.}
\label{f:jpaths2}
\end{figure}

\begin{proof}
These estimates are applications of Proposition \ref{prop:bdi}. Here we are working on the profile from $\L_r$ to $\bn$, which has the same law as the profile from $\L_0$ to $\bn-\br$. We can assume that $\frac{r}{n}$ is small enough so that $2(\log (j_0+1))^{10}r^{2/3}(n-r)^{-2/3} < \frac{1}{18}$, since otherwise the statement holds by taking $C$ large enough.

For (i) we divide $A_j$ into events $$A_{j,i,\pm}:=\{u^{\rm{res}}_{\max}=\br+(u_1,-u_1): u_1 \in \pm[\frac{1}{2}ir^{2/3}, \frac{1}{2}(i+1)r^{2/3})\},$$
for $i=j^{101}, \cdots, (j+1)^{101}-1$.
For each of these events, we apply Proposition \ref{prop:bdi} with the discrete intervals $$I_{i,\pm}=[\frac{1}{2}ir^{2/3}, \frac{1}{2}(i+1)r^{2/3}) \cap \Z,\;\;\varepsilon = \frac{1}{2}r^{2/3}(n-r)^{-2/3},$$ and $\iota = \frac{1}{2}\delta^{2/3}$
(note that the assumption above ensures that $0 < \iota < \varepsilon < \frac{1}{18}$).
Then for each of $i$ we get, using Proposition \ref{prop:bdi} ({recall that $C$ appearing in Proposition \ref{prop:bdi} is independent of $\iota, \varepsilon$ and depends only on $M$}), that $$\P(A_{j,i,\pm})\le C'r^{2/3}(n-r)^{-2/3}\exp\left(C'|\log(r^{2/3}(n-r)^{-2/3})|^{5/6}\right),$$
for some absolute constant $C'$, {when $n=n(\delta,M)$ is large enough}.
Our first estimate follows by taking $C=100C'$ and summing over all $i=j^{101}, \cdots, (j+1)^{101}-1$.

\medskip

For (ii), note that all these $C_j$ are mutually disjoint, and $\bigcup_{j=0}^{j_0-1} C_j$ is contained in the event:
\begin{equation}  \label{eq:lem:bdac:pf1}
\max_{u \in \L_{r,(\log (j_0+1))^{10} r^{2/3}}} T_{u, \bn} \geq T_{u^{\rm{res}}_{\max}, \bn} - 1000(\log(j_0+1))^2r^{1/3}.
\end{equation}
We apply Proposition \ref{prop:bdi} with the following choices:  $$I=[-(\log (j_0+1))^{10}r^{2/3}, (\log (j_0+1))^{10}r^{2/3}] \cap \Z,\,\,
\varepsilon = 2(\log (j_0+1))^{10}r^{2/3}(n-r)^{-2/3}, \text{ and } \iota = \delta^{2/3}.$$
{Note that $j_0\geq (2^{2/3} M)^{1/101}$ and hence by choosing $M$ to be a sufficiently large absolute constant, we have 
\begin{equation}
\label{e:mchoice}
\sqrt{\varepsilon} >1000(\log(j_0+1))^2r^{1/3}n^{-1/3};
\end{equation}
and the assumption above ensures that $0 < \iota < \varepsilon < \frac{1}{18}$}.
Then when $n$ is large enough, we get the desired bound for the probability of the event \eqref{eq:lem:bdac:pf1} and in turn the required upper bound for $\sum_{j=0}^{j_0-1}\P[C_j]$.
\end{proof}

\subsection{Decomposition of the Covariance}\label{covdec}
To control the covariance $\Cov (X_r,X_n)$ our strategy is to first condition on $\cf_r$, then decompose the conditional covariance depending on which of the events $A_j$ occurs on the conditioned environment (observe that for any configuration of weights measurable with respect to $\cf_r$ exactly one of $A_j$, for $j=0,1,\ldots, j_0-1$ holds) and finally average over the conditioned environment. For notational convenience we shall denote conditional covariance given any $\sigma$-algebra $\cg$ by $\Cov_{\cg}(\cdot, \cdot)$. We shall use the following elementary fact: if random variables $Y$ and $Z$ are such that $Y$ is independent of some $\sigma$-algebra $\cg$ then $$\Cov(Y,Z)=\E[YZ]-\E Z\E Y = \E[\E[YZ\mid \cg]-\E[Z\mid \cg]\E[Y\mid \cg]]=\E[\Cov_{\cg}(Y, Z)].$$
Using this and the fact that $X_r$ is independent of the $\sigma$-algebra $\cf_r$, we get
\begin{eqnarray}
\label{e:decomp}
\Cov (X_r,X_n)&=& \E[\Cov_{\cf_r}((X_r,X_n))]= \E \left[\sum_{j=0}^{j_0-1} \left[\don_{A_j}\Cov_{\cf_r}((X_r,X_n)) \right]\right]\nonumber\\
&=&\sum_{j=0}^{j_0-1} \E[\don_{B_j}\Cov_{\cf_r}((X_r,X_n))]+ \sum_{j=0}^{j_0-1} \E[\don_{C_j}\Cov_{\cf_r}((X_r,X_n))].
\end{eqnarray}
To prove Theorem \ref{t:upper} we need to control each of the summands in the RHS above. This is done in the next two lemmas. We start with upper bounding $\E[\don_{B_j} \Cov_{\cf_r}((X_r,X_n))]$.

\begin{lemma}
\label{l:bjbound}
In the above set-up, there exists an absolute constant $C$ such that for all $j=0,1,2,\ldots, j_0-1$ we have 
$$\E[\don_{B_j} \Cov_{\cf_r}((X_r,X_n))] \leq  C(j+1)^{-1000}r^{2/3}\P[B_j]
\biggl((\log(j+2))^2-\log(\P[B_j]) + \log(n/r)\biggr)^2.$$
\end{lemma}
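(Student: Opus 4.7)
The plan is to isolate $X_r$ and $X_n$ using a tube below $\L_r$ around the diagonal from $\mathbf{0}$ to $\br$. Define
$$U_j := \{(x,y) \in \Z^2 : 0 \le x+y \le 2r,\ |x-y| \le \log^{10}(j+2)\,r^{2/3}\},$$
and let $X_{r,j}$ denote the maximum weight of paths from $\L_0$ to $\br$ constrained to lie in $U_j$, and $X_{n,\bar{j}}$ the maximum weight of paths from $\L_0$ to $\bn$ that do not intersect $U_j$; these are the quantities depicted in Figure \ref{f:jpaths}. Since $U_j$ lies strictly below $\L_r$, conditional on $\cf_r$ the quantities $X_{r,j}$ and $X_{n,\bar{j}}$ depend on disjoint blocks of the remaining randomness (weights inside and outside $U_j$, both below $\L_r$), hence are conditionally independent, so $\mathrm{Cov}_{\cf}((X_{r,j}, X_{n,\bar{j}})) = 0$. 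Writing $X_r = X_{r,j} + (X_r - X_{r,j})$ and $X_n = X_{n,\bar{j}} + (X_n - X_{n,\bar{j}})$ with both differences nonnegative, I obtain
$$\mathrm{Cov}_{\cf}((X_r, X_n)) = \mathrm{Cov}_{\cf}((X_{r,j}, X_n - X_{n,\bar{j}})) + \mathrm{Cov}_{\cf}((X_r - X_{r,j}, X_{n,\bar{j}})) + \mathrm{Cov}_{\cf}((X_r - X_{r,j}, X_n - X_{n,\bar{j}})),$$
and I will control each summand by the conditional Cauchy--Schwarz inequality.

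The two \textbf{bulk} variances are handled using Section \ref{s:prelim}: since $X_{r,j}$ is independent of $\cf_r$, a variant of Proposition \ref{p:constrained} combined with Theorem \ref{t:onepoint} gives $\Var_{\cf}(X_{r,j}) = \Var(X_{r,j}) = O(r^{2/3})$; similarly $\Var_{\cf}(X_{n,\bar{j}})$ is dominated by the $X_v$-fluctuations of competitive $v \in \L_r$ (those with $X_v + T_{v,\bn}$ near the max), and truncating via the parabolic decay of $T_{v,\bn}$ around $u^{\rm{res}}_{\max}$ gives a bound $\le Cr^{2/3}$ times a log factor that (after averaging against $\don_{B_j}$) absorbs $\log(n/r)$ (from the number of $(n-r)^{2/3}$-correlation patches competing within the parabolic envelope) and $-\log\P[B_j]$ (from the inequality $\E[\don_E Z^p] \le C\P[E](1 + (-\log\P[E]))^p$ for nonnegative $Z$ with exponential tails).

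The crucial \textbf{smallness} inputs are the two defect estimates. For $X_r - X_{r,j}$: this vanishes unless $\Gamma_{\br}$ exits $U_j$, and by Proposition \ref{l: prep1-tf} (applied with $\phi = \log^{10}(j+2)$) this event has probability at most $\exp(-c\log^{10}(j+2))$; combining with fourth-moment bounds on $X_r - 4r$ from Theorem \ref{t:onepoint} via Cauchy--Schwarz yields $\E_{\cf}[(X_r - X_{r,j})^2] \le C(j+1)^{-2001}r^{2/3}$. For $X_n - X_{n,\bar{j}}$ on the event $B_j$: any path attaining $X_n > X_{n,\bar{j}}$ must intersect $U_j$, so either (i) it crosses $\L_r$ at some $v \in U_j \cap \L_r$, or (ii) it crosses $\L_r$ outside $U_j$ and subsequently re-enters, which is handled by another application of Proposition \ref{l: prep1-tf}. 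In case (i), bounding $X_{n,\bar{j}}$ from below by the weight through $u^{\rm{res}}_{\max}$ (which sits outside $U_j$, with its below-$\L_r$ portion having typical transversal fluctuation $O(r^{2/3})$ that avoids $U_j$, again by Proposition \ref{l: prep1-tf}) yields, using the $B_j$ decay condition $T_{v,\bn} \le T_{u^{\rm{res}}_{\max},\bn} - 1000(\log(j+2))^2 r^{1/3}$,
$$X_n - X_{n,\bar{j}} \le \Bigl(\max_{v \in U_j \cap \L_r} X_v - X_{u^{\rm{res}}_{\max}} - 1000(\log(j+2))^2 r^{1/3}\Bigr)_+ + (\text{transversal error}).$$
Since $\max_{v \in U_j \cap \L_r} X_v - 4r$ has sub-exponential upper tails on scale $\log\log(j+2)\,r^{1/3}$ (Lemma \ref{l:l2lbd} for $j \ge 1$, Proposition \ref{p:l2l} for small $j$) and $X_{u^{\rm{res}}_{\max}} - 4r$ has Gaussian-type lower tails by Theorem \ref{t:onepoint}, integrating tails against the deterministic $(\log(j+2))^2 r^{1/3}$ gap gives $\E_{\cf}[(X_n - X_{n,\bar{j}})^2 \don_{B_j}] \le C(j+1)^{-2001}r^{2/3}$.

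Combining the above by applying conditional Cauchy--Schwarz to each of the three summands and then taking expectation against $\don_{B_j}$ produces the lemma's bound: the $(j+1)^{-1000}$ is the square root of the super-polynomial defect decay (so the exponent $2001$ above suffices), the $\P[B_j]$ is the expectation of the indicator, and the factor $\bigl((\log(j+2))^2 - \log\P[B_j] + \log(n/r)\bigr)^4$ gathers the log contributions from the conditional variance of $X_{n,\bar{j}}$ and from the tail integrations. The main obstacle is the second defect estimate: one must carefully isolate the competing path through $u^{\rm{res}}_{\max}$, control paths with atypically large transversal fluctuation via Proposition \ref{l: prep1-tf}, and combine the deterministic $(\log(j+2))^2 r^{1/3}$ gap afforded by $B_j$ with the line-to-point tail bounds to extract the required super-polynomial decay in $j$.
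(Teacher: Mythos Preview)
Your decomposition via $U_j$, $X_{r,j}$, $X_{n,\bar j}$ and the three-term splitting of $\Cov_{\cf}(X_r,X_n)$ are exactly what the paper does, and the two defect mechanisms you isolate (transversal fluctuation for $X_r-X_{r,j}$, the $B_j$ gap $1000(\log(j+2))^2 r^{1/3}$ for $X_n-X_{n,\bar j}$) are precisely Lemmas \ref{l:transversal1} and \ref{l:transversal2}. The paper does not, however, use conditional Cauchy--Schwarz. It bounds each piece by an unconditional expectation of a product, e.g.\ $\E[\don_{B_j}|X_{r,j}-\E X_{r,j}|\,|X_n-X_{n,\bar j}|]$, writes this as $r^{2/3}\int_0^\infty \P[\don_{B_j}|A||B|>xr^{2/3}]\,dx$, and splits the integral at a cutoff $x_c$: for $x<x_c$ the integrand is $\le \P[B_j,\,X_n\neq X_{n,\bar j}]\le C\exp(-c(\log(j+2))^2)\P[B_j]$, while for $x>x_c$ it is $\le Cn^{4/3}r^{-4/3}\exp(-cx^{1/4})$ from the concentration Lemmas \ref{l:rjconc} and \ref{l:njconc}. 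Matching the two regimes gives $x_c=\bigl((\log(j+2))^2-\log\P[B_j]+\log(n/r)\bigr)^4$, and the integral is $\lesssim \exp(-c(\log(j+2))^2)\,\P[B_j]\,x_c\,r^{2/3}$, which is the stated bound.

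Your route has a gap in how the factor $\P[B_j]$ is produced. You assert $\don_{B_j}\E_{\cf}[(X_n-X_{n,\bar j})^2]\le C(j+1)^{-2001}r^{2/3}$ as an almost-sure bound and then say ``$\P[B_j]$ is the expectation of the indicator''. The pointwise bound is not available: $B_j$ only constrains $T_{v,\bn}$ for $v\in\L_{r,Mn^{2/3}}$, so on $B_j$ there is no uniform control of the conditional upper tail of $X_n-X_n^r$ (this is exactly why Lemma \ref{l:njconc} carries the prefactor $n^{4/3}r^{-4/3}$ and why its proof introduces the auxiliary $\cf_r$-events $E_x,F_x$). The same objection applies to a pointwise bound on $\Var_{\cf}(X_{n,\bar j})$. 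What one can prove is the averaged estimate $\E[\don_{B_j}(X_n-X_{n,\bar j})^2]\le C\exp(-c(\log(j+2))^2)\,\P[B_j]\,r^{2/3}\bigl(\cdots\bigr)^4$, obtained by the cutoff-integral above (applied with $E=B_j\cap\{X_n\neq X_{n,\bar j}\}$ in your $\E[\don_EZ^p]$ inequality, and with the sub-exponential tail from Lemma \ref{l:njconc}). With this correction the Cauchy--Schwarz layer does close (a second Cauchy--Schwarz on $\E[\don_{B_j}\sqrt{\Var_{\cf}(\cdot)}]$ recovers the missing $\sqrt{\P[B_j]}$), but at that point you are doing the paper's computation inside a redundant wrapper.
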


The next lemma upper bounds the terms $\E[\don_{C_j} \Cov_{\cf_r}((X_r,X_n))]$.

\begin{lemma}
\label{l:cjbound}
In the above set-up, there exists an absolute constant $C$ such that for all $j=0,1,2,\ldots, j_0-1$ we have 
$$\E[\don_{C_j} \Cov_{\cf_r}((X_r,X_n))] \leq Cr^{2/3}\P[C_j] \biggl(-\log(\P[C_j])+\log(n/r)\biggr)^{2}.$$
\end{lemma}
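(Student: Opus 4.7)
The plan is to bound the conditional covariance by Cauchy--Schwarz and then crudely control the conditional variance $\Var_\cf(X_n)$. Since $X_r$ is independent of $\cf_r$, we immediately have
\[
|\Cov_\cf(X_r,X_n)| \leq \sqrt{\Var(X_r)\,\Var_\cf(X_n)} \leq Cr^{1/3}\sqrt{\Var_\cf(X_n)},
\]
using $\Var(X_r)=O(r^{2/3})$ from Theorem \ref{t:onepoint}. The task reduces to bounding $\E[\don_{C_j}\sqrt{\Var_\cf(X_n)}]$, and the factor of $\P[C_j]$ in the desired bound should come from a Markov-type integration exploiting the smallness of $\P[C_j]$.

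To estimate $\Var_\cf(X_n)$, I use the variational representation $X_n = \max_{v\in\L_r}(X^0_v + f(v))$, where $f(v):=T_{v,\bn}$ is $\cf_r$-measurable while $\{X^0_v\}_v$ is independent of $\cf_r$. Applying $\Var_\cf(X_n) \leq \E_\cf[(X_n-\bar c)^2]$ with the $\cf_r$-measurable centering $\bar c := \mu_r + T_{u^{\rm{res}}_{\max},\bn}$ (where $\mu_r := \E X_r$), and splitting $\L_r = I_1 \sqcup I_1^c$ with $I_1 := \L_{r,Mn^{2/3}}$, I obtain two-sided bounds for $X_n - \bar c$. The lower bound $X_n \geq X^0_{u^{\rm{res}}_{\max}} + T_{u^{\rm{res}}_{\max},\bn}$ gives $X_n-\bar c \geq X^0_{u^{\rm{res}}_{\max}} - \mu_r$, of variance $O(r^{2/3})$ by translation invariance and Theorem \ref{t:onepoint}. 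For the upper bound, on $I_1$ the definition of $u^{\rm{res}}_{\max}$ yields $f(v)\leq T_{u^{\rm{res}}_{\max},\bn}$, so $\max_{v\in I_1}(X^0_v+f(v)) - \bar c \leq \max_{v\in I_1}X^0_v - \mu_r$, and Lemma \ref{l:l2lbd} applied with $\phi = M(n/r)^{2/3}$ gives $\P[\max_{v\in I_1}X^0_v > \mu_r + xr^{1/3}\log(n/r)] \leq Ce^{-cx}$. The contribution from $v\in I_1^c$ is controlled by Proposition \ref{l: prep1-tf}: the corresponding large transversal fluctuations force $T_{v,\bn}$ to be anomalously small with stretched exponential probability, so this portion can be absorbed into lower order terms.

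Combining these ingredients, $\sqrt{\Var_\cf(X_n)}$ admits a tail bound of the schematic form $\P[\sqrt{\Var_\cf(X_n)} > Cr^{1/3}(L + \log(n/r))] \leq Ce^{-cL^{1/2}}$, where the square root in the exponent arises from having squared the primary exponential upper-tail of the supremum. Substituting into
\[
\E[\don_{C_j}\sqrt{\Var_\cf(X_n)}] \leq \int_0^\infty \min\{\P[C_j],\,\P[\sqrt{\Var_\cf(X_n)}>s]\}\,ds
\]
and splitting the integral at $s_* := Cr^{1/3}(-\log\P[C_j]+\log(n/r))^2$, the $s\leq s_*$ part contributes at most $s_*\P[C_j]$ while the $s>s_*$ part is $O(r^{1/3}\P[C_j])$. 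Multiplying through by the $r^{1/3}$ from the Cauchy--Schwarz step yields the claimed bound $Cr^{2/3}\P[C_j](-\log\P[C_j]+\log(n/r))^2$.

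The hard part will be converting the one-point moderate deviation and transversal fluctuation estimates into a tail bound on $\Var_\cf(X_n)$ sharp enough to produce the quadratic logarithmic factor. In particular, one must carefully control the contribution from $v\in I_1^c$ by chaining Proposition \ref{l: prep1-tf} with the supremum bounds (Lemma \ref{l:l2lbd}) over the full spatial range, while keeping all constants universal (independent of $\delta$). The remaining manipulations are routine Markov-type integrations.
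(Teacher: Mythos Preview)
Your approach is different from the paper's and, while plausible in outline, leaves the central step underjustified. The paper never tries to control $\Var_\cf(X_n)$. Instead it observes that since $X_n^r$ is $\cf_r$-measurable and $X_r$ is independent of $\cf_r$,
\[
\Cov_\cf(X_r,X_n)=\Cov_\cf(X_r,X_n-X_n^r)=\E_\cf\bigl[(X_r-\E X_r)(X_n-X_n^r-4r)\bigr],
\]
so $\E[\don_{C_j}\Cov_\cf(X_r,X_n)]\le \E[\don_{C_j}|X_r-\E X_r|\,|X_n-X_n^r-4r|]$. This unconditional product is then integrated directly using the one-point tail for $|X_r-\E X_r|$ (Theorem~\ref{t:onepoint}) and the crucial estimate $\P[|X_n-X_n^r-4r|\ge xr^{1/3}]\le C(n/r)^{2/3}e^{-cx}$ from Lemma~\ref{l:njconc}(3). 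The $(n/r)^{2/3}$ prefactor is what produces the $\log(n/r)$ term, and splitting the integral at the $x_c$ where $C(n/r)^{2/3}e^{-cx_c^{1/2}}=\P[C_j]$ yields the quadratic factor $(-\log\P[C_j]+\log(n/r))^2$. No conditional-variance analysis is needed.

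In your route, the key claim $\P[\sqrt{\Var_\cf(X_n)} > Cr^{1/3}(L+\log(n/r))]\le Ce^{-cL^{1/2}}$ is not established. The quantity $\Var_\cf(X_n)$ is $\cf_r$-measurable, and the ingredients you cite do not combine in the way you describe: the $I_1$ contribution $\E_\cf[(\max_{v\in I_1}X^0_v-\mu_r)_+^2]$ is \emph{deterministic} (since $X^0_v$ is independent of $\cf_r$), so Lemma~\ref{l:l2lbd} gives a fixed bound $Cr^{2/3}(\log(n/r))^2$, not a tail; all the $\cf_r$-randomness of $\Var_\cf(X_n)$ sits in the $I_1^c$ piece. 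Your heuristic ``squaring an exponential tail gives $e^{-cL^{1/2}}$'' conflates the tail of $(X_n-\bar c)$ with the tail of its conditional second moment. To make this rigorous one must condition on $\cf_r$-measurable events about the profile $f$ (precisely the events $E_{x_0}\cap F_{x_0}$ in the proof of Lemma~\ref{l:njconc}), integrate the resulting conditional tail on $(X_n-\bar c)_+$, and then use \eqref{e:njconc7} to control the complement---at which point you have essentially rederived Lemma~\ref{l:njconc}(3), and the paper's direct product bound is shorter.
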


We postpone the proofs of Lemma \ref{l:bjbound} and Lemma \ref{l:cjbound} until the next subsection and show first how these estimates can be used to complete the proof of Theorem \ref{t:upper}. The arguments consist of a sequence of playing around with the above bounds and might be slightly difficult to parse at first read.

\begin{proof}[Proof of Theorem \ref{t:upper}]
In this proof, $C$ shall denote an absolute constant whose value might change from line to line.
In light of \eqref{e:decomp}, we first control $\sum_{j=0}^{j_0-1} \E[\don_{B_j}\Cov_{\cf_r}((X_r,X_n))]$.
Observe that by Lemma \ref{lem:bdac} (i) we have $$\P[B_j] \leq \P[A_j] \leq  C(j+1)^{100}r^{2/3}n^{-2/3}\exp\left( C\left(\log(n/r)\right)^{5/6} \right),$$ for all $j<j_0$. Using Lemma \ref{l:bjbound},
\begin{align} \label{e:b0}
\sum_{j=0}^{j_0-1} \E[\don_{B_j}\Cov_{\cf_r}((X_r,X_n))]
\leq
Cr^{2/3}\sum_{j=0}^{j_0-1}\P[B_j](j+1)^{-1000} \left(\log^4(j+2) + \log^2(n/r)+ |\log(\P[B_j])|^2\right).
\end{align}

Splitting the RHS above by separating the first two terms from the third term in the sum, and using the upper bound on $\P[B_j],$ the first sum is bounded by
\begin{align*}
&Cr^{4/3}n^{-2/3}\exp\left( C\left(\log(n/r)\right)^{5/6} \right)\sum_{j=0}^{j_0-1}
\left((\log(j+2))^4 + (\log(n/r))^2\right)(j+1)^{-900}
\\
&\leq Cr^{4/3}n^{-2/3}\exp\left( C\left(\log(n/r)\right)^{5/6} \right).
\end{align*}
For the second sum, note that $|\log(x)|^2x$ increases for $x \in (0, e^{-2})$, and decreases for $x \in (e^{-2}, 1)$.
Thus if $$C'(j+1)^{100}r^{2/3}n^{-2/3}\exp\left( C'\left(\log(n/r)\right)^{5/6} \right) > e^{-2},$$ we have
\begin{align*}
&(j+1)^{-1000}r^{2/3}|\log(\P[B_j])|^2\P[B_j]
\\
&\leq
(j+1)^{-1000}r^{2/3}\cdot
2^2 e^{-2}
\leq 4 C'(j+1)^{-900}r^{4/3}n^{-2/3}\exp\left( C'\left(\log(n/r)\right)^{5/6} \right), 
\end{align*}
and otherwise we have
\begin{align*}
(j+1)^{-1000}r^{2/3}|\log(\P[B_j])|^2\P[B_j]
\leq
\left|\log\left(C'(j+1)^{100}r^{2/3}n^{-2/3}\exp\left( C'\left(\log(n/r)\right)^{5/6}\right)\right)\right|^2\\
\times
C'(j+1)^{-900}r^{4/3}n^{-2/3}\exp\left( C'\left(\log(n/r)\right)^{5/6} \right),
\\
\leq
C'(j+1)^{-900}r^{4/3}n^{-2/3}\exp\left( C'\left(\log(n/r)\right)^{5/6}\right)
\left|\frac{2}{3}\log(n/r) + \log(C')\right|^2.
\end{align*}
In each case, by summing over $j=0,1,\ldots, j_0-1$,
and using the value of $j_0$ (see \eqref{indexchoice}) we can bound the second sum on the right hand side of \eqref{e:b0}  by $$C'''r^{4/3}n^{-2/3}\exp\left( C'''\left(\log(n/r)\right)^{5/6} \right).$$
In conclusion, we have
\begin{equation}
\label{e:b}
\sum_{j=0}^{j_0-1} \E[\don_{B_j}\Cov_{\cf_r}((X_r,X_n))] \leq Cr^{4/3}n^{-2/3}\exp\left( C\left(\log(n/r)\right)^{5/6} \right),
\end{equation}
for some absolute constant $C$.
Having bounded $\sum_{j=0}^{j_0-1} \E[\don_{B_j}\Cov_{\cf_r}((X_r,X_n))]$, we next control $\sum_{j=0}^{j_0-1} \E[\don_{C_j}\Cov_{\cf_r}((X_r,X_n))]$.
By Lemma \ref{l:cjbound} it is bounded by
$$
C\sum_{j=0}^{j_0-1}r^{2/3}\P[C_j] \biggl(-\log(\P[C_j])+\log(n/r) + 2\biggr)^{2}
$$
for some absolute constant $C$.
Note that $x\left(-\log(x) + \log(n/r) + 2\right)^{2}$ is concave for $x \in (0, 1)$.
By Jensen's inequality, this sum is bounded by
$$
Cr^{2/3}\left(\sum_{j=0}^{j_0-1}\P[C_j]\right) \left(-\log\left( \frac{\sum_{j=0}^{j_0-1}\P[C_j]}{j_0} \right)+\log(n/r) + 2\right)^{2}.
$$
Using Lemma \ref{lem:bdac} (ii), {
we have $\sum_{j=0}^{j_0-1}\P[C_j]\leq Cr^{2/3}n^{-2/3}
\exp(C\left(\log(n/r)\right)^{5/6})$.
Noting that $x\left(-\log(x)+\log(j_0)+\log(n/r)+2\right)^{2}$ is increasing for $x \in (0, 1)$, 
if $$C'r^{2/3}n^{-2/3}\exp(C'\left(\log(n/r)\right)^{5/6}) > 1,$$
\begin{align*}
\left(\sum_{j=0}^{j_0-1}\P[C_j]\right) \left(-\log\left( \frac{\sum_{j=0}^{j_0-1}\P[C_j]}{j_0} \right)+\log(n/r) + 2\right)^{2}
\leq
\left(\log(j_0)+\log(n/r) + 2\right)^{2}
\\
\leq
C'r^{2/3}n^{-2/3}
\exp(C'\left(\log(n/r)\right)^{5/6})\left(\log(j_0)+\log(n/r) + 2\right)^{2};
\end{align*}
and otherwise
\begin{align*}
&\left(\sum_{j=0}^{j_0-1}\P[C_j]\right) \left(-\log\left( \frac{\sum_{j=0}^{j_0-1}\P[C_j]}{j_0} \right)+\log(n/r) + 2\right)^{2}
\\
&\leq
C'r^{2/3}n^{-2/3}
\exp(C'\left(\log(n/r)\right)^{5/6})
\left(-\log\left( \frac{C'r^{2/3}n^{-2/3}
\exp(C'\left(\log(n/r)\right)^{5/6})}{j_0} \right)+\log(n/r)+2\right)^{2}.
\end{align*}
In any case, we have
\begin{align}
\label{e:c2}
&\sum_{j=0}^{j_0-1} \E[\don_{C_j}\Cov_{\cf_r}((X_r,X_n))]
\leq
C'C''r^{4/3}n^{-2/3}
\exp\left(C'\left(\log(n/r)\right)^{5/6}\right)
\\
\nonumber
&\times
\left(\left|\log\left( C'r^{2/3}n^{-2/3}
\exp(C'\left(\log(n/r)\right)^{5/6}) \right)\right|+ \log(j_0)+\log(n/r)+2\right)^{2},
\\
\nonumber
&\leq
C'C''r^{4/3}n^{-2/3}
\exp\left(C'\left(\log(n/r)\right)^{5/6}\right)
\left(|C'+\log(C')| + \frac{1}{101}\log(M) +\frac{5}{3}\log(n/r)+2\right)^{2},
\\
\nonumber
&\leq
Cr^{4/3}n^{-2/3}\exp\left( C\left(\log(n/r)\right)^{5/6} \right),
\end{align}
for some absolute constant $C$.} Using \eqref{e:b} and \eqref{e:c2} together with \eqref{e:decomp} gives the desired result.  
\end{proof}

It remains to prove Lemma \ref{l:bjbound} and Lemma \ref{l:cjbound}. We need some preparatory work for the proofs of these results. Our primary objective (for Lemma \ref{l:bjbound}) is to show that on $B_j$ (for $j$ large) covariance is small. As indicated in Section \ref{iop}, the main idea here to approximate $X_n$ and $X_r$ by some other variables which are independent, and then control the error incurred in the approximation. We start by proving some required estimates about the geometry of the optimal paths that are consequences of the results developed in Section \ref{s:prelim}.

\subsection{Approximation and Concentration Estimates}
Consider the set-up in Lemma \ref{l:bjbound}. For $j=0,1,2,\ldots, j_0-1$, let $U_j$ denote the rectangle (see Figure \ref{f:jpaths} for an illustration), 
\begin{equation}\label{recdef20}
U_j:=\biggl\{u=(u_1,u_2)\in \Z^2: 0\leq u_1+u_2 < 2r, |u_1-u_2|\leq (\log(j+2))^{10} r^{2/3}\biggr\}.
\end{equation}
For each ${v\succ \L_0}$ (recall the notation from below \eqref{linedef}), let $X_{v,j}$ denote the maximum passage time among all paths between $\L_0$ to $v$ that are completely contained in the rectangle $U_j$. Similarly, let $X_{v,\bar{j}}$ denote the maximum passage time among all paths from $\L_0$ to $v$ that do not intersect $U_j$. 
Also following our usual convention, we use $X_{r,j}$ to denote $X_{\br, j}$ and $X_{n,\bar{j}}$ to denote $X_{\bn, \bar{j}}$.
Our objective is to prove first that with high probability $X_{r,j}=X_r$ and  that with high conditional probability $X_{n,\bar{j}}=X_n$ where the conditioning is done on $\cf_r$ such that $\don_{B_j}=1$. These are the following two lemmas.

\begin{lemma}
\label{l:transversal1}
The exists $C,c>0$ such that for all $j<j_0$, we have 
$$\P[X_{r,j}\neq X_r]\leq C\exp\left(-c(\log(j+2))^{10}\right).$$
\end{lemma}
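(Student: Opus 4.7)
The plan is to view this lemma as a direct consequence of the transversal fluctuation estimates already assembled in Section~\ref{s:prelim}, specifically Lemma~\ref{l:transversal}(2) (or equivalently Proposition~\ref{l: prep1-tf}). The key observation is that the event $\{X_{r,j} \neq X_r\}$ is exactly the event that the unconstrained line-to-point geodesic $\Gamma_\br$ exits the strip $|u_1-u_2| \leq (\log(j+2))^{10} r^{2/3}$ at some point, i.e., has transversal fluctuation exceeding $(\log(j+2))^{10} r^{2/3}$ before reaching $\br$.

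First, I would note that any up-right path from $\L_0$ to $\br = (r,r)$ is automatically contained in the slab $\{0 \leq u_1 + u_2 \leq 2r\}$, so the additional longitudinal constraint in the definition of $U_j$ is vacuous for such paths. Consequently $X_{r,j}$ coincides with $X_\br^U$ where $U$ is the strip $|u_1 - u_2| \leq (\log(j+2))^{10} r^{2/3}$ appearing in Lemma~\ref{l:transversal}. Since $\br \in \L_r$ satisfies $|r - r| = 0 \leq r^{2/3}$, the hypothesis of Lemma~\ref{l:transversal}(2) is met, and applying it with $\phi = (\log(j+2))^{10}$ yields
\[
\P[X_{r,j} \neq X_r] = \P[X_\br^U \neq X_\br] \leq C \exp\bigl(-c(\log(j+2))^{10}\bigr).
\]

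For small values of $j$, the quantity $(\log(j+2))^{10}$ may fail to be ``sufficiently large'' in the sense required by Lemma~\ref{l:transversal}(2); but in that regime $j$ ranges over a bounded set and the probability is trivially bounded by $1$, which can be absorbed into the constant $C$ by enlarging it if necessary. There is no real obstacle here: the content of the lemma lies entirely in the input Lemma~\ref{l:transversal}(2), whose proof, via the chaining argument of Proposition~\ref{l: prep1-tf}, already handles the core probabilistic estimate. Thus the proof reduces to a short unpacking of definitions plus one invocation of the quoted result.
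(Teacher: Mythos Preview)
Your proposal is correct and takes essentially the same approach as the paper, which simply states that the lemma is an immediate consequence of Lemma~\ref{l:transversal}. Your write-up supplies the natural unpacking of that one-line deduction, including the observation that the longitudinal constraint in $U_j$ is vacuous for paths to $\br$ and the adjustment of $C$ for small $j$.
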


This is an immediate consequence of Lemma \ref{l:transversal}.

\begin{lemma}
\label{l:transversal2}
The exists $C,c>0$ such that for all $j<j_0$, we have 
$$\don_{B_j}\P[X_{n,\bar{j}}\neq X_n\mid \cf_{r}]\leq C\exp\left(-c(\log(j+2))^2\right).$$
\end{lemma}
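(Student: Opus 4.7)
The plan rests on the fact that $X_n \neq X_{n,\bar j}$ if and only if the almost surely unique geodesic $\Gamma_{\bn}$ intersects $U_j$. Let $u_0$ be the crossing point of $\Gamma_{\bn}$ with $\L_r$, equivalently $u_0 = \argmax_{u \in \L_r}(X_u + T_{u,\bn})$, and write $u^* := u_{\max}^{\rm res}$. I would split into Case (A), where $u_0 \in \L_{r,(\log(j+2))^{10} r^{2/3}}$ (so that the step of $\Gamma_{\bn}$ just below $u_0$ automatically lies in $U_j$), and Case (B), where $u_0$ lies outside this strip yet the portion of $\Gamma_{\bn}$ below $\L_r$ still dips into $U_j$.

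In Case (A), on $B_j$ the decay condition gives $T_{u_0,\bn} \leq T_{u^*,\bn} - 1000(\log(j+2))^2 r^{1/3}$; combined with $X_n = X_{u_0} + T_{u_0,\bn} \geq X_{u^*} + T_{u^*,\bn}$, this forces $\max_{u \in \L_{r,(\log(j+2))^{10} r^{2/3}}} X_u - X_{u^*} \geq 1000(\log(j+2))^2 r^{1/3}$. Since $u^*$ is $\cf_r$-measurable while $\{X_u\}_{u\in\L_r}$ is independent of $\cf_r$, this conditional probability is bounded by
\[
\P\!\left[\max_{u \in \L_{r,(\log(j+2))^{10} r^{2/3}}} X_u \geq 4r + 500(\log(j+2))^2 r^{1/3}\right] + \P\!\left[X_{u^*} \leq 4r - 500(\log(j+2))^2 r^{1/3}\right].
\]
The first term is controlled by decomposing the strip into $(\log(j+2))^{10}$ subintervals of length $2r^{2/3}$, applying Proposition \ref{p:l2l} with $x = 500(\log(j+2))^2$ on each and union-bounding; the polynomial prefactor is absorbed into the exponential, yielding $C\exp(-c(\log(j+2))^2)$. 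The second term is bounded by $C\exp(-c(\log(j+2))^6)$ via translation invariance and Theorem \ref{t:onepoint}(4), easily enough.

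In Case (B), writing $u_0 = \br + (b,-b)$ with $|b| > (\log(j+2))^{10} r^{2/3}$ and without loss of generality $b > 0$, the optimal starting point on $\L_0$ for a geodesic to $u_0$ is near $(b,-b)$, so the straight-line interpolation keeps $u_1 - u_2 \approx 2b$ throughout the portion below $\L_r$; hitting any $w \in U_j$ (with $|w_1-w_2| \leq (\log(j+2))^{10} r^{2/3}$) thus demands transversal fluctuation of order $b$. I would dyadically decompose $|b| \in [2^k (\log(j+2))^{10} r^{2/3}, 2^{k+1} (\log(j+2))^{10} r^{2/3})$ for $k \geq 0$, and separately treat $|b| > Mn^{2/3}$ via a global transversal fluctuation estimate for $\Gamma_{\bn}$ on scale $n$. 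At each dyadic scale, invoking Proposition \ref{l: prep1-tf} (or adapting the chaining of Lemma \ref{l:transversal-general}) with parameter $\phi \sim 2^k (\log(j+2))^{10}$ yields $\exp(-c \cdot 2^k (\log(j+2))^{10})$; geometric summation gives $C\exp(-c(\log(j+2))^{10})$, comfortably dominated by the Case (A) bound. The main obstacle is Case (B): Proposition \ref{l: prep1-tf} as stated concerns paths terminating in $\L_{r,r^{2/3}}$, so the transversal fluctuation estimate must be reapplied with carefully chosen scales (mirroring the chaining of Lemma \ref{l:transversal-general}) to uniformly control $u_0$ ranging over all dyadic distances from $\br$, and the very-far regime $|u_0 - \br| > Mn^{2/3}$ requires a separate moderate deviation argument for the full geodesic $\Gamma_{\bn}$.
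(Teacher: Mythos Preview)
Your decomposition into Cases (A) and (B) matches the paper's, and your Case~(A) argument is essentially identical to the paper's: on $B_j$, a crossing point $u_0$ inside $\L_{r,(\log(j+2))^{10}r^{2/3}}$ forces $X_{u_0}-X_{u^*}>1000(\log(j+2))^2 r^{1/3}$, which you control via Proposition~\ref{p:l2l} and a lower tail bound for $X_{u^*}$.

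Where you diverge is Case~(B). Your dyadic-in-$|b|$ scheme, applying a translated Proposition~\ref{l: prep1-tf} on each scale and summing, can be made to work and yields the stronger bound $\exp(-c(\log(j+2))^{10})$; the obstacles you flag (adapting the endpoint interval, handling the far regime) are real but surmountable by covering each dyadic annulus with $O(2^k(\log(j+2))^{10})$ intervals of length $r^{2/3}$ and union-bounding. The paper, however, avoids all of this with a one-line \emph{planarity/ordering} observation: if $u_0$ lies to the right of the strip and $\Gamma_{u_0}$ enters $U_j$, then by the ordering of line-to-point geodesics the geodesic $\Gamma_{\br+u_*}$ to the boundary point $u_*=(\lceil(\log(j+2))^{10}r^{2/3}\rceil,-\lceil(\log(j+2))^{10}r^{2/3}\rceil)$ must also enter $U_j$ (it is sandwiched between $\Gamma_{u_0}$ on its right and its own endpoint just outside $U_j$). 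Thus Case~(B) reduces to the transversal fluctuation of the geodesic to two \emph{fixed} points $\br\pm u_*$, which by translation invariance is exactly Lemma~\ref{l:transversal1}. This eliminates the dyadic decomposition, the rechaining, and the separate far-regime argument in one stroke. Your route is correct but works harder than necessary; the ordering trick is worth internalizing.
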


\begin{proof}
We assume that $j > j_1$ for some (large) absolute constant $j_1$, since otherwise the conclusion follows by taking $C$ large enough.
Recall that $u_0=(x_0,y_0)$ denotes the unique point at which $\Gamma_{\bn}$ intersects $\L_r$.
The event $X_{n,\bar{j}}\neq X_n$ is equivalent to $\Gamma_{u_0}\cap U_j \neq \emptyset$,
and implies either
\begin{itemize}
\item $|x_0-y_0| < 2(\log(j+2))^{10} r^{2/3}$ or,
\item $\Gamma_{\br+u_*}\bigcap U_j \neq \emptyset$  for  $u_*= \pm(\lceil (\log(j+2))^{10} r^{2/3} \rceil,-\lceil (\log(j+2))^{10} r^{2/3} \rceil ).$
\end{itemize}
Under the event $B_j$, if $|x_0-y_0| < 2(\log(j+2))^{10} r^{2/3}$, we must have
$$
\max_{u \in \L_{r,(\log(j+2))^{10} r^{2/3}}} X_{u} > X_{u^{\rm{res}}_{\max}} + 1000(\log(j+2))^2r^{1/3},
$$
since on the event $B_j$,  if $|x_0-y_0| < 2(\log(j+2))^{10} r^{2/3}$, the polymer $\Gamma_{(x_{0},y_0)}$ must make for the deficit of weights between $\Gamma_{(x_0,y_0),\bn}$ and $\Gamma_{u^{\rm{res}}_{\max},\bn}$ induced by the event $B_j.$
By Proposition \ref{p:l2l}, if $j_1$ is taken large enough, for any $i \in \Z$ we have,
$$
\P\left[\max_{u \in (i,-i)+ \L_{r,r^{2/3}}}X_{u} > X_{u^{\rm{res}}_{\max}} + 1000(\log(j+2))^2r^{1/3}\right] \leq C'\exp\left(-c'(\log(j+2))^2\right),
$$
for some constants $C', c' > 0$.
Thus by dividing $\L_{r,(\log(j+2))^{10} r^{2/3}}$ into translations of $\L_{r,r^{2/3}}$, we have

\begin{eqnarray*}
\P[|x_0-y_0| < 2(\log(j+2))^{10} r^{2/3}] &\leq & \P\left[\max_{u \in \L_{r,(\log(j+2))^{10} r^{2/3}}} X_{u} > X_{u^{\rm{res}}_{\max}} + 1000(\log(j+2))^2r^{1/3}\right]\\
&\leq & 3C'(\log(j+2))^{10} \exp\left(-c'(\log(j+2))^2\right).
\end{eqnarray*}
Recalling \eqref{recdef20} note that by translation invariance of the underlying field  Lemma \ref{l:transversal1} also implies that for each choice of $u_*$ we get that the probability of $\Gamma_{\br+u_*}\bigcap U_j \neq \emptyset$ is bounded by $C''\exp\left(-c''(\log(j+2))^{10}\right)$, for some constants $C'', c'' > 0$.

Thus we have
$$\don_{B_j}\P[X_{n,\bar{j}}\neq X_n\mid \cf_{r}]\leq 3C'(\log(j+2))^{10}\exp\left(-c'(\log(j+2))^{2}\right)
+2C''\exp\left(-c''(\log(j+2))^{10}\right),
$$
which implies our conclusion.
\end{proof}

We shall also need some concentration estimates for $X_{r,j}$ and $X_{n,\bar{j}}$. 
We start with the following estimates for $X_{r,j}$.
\begin{lemma}
\label{l:rjconc}
In the above set-up, there exists $C,c>0$ such that for all $j<j_0$ and for all $x>0$ sufficiently large we have the following:
\begin{enumerate}
    \item[(i)] $\P[X_r-X_{r,j}\geq xr^{1/3}] \leq C\exp(-cx)$.
    \item[(ii)] $\P[|X_{r,j}-\E X_{r,j}| \geq xr^{1/3}] \leq C\exp(-cx)$.
\end{enumerate}
\end{lemma}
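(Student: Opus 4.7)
\textbf{Plan for the proof of Lemma~\ref{l:rjconc}.} Both parts reduce to combining the one-point moderate deviation bounds of Theorem~\ref{t:onepoint} for the unconstrained passage time $X_r$ with a matching subexponential lower-tail bound for the constrained passage time $X_{r,j}$, the latter coming from the same chopping-into-independent-strips argument that underlies Proposition~\ref{p:constrained}.

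For part (1), the plan is to split
\[
\P[X_r-X_{r,j}\geq xr^{1/3}]\leq \P\bigl[X_r\geq 4r+\tfrac{x}{2}r^{1/3}\bigr]+\P\bigl[X_{r,j}\leq 4r-\tfrac{x}{2}r^{1/3}\bigr].
\]
Theorem~\ref{t:onepoint}(3) bounds the first summand by $C\exp(-c\min\{x^{3/2},xr^{1/3}\})\leq C\exp(-cx^{1/2})$ for $x$ large. For the second summand, since $U_0\subseteq U_j$ for all $j\geq 0$, monotonicity gives $X_{r,j}\geq X_{r,0}$, so it suffices to bound $\P[X_{r,0}\leq 4r-yr^{1/3}]$. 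The rectangle $U_0$ coincides with the rectangle $R_{\theta_0}$ of Section~\ref{s:prelim} for the absolute constant $\theta_0=(\log 2)^{10}$. Applying the chopping argument in the proof of Proposition~\ref{p:constrained}, but retaining the full subexponential Bernstein bound (rather than the fourth-moment consequence actually recorded there), one obtains
\[
\P\bigl[Y^{\ast}\leq 4r-yr^{1/3}\bigr]\leq C\exp(-cy)\qquad\text{for all }y\geq C',
\]
where $Y^{\ast}=\inf_{u,v} T^{U_0}_{u,v}$ is the infimum from Proposition~\ref{p:constrained} and $C,c,C'$ are absolute constants (crucially, $\theta_0$ is fixed). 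Since $X_{r,0}\geq T^{U_0}_{\mathbf{0},\br}\geq Y^{\ast}$, this gives $\P[X_{r,0}\leq 4r-(x/2)r^{1/3}]\leq C\exp(-cx)$ for $x$ large, and $e^{-cx}\leq e^{-cx^{1/2}}$.

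For part (2), the plan is first to deduce $|\E X_{r,j}-4r|\leq C_0 r^{1/3}$ for some absolute $C_0$. The upper bound $\E X_{r,j}\leq \E X_r\leq 4r+Cr^{1/3}$ follows from Theorem~\ref{t:onepoint}; the lower bound follows by integrating the tail established in part~(1). Given this, it is enough to prove the two-sided bound $\P[|X_{r,j}-4r|\geq xr^{1/3}]\leq C\exp(-cx^{1/2})$ for $x$ large. The upper tail uses $X_{r,j}\leq X_r$ and Theorem~\ref{t:onepoint}(3); the lower tail is exactly the bound above. Adjusting constants to absorb the shift $\E X_{r,j}-4r$ (of order $r^{1/3}$) then yields the claim.

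The only technical step worth flagging is extracting the quantitative exponential lower tail for $Y^{\ast}$: the statement of Proposition~\ref{p:constrained} records only a fourth-moment bound, whereas its proof invokes Bernstein's inequality for sums of independent subexponential random variables, which in fact yields an exponential tail. Re-running that computation with the fixed scale $\theta_0$ is routine, and no other obstacle is anticipated.
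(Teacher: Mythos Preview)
Your approach is correct and essentially identical to the paper's: both split into the upper tail of $X_r$ via Theorem~\ref{t:onepoint} and the lower tail of $X_{r,0}\geq T^{U_0}_{\mathbf{0},\br}$, with the paper citing Theorem~\ref{t:supinf} directly for the latter while you spell out the chopping argument underlying Proposition~\ref{p:constrained}. One small caveat: your claimed exponent $e^{-cy}$ for $Y^{\ast}$ assumes each strip's constrained passage time is genuinely subexponential, whereas Theorem~\ref{t:supinf}(3) only yields $e^{-cy^{1/2}}$ per strip (the paper's own citation of part~(2) in the proof of Proposition~\ref{p:constrained} glosses over the constraint), but since the lemma only requires $e^{-cx^{1/2}}$ this is harmless.
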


\begin{proof}
Let $X'$ denote the maximum passage time from $\mathbf{0}$ to $\br$ that are contained in $U_0$.
By Theorem \ref{t:onepoint} and Theorem \ref{t:supinf}, when $x$ is large enough we have
\begin{align*}
\P[X_{r,0} - 4r < -xr^{1/3}] &\leq \P[X' - 4r < -xr^{1/3}]
\leq
C'\exp(-c'x^{3/2}), \text{ and,}\\ 
\P[X_r - 4r > xr^{1/3}] &\leq C'\exp(-c'\min\{x^{3/2},xr^{1/3}\}).
\end{align*}
Here $c', C'>0$ are absolute constants.
Thus, since $X_{r,0}\leq X_{r,j}\leq X_r$, 
we must have
\begin{align*}
\P[X_r-X_{r,j}\geq xr^{1/3}] \leq
\P[X_{r,0} - 4r < -\frac{1}{2}xr^{1/3}] + \P[X_r - 4r > \frac{1}{2}xr^{1/3}]
\leq Ce^{-cx},
\end{align*}
and (i) follows. For (ii) notice that
\begin{align*}
\P[|X_{r,j}-4r|\geq xr^{1/3}] \leq
\P[X_{r,0} - 4r < -xr^{1/3}] + \P[X_r - 4r > xr^{1/3}]\leq Ce^{-cx}.
\end{align*}
This implies that $|\E X_{r,j}-4r|\leq Cr^{1/3}$ and (ii) also follows.
\end{proof}

We need one more result for the proofs of Lemmas \ref{l:bjbound} and \ref{l:cjbound}. Recalling that $X_n^r$ denotes the line to point last passage time from $\L_r$ to $\bn$,
 we next seek to obtain  concentration estimates for $X_{n}-X_{n,\bar{j}}$,
$X_n^r-X_{n,\bar{j}}$,
and $X_{n}-X_n^{r}$,
conditional on $\cf_r$ on the event $A_j$. 
Unlike Lemma \ref{l:rjconc}, there are extra suboptimal powers of $\frac{n}{r}$ on the right hand side of these estimates which we have not attempted to remove.
These however shall not introduce any difficulties as they will only be used in the proofs of Lemma \ref{l:bjbound} and \ref{l:cjbound} to control the contribution to the covariances from the tails, by integrating the effect of the deep end of the tails where the exponential terms would be small enough to kill the polynomial pre-factors. 
\begin{lemma}
\label{l:njconc}
In the above set-up, there exists $C,c>0$ such that for all $j<j_0$ and for all $x>0$ sufficiently large we have the following:
\begin{enumerate}
    \item[(i)] $\P[\don_{B_j} (X_n-X_{n,\bar{j}}) \geq xr^{1/3} ] \leq Cn^{2/3}r^{-2/3}\exp(-cx)$.
    \item[(ii)] $\P\left[\don_{B_j}\left|X_{n,\bar{j}}-\E \left[X_{n,\bar{j}}\mid \cf_r\right] \right| \geq xr^{1/3}\right] \leq Cn^{4/3}r^{-4/3}\exp(-cx)$.
    \item[(iii)] $\P\left[|X_{n}-X_n^{r}- 4r | \geq xr^{1/3}\right] \leq Cn^{2/3}r^{-2/3}\exp(-cx)$.
\end{enumerate}
\end{lemma}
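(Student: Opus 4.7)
All three estimates reduce to combinations of transversal fluctuation bounds (Proposition \ref{l: prep1-tf} and Lemma \ref{l:transversal}), the moderate deviation bounds from Section \ref{s:prelim}, and, for (1)--(2), the near-maximum decay property \eqref{decayneeded} encoded in $B_j$. In each case the strategy is: upper-bound $X_n$ via the deterministic decomposition $X_n = X_{u_0} + T_{u_0,\bn}$ (with $u_0$ the crossing of $\Gamma_\bn$ with $\L_r$), and lower-bound the competing quantity via a single well-chosen reference path.

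For (3) we use also the a priori lower bound $X_n \geq X_{u_{\max}} + X_n^r$ for $u_{\max}$ the $\cf_r$-measurable maximizer of $T_{\cdot,\bn}$ on $\L_r$. The upper tail $X_n - X_n^r - 4r \geq xr^{1/3}$ is handled by invoking Proposition \ref{l: prep1-tf} to confine $|u_0 - \br|$ to $O(n^{2/3})$ with exponential probability and then applying Proposition \ref{p:l2l} union-bounded over the $O(n^{2/3}/r^{2/3})$ length-$r^{2/3}$ blocks covering this interval. The lower tail follows because, conditionally on $\cf_r$, the point $u_{\max}$ is deterministic while $X_{u_{\max}}$ is independent of $\cf_r$ with the law of $X_r$; Theorem \ref{t:onepoint}(4) then yields $\P[X_{u_{\max}} \leq 4r - xr^{1/3} \mid \cf_r] \leq Ce^{-cx^3}$, which is absorbed into the $Cn^{2/3}r^{-2/3}e^{-cx}$ bound from above.

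For (1), on $B_j$ the event $X_n > X_{n,\bar j}$ forces $\Gamma_\bn$ to enter $U_j$; since $\Gamma_\bn$ is directed, its unique crossing of $\L_r$ must lie in $U_j \cap \L_r \subset \L_{r,(\log(j+2))^{10} r^{2/3}}$, so \eqref{decayneeded} applies at $u_0$ and gives $T_{u_0,\bn} \leq T_{u^{\rm res}_{\max},\bn} - 1000(\log(j+2))^2 r^{1/3}$. Writing $X_n = X_{u_0} + T_{u_0,\bn}$ and comparing against a reference path from $\L_0$ to $\bn$ through $u^{\rm res}_{\max}$ that stays outside $U_j$ (available since $u^{\rm res}_{\max}$ sits at diagonal separation $\geq \tfrac12 j^{101} r^{2/3}$ from $\br$ when $j$ is not too small), we obtain
\[
X_n - X_{n,\bar j} \leq \max_{u \in U_j \cap \L_r} X_u - X^{U_j^c}_{u^{\rm res}_{\max}} - 1000(\log(j+2))^2 r^{1/3}.
\]
The first term is controlled above via Proposition \ref{p:l2l} union-bounded over $O((\log(j+2))^{10})$ translates; the second, a constrained passage time, is bounded below by $4r - zr^{1/3}$ with probability $1 - Ce^{-cz^{1/2}}$ by Theorem \ref{t:supinf}(3), which is precisely the source of the $e^{-cx^{1/2}}$ tail. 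The residual small-$j$ cases, in which $u^{\rm res}_{\max}$ may itself fall inside $U_j$, are absorbed into the conservative $n^{2/3}/r^{2/3}$ prefactor by union-bounding over a slightly enlarged set of reference points just outside $U_j$ on $\L_r$.

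For (2), a triangle inequality yields
\[
\bigl|X_{n,\bar j} - \E[X_{n,\bar j} \mid \cf_r]\bigr| \leq \bigl|X_{n,\bar j} - X_n\bigr| + \bigl|X_n - \E[X_n \mid \cf_r]\bigr| + \bigl|\E[X_n - X_{n,\bar j} \mid \cf_r]\bigr|.
\]
Part (1) handles the first term on $B_j$; integrating its tail into moment bounds on $\don_{B_j}(X_n - X_{n,\bar j})$ and applying Jensen together with Markov at the optimal moment yields a stretched exponential tail for the third term. The middle term, the conditional fluctuation of $X_n$ given $\cf_r$, is controlled by combining part (3) with moderate deviation estimates for $\{X_u\}_{u \in \L_r}$ against the $\cf_r$-measurable profile $\{T_{u,\bn}\}$. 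Multiplying the per-term probabilities produces the squared prefactor $n^{4/3}/r^{4/3}$. The principal technical obstacle throughout is the lower bound on $X_{n,\bar j}$ in (1): one must exhibit a path from $\L_0$ to $\bn$ that provably avoids $U_j$ with passage time close to $4n$ and a stretched exponential lower tail, and for moderate $j$ the argument via $u^{\rm res}_{\max}$ is clean while the small-$j$ boundary cases demand a slightly more delicate choice of reference point on $\L_r$.
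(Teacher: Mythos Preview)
Your argument for (1) contains a genuine gap. You assert that if $\Gamma_\bn$ enters $U_j$, then by directedness its crossing $u_0$ of $\L_r$ must lie in $\L_{r,(\log(j+2))^{10}r^{2/3}}$. This is false: a directed path can enter $U_j$, exit through one of the long sides $\{u_1-u_2 = \pm(\log(j+2))^{10}r^{2/3}\}$ at some height strictly below $\L_r$, and then continue to cross $\L_r$ arbitrarily far from $\br$. Hence \eqref{decayneeded} need not apply at $u_0$, and your displayed inequality for $X_n - X_{n,\bar j}$ is unjustified. The paper meets this same geometric subtlety in the proof of Lemma~\ref{l:transversal2} and resolves it there via polymer ordering against reference geodesics $\Gamma_{\br\pm u_*}$, but that argument bounds only the \emph{probability} that $X_n \neq X_{n,\bar j}$, not the size of the difference, and so does not directly repair your route.

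The paper's proof of (1) avoids the location of $u_0$ entirely by pivoting through the $\cf_r$-measurable quantity $X_n^r$: it writes $X_n - X_{n,\bar j} = (X_n - X_n^r - 4r) - (X_{n,\bar j} - X_n^r - 4r)$ and bounds each piece. The first piece's upper tail is part~(3). For the second piece on $B_j$, the paper observes $X_{n,\bar j} - X_n^r \geq X_{u_{\max},\bar j}$ (concatenate any $U_j$-avoiding path to $u_{\max}$ with $\Gamma^r_\bn$, which lies above $\L_r$) and lower-bounds $X_{u_{\max},\bar j}$ by the weight of the best path inside the translate $U_j + u_{\max} - \br$; this translate is disjoint from $U_j$ because \eqref{decayneeded} forces $u_{\max} \notin \L_{r,(\log(j+2))^{10}r^{2/3}}$, so there is no small-$j$ boundary case to handle. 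By translation invariance that constrained weight has the law of $X_{r,j}$, whose lower tail (Theorem~\ref{t:supinf}(3)) yields the $e^{-cx^{1/2}}$ rate. Your treatment of (3) is essentially correct and matches the paper's; for (2) your triangle inequality is workable but more roundabout than the paper's direct use of the same $X_n^r$ pivot, which immediately gives $X_{n,\bar j} - \E[X_{n,\bar j}\mid\cf_r] = (X_{n,\bar j} - X_n^r - 4r) - \E[X_{n,\bar j} - X_n^r - 4r\mid\cf_r]$ and controls both terms by the tail already established.
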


\begin{proof}
We first state our proof strategy.
We will bound the upper and lower tails of
$$
X_{n}-X_n^{r}- 4r, \;\; \don_{B_j}(X_{n,\bar{j}}-X_n^r- 4r),
$$
conditioned on $\cf_r$.
In Step 1 below, we bound lower tails, and the upper tails would be bounded in Steps 2 and 3.
In Step 2 we bound the upper tails conditioned on some ``good'' events, where the point to line cost $X_n^r$ is not too small, and the profile from $\L_r$ to $\bn$ has parabolic decay when moving away from the center.
In Step 3 we will control the probability where these events fail.
In Step 4 we will show how we get our conclusions from the obtained tail estimates.

Throughout the proof we take $C, c>0$ to be some large and small absolute constants, and we assume that $x$ is sufficiently large enough.

\noindent
\textbf{Step 1.}
We first show that,
\begin{equation}    \label{e:njconc4}
\don_{B_j}\P\left[X_{n, \bar{j}}-X^r_{n} - 4r < - xr^{1/3}\mid \cf_r\right] \leq C \exp(-cx),
\end{equation}
and
\begin{equation}    \label{e:njconc5}
\P\left[X_{n}-X^r_{n} - 4r \leq - xr^{1/3}\mid \cf_r\right] \leq C \exp(-cx).
\end{equation}

Recall from \eqref{argmax1}, that $u_{\max}$ is the starting point of the line-to-point geodesic $\Gamma^r_{\bn}.$
Then $$X_{u_{\max}, \bar{j}} \leq X_{n, \bar{j}}-X^r_{n} \text{ and }X_{u_{\max}} \leq X_{n}-X^r_{n}.$$
Thus
\begin{align*}
\P\left[X_{n}-X^r_{n} - 4r \leq - xr^{1/3}\mid \cf_r\right]
\leq
\P\left[X_{u_{\max}} - 4r \leq - xr^{1/3}\mid \cf_r\right]\\
=
\P\left[X_{r} - 4r \leq - xr^{1/3}\right]
\leq
\P\left[T_{\mathbf{0}, \br} - 4r \leq - xr^{1/3}\right]
\leq C\exp(-cx)
\end{align*}
where the equality is due to translation invariance, 
and the last inequality is by Theorem \ref{t:onepoint}.
Also, if we let $X'$ denote the maximum passage time among all paths between $\L_0$ and $u_{\max}$ that do not exit $U_j+u_{\max}-\br$, then
\begin{align*}
\don_{B_j}\P\left[X_{n, \bar{j}}-X^r_{n} - 4r \leq - xr^{1/3}\mid \cf_r\right]
&\leq
\don_{B_j}\P\left[X_{u_{\max}, \bar{j}} - 4r \leq - xr^{1/3}\mid \cf_r\right]
\\
&\leq
\don_{B_j}\P\left[X' - 4r \leq - xr^{1/3}\mid \cf_r\right],
\end{align*}
since on $B_j$, $u_{\max} \not\in \L_{r,(\log(j+1))^{10}r^{2/3}}$.
By translation invariance, this probability equals
$$
\don_{B_j}\P\left[X_{r, j} - 4r \leq - xr^{1/3}\right]
\leq
\P\left[X_{r, 0} - 4r \leq - xr^{1/3}\right]
\leq C\exp(-cx),
$$
and the last inequality is proved at the beginning of the proof of Lemma \ref{l:rjconc}.  

We next move on to the upper tails. \\
\noindent
\textbf{Step 2.} As indicated in the proof strategy, we first prove the following upper tail estimate  conditioned on the good event $E_x\cap F_x,$  where $E_x$ is the event
$$
T_{\br+(u_1,-u_1),\bn} \leq 4(n-r) - \frac{u_1^2}{10n},\; \forall u_1\in \Z, |u_1|>xn^{2/3},
$$
and $F_x$ is the event
$$
X^r_n > 4(n-r) - xn^{1/3}.
$$
We will show 
\begin{equation}    \label{e:njconc6}
\don_{E_x\cap F_x}\P\left[X_{n}-X^r_{n} - 4r \geq xr^{1/3}\mid \cf_r\right] \leq Cn^{2/3}r^{-2/3} \exp(-cx),
\end{equation}
Recall that $u_0$ is the intersection of $\Gamma_{\bn}$ with $\L_r$, and that $u_0 = \br+(u_{01},-u_{01})$ for some $u_{01} \in \Z$.
Now assume the event $E_x\cap F_x$, and $X_{n}-X^r_{n} - 4r \geq xr^{1/3}$. We consider two cases:
\begin{itemize}
\item 
If $|u_{01}| < xn^{2/3}$,
since $X_{u_0} \geq X_{n}-X^r_{n} \geq 4r+xr^{1/3}$ we have
$$
\max_{u_1\in \Z, |u_1|\leq xn^{2/3}} X_{\br+(u_1,-u_1)} \geq 4r+xr^{1/3}.
$$
\item
If $|u_{01}| \geq xn^{2/3}$,
we have
$$
X_{u_0} \geq X^{r}_n+4r+xr^{1/3}-T_{u_0,\bn} > 4n-xn^{1/3}-\left(4(n-r)-\frac{u_{01}^2}{10n}\right)
=4r-xn^{1/3}+\frac{u_{01}^2}{10n},
$$
and hence,
$
\displaystyle{\max_{u_1\in\Z, |u_1|> xn^{2/3}} X_{\br+(u_1,-u_1)}-\frac{u_{1}^2}{10n} \geq 4r-xn^{1/3}.}
$
\end{itemize}
In conclusion, we have 
\begin{align*}
\don_{E_x\cap F_x}\P\left[X_{n}-X^r_{n} - 4r \geq xr^{1/3}\mid \cf_r\right]
\leq
\P\left[\max_{u_1\in \Z, |u_1|\leq xn^{2/3}} X_{\br+(u_1,-u_1)} \geq 4r+xr^{1/3}
\right]\\
+
\P\left[\max_{u_1\in\Z, |u_1|> xn^{2/3}} X_{\br+(u_1,-u_1)}-\frac{u_{1}^2}{10n} \geq 4r-xn^{1/3}
\right] .
\end{align*}
To estimate the right hand side, we apply Proposition \ref{p:l2l} to $\L_{r,r^{2/3}}+(i,-i)$ for $i \in 2\lfloor r^{2/3} \rfloor\Z$ to get the bound
$$
C'\lceil x\rceil n^{2/3}r^{-2/3}\exp(-c'x) + \sum_{i=1}^{\infty} C'\lceil x\rceil n^{2/3}r^{-2/3}\exp\left(-c'(i^2x^2/10-x)n^{1/3}r^{-1/3}\right)
$$
for constants $C', c'$. When $x$ is large enough this implies \eqref{e:njconc6}.

\noindent
\textbf{Step 3.}
Having proved an upper tail estimate conditioned on $E_x\cap F_x$, we next show that,
\begin{equation}\label{e:njconc7}
\P[E_x\cap F_x] \geq 1 - C\exp(-cx^2).
\end{equation}
By Theorem \ref{t:onepoint}, we have $$\P[F_x^c] \leq \P[T_{\br, \bn} \leq 4(n-r) - xn^{1/3}]\leq C'\exp(-c'x^3),$$ for some $c', C'>0$.
To bound the probability of $E_x^c$ notice that on this event, there is some $i \in 2\lfloor n^{2/3}\rfloor \Z$, $|i| > (x-1)n^{2/3}$ (where the upper bound is due to the geometric constraints of directed paths), such that
$$
\max_{u \in \L_{r, n^{2/3}}+(i,-i)} T_{u,\bn} > 4(n-r) - \frac{(|i|+n^{2/3})^2}{10n}.
$$
By Theorem \ref{t:onepoint} and Theorem \ref{t:supinf}, there are $C'', c'' > 0$ such that
$$
\P\left[\max_{u \in \L_{r, n^{2/3}}+(i,-i)} T_{u,\bn} > 4(n-r) - \frac{(|i|+n^{2/3})^2}{10n}\right] \leq C''\exp\left(-c''(|i|-n^{2/3})^2n^{-4/3}\right)
$$
for $(x-1)n^{2/3} < |i| < 0.99(n-r)$; and
\begin{align*}
&\P\left[\max_{u \in \L_{r, n^{2/3}}+(i,-i)} T_{u,\bn} > 4(n-r) - \frac{(|i|+n^{2/3})^2}{10n}\right]
\\
&\leq
\P\left[\max_{u \in \L_{r, n^{2/3}}+(i,-i)} T_{u,(\lfloor 1.01n\rfloor, \lfloor 1.01n\rfloor)} > 4(n-r) - \frac{(|i|+n^{2/3})^2}{10n}\right]
\leq
C'\exp\left(-c''(|i|-n^{2/3})^2n^{-4/3}\right)
\end{align*}
for $0.99(n-r) < |i| < n-r+n^{2/3}$.
By summing over all $i \in 2\lfloor n^{2/3}\rfloor \Z$, $(x-1)n^{2/3} < |i| < n-r+n^{2/3}$, we get the desired bound which implies \eqref{e:njconc7}.

\noindent
\textbf{Step 4.}
Finally, we deduce our conclusions from the above tail estimates.
Adding up the estimates \eqref{e:njconc5}, \eqref{e:njconc6} and \eqref{e:njconc7} immediately imply conclusion (iii).
For (ii), since $X_n^r$ is measurable with respect to $\cf_r$, we have
\begin{align}    \label{e:njconc11}
&\P\left[\don_{E_x\cap F_x \cap B_j}\left|X_{n,\bar{j}}-\E \left[X_{n,\bar{j}}\mid \cf_r\right] \right| \geq xr^{1/3}\right]
\\
\nonumber
&=
\P\left[\don_{E_x\cap F_x \cap B_j}\left|X_{n,\bar{j}}-X_n^{r}- 4r - \don_{E_x\cap F_x \cap B_j}\E \left[X_{n,\bar{j}}-X_n^{r}- 4r\mid \cf_r\right] \right| \geq xr^{1/3}\right]
\\
\nonumber
&\leq
\P\left[\don_{ B_j}\left|X_{n,\bar{j}}-X_n^{r}- 4r  \right| \geq xr^{1/3} - \don_{E_x\cap F_x \cap B_j}\left|\E \left[X_{n,\bar{j}}-X_n^{r}- 4r\mid \cf_r\right]\right|\right].
\end{align}
The next obvious step is to  control the tail of $\left|X_{n,\bar{j}}-X_n^{r}- 4r  \right|$, as well as bound the conditional expectation $\don_{E_x\cap F_x \cap B_j}\left|\E \left[X_{n,\bar{j}}-X_n^{r}- 4r\mid \cf_r\right]\right|$.
Since $X_{n,\bar{j}} \leq X_n$, \eqref{e:njconc6} implies
\begin{equation}    \label{e:njconc9}
\don_{E_x\cap F_x}\P\left[X_{n,\bar{j}}-X^r_{n} - 4r \geq xr^{1/3}\mid \cf_r\right] \leq Cn^{2/3}r^{-2/3} \exp(-cx).
\end{equation}
By \eqref{e:njconc4}, \eqref{e:njconc7}, and \eqref{e:njconc9} we have
\begin{equation}    \label{e:njconc3}
\P\left[\don_{B_j}|X_{n,\bar{j}}-X_n^{r}- 4r | \geq xr^{1/3}\right] \leq Cn^{2/3}r^{-2/3}\exp(-cx).
\end{equation}
Before proceeding further notice that by triangle inequality and simple union bound, conclusion (iii) and \eqref{e:njconc3} imply conclusion (i). Next, by \eqref{e:njconc4} and \eqref{e:njconc9} we have
\begin{align}    \label{e:njconc10}
&\don_{E_x\cap F_x \cap B_j}\left|\E\left[X_{n,\bar{j}}-X^r_{n} - 4r\mid \cf_r\right]\right|
\leq
\don_{E_x\cap F_x \cap B_j}\E\left[|X_{n,\bar{j}}-X^r_{n} - 4r| \mid \cf_r\right],
\\
\nonumber
& \leq 
r^{1/3}\int_{\R_+} \min\left\{1, Cn^{2/3}r^{-2/3} \exp(-cx) \right\}
+ C\exp(-cx) dx,\\
\nonumber
&=
c^{-1}\left(1+\log\left(Cn^{2/3}r^{-2/3}\right)\right)r^{1/3} + Cc^{-1}r^{1/3}.
\end{align}
Then by \eqref{e:njconc10}, and replacing $x$ by $x-(c^{-1}\left(1+\log\left(Cn^{2/3}r^{-2/3}\right)\right) + Cc^{-1})$ in \eqref{e:njconc3}, the last line of \eqref{e:njconc11} is bounded by
\begin{align*}
&Cn^{2/3}r^{-2/3}\exp(-cx)\exp\left(\left(1+\log\left(Cn^{2/3}r^{-2/3}\right)\right) + C\right)
\\
&\leq C^2n^{4/3}r^{-4/3}\exp(-cx)\exp\left(1 + C\right),
\end{align*}
yielding
$$
\P\left[\don_{B_j}\left|X_{n,\bar{j}}-\E \left[X_{n,\bar{j}}\mid \cf_r\right] \right| \geq xr^{1/3}\right]
\leq 
C^2n^{4/3}r^{-4/3}\exp(-cx)\exp\left(1 + C\right)+ \P[E_x\cap F_x],
$$
which with \eqref{e:njconc7} implies conclusion (ii).
\end{proof}

\subsection{Proofs of Lemma \ref{l:bjbound} and Lemma \ref{l:cjbound}}
We are finally ready to complete the proofs of the lemmas  using the estimates obtained in the previous subsection. As in some of the earlier proofs, the arguments rely on some manipulation of the already obtained estimates and are slightly technical.
\begin{proof}[Proof of Lemma \ref{l:bjbound}]
Recall the set-up of Lemma \ref{l:bjbound}. Let $j<j_0$ be fixed. Writing $$X_r=X_{r,j}+(X_{r}-X_{r,j}), \text{and, } X_{n}=X_{n,\bar{j}}+(X_{n}-X_{n,\bar{j}}),$$ we get 
$$\Cov_{\cf_r}((X_r,X_n))=\Cov_{\cf_r}(X_{r}-X_{r,j},X_{n}-X_{n,\bar{j}})+\Cov_{\cf_r}(X_{r,j},X_{n}-X_{n,\bar{j}})+\Cov_{\cf_r}(X_{r}-X_{r,j},X_{n,\bar{j}}) $$
where we have used the fact that by definition $X_{r,j}$ and $X_{n,\bar{j}}$ are conditionally (and also unconditionally) independent as they are functions of weights of disjoint sets of vertices. On $B_j$ we shall bound expectations of each of the tree terms in the RHS above separately. 

\noindent
For the first term observe that $X_{r}-X_{r,j}$ and $X_{n}-X_{n,\bar{j}}$ are both nonnegative and hence we get 
\begin{equation}
\label{e:1st}
\E[\don_{B_j}\Cov_{\cf_r}(X_{r}-X_{r,j},X_{n}-X_{n,\bar{j}})] \leq \E[\don_{B_j}(X_{r}-X_{r,j})(X_{n}-X_{n,\bar{j}})].
\end{equation}
For the second term above notice that $\Cov_{\cf_r}(X_{r,j},X_{n}-X_{n,\bar{j}})=\E[(X_{r,j}-\E X_{r,j})(X_{n}-X_{n,\bar{j}})\mid \cf_r]$ and hence we get 
\begin{equation}
\label{e:2nd}
\E[\don_{B_j}\Cov_{\cf_r}(X_{r,j},X_{n}-X_{n,\bar{j}})] \leq \E[\don_{B_j}|X_{r,j}-\E X_{r,j}||X_{n}-X_{n,\bar{j}}|].
\end{equation}
For the third term above, arguing as in the previous case we obtain 
\begin{equation}
\label{e:3rd}
\E[\don_{B_j}\Cov_{\cf_r}(X_{r}-X_{r,j},X_{n,\bar{j}})] \leq \E\left[\don_{B_j}|X_{r}-X_{r,j}|\left|X_{n,\bar{j}}-\E \left[X_{n,\bar{j}}\mid \cf_r\right]\right|\right].
\end{equation}
We can write these expectations as
$$
\int_{\R_+}
r^{2/3}\P[\don_{B_j}(X_{r}-X_{r,j})(X_{n}-X_{n,\bar{j}}) \geq xr^{2/3}] dx,
$$
$$
\int_{\R_+}
r^{2/3}\P[\don_{B_j}|X_{r,j}-\E X_{r,j}||X_{n}-X_{n,\bar{j}}| \geq xr^{2/3}] dx,
$$
$$
\int_{\R_+}
r^{2/3}\P[\don_{B_j}|X_{r}-X_{r,j}|\left|X_{n,\bar{j}}-\E \left[X_{n,\bar{j}}\mid \cf_r\right]\right| \geq xr^{2/3}] dx,
$$
respectively.
It remains to bound the tail probabilities.
Take $c', C'>0$ be small and large enough constants, respectively.
By Lemma \ref{l:rjconc} (i) and Lemma \ref{l:njconc} (i), for $x$ large enough we have
\begin{align*}
\P[\don_{B_j}(X_{r}-X_{r,j})(X_{n}-X_{n,\bar{j}}) \geq xr^{2/3}]
&\leq
\P[X_{r}-X_{r,j} \geq x^{1/2}r^{1/3}]
+
\P[\don_{B_j}(X_{n}-X_{n,\bar{j}}) \geq x^{1/2}r^{1/3}]\\
&\leq C'n^{2/3}r^{-2/3}\exp(-c'x^{1/2}).
\end{align*}
By Lemma \ref{l:rjconc} (ii) and Lemma \ref{l:njconc} (i), for $x$ large enough
\begin{align*}
&\P[\don_{B_j}|X_{r,j}-\E X_{r,j}||X_{n}-X_{n,\bar{j}}| \geq xr^{2/3}]\\
&\leq
\P[|X_{r,j}-\E X_{r,j}| \geq x^{1/2}r^{1/3}]
+
\P[\don_{B_j}|X_{n}-X_{n,\bar{j}}| \geq x^{1/2}r^{1/3}]
\leq C'n^{2/3}r^{-2/3}\exp(-c'x^{1/2}).
\end{align*}
By Lemma \ref{l:rjconc} (i), and since $X_{r}-X_{r,j} \geq 0$, we have $0 \leq \E(X_{r}-X_{r,j}) \leq C''r^{1/3}$, for some constant $C''$.
Then by Lemma \ref{l:rjconc} (i) and Lemma \ref{l:njconc} (ii), for $x$ large enough we have,
\begin{align*}
&\P[\don_{B_j}|X_{r}-X_{r,j}|\left|X_{n,\bar{j}}-\E \left[X_{n,\bar{j}}\mid \cf_r\right]\right| \geq xr^{2/3}]\\
&\leq
\P[|X_{r}-X_{r,j}| \geq x^{1/2}r^{1/3}]
+
\P[\don_{B_j}\left|X_{n,\bar{j}}-\E \left[X_{n,\bar{j}}\mid \cf_r\right]\right| \geq x^{1/2}r^{1/3}] \\
&\leq C'n^{4/3}r^{-4/3}\exp(-c'x^{1/2}).
\end{align*}
We also have that, by Lemma \ref{l:transversal1} and Lemma \ref{l:transversal2},
\begin{align*}
\P[\don_{B_j}(X_{r}-X_{r,j})(X_{n}-X_{n,\bar{j}}) \geq xr^{2/3}]
\leq
\P[B_j, X_{r,j}\neq X_r]
\leq
C'\exp\left(-c'(\log(j+2))^{2}\right)
\P[B_j],
\end{align*}
\begin{align*}
\P[\don_{B_j}|X_{r,j}-\E X_{r,j}||X_{n}-X_{n,\bar{j}}| \geq xr^{2/3}]
\leq
\P[B_j, X_{n}\neq X_{n,\bar{j}}]
\leq
C'\exp\left(-c'(\log(j+2))^{2}\right)
\P[B_j],
\end{align*}
\begin{align*}
\P[\don_{B_j}|X_{r}-X_{r,j}|\left|X_{n,\bar{j}}-\E \left[X_{n,\bar{j}}\mid \cf_r\right]\right| \geq xr^{2/3}]
\leq
\P[B_j, X_{r,j}\neq X_r]
\leq
C'\exp\left(-c'(\log(j+2))^{2}\right)
\P[B_j].
\end{align*}

Take $x_c \in \R_+$ such that
$C'n^{4/3}r^{-4/3}\exp(-c'x_c^{1/2})=C' \exp\left(-c'(\log(j+2))^{2}\right)\P[B_j]$; i.e. 
\begin{equation}  \label{e:xc}
x_c = \left((\log(j+2))^{2}-\frac{1}{c'}\log(\P[B_j]) + \frac{4}{3c'}\log(n/r)\right)^2,
\end{equation}
which can be large enough if $c'$ is small enough.
Thus each of \eqref{e:1st}, \eqref{e:2nd}, and \eqref{e:3rd} is bounded by
\begin{align*}
C'\exp\left(-c'(\log(j+2))^{2}\right)\P[B_j]x_c r^{2/3} + \int_{x>x_c}
C'n^{4/3}r^{-2/3}\exp(-c'x^{1/2}) dx
\\
\leq
C'\exp\left(-c'(\log(j+2))^{2}\right)\P[B_j]x_c r^{2/3} + 
C'n^{4/3}r^{-2/3}x_c
\sum_{i=1}^{\infty}\exp(-c'i^{1/2}x_c^{1/2}).
\end{align*}
Since $c'x_c^{1/2} \geq c'(\log(j+2))^{2} \geq c'(\log(2))^{2}$, it is bounded away from zero, so we further bound the above by
\begin{align*}
C'\exp\left(-c'(\log(j+2))^{2}\right)\P[B_j]x_c r^{2/3} + 
C''n^{4/3}r^{-2/3}\exp(-c'x_c^{1/2})x_c \\
\leq
2C''\exp\left(-c'(\log(j+2))^{2}\right)\P[B_j]x_c r^{2/3}
\end{align*}
for some large $C'' > 0$.
By plugging \eqref{e:xc} into this, we can bound each of \eqref{e:1st}, \eqref{e:2nd}, and \eqref{e:3rd} by
$$
2C''\left(\frac{4}{3c'}\right)^4\exp\left(-c'(\log(j+2))^{2}\right)r^{2/3}\P[B_j]\left((\log(j+2))^{2}-\log(\P[B_j]) + \log(n/r)\right)^2,
$$
and our conclusion follows.
\end{proof}

\medskip

\begin{proof}[Proof of Lemma \ref{l:cjbound}]
Recall the set-up of Lemma \ref{l:cjbound}. Let $j<j_0$ be fixed. Arguing as in the proof of Lemma \ref{l:bjbound} we get that 
$$\E[\don_{C_j}\Cov_{\cf_r}(X_r,X_n)]=\E[\don_{C_j}\Cov_{\cf_r}(X_r,X_n-X_n^r)]\leq \E[\don_{C_j}|X_r-\E X_r||X_n-X_n^r-4r|].$$
Take $c', C'>0$ be small and large enough constants, respectively.
For $x>0$ large enough, by Theorem \ref{t:onepoint},
$$
\P[|X_r-\E X_r| \geq xr^{1/3}] \leq C'\exp(-c'x^{3/2}),
$$
and by Lemma \ref{l:njconc} (iii), we have
$$
\P[|X_n-X_n^r-4r| \geq xr^{1/3}] \leq C'n^{2/3}r^{-2/3}\exp(-c'x).
$$
Then for $x$ large enough we have
\begin{align*}
&\P[\don_{C_j}|X_r-\E X_r||X_n-X_n^r-4r| \geq xr^{2/3}]\\
&\leq
\P[|X_r-\E X_r| \geq x^{1/2}r^{1/3}]
+
\P[|X_n-X_n^r-4r| \geq x^{1/2}r^{1/3}]
\leq
2C'n^{2/3}r^{-2/3}\exp(-c'x^{1/2}).
\end{align*}
We also have that
\begin{equation*}
\P[\don_{C_j}|X_r-\E X_r||X_n-X_n^r-4r| \geq xr^{2/3}]
\leq
\P[C_j].
\end{equation*}
Take $x_c \in \R_+$ such that $2C'n^{2/3}r^{-2/3}\exp(-c'x_c^{1/2}) = \P[C_j]$; i.e. $$x_c = \left(-\frac{\log(\P[C_j]) - \log(2C') - \frac{2}{3}\log(n/r)}{c'}\right)^2,$$ which is large enough when $c'$ is small enough.
Then we have
\begin{align*}
&\E[\don_{C_j}|X_r-\E X_r||X_n-X_n^r-4r|]
=
\int_{\R_+} r^{2/3}\P[\don_{C_j}|X_r-\E X_r||X_n-X_n^r-4r| > xr^{2/3}]dx,
\\
&\leq
r^{2/3}\P[C_j]x_c
+
\int_{x>x_c} 2C'n^{2/3}\exp(-c'x^{1/2})dx
\leq
r^{2/3}\P[C_j]x_c
+
2C'n^{2/3}x_c\sum_{i=1}^{\infty}\exp(-c'i^{1/2}x_c^{1/2}).
\end{align*}
Since $c'x_c^{1/2}\geq \log(2C')$ is bounded away from zero, the above is bounded by
\begin{align*}
r^{2/3}\P[C_j]x_c
+
2C'n^{2/3}x_c\exp(-c'x_c^{1/2})&=
2r^{2/3}\P[C_j]x_c
\\
&=
2r^{2/3}\P[C_j]\left(-\frac{\log(\P[C_j]) - \log(2C') - \frac{2}{3}\log(n/r)}{c'}\right)^2,
\end{align*}
and our conclusion follows.
\end{proof}

We end the main body of the paper by providing the proof of Theorem \ref{t:lower}.
\section{Covariance Lower Bound: Proof of Theorem \ref{t:lower}} \label{s:lower}
We shall work with the same set-up as in the previous section; $\delta\in (0,\frac{1}{2})$ and $\delta n< r< \frac{n}{2}$ sufficiently large will be fixed, and all constants will be independent of $\delta$. We shall use the same notation from the previous section wherever applicable. {As mentioned in Section \ref{iop}, the argument for the lower bound develops upon the ideas in the proof in the narrow wedge case \cite[Theorem 2 (i)]{BG18}. However our set up is substantially more complicated and significant new ingredients are required including a variant of Proposition \ref{p:maxdecay}}. Recall that in the previous section we conditioned on the $\sigma$-algebra $\cf_r$ generated by the configuration above the line $\L_r$. For Theorem \ref{t:lower} we shall need to {condition on a slightly larger} $\sigma$-algebra. For $\theta>0$, let the rectangle $R_{\theta}\subseteq \Z^2$ be defined as (see Figure \ref{f:defn}):
$$R_{\theta}:=\{u=(u_1,u_2)\in \Z^2: 0\leq u_1+u_2 < 2r,\; |u_1-u_2| \leq \theta r^{2/3}\}.$$
Let $\cg_{r,\theta}$ denote the $\sigma$ algebra generated by $\omega_{\theta}=\{\omega_{v}:v\in  \Z^2 \setminus R_{\theta}\}$. Clearly, $\cg_{r,\theta} \supseteq \cf_r$. Observe that for any event $\cE$ measurable with respect to $\cg_{r,\theta}$ there exists a subset $\cE'$ of $\R_{+}^{\Z^2\setminus R_{\theta}}$ (i.e., the configuration space restricted to the co-ordinates $\Z^2\setminus R_{\theta}$) such that $\cE=\cE' \times \R_{+}^{R_{\theta}}$. Often, in such instances, by an $\omega_{\theta}\in \cE$ we shall refer to its projection to $\cE'$. The following proposition will be our main technical tool in proving Theorem \ref{t:lower}. 

\begin{proposition}
\label{prop:event}
There exist absolute positive constants $\beta, \theta>0$ sufficiently small such that for any $\delta\in  (0,\frac{1}{2})$ and $n, r \in \Z_+$ with $\delta n< r <\frac{n}{2}$ and $n>n_0(\delta,\theta)$, there exists an event $\cE$ measurable with respect to $\cg_{r,\theta}$ with $\P(\cE)\geq \beta r^{2/3}n^{-2/3}$, and the following property: for all weight configuration $\omega_{\theta}\in \cE$ we have 
\begin{equation*}  
\label{eq:prop:event}
\Cov(X_r, X_n \mid \omega_{\theta}) > r^{2/3}.
\end{equation*}
\end{proposition}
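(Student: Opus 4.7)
The plan is to define $\cE$ as the intersection of two $\cg_{r,\theta}$-measurable events living on disjoint sets of vertices, using independence to lower bound $\P(\cE)$, and then to argue that conditional on $\cE$ the polymer $\Gamma_{\br}$ and the portion of $\Gamma_{\bn}$ between $\L_0$ and $\L_r$ are both forced into $R_\theta$, producing a significant overlap that drives the conditional covariance.

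Concretely, let $\cE_{\max}\in \cf_r$ be the event of Proposition \ref{p:maxdecay} (with its parameter $\theta$), so that the argmax $u_{\max}$ of $\{T_{u,\bn}\}_{u\in\L_r}$ lies within $\theta r^{2/3}$ of $\br$ and the profile exhibits an almost-diffusive decay on both sides of $u_{\max}$; by that proposition $\P(\cE_{\max})\geq c_0 \theta r^{2/3}n^{-2/3}$. Let $\cE_{\mathrm{dec}}$ be a depletion event on the sub-strip $\{0\le x+y<2r,\; \theta r^{2/3} \le |x-y| \le K r^{2/3}\}$ adjacent to $R_\theta$: divide this region into a bounded number of $r\times \theta r^{2/3}$ on-scale rectangles and impose the barrier event of Lemma \ref{l:barbasic} in each with a parameter $L=L(\theta)$ chosen so that any path passing through the depleted region loses at least $L r^{1/3}$ from its expected weight. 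Applying FKG to the (monotone decreasing) indicators of these barrier events inside each rectangle yields $\P(\cE_{\mathrm{dec}})\ge \rho(\theta,K)>0$ uniformly in $r$. Since $\cE_{\max}$ and $\cE_{\mathrm{dec}}$ depend on disjoint sets of vertices, they are independent, giving $\P(\cE)\ge \beta r^{2/3}n^{-2/3}$ with $\beta=c_0\theta\rho$.

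Next, for any $\omega_\theta\in\cE$ the plan is to show that with conditional probability $1-o_\theta(1)$ one has $X_r=T^{R_\theta}_{\mathbf{0},\br}$ and $X_n = \max_{u\in R_\theta\cap \L_r}\bigl(T^{R_\theta}_{\mathbf{0},u}+T_{u,\bn}\bigr)$. For $\Gamma_\br$, the barrier $\cE_{\mathrm{dec}}$ penalizes an exit from $R_\theta$ by $\Omega(L r^{1/3})$, while Lemma \ref{l:uppertail} gives a positive probability that $T^{R_\theta}_{\mathbf{0},\br}$ is of order $4r+O(r^{1/3})$; large $L$ then excludes excursions. Proposition \ref{l: prep1-tf} similarly controls the transversal fluctuation outside the depleted width $K r^{2/3}$. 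For the sub-polymer of $\Gamma_\bn$ between $\L_0$ and $\L_r$, the decay condition in $\cE_{\max}$ combined with the same depletion argument forces the crossing point $u_0$ of $\Gamma_\bn$ with $\L_r$ to lie within $O(\theta r^{2/3})$ of $u_{\max}$, hence in $R_\theta\cap \L_r$, after which the $R_\theta$-confinement follows as before. On this localization event, writing $X_n=T^{R_\theta}_{\mathbf{0},u_0^*}+T_{u_0^*,\bn}$ with $T_{\cdot,\bn}$ being $\cg_{r,\theta}$-measurable, both $X_r$ and $X_n$ become monotone increasing functions of the $R_\theta$-weights; the FKG inequality inside $R_\theta$ then gives
\begin{equation*}
\Cov(X_r,X_n\mid\omega_\theta)\ \geq\ \Cov\!\left(T^{R_\theta}_{\mathbf{0},\br},\,T^{R_\theta}_{\mathbf{0},u_0^*}\ \Big|\ \omega_\theta\right)-\mathrm{Err}(\theta,r,n),
\end{equation*}
where $\mathrm{Err}$ collects all approximation errors from the non-localization events. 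The shared thin-strip path structure makes the leading term of order $\Var\!\left(T^{R_\theta}_{\mathbf{0},\br}\right)$, which is of order $\theta^{-1/2}r^{2/3}$ (upper-bounded by Proposition \ref{p:constrained}, and lower-bounded by combining Lemma \ref{l:uppertail}-type tail arguments with a stationary-LPP type calculation along the strip). Taking $\theta$ small makes the main term dominate and exceed $r^{2/3}$.

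The hard part will be quantifying the error term $\mathrm{Err}(\theta,r,n)$ with a constant independent of $\theta$, so that the $\theta^{-1/2}$ blow-up of the main term can win. This requires showing that the conditional probability of $\Gamma_\br$ leaving $R_\theta$, or of $u_0$ lying outside $R_\theta\cap \L_r$, contributes a covariance correction of size $O(r^{2/3})$ uniformly in the choice of $\theta$, which in turn forces a careful order of quantifiers: one must first fix $L=L(\theta)$ large enough in the definition of $\cE_{\mathrm{dec}}$ so that the exponential penalty beats the $\theta^{-1/2}$ enhancement only in the error term and not in the leading term. Combining this with the concentration estimates of Lemmas \ref{l:rjconc}--\ref{l:njconc} under the conditioning on $\omega_\theta\in\cE$, and then choosing $\theta$ sufficiently small at the end, completes the argument.
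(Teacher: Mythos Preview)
Your overall architecture matches the paper's: define $\cE$ as (essentially) the intersection of a profile-decay event above $\L_r$ and a barrier event in the strip between $\L_0$ and $\L_r$ outside $R_\theta$, use independence to lower bound $\P(\cE)$, and then exploit localization of both polymers in the thin strip to make the conditional covariance comparable to the variance of a constrained passage time of order $\theta^{-1/2}r^{2/3}$. However, there are two genuine gaps in your execution.

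First, the FKG step does not give what you claim. From ``$X_r$ and $X_n$ are increasing in the $R_\theta$-weights'' one only obtains $\Cov(X_r,X_n\mid\omega_\theta)\ge 0$; it does not yield an inequality of the form $\Cov(X_r,X_n\mid\omega_\theta)\ge \Cov(T^{R_\theta}_{\mathbf{0},\br},T^{R_\theta}_{\mathbf{0},u_0^*}\mid\omega_\theta)-\mathrm{Err}$. The paper instead writes $X_r=X_{2\theta}+(X_r-X_{2\theta})$ and $X_n-X_n^r=X_{2\theta}+(X_n-X_n^r-X_{2\theta})$, expands the covariance by bilinearity, keeps $\Var(X_{2\theta}\mid\omega_\theta)$ as the main term, and bounds the three cross terms by Cauchy--Schwarz (Proposition~\ref{p:covlb}). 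Note also the buffer: the paper compares to $X_{2\theta}$ (constrained to $R_{2\theta}$), not $T^{R_\theta}_{\mathbf{0},\br}$, because the localized polymer may briefly leave $R_\theta$ near its endpoints while staying in $R_{2\theta}$ (Lemma~\ref{l:key1}).

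Second, and this is the point you yourself flag as ``the hard part'', the event $\cE_{\max}\cap\cE_{\mathrm{dec}}$ alone does \emph{not} guarantee the pointwise bound $\Cov(X_r,X_n\mid\omega_\theta)>r^{2/3}$ for every $\omega_\theta$ in it. The quantities $\Var(X_{2\theta}\mid\omega_\theta)$, $\E[(X_r-X_{2\theta})^2\mid\omega_\theta]$, $\E[(X_n-X_n^r-X_{2\theta})^2\mid\omega_\theta]$ all genuinely depend on $\omega_\theta$ and can be bad on a subset of $\cE_{\max}\cap\cE_{\mathrm{dec}}$. The paper resolves this by \emph{defining} $\cE$ to be the subset of $\cE_{\mathrm{bar}}\cap\cE_{\mathrm{dec}}$ on which these three conditional moments satisfy fixed numerical bounds, and then proving separately (Lemmas~\ref{l:12proof}, \ref{l:realvar}, \ref{l:e8}) that each bound holds with conditional probability at least $0.9$, so the refined $\cE$ still has probability $\gtrsim r^{2/3}n^{-2/3}$. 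The variance lower bound $\Var(X_{2\theta}\mid\omega_\theta)\ge c_1\theta^{-1/2}r^{2/3}$ in particular is obtained by a Doob-martingale decomposition over $\Theta(\theta^{-3/2})$ sub-rectangles of $R_\theta$ and a resampling argument (Lemmas~\ref{l:realvar}, \ref{l:eachgood}, \ref{l:keystepvar}), not by the tail/stationary-LPP heuristic you sketch. Finally, Lemmas~\ref{l:rjconc}--\ref{l:njconc} belong to the upper-bound section and are tailored to conditioning on $\cf_r$; the relevant second-moment controls here are Lemmas~\ref{l:2nd1} and \ref{l:2nd2}.
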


Postponing the proof of Proposition \ref{prop:event} for the moment let us first show how this implies Theorem \ref{t:lower}. At this point we shall rely on the FKG inequality which will also be used in several occasions later in this section. Thus for concreteness we elaborate on how our setting enables the application. The statement of the inequality says, for a product measure on a countable product of the real line (i.e.\ for a collection of independent real valued random variables), increasing functions are positively correlated (see e.g.\ \cite[Lemma 2.1]{Kes03}); i.e., if there are two functions $f,g$ on the product space that are increasing (i.e.\ $f(\omega_1)\geq f(\omega_2)$ and $g(\omega_1)\geq g(\omega_2)$) whenever two sample points satisfy $\omega_1\geq \omega_2$ co-ordinate wise, we have $\E [fg] \ge \E [f]\E [g]$. Now consider the above set-up. Observe that for a fixed configuration $\omega_{\theta}$ on the vertices outside $R_{\theta}$, $X_{n}$ and $X_{r}$ are non-decreasing functions on the product space indexed by the vertices of $R_{\theta}.$ It follows that the conditional expectation $\E[X_{n}X_{r}\mid \cg_{r,\theta}]$ evaluated at the configuration $\omega_{\theta}$ satisfies 
$$ \E [X_{n} X_{r} \mid \cg_{r,\theta}] (\omega_{\theta}) \geq  \E [X_{n}\mid \cg_{r,\theta}](\omega_\theta) \E[X_{r} \mid \cg_{r,\theta}](\omega_{\theta}).$$
Observe that for $\omega_{\theta}\in \ce,$ Proposition \ref{prop:event} gives a sharper estimate which we shall use in the following proof. Observe also that as functions of $\omega_{\theta}$, $\E [X_{n}\mid \cg_{r,\theta}](\omega_{\theta})$ and  $\E[X_{r} \mid \cg_{r,\theta}](\omega_{\theta})$ are also increasing and hence we have 
$$\E\left( \E [X_{n}\mid \cg_{r,\theta}] \E[X_{r} \mid \cg_{r,\theta}]\right) \geq \E\left( \E [X_{n}\mid \cg_{r,\theta}]\right) \E\left( \E[X_{r} \mid \cg_{r,\theta}]\right)=\E[X_n]\E[X_{r}].$$
We are now ready to complete the proof of Theorem \ref{t:lower}.

\begin{proof}[Proof of Theorem \ref{t:lower}]
Let $\cE$ be the event satisfying the properties listed in Proposition \ref{prop:event}. Applying the FKG inequality twice, as explained above, together with Proposition \ref{prop:event} then implies

\begin{eqnarray*}
\E X_{n} X_{r} &=& \E \left( \E [X_{n} X_{r} \mid \cg_{r,\theta}]\right)\\
&=& \int_{\ce}  \E [X_{n} X_{r} \mid \cg_{r,\theta}] d\omega + \int_{\ce ^c}  \E [X_{n} X_{r} \mid \cg_{r,\theta}] d\omega\\
&\geq &  \int_{\ce}  \E [X_{n}\mid \cg_{r,\theta}] \E[X_{r} \mid \cg_{r,\theta}] d\omega  + \beta r^{4/3}n^{-2/3} + \int_{\ce ^c}  \E [X_{n}\mid \cg_{r,\theta}] \E[X_{r} \mid \cg_{r,\theta}] d\omega \\
&{=} & \E\left( \E [X_{n}\mid \cg_{r,\theta}] \E[X_{r} \mid \cg_{r,\theta}]\right)+\beta r^{4/3}n^{-2/3}\\
&\geq & \E[X_n]\E[X_{r}]+\beta r^{4/3}n^{-2/3};
\end{eqnarray*}
i.e., $\Cov(X_r,X_n)\geq \beta (r/n)^{4/3}n^{2/3}$, as desired. 
\end{proof}

\subsection{Construction of the event $\cE$}
\label{s:ce}
We construct the event $\cE$ in this subsection and show that it satisfies the required covariance lower bound. The probability lower bound on $\cE$ is established in the next subsection. There are two main components of the construction of $\cE$: two independent events $\cE_{\mathrm{dec}}$ ({\rm{dec}} stands for decay) and $\cE_{\mathrm{bar}}$ ({\rm{bar}} stands for barrier); the first of these is measurable with respect to the $\sigma$-algebra $\cf_r$ and the second depends only the weights below the line $\L_r$. We give precise definitions below.

\begin{figure}[htbp!]
\includegraphics[width=.4\textwidth]{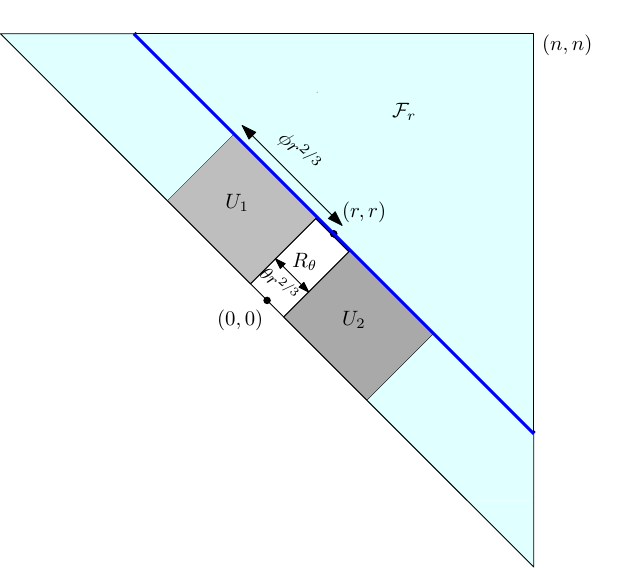} 
\caption{Conditioning on the complement of $R_{\theta}$; a thin rectangle of width $\theta r^{2/3}$ around the straight line joining $\mathbf{0}$ and $\mathbf{r}$ below and \emph{excluding} the line $x+y=2r.$ We condition on the line to point profile $\{T_{v,\mathbf{n}}\}_{v\in \mathbb{L}_r}$ to have its maximum close to $v=\mathbf{r}$ and have characteristic decay away from the maxima (this is the event $\ce_{\mathrm{dec}}$). We also condition on the region below $\mathbb{L}_r$ except for the region $R_\theta$ (the region marked in blue and gray) and in particular ask that the regions $U_1$ and $U_2$ act like barriers; any path going across this region is heavily penalised (this is the event $\ce_{\mathrm{bar}}$). The event $\ce$ is a subset of $\ce_{\mathrm{dec}}\cap \ce_{\mathrm{bar}}$. $\cf_r$ denotes the sigma algebra generated by all vertices \emph{including} and above the line $x+y=2r.$}
\label{f:defn}
\end{figure}

\noindent
\textbf{Definition of $\cE_{\mathrm{dec}}:$} {As in the setting of Proposition \ref{p:maxdecay}, we take $\tau = \frac{1}{4}$, and take $\alpha$ (a universal constant) as given by Proposition \ref{prop:lbae}}. Let $H'_0$ be the event where
\begin{equation}
\max_{|m| < r^{2/3}} T_{\br+(m,-m), \bn} =\max_{|m| < \theta r^{2/3}} T_{\br+(m,-m), \bn} {< T_{\br, \bn} + 2\alpha^{-1}r^{1/3}}. 
\end{equation}

For each $j\in \Z_+$, let $H'_j=H'_j(n,r)$ denote the event where
\begin{equation}
\max_{2^{j-1} r^{2/3} \leq |m| < 2^{j} r^{2/3}} T_{\br+(m,-m), \bn} < \max_{|m| < r^{2/3}} T_{\br+(m,-m), \bn} - 2\alpha \cdot 2^{j\left(\frac{1}{2}-\tau\right)}r^{1/3}.
\end{equation}
{We define 
\begin{equation}
\label{e:defdec}
\cE_{\mathrm{dec}}:=\bigcap_{j\in\Z_{\geq 0}}H'_j;
\end{equation} 
clearly $\cE_{\mathrm{dec}}$ is measurable with respect to $\cf_r$ and is a translate of the event $\cE_{n-r,r}$ defined in Section \ref{s:finite}.}
Note that the above constraints in the definition implies that the profile is maximized in the interval around $\br$ of size $\theta r^{2/3}$ with the value at $\br$ comparable to the value of the maximum along with an almost diffusive decay away from the interval of size $r^{2/3}.$

\noindent
\textbf{Definition of $\cE_{\mathrm{bar}}:$}
This event will depend on two absolute constants $\phi$ and $L$ which will be chosen sufficiently large later (depending on $\theta$). Let 
$$U_1=\{(u_1,u_2)\in \Z^2: 0\leq u_1+u_2<2r: \theta r^{2/3}< u_1-u_2\leq \phi r^{2/3}\};$$
$$U_2=\{(u_1,u_2)\in \Z^2: 0\leq u_1+u_2<2r: \theta r^{2/3}<u_2-u_1\leq \phi r^{2/3}\};$$
(see Figure \ref{f:defn}).
For any point $u=(u_1,u_2)\in \Z^2$, let 
\begin{equation}\label{sumcoordinates}
d(u):=u_1+u_2.
\end{equation}
 Also, for any region $U$, and points $u,v\in U$, let us denote, by $T_{u,v}^{U}$ to be the length of the longest path from $u$ to $v$ that does not exit $U$. We define $\cE_{\mathrm{bar}}$ to be the intersection of the events 
$$\cE_{1,\mathrm{bar}}:= \left\{ T^{U_1}_{u,u'}-\E T_{u,u'} \leq -Lr^{1/3} ~\forall u,u' \in  U_1 ~\text{with}~ |d(u)-d(u')|\geq \frac{r}{L}\right\}$$
and 
$$\cE_{2,\mathrm{bar}}:=\left\{T^{U_2}_{u,u'}-\E T_{u,u'} \leq -Lr^{1/3} ~\forall u,u' \in  U_2 ~\text{with}~ |d(u)-d(u')|\geq \frac{r}{L}\right\}.$$

We are now ready to define the event $\cE$. In what follows, the constants $C_1$ and $c_1$ will be chosen appropriately large and small respectively later (independent of $\theta$). By an abuse of notation we shall also denote by 
$X_{2\theta}$ ($X_{\theta}$, respectively) the weight of the best path {from $\mathbf{0}$ to $\mathbf{r}$ that does not exit $R_{2\theta}$ ($R_{\theta}$, respectively)}. This  local usage with the specific value of $\theta$ and $2\theta$, should not create any confusion with objects such as $X_r$ and $X_n$ defined earlier and used throughout the article including in this section. Now we define $\cE \subseteq \cE_{\mathrm{bar}}\cap \cE_{\mathrm{dec}}$ to be the event such that for all $\omega_{\theta}\in \cE$ we have 

\begin{enumerate}
\item[(i)] $\E[(X_r-X_{2\theta})^2\mid \omega_{\theta}] \leq 10r^{2/3}$.
\item[(ii)] $\E[(X_n-X_n^r-X_{2\theta})^2\mid \omega_{\theta}] \leq C_1r^{2/3}$.
\item[(iii)] $\Var (X_{2\theta}\mid \omega_{\theta})\geq c_1\theta^{-1/2}r^{2/3}$.
\end{enumerate}

Next we show that for $\cE$ defined as above we have the required covariance lower bound for all $\omega_{\theta}\in \cE$. 

\begin{proposition}
\label{p:covlb}
There exist choices of parameters $C_1$ and $c_1$ independent of $\theta$ such that for all $\theta$ sufficiently small and appropriate choices of parameters $\phi$ and $L$ depending on $\theta$ we have the following: for $\cE$ defined above and for all $\omega_{\theta}\in \cE$ that $\Cov(X_r,X_n\mid \omega_{\theta})>r^{2/3}$. 
\end{proposition}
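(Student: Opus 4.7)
The plan is to reduce the claim to an elementary Cauchy--Schwarz calculation that uses the three defining properties of $\omega_\theta \in \cE$. Since $X_n^r$ is the passage time from $\L_r$ to $\bn$ it depends only on weights strictly above $\L_r$ and is therefore measurable with respect to $\cf_r \subseteq \cg_{r,\theta}$. Conditioning on $\omega_\theta$ renders it a deterministic constant, which gives
$$
\Cov(X_r, X_n \mid \omega_\theta) = \Cov(X_r, X_n - X_n^r \mid \omega_\theta).
$$
I will then write $X_r = X_{2\theta} + (X_r - X_{2\theta})$ and $X_n - X_n^r = X_{2\theta} + (X_n - X_n^r - X_{2\theta})$ and expand the covariance bilinearly, producing a main term $\Var(X_{2\theta}\mid\omega_\theta)$ together with three cross terms of the form $\Cov(\cdot,\cdot\mid\omega_\theta)$.

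All three cross terms will be controlled by Cauchy--Schwarz. Using the elementary inequality $\Var(Z) \le \E[Z^2]$, properties (1) and (2) of $\cE$ bound the conditional variances of the error terms $X_r - X_{2\theta}$ and $X_n - X_n^r - X_{2\theta}$ by $10 r^{2/3}$ and $C_1 r^{2/3}$ respectively. Writing $V = \Var(X_{2\theta} \mid \omega_\theta)$, bilinearity and Cauchy--Schwarz then yield
$$
\Cov(X_r, X_n \mid \omega_\theta) \ge V - \bigl(\sqrt{10}+\sqrt{C_1}\bigr)\, r^{1/3}\sqrt{V} - \sqrt{10\, C_1}\, r^{2/3}.
$$
Property (3) supplies the lower bound $V \ge c_1\theta^{-1/2} r^{2/3}$. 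Viewed as a function of $V$, the right-hand side $V \mapsto V - M\sqrt{V} - N$ with $M = (\sqrt{10}+\sqrt{C_1}) r^{1/3}$ and $N = \sqrt{10\,C_1}\, r^{2/3}$ is increasing on $[M^2/4, \infty)$; choosing $\theta$ small enough (depending on $C_1$) places $V$ in this regime. Substituting the lower bound for $V$ produces
$$
\Cov(X_r, X_n \mid \omega_\theta) \ge r^{2/3} \Bigl( c_1\theta^{-1/2} - \bigl(\sqrt{10}+\sqrt{C_1}\bigr)\sqrt{c_1}\,\theta^{-1/4} - \sqrt{10\,C_1}\Bigr),
$$
and the leading $c_1\theta^{-1/2}$ term dominates as $\theta \to 0$. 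Hence any fixed positive $c_1, C_1$ chosen independently of $\theta$ suffice, provided $\theta$ is taken sufficiently small.

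Note that the auxiliary parameters $\phi$ and $L$ do not appear in this computation; they enter only through the definitions of $\cE_{1,\mathrm{bar}}$ and $\cE_{2,\mathrm{bar}}$, and their role lies elsewhere --- namely, in ensuring that on $\cE_{\mathrm{bar}}$ any path from $\L_0$ to $\br$ that exits $R_{2\theta}$ loses at least $L r^{1/3}$ units of weight, so that the constrained and unconstrained passage times agree up to $O(r^{2/3})$ in second moment (giving property (1), and together with $\cE_{\mathrm{dec}}$ giving property (2)), and in establishing the probability lower bound $\P(\cE) \gtrsim r^{2/3} n^{-2/3}$. Viewed this way the present proposition is purely algebraic once properties (1)--(3) are built into $\cE$, and there is no substantial obstacle in its proof. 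The genuinely delicate inputs --- constructing $\cE$ so that all three conditions simultaneously hold with the required probability, and in particular verifying the anomalous $\theta^{-1/2}$ lower bound on $\Var(X_{2\theta}\mid \omega_\theta)$ arising from fluctuations through a thin strip of width $\theta r^{2/3}$ --- are handled in the next subsection.
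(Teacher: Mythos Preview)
Your proposal is correct and follows essentially the same approach as the paper: subtract the $\cg_{r,\theta}$-measurable quantity $X_n^r$, expand bilinearly around $X_{2\theta}$, bound the three cross terms by Cauchy--Schwarz using properties (1) and (2), and then use property (3) to let the main variance term dominate for small $\theta$. Your treatment of the algebra (monotonicity of $V\mapsto V-M\sqrt V-N$) is slightly more explicit than the paper's, but the argument is the same in substance.
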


\begin{proof}
Let us fix $\omega_{\theta}\in \ce$. Observe that $X_n^r$ is a deterministic function of $\omega_{\theta}$. Using this together with linearity of covariance and Cauchy-Schwarz inequality, we have for each $\omega_{\theta} \in \ce$
\begin{eqnarray*}
{\rm {Cov}} (X_r, X_n \mid \omega_{\theta}) &=& {\rm {Cov}} (X_r, X_n-X_n^r \mid \omega_{\theta})={\rm {Cov}} (X_r-X_{2\theta}+X_{2\theta}, X_n-X_n^r-X_{2\theta}+X_{2\theta} \mid \omega_{\theta})\\
&=& {\rm {Var}} (X_{2\theta}\mid \omega_{\theta})+ {\rm {Cov}}(X_{2\theta}, X_n-X_n^r-X_{2\theta} \mid \omega_{\theta})\\
&+& {\rm {Cov}}(X_r-X_{2\theta},X_{2\theta}\mid \omega_{\theta})+ {\rm {Cov}} (X_r-X_{2\theta},X_n-X_n^r-X_{2\theta}\mid \omega_{\theta})\\
&\geq & {\rm {Var}} (X_{2\theta}\mid \omega_{\theta})\\
&-&\sqrt{{\rm {Var}}(X_{2\theta}\mid \omega_{\theta})}\left(\sqrt{{\rm {Var}}(X_r-X_{2\theta}\mid \omega_{\theta})}+\sqrt{{\rm {Var}}(X_n-X_n^r-X_{2\theta}\mid \omega_{\theta})}\right)\\
&-& \sqrt{{\rm {Var}}(X_r-X_{2\theta}\mid \omega_{\theta})}\sqrt{{\rm {Var}}(X_n-X_n^r-X_{2\theta}\mid \omega_{\theta})}.
\end{eqnarray*}
Clearly the last term in RHS is further lower bounded by $-\sqrt{10 C_1}r^{2/3}$ (by definition of $\cE$), and using the definition of $\cE$ and choosing $\theta$ sufficiently small one can make the total contribution of the other terms lower bounded by $(\sqrt{10C_1}+1)r^{2/3}$, implying ${\rm {Cov}} (X_r, X_n \mid \omega_{\theta})>r^{2/3}$, as desired. 
\end{proof}

\subsection{Lower Bounding the Probability of $\cE$}
To complete the proof of Theorem \ref{t:lower}, it remains now to prove an appropriate lower bound on $\P(\cE)$, which is done in the next proposition. 

\begin{proposition}
\label{p:elb}
In the set-up of Proposition \ref{prop:event}, for $\cE$ as defined in the previous section, there exist choices of parameters $C_1$ and $c_1$ independent of $\theta$ such that for all $\theta$ sufficiently small and appropriate choices of parameters $\phi$ and $L$ depending on $\theta$ such that the following holds: there exists $\beta>0$ depending on all the parameters such that $\P(\cE)\geq \beta r^{2/3}n^{-2/3}$.
\end{proposition}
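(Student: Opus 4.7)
The plan is to lower bound $\P(\cE_{\mathrm{dec}} \cap \cE_{\mathrm{bar}})$ by $\Omega(r^{2/3}n^{-2/3})$ using the fact that these two events are independent, and then show that the three conditions defining $\cE$ hold on all but a negligible $\cg_{r,\theta}$-measurable subset. Since $\cE_{\mathrm{dec}}$ is $\cf_r$-measurable while $\cE_{\mathrm{bar}}$ depends only on weights inside $U_1 \cup U_2$, which lies strictly below $\L_r$, the two events are indeed independent. Proposition \ref{p:maxdecay}, applied (after a translation by $-\br$) to the line-to-point profile from $\L_r$ to $\bn$, yields $\P(\cE_{\mathrm{dec}}) \geq c_0 \theta r^{2/3}(n-r)^{-2/3} \geq c \theta r^{2/3} n^{-2/3}$ for some absolute constant $c>0$, using $r < n/2$. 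Applying Lemma \ref{l:barbasic} independently in each of the parallelograms $U_1$ and $U_2$ gives $\P(\cE_{\mathrm{bar}}) \geq \rho_*$ for some $\rho_* = \rho_*(L,\phi,\theta) > 0$. Multiplying, $\P(\cE_{\mathrm{dec}} \cap \cE_{\mathrm{bar}}) \geq c\rho_* \theta\, r^{2/3} n^{-2/3}$.

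The next step is to verify that conditions (1)--(3) hold on all $\omega_\theta$ in a large sub-event of $\cE_{\mathrm{dec}} \cap \cE_{\mathrm{bar}}$. For (1), on $\cE_{\mathrm{bar}}$ every $\L_0$-to-$\br$ path exiting $R_{2\theta}$ must either cross one of the barriers $U_i$ (losing at least $Lr^{1/3}$ by the definition of $\cE_{\mathrm{bar}}$) or have transversal fluctuation at least $\phi r^{2/3}$ (an event of probability $\leq e^{-c\phi}$ by Proposition \ref{l: prep1-tf}). Combined with the upper tail on $X_r-4r$ from Theorem \ref{t:onepoint}, this gives uniform stretched-exponential tails on $X_r - X_{2\theta}$; so $\E[(X_r-X_{2\theta})^2]$ is of order one, and Markov's inequality applied to $\E[(X_r - X_{2\theta})^2 \mid \omega_\theta]$ produces (1) outside a $\cg_{r,\theta}$-set of probability $o(r^{2/3}n^{-2/3})$, provided $L$ and $\phi$ are chosen large enough (depending on $\theta$). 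Condition (2) reduces to (1) via the decomposition $X_n - X_n^r = \max_{u \in \L_r}(X_u + T_{u,\bn}) - T_{u_{\max}, \bn}$: the decay encoded in $\cE_{\mathrm{dec}}$, together with Proposition \ref{p:l2l} controlling $X_u$ on annuli far from $\br$, localizes the dominant $u$ to within $\theta r^{2/3}$ of $\br$, and the rest of the analysis mirrors (1).

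The main obstacle is condition (3): a conditional \emph{lower} bound on $\Var(X_{2\theta} \mid \omega_\theta)$. The plan is first to show that for $\omega_\theta$ in a high-probability $\cg_{r,\theta}$-event the optimizing path for $X_{2\theta}$ stays entirely inside $R_\theta$, so that the conditional law of $X_{2\theta}$ agrees with the unconditional law of $T^{R_\theta}_{\mathbf{0},\br}$ (which is independent of $\omega_\theta$). This in turn would reduce (3) to proving $\Var(T^{R_\theta}_{\mathbf{0}, \br}) \geq c_1 \theta^{-1/2} r^{2/3}$. To obtain the latter the plan is to slice $R_\theta$ into $\theta^{-3/2}$ disjoint sub-rectangles of length $\theta^{3/2}r$, matching the KPZ aspect ratio of $R_\theta$; by pairing Lemma \ref{l:uppertail} with the one-point lower tail from Theorem \ref{t:onepoint}, each slab's best-to-worst-case passage time has fluctuation on the natural KPZ scale of order $\theta r^{2/3}$, and summing these independent contributions across $\theta^{-3/2}$ slabs gives the desired $\theta^{-1/2} r^{2/3}$ variance, together with a sandwiching argument that controls $T^{R_\theta}_{\mathbf{0},\br}$ between sums of infima and suprema of slab passage times. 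This symmetric variance lower bound for the constrained passage time in the thin rectangle $R_\theta$ is the key novelty required, since Proposition \ref{p:constrained} only supplies one-sided upper moment bounds; the remaining bookkeeping is to ensure that the $\omega_\theta$-error sets where $X_{2\theta}$ differs from $T^{R_\theta}_{\mathbf{0}, \br}$ or where (3) fails at level $c_1 \theta^{-1/2} r^{2/3}$ are small compared to $c\rho_* \theta\, r^{2/3}n^{-2/3}$, which is achieved by first fixing $c_1$ as an absolute constant and then taking $\theta$ small and $L, \phi$ large.
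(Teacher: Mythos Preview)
Your overall architecture matches the paper --- lower bound $\P(\cE_{\mathrm{dec}}\cap\cE_{\mathrm{bar}})$ via independence, then argue that the defining conditions (1)--(3) hold on most of this event --- but two steps do not go through as written.

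For (1)--(2) you claim the $\omega_\theta$-failure sets have probability $o(r^{2/3}n^{-2/3})$ (equivalently, small compared to $c\rho_*\theta\,r^{2/3}n^{-2/3}$). This cannot hold: the failure event for (1) depends only on weights below $\L_r$, so its probability is a fixed positive number once $\theta,\phi,L$ are chosen and is independent of $n$, whereas $r^{2/3}n^{-2/3}$ ranges over $(\delta^{2/3},2^{-2/3})$ with $\beta$ required to be uniform in $\delta$. The paper instead bounds the \emph{conditional} failure probabilities given $\cE_{\mathrm{dec}}\cap\cE_{\mathrm{bar}}$ by $0.1$ each, via $\E[(X'-X_{2\theta})^2\mid\cE_{\mathrm{bar}}]\le r^{2/3}$ (Lemma~\ref{l:2nd1}) and its analogue (Lemma~\ref{l:2nd2}), followed by Markov; a union bound then gives $\P(\cE\mid\cE_{\mathrm{dec}}\cap\cE_{\mathrm{bar}})\ge 0.7$.

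The deeper gap is in (3). Your reduction to $\Var(T^{R_\theta}_{\mathbf{0},\br})$ fails because $\cE_{\mathrm{bar}}$ only penalises excursions in $U_1\cup U_2$ of diagonal length at least $r/L$; short excursions of the $X_{2\theta}$-optimising path into $R_{2\theta}\setminus R_\theta$ are not forbidden and will generically occur, so for fixed $\omega_\theta$ one does not have $X_{2\theta}=T^{R_\theta}_{\mathbf{0},\br}$ over the $R_\theta$-randomness, and the conditional law of $X_{2\theta}$ genuinely depends on the frozen weights in $R_{2\theta}\setminus R_\theta\subset\omega_\theta$. Moreover, even for $T^{R_\theta}_{\mathbf{0},\br}$ itself, a sandwich $\sum Z_i\le T^{R_\theta}_{\mathbf{0},\br}\le\sum Z'_i$ gives no variance lower bound (the inequality $A\le X\le B$ says nothing about $\Var(X)$, and here $\Var(X-\sum Z_i)$ is of the same order as $\Var(\sum Z_i)$). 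What the paper does instead is a Doob martingale argument directly for $\Var(X_{2\theta}\mid\omega_\theta)$: it reveals the $R_\theta$-weights slab by slab and, for each ``good'' slab (an $\omega_\theta$-measurable notion; at least half the slabs are good on an event of conditional probability $\ge 0.9$, Lemma~\ref{l:e8}), couples that slab's environment with one conditioned to have atypically high through-weight (via Lemma~\ref{l:uppertail}) to show the martingale increment has size $\Omega(\theta^{1/2}r^{1/3})$ with uniformly positive probability (Lemma~\ref{l:keystepvar}). Summing $\Omega(\theta^{-3/2})$ per-slab contributions of order $\theta r^{2/3}$ yields $\Var(X_{2\theta}\mid\omega_\theta)\ge c_1\theta^{-1/2}r^{2/3}$ on $\cE_*$.
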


It is obvious that Proposition \ref{p:covlb} and \ref{p:elb} imply Proposition \ref{prop:event}. There are two key steps to the proof of Proposition \ref{p:elb}. First we show that $\cE_{\mathrm{bar}}\cap \cE_{\mathrm{dec}}$ has probability bounded below by $cr^{2/3}n^{-2/3}$ for some $c>0$ (depending on $\delta$ and $\theta$, but not on $r$ or $n$).  By the definition of $\cE$, the key step is to show that, for appropriate choices of the parameters and sufficiently small $\theta$, conditional on the event $\cE_{\mathrm{bar}}\cap \cE_{\mathrm{dec}}$, the three conditions defining $\cE$ are likely to occur. In particular, we shall show the following corresponding statements for suitable absolute constants $C_1$ and $c_1$:

\begin{equation}
\label{e:2nd1}
\P[\E[(X_r-X_{2\theta})^2\mid \omega_{\theta}] \leq 10r^{2/3} \mid \cE_{\mathrm{bar}}\cap \cE_{\mathrm{dec}}] > 0.9.
\end{equation}

\begin{equation}
\label{e:2nd2}
\P[\E[(X_n-X_n^r-X_{2\theta})^2\mid \omega_{\theta}] \leq C_1r^{2/3} \mid \cE_{\mathrm{bar}}\cap \cE_{\mathrm{dec}}] > 0.9.
\end{equation}

\begin{equation}
\label{e:varlb}
\P[\Var (X_{2\theta}\mid \omega_{\theta})\geq c_1\theta^{-1/2}r^{2/3} \mid \cE_{\mathrm{bar}}\cap \cE_{\mathrm{dec}}] > 0.9.
\end{equation}

We prove \eqref{e:2nd1} and \eqref{e:2nd2} in Lemma \ref{l:12proof} below, whereas the proof of \eqref{e:varlb} is contained in Lemma \ref{l:realvar} and Lemma \ref{l:e8}.

\subsubsection{Probability Lower Bounds for $\cE_{\mathrm{bar}}$ and  $\cE_{\mathrm{dec}}$}
We start with lower bounding $\P(\cE_{\mathrm{bar}}\cap \cE_{\mathrm{dec}})$. Observe that these two events depend on disjoint sets of vertex weights and hence are independent. So it suffices to lower bound $\P(\cE_{\mathrm{dec}})$ and $\P(\cE_{\mathrm{bar}})$ separately. We start with lower bound of $\P(\cE_{\mathrm{dec}})$. 

\begin{lemma}
\label{l:declb}
In the set-up of Proposition \ref{prop:event}, there exists an absolute constant $c>0,$ {such that for all $\theta$ sufficiently small}, $\P(\cE_{\mathrm{dec}})\geq c\theta r^{2/3}n^{-2/3}$.
\end{lemma}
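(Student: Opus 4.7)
The plan is to derive this directly from Proposition \ref{p:maxdecay} via translation invariance of the i.i.d.\ environment. The first observation is that the collection of passage times $\{T_{\br+(m,-m), \bn}\}_{m}$ appearing in the definition of $\cE_{\mathrm{dec}}$ has the same joint law as $\{T_{(m,-m), (n-r,n-r)}\}_{m}$, obtained by translating the endpoints by $-\br$. Under this translation, the events $H'_j$ become precisely the events $H_j$ (with $n$ replaced by $n':=n-r$ and the same $r$) appearing in the definition of $\cE_{n',r}$ in Section \ref{s:finite}.

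Consequently, $\P(\cE_{\mathrm{dec}}) = \P(\cE_{n-r,\,r})$. To invoke Proposition \ref{p:maxdecay} with parameters $(n',r)=(n-r,r)$, I need to check the hypothesis $\delta' n' < r < n'$ for some $\delta' \in (0,1)$. The upper bound $r < n-r$ holds since the excerpt assumes $r < n/2$. For the lower bound, $r > \delta n > \delta(n-r)$, so the hypothesis holds with $\delta'=\delta$. Assuming $n \geq n_0(\delta,\theta)$ is large enough that $n-r \geq n/2$ exceeds the threshold required by Proposition \ref{p:maxdecay} (which can be arranged by enlarging the $n_0$ in the statement of Proposition \ref{prop:event}), the proposition yields
\begin{equation*}
\P(\cE_{\mathrm{dec}}) \;\geq\; c_0\,\theta\,r^{2/3}(n-r)^{-2/3} \;\geq\; c_0\,\theta\,r^{2/3}n^{-2/3},
\end{equation*}
where $c_0$ is the absolute constant from Proposition \ref{p:maxdecay}. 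Taking $c=c_0$ finishes the proof.

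There is no real obstacle here: the lemma is essentially a bookkeeping corollary of Proposition \ref{p:maxdecay}, and the only subtleties are (i) matching the two definitions of the decay event after the $-\br$ translation, and (ii) verifying that the pair $(n-r,r)$ satisfies the hypotheses of Proposition \ref{p:maxdecay} with the same constant $\delta$. Both are immediate from $\delta n < r < n/2$.
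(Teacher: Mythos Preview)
Your proof is correct and is essentially identical to the paper's own argument: translate by $-\br$ to identify $\cE_{\mathrm{dec}}$ with $\cE_{n-r,r}$, verify $\delta(n-r)<r<n-r$ from $\delta n<r<n/2$, and apply Proposition~\ref{p:maxdecay}.
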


\begin{proof}
Observe that in the setting of Proposition \ref{prop:event} we have $\delta n<r< \frac{n}{2}$ and hence $\delta(n-r) <r< n-r$. Observing that $\ce_{\mathrm{dec}}$ is a translate of the event $\ce_{n-r,r}$ as defined in Proposition \ref{p:maxdecay}, and using the same proposition we get 
$\P(\ce_{\mathrm{dec}})\geq c_0\theta r^{2/3}(n-r)^{-2/3}\geq c\theta r^{2/3}n^{-2/3}$, as desired.
\end{proof}

\begin{lemma} \label{l:barlb}
There exists a constant $\kappa=\kappa(L,\phi)>0$ (independent of $r$) such that $\P(\ce_{\mathrm{bar}})>\kappa$ for all $r$ sufficiently large. 
\end{lemma}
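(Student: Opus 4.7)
The plan is to deduce Lemma \ref{l:barlb} as a direct consequence of the already-established Lemma \ref{l:barbasic}, together with translation invariance of the i.i.d.\ weight field and the fact that $U_1$ and $U_2$ occupy disjoint sets of vertices.

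First, I would observe that the rectangles $U_1$ and $U_2$ are disjoint (they sit on opposite sides of the central strip $|u_1 - u_2| \leq \theta r^{2/3}$), so the events $\cE_{1,\mathrm{bar}}$ and $\cE_{2,\mathrm{bar}}$ are measurable with respect to disjoint collections of vertex weights and therefore independent. Consequently $\P(\cE_{\mathrm{bar}}) = \P(\cE_{1,\mathrm{bar}}) \cdot \P(\cE_{2,\mathrm{bar}})$, and it suffices to lower bound each factor by a constant depending only on $L$ and $\phi$.

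Next, for $i \in \{1,2\}$, I would argue that $U_i$ is simply an integer translate of a parallelogram of the form
\[
\widetilde U := \{(u_1, u_2) \in \Z^2 : 0 \leq u_1 + u_2 < 2r,\ |u_1 - u_2| \leq \tfrac{\phi - \theta}{2} r^{2/3}\},
\]
i.e. exactly the type of region $U_{r, \Delta}$ appearing in Lemma \ref{l:barbasic} with $\Delta = (\phi-\theta)/2$. Since the weight field $\{\omega_v\}$ is i.i.d., the joint law of $\{T^{U_i}_{u,v} - \E T_{u,v} : u, v \in U_i\}$ coincides with that of $\{T^{\widetilde U}_{u,v} - \E T_{u,v} : u, v \in \widetilde U\}$, and likewise the condition $|d(u) - d(u')| \geq r/L$ transfers verbatim under the translation. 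Applying part (2) of Lemma \ref{l:barbasic} with the constant $\Delta = (\phi - \theta)/2$ therefore yields
\[
\P(\cE_{i,\mathrm{bar}}) \geq \rho^{16 L^2}, \qquad i = 1, 2,
\]
for some $\rho = \rho(L, \phi, \theta) > 0$ and all $r$ sufficiently large. Combining this with the independence from the previous paragraph gives $\P(\cE_{\mathrm{bar}}) \geq \rho^{32 L^2} =: \kappa$, which is the desired lower bound.

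There is essentially no obstacle here since all the heavy lifting has already been carried out in Lemma \ref{l:barbasic}; the only care needed is the routine bookkeeping to align the definitions (making sure the translation identifies $U_i$ with a centered $\widetilde U$, and that the constant $L$ in the depletion threshold $-L r^{1/3}$ matches the $L$ controlling the minimal separation $r/L$). Finally I would emphasize that the resulting constant $\kappa$ depends on $L$, $\phi$, $\theta$ but is independent of $r$, which is all that is needed for the subsequent lower bound argument.
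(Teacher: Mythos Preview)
Your proof is correct and follows essentially the same approach as the paper: both use independence of $\cE_{1,\mathrm{bar}}$ and $\cE_{2,\mathrm{bar}}$ (disjoint vertex sets) and then recognize each $U_i$ as a translate of a centered rectangle $U_{r,\Delta}$ so that Lemma~\ref{l:barbasic}(2) applies directly. Your identification $\Delta=(\phi-\theta)/2$ is in fact the correct half-width (the paper writes $\Delta=\phi-\theta$, a harmless imprecision), and your acknowledgment that $\kappa$ also depends on $\theta$ is accurate and consistent with how the constant is used downstream.
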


\begin{proof}
Recall that $\cE_{\mathrm{bar}}$ was defined to be the intersection of the events $\cE_{1,\mathrm{bar}}$ and $\cE_{2,\mathrm{bar}}$
where $\cE_{i,\mathrm{bar}}$ was the event that asked the passage times across the rectangle $U_{i}$ to be small. 
Observe that $\cE_{1,\mathrm{bar}}$ and $\cE_{2,\mathrm{bar}}$ are independent and by obvious symmetry these events have equal probabilities. Hence it suffices to only lower bound $\P(\cE_{1,\mathrm{bar}})$.  Observe that $\cE_{1,\mathrm{bar}}$ is the translate of the event $\mathcal{B}$ from Lemma \ref{l:barbasic} with $\Delta=\phi-\theta$. Hence it follows from translation invariance of the underlying field and Lemma \ref{l:barbasic} that 
$\P(\cE_{1,\mathrm{bar}})\geq \sqrt{\kappa}$ for some $\kappa=\kappa(L,\phi)>0$. This completes the proof of the lemma.
\end{proof}

\subsubsection{Lower Bound for $\P(\cE)$}
In this subsection we prove \eqref{e:2nd1} and \eqref{e:2nd2}, \eqref{e:varlb} and use those to complete the proof of Proposition \ref{p:elb}. The broad strategy for the proofs of \eqref{e:2nd1} and \eqref{e:2nd2}  is the same. We shall prove the following bounds on the  conditional expectations: $$\E[(X_r-X_{2\theta})^2\mid \cE_{\mathrm{bar}}\cap \cE_{\mathrm{dec}}], \text{and, } \E\left[(X_{n}-X_{n}^r-X_{2\theta})^2  \mid \cE_{\mathrm{bar}}\cap \cE_{\mathrm{dec}}\right] \leq C r^{2/3};$$ \eqref{e:2nd1} and \eqref{e:2nd2} will then follow from Markov's inequality. We first state these results. 

Observe first that $X_r-X_{2\theta}$ is independent of $\cf_r$ and hence $\cE_{\mathrm{dec}}$, so it suffices to control $$\E[(X_r-X_{2\theta})^2\mid \ce_{\mathrm{bar}}].$$ In fact we shall prove a slightly stronger result. Let 
\begin{equation}\label{x'}
X'=\sup_{u\in {\L_{r,\frac{1}{4}\phi r^{2/3}}} }X_{u},
\end{equation} that is the weight of the maximum weight point to line path from $\L_0$ with the other endpoint being on the line segment in $\L_{r}$ centered at $\mathbf{r}$ with length {$\frac{1}{2}\phi r^{2/3}$}.

\begin{lemma}
\label{l:2nd1}
For $\theta$ sufficiently small and appropriate choices of parameters $\phi$ and $L$ we have for all $r$ sufficiently large: $\E[(X'-X_{2\theta})^2\mid \ce_{\mathrm{bar}}]\leq r^{2/3}$. 
\end{lemma}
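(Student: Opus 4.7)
The strategy is to exploit that on $\ce_{\mathrm{bar}}$ the path realizing $X'$ must spend essentially all of its $d$-length inside $R_\theta \subset R_{2\theta}$, so $X'$ tracks $X_{2\theta}$ closely rather than behaving like an unconstrained line-to-point weight. Concretely, the barrier event penalizes any sub-path lying entirely in $U_1$ (or $U_2$) of $d$-extent $\geq r/L$ by an additive $Lr^{1/3}$, and I shall choose $L$ large (depending on $\theta$) so that this penalty dominates the maximal gap of order $\theta^{-1}r^{1/3}$ between $X_{2\theta}$ and $4r$ controlled by Proposition \ref{p:constrained}.

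First I would reduce to paths in $R_\phi$: by Proposition \ref{l: prep1-tf}, except on an event of probability $e^{-c\phi}$, any path from $\L_0$ to $\L_{r, \phi r^{2/3}/4}$ of weight at least $4r - c_1\phi^2 r^{1/3}$ stays within $R_\phi$. Next, on $\ce_{\mathrm{bar}}$, a near-optimal such path is decomposed into pieces in $R_\theta$ interleaved with excursions into $U_1 \cup U_2$. Combining the barrier estimate with the lower bound $X_{2\theta} \geq 4r - C\theta^{-1}r^{1/3}$ (which holds with high probability, via the fourth-moment estimate of Proposition \ref{p:constrained}), any path carrying a long excursion (of $d$-length $\geq r/L$) has weight strictly smaller than $X_{2\theta}$ once $L$ is chosen large enough relative to $\theta^{-1}$, and hence cannot attain $X'$. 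So on a subevent of $\ce_{\mathrm{bar}}$ of overwhelming conditional probability, the $X'$-achieving path consists of pieces in $R_\theta$ connected by short excursions of $d$-length $<r/L$ each.

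For such paths the excess $X' - X_{2\theta}$ (which is automatically non-negative) would be estimated via two contributions: the endpoint-slack contribution from replacing the endpoint $\mathbf{r}$ by the full segment $\L_{r,\phi r^{2/3}/4}$, bounded via Proposition \ref{p:l2l} and Lemma \ref{l:l2lbd} to give an $O((\log\phi)r^{1/3})$ term; and the aggregate gain from short excursions off $R_\theta$, bounded by summing the fluctuations $T^{U}-\E T$ of Theorem \ref{t:supinf} at the scale of each excursion. Both contributions admit second-moment bounds of order $r^{2/3}$ with explicit prefactors. On the complementary ``bad'' subevent of $\ce_{\mathrm{bar}}$ (where transversal fluctuation fails, a long excursion appears, or the profile is atypical), whose conditional probability is at most $e^{-c\phi}+e^{-cL}+$ small terms, I would use Cauchy--Schwarz together with the fourth-moment bound from Proposition \ref{p:constrained} and the exponential upper tail of $X'$ from Lemma \ref{l:l2lbd} to show the contribution to $\E[(X'-X_{2\theta})^2 \mid \ce_{\mathrm{bar}}]$ is negligible.

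The main obstacle is the book-keeping needed to organize the path decomposition and the aggregation over short excursions cleanly, and moreover to choose the parameters in the correct order---first taking $L$ and $\phi$ large as functions of $\theta$ so that the barrier forces confinement and the ``bad'' event is super-exponentially rare, and only then taking $\theta$ small enough to absorb the remaining $\theta$-dependent prefactors---so that the final constant on the right-hand side is the claimed $1$ rather than an unspecified $C$.
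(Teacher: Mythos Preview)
Your overall architecture---confine $\Gamma'$ to $R_\phi$, use the barrier to rule out long excursions off $R_\theta$, then handle a bad set by Cauchy--Schwarz against a fourth-moment bound---matches the paper's. But there is a genuine missing step, and it causes both of your ``good-event'' estimates to fail.

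The gap is this: having shown that $\Gamma'$ returns to $R_\theta$ in every $d$-interval of length $r/L$, you have not established that the short excursions of $\Gamma'$ off $R_\theta$ stay inside $R_{2\theta}$. This is exactly what the paper's Lemma~\ref{l:key1} does in its final step: a local transversal-fluctuation argument (Proposition~\ref{l: prep1-tf} applied at scale $r/L$) shows that an excursion of $d$-length $\leq r/L$ starting and ending in $R_\theta$ cannot reach the boundary of $R_{2\theta}$ except with probability $\exp(-c\theta L^{2/3})$. Without this, your comparison of $X'$ to $X_{2\theta}$ does not close: either the excursions stay in $R_{2\theta}$, in which case there is \emph{no} ``aggregate excursion gain'' to bound (the middle of $\Gamma'$ is already a legal sub-path for $X_{2\theta}$); or they exit $R_{2\theta}$, and you have no control on them.

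This missing step also explains why your quantitative bounds blow up. Once confinement to $R_{2\theta}$ is known, the only discrepancy between $X'$ and $X_{2\theta}$ comes from the two endpoint regions of $d$-extent $O(\theta^{3/2}r)$ where $\Gamma'$ has not yet entered $R_\theta$; the paper bounds this via Lemma~\ref{l:sideregularity} at that scale to get second moment $O(\theta r^{2/3})$, which vanishes as $\theta\to 0$. Your proposed endpoint-slack bound via Lemma~\ref{l:l2lbd} lives at the much coarser scale $\phi r^{2/3}$ and gives second moment $O((\log\phi)^2 r^{2/3}) = O((\log\theta^{-1})^2 r^{2/3})$, which grows as $\theta\to 0$; similarly, summing $O(L)$ excursion fluctuations of size $(r/L)^{1/3}$ gives $O(L^{2/3}r^{1/3})$, again diverging. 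So the parameter ordering you describe---taking $\theta$ small last to absorb prefactors---cannot succeed with these estimates.
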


\begin{lemma}
\label{l:2nd2}
There exists and absolute constant $C>0$ such that for all $\theta$ sufficiently small and appropriate choices of parameters $\phi$ and $L$ we have for all $r$ sufficiently large: $$\E\left[(X_{n}-X_{n}^r-X_{2\theta})^2  \mid \cE_{\mathrm{bar}}\cap \cE_{\mathrm{dec}}\right] \leq C r^{2/3}.$$ 
\end{lemma}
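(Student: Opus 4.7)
I would first obtain the deterministic inequality $X_n - X_n^r - X_{2\theta} \geq -2\alpha^{-1} r^{1/3}$ on $\cE_{\mathrm{dec}}$. Concatenating an $R_{2\theta}$-constrained $\mathbf{0} \to \br$ path of weight $X_{2\theta}$ with the $\br \to \bn$ geodesic yields a valid $\L_0\to\bn$ path, so $X_n \geq X_{2\theta} + T_{\br,\bn}$. The conditions $H'_j$, $j \geq 1$, inside $\cE_{\mathrm{dec}}$ force the global argmax of the $T_{\cdot,\bn}$-profile on $\L_r$ to lie inside $\L_{r,\theta r^{2/3}}$, and then the bound in $H'_0$ gives $X_n^r \leq T_{\br,\bn} + 2\alpha^{-1} r^{1/3}$. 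This accounts for the negative part and contributes at most $4\alpha^{-2} r^{2/3}$ to the squared error on $\cE_{\mathrm{bar}}\cap\cE_{\mathrm{dec}}$.

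\textbf{Upper bound via crossing-point decomposition.} Since any $\L_0 \to \bn$ path crosses $\L_r$, one has $X_n = \max_{v \in \L_r}[X_v + T_{v,\bn}]$, so $X_n - X_n^r = \max_v Z_v$ with $Z_v := X_v - (X_n^r - T_{v,\bn})$. I split at the scale $\phi r^{2/3}/4$: for $v=\br+(m,-m)$ with $|m| < \phi r^{2/3}/4$, using $T_{v,\bn} \leq X_n^r$ gives $Z_v \leq X_v \leq X'$; for $|m| \in [2^{j-1}, 2^j) r^{2/3}$ with $2^{j-1} \geq \phi/4$, the event $H'_j \subseteq \cE_{\mathrm{dec}}$ gives $X_n^r - T_{v,\bn} \geq 2\alpha \cdot 2^{j/4} r^{1/3}$, whence $Z_v \leq Y_j - 2\alpha \cdot 2^{j/4} r^{1/3}$ with $Y_j := \sup_{|m| \in [2^{j-1}, 2^j) r^{2/3}} X_{\br + (m,-m)}$. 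Setting $W := \max_{j \geq j_\phi}[Y_j - 2\alpha \cdot 2^{j/4} r^{1/3}]$ (with $j_\phi$ the smallest $j$ satisfying $2^{j-1}\ge\phi/4$), on $\cE_{\mathrm{dec}}$ we obtain $X_n - X_n^r \leq X' \vee W$, and hence
\begin{equation*}
(X_n - X_n^r - X_{2\theta})_+ \leq (X' - X_{2\theta})_+ + (W - X_{2\theta})_+.
\end{equation*}

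\textbf{$L^2$ control and conditioning.} Lemma \ref{l:2nd1} gives $\E[(X' - X_{2\theta})^2 \mid \cE_{\mathrm{bar}}] \leq r^{2/3}$, and this transfers to $\cE_{\mathrm{bar}} \cap \cE_{\mathrm{dec}}$ since $X'$ and $X_{2\theta}$ are measurable with respect to weights strictly below $\L_r$ and hence independent of $\cf_r \supseteq \sigma(\cE_{\mathrm{dec}})$. For $W$, Lemma \ref{l:l2lbd} applied dyadically yields $\P(Y_j - 4r \geq z r^{1/3}) \leq C 2^j e^{-cz}$; union-bounding in $j$ with the geometric shifts $2\alpha \cdot 2^{j/4}$ (where the $2^j$ prefactor is dominated by $e^{-2c\alpha \cdot 2^{j/4}}$ once $\phi$, hence $j_\phi$, is large enough) gives $\P(W - 4r \geq x r^{1/3}) \leq C(\phi) e^{-cx}$, so $\E[(W - 4r)_+^2] \leq C(\phi) r^{2/3}$. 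Proposition \ref{p:constrained} furnishes $\E[(X_{2\theta} - 4r)^2] \leq C\theta^{-2} r^{2/3}$ via Cauchy--Schwarz on its $L^4$ bound, yielding $\E[(W - X_{2\theta})_+^2] \leq C(\theta, \phi) r^{2/3}$ unconditionally; since $(W, X_{2\theta})$ is independent of $\cE_{\mathrm{dec}}$, this persists under $\cE_{\mathrm{dec}}$, and further conditioning on $\cE_{\mathrm{bar}}$ inflates by at most $1/\P(\cE_{\mathrm{bar}}) \leq 1/\kappa(L,\phi)$ from Lemma \ref{l:barlb}. Combining with the lower-bound contribution and choosing $\phi, L$ as functions of $\theta$ (as in Lemma \ref{l:2nd1}) yields $\E[(X_n - X_n^r - X_{2\theta})^2 \mid \cE_{\mathrm{bar}} \cap \cE_{\mathrm{dec}}] \leq C r^{2/3}$ for an absolute constant $C$. \textbf{The main obstacle} is controlling the tail of $W$: the $2^j$ prefactor arising from the union bound over dyadic transversal scales must be absorbed by the exponential decay $e^{-c\cdot 2^{j/4}}$, forcing $\phi$ to be chosen large in a $\theta$-dependent fashion; since $\P(\cE_{\mathrm{bar}})\ge\kappa(L,\phi)>0$ is independent of $r$, the resulting $\theta$-dependent constants do not spoil the final absolute bound $C r^{2/3}$.
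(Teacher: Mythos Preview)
Your lower bound and your crossing-point decomposition are correct, and the idea of splitting at scale $\phi r^{2/3}/4$ into a near contribution bounded by $X'$ and a far contribution $W$ is essentially the same as the paper's (the paper packages the far contribution into an event $\mathfrak{A}$ rather than a random variable $W$, but the content is the same). The control of $(X'-X_{2\theta})$ via Lemma \ref{l:2nd1} is also correct.

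The gap is in your treatment of the $(W - X_{2\theta})_+$ term, and specifically in the claim that the final constant is \emph{absolute}. Your route gives
\[
\E\bigl[(W-X_{2\theta})_+^2 \,\big|\, \cE_{\mathrm{bar}}\cap\cE_{\mathrm{dec}}\bigr]
\;\le\;
\frac{1}{\kappa(L,\phi)}\Bigl(2\E[(W-4r)_+^2] + 2\E[(4r-X_{2\theta})_+^2]\Bigr),
\]
where $\E[(4r-X_{2\theta})_+^2]\le C\theta^{-2}r^{2/3}$ from Proposition \ref{p:constrained} and $\kappa(L,\phi)$ is the barrier probability from Lemma \ref{l:barlb}. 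Both the $\theta^{-2}$ factor and $1/\kappa(L,\phi)$ blow up as $\theta\to 0$ (with $\phi=\theta^{-30}$, $L=\phi^{30}$), so the resulting constant depends on $\theta$. This matters: Lemma \ref{l:12proof} sets $C_1=10C$, and Proposition \ref{p:covlb} requires $C_1$ independent of $\theta$ so that the variance term $c_1\theta^{-1/2}r^{2/3}$ can be made to dominate $\sqrt{10C_1}\,r^{2/3}$ by choosing $\theta$ small. A $\theta$-dependent $C$ breaks that step.

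The paper repairs this by observing that the event $\{(W-X_{2\theta})_+>0\}$ is contained in $\mathfrak{A}^c=\ce_1^c\cup\ce_2^c$ (where $\ce_1=\{X_\theta\ge 4r-\theta^{-3}r^{1/3}+2\alpha^{-1}r^{1/3}\}$ and $\ce_2$ is your tail event on the $Y_j$'s), and that $\P(\mathfrak{A}^c\mid \cE_{\mathrm{bar}}\cap\cE_{\mathrm{dec}})\le \exp(-\theta^{-1/2})$ via FKG and Proposition \ref{p:l2l}. They then use Cauchy--Schwarz against a fourth-moment bound $\E[(X_n-X_n^r-X_{2\theta})_+^4\mid\cE_{\mathrm{bar}}\cap\cE_{\mathrm{dec}}]\le C\theta^{-4}r^{4/3}$, so the far contribution is at most $\exp(-\theta^{-1/2}/2)\cdot C^{1/2}\theta^{-2}r^{2/3}\le r^{2/3}$ for $\theta$ small. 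Your decomposition is one step away from this: replace the crude division by $\kappa$ with the observation that $(W-X_{2\theta})_+$ is supported on an event of exponentially small (in $\theta^{-1/2}$) conditional probability.
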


Let us postpone the proofs of these two lemmas and first show how these results lead to simple proofs of \eqref{e:2nd1} and \eqref{e:2nd2} which are recorded in the next lemma. 

\begin{lemma}
\label{l:12proof}
There exist an absolute constant $C_1$ (independent of $\theta$) such that for all $\theta$ sufficiently small and appropriate choices of parameters $\phi$ and $L$ depending on $\theta$ we have 
\begin{enumerate}
\item[(i)] $\P[\E[(X_r-X_{2\theta})^2\mid \omega_{\theta}] \leq 10r^{2/3} \mid \cE_{\mathrm{bar}}\cap \cE_{\mathrm{dec}}] > 0.9$,
\item[(ii)] $\P[\E[(X_n-X_n^r-X_{2\theta})^2\mid \omega_{\theta}] \leq C_1r^{2/3} \mid \cE_{\mathrm{bar}}\cap \cE_{\mathrm{dec}}] > 0.9$.
\end{enumerate}
\end{lemma}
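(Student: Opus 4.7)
The plan is essentially to combine Lemmas \ref{l:2nd1} and \ref{l:2nd2} with Markov's inequality, observing that both events $\cE_{\mathrm{bar}}$ and $\cE_{\mathrm{dec}}$ are measurable with respect to $\omega_\theta$ (the former because $U_1,U_2\subseteq \Z^2\setminus R_\theta$, the latter because it is $\cf_r$-measurable and $\cf_r\subseteq \sigma(\omega_\theta)$). This measurability alone, through the tower property, would allow us to pass from an unconditioned second moment bound to a conditional one. The minor twists are that Lemma \ref{l:2nd1} is stated for $X'$ rather than $X_r$, and its conditional expectation is only taken with respect to $\cE_{\mathrm{bar}}$ and not $\cE_{\mathrm{bar}}\cap\cE_{\mathrm{dec}}$; both of these are handled below using, respectively, a monotone comparison and an independence observation.

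For part (2), tower and $\omega_\theta$-measurability of $\cE_{\mathrm{bar}}\cap\cE_{\mathrm{dec}}$ give
\begin{equation*}
\E\left[\E[(X_n-X_n^r-X_{2\theta})^2\mid\omega_\theta]\,\big|\,\cE_{\mathrm{bar}}\cap\cE_{\mathrm{dec}}\right]
=\E\left[(X_n-X_n^r-X_{2\theta})^2\,\big|\,\cE_{\mathrm{bar}}\cap\cE_{\mathrm{dec}}\right]\leq Cr^{2/3},
\end{equation*}
by Lemma \ref{l:2nd2}. Applying Markov's inequality with $C_1=10C$ directly yields (2).

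For part (1), first observe the pointwise chain $X_{2\theta}\leq X_r\leq X'$: the left inequality is because $X_{2\theta}$ is $T_{\mathbf{0},\br}$ restricted to paths inside $R_{2\theta}$ while $X_r=\max_{u\in\L_0}T_{u,\br}\geq T_{\mathbf{0},\br}$, and the right inequality holds because $\br$ is in the line segment over which the supremum defining $X'$ (see \eqref{x'}) is taken, so $X'\geq X_{\br}=X_r$. Since all three are nonnegative, squaring yields $(X_r-X_{2\theta})^2\leq (X'-X_{2\theta})^2$ pointwise, hence $\E[(X_r-X_{2\theta})^2\mid\omega_\theta]\leq \E[(X'-X_{2\theta})^2\mid\omega_\theta]$. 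Next, note that $X_r$, $X'$, $X_{2\theta}$, and $\cE_{\mathrm{bar}}$ are all measurable with respect to the environment in $\{v:d(v)\leq 2r\}$, while $\cE_{\mathrm{dec}}$ is $\cf_r$-measurable; hence $\cE_{\mathrm{dec}}$ is independent of the pair $\bigl(\E[(X_r-X_{2\theta})^2\mid\omega_\theta],\don_{\cE_{\mathrm{bar}}}\bigr)$. Consequently, conditioning on $\cE_{\mathrm{bar}}\cap\cE_{\mathrm{dec}}$ reduces to conditioning on $\cE_{\mathrm{bar}}$ alone for the probability in (1). Combining this with tower, the comparison above, and Lemma \ref{l:2nd1}, we obtain
\begin{equation*}
\E\left[\E[(X_r-X_{2\theta})^2\mid\omega_\theta]\,\big|\,\cE_{\mathrm{bar}}\right]
\leq \E\left[(X'-X_{2\theta})^2\,\big|\,\cE_{\mathrm{bar}}\right]\leq r^{2/3},
\end{equation*}
and Markov's inequality then gives (1) with the constant $10$. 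Neither step presents a serious obstacle; the only subtlety worth flagging is keeping track of which environments each random variable and each event depends on, to justify the passage from conditioning on $\cE_{\mathrm{bar}}\cap\cE_{\mathrm{dec}}$ to conditioning on $\cE_{\mathrm{bar}}$ in (1).
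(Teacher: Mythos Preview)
Your proposal is correct and follows essentially the same approach as the paper: both use the pointwise comparison $X_{2\theta}\le X_r\le X'$, the independence of $\cE_{\mathrm{dec}}$ from $X_r,X_{2\theta},\cE_{\mathrm{bar}}$ to drop the $\cE_{\mathrm{dec}}$-conditioning in (1), and then apply Lemmas~\ref{l:2nd1} and~\ref{l:2nd2} together with Markov's inequality (with $C_1=10C$). Your write-up simply makes the tower/measurability steps more explicit than the paper's one-line version.
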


\begin{proof}
Note that (i) follows from Lemma \ref{l:2nd1} by observing that $X'\geq X_r \geq X_{2\theta}$; and $X_{2\theta}$ and $X_r$ and $\cE_{\mathrm{bar}}$ are all independent of $\cE_{\mathrm{dec}}$ and using Markov's inequality. Similarly, (ii) follows from Lemma \ref{l:2nd2} and Markov's inequality by setting $C_1=10C$ where $C$ is as in the statement of Lemma \ref{l:2nd2}.
\end{proof}

We next move towards  proving \eqref{e:varlb}. The strategy is to divide the rectangles $R_{\theta}$ (and $R_{2\theta}$) by parallel lines $\L_{i\theta^{3/2}r}=\{x+y=2i\theta^{3/2}r\}$. The rectangle formed by the intersection of $R_{\theta}$ (resp.\ $R_{2\theta}$) and the lines $\L_{i\theta^{3/2}r}$ and $\L_{(i+1)\theta^{3/2}r}$ will be denoted $R_{\theta}^{i}$ (resp.\ $R_{2\theta}^{i}$). We want to show that conditional on some positive probability subset of $\cE$, a linear fraction of these rectangles has a contribution of the order of $\theta^{1/2}r^{1/3}$ to the variance of $X_{2\theta}$.
For any $\omega_{\theta}$, weight configuration outside $R_{\theta}$, an index $i\in \{1,2,\ldots , \frac{1}{5}\theta^{-3/2}\}$ is called \textbf{good} if 
$$\P\left(\sup_{u\in R_{2\theta}^{5i}, v\in R_{2\theta}^{5i+4}} T_{u,v} - 2|d(u)-d(v)|\leq  C_2\theta^{1/2}r^{1/3}\mid \omega_{\theta}\right) \geq 0.99$$
for some fixed large constant $C_2$. Let $\ce_{*}$ denote the subset of all $\omega_{\theta}\in \ce_{\mathrm{dec}}\cap \ce_{\mathrm{bar}}$ such that the fraction of good indices is at least a half. We have the following results which together immediately imply \eqref{e:varlb}.

\begin{lemma}
\label{l:realvar}
For each $C_2>0$ sufficiently large, there exists $c_1>0$, such that for all $\theta$ sufficiently small and for $\omega_{\theta}\in \cE_{*}$ we have
$$\Var (X_{2\theta}\mid \omega_{\theta})\geq c_1\theta^{-1/2}r^{2/3}$$
for all $r$ sufficiently large (depending on $\theta$).
\end{lemma}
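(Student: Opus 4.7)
The plan is to exploit the conditional independence, given $\omega_\theta$, of the weights in the middle sub-rectangles $R_\theta^{5i+2}$ across different good indices $i$, in order to decompose $\Var(X_{2\theta}\mid \omega_\theta)$ into at least $N:=\tfrac{1}{10}\theta^{-3/2}$ non-negative contributions and then bound each below by an absolute multiple of $\theta r^{2/3}$; summation then yields the claimed $\theta^{-1/2}r^{2/3}$ bound. Concretely, let $S$ denote the set of good indices, so $|S|\ge N$, and for $i\in S$ let $W_i$ denote the collection of weights inside $R_\theta^{5i+2}$; these are mutually independent given $\omega_\theta$ since their supports are disjoint. Revealing the $W_i$'s sequentially and invoking orthogonality of martingale differences (equivalently, the Hoeffding--ANOVA decomposition) yields
\begin{equation}
\Var(X_{2\theta}\mid \omega_\theta) \;\geq\; \sum_{i\in S}\Var\bigl(\E[X_{2\theta}\mid W_i,\omega_\theta]\,\big|\,\omega_\theta\bigr),
\end{equation}
so it suffices to show that each summand on the right is at least $c\theta r^{2/3}$ for some absolute constant $c>0$, and then take $c_1 = c/10$.

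For a good $i$, introduce the auxiliary restricted line-to-line passage time $U_i := \max_{u\in A_i,\,v\in B_i,\,u\preceq v} T^{R_\theta^{5i+2}}_{u,v}$, where $A_i, B_i$ denote the bottom and top sides of $R_\theta^{5i+2}$; note that $U_i$ depends only on $W_i$. Since this is an on-scale passage time of length $\theta^{3/2}r$, Theorem \ref{t:onepoint} combined with Theorem \ref{t:supinf} gives $\Var(U_i \mid \omega_\theta) \geq c_0 \theta r^{2/3}$ for an absolute constant $c_0$ (matching Gaussian-type one-point tails on both sides preclude concentration below this scale). The plan is then to argue that, on the good-index event $\mathcal{G}_i$ (of conditional probability $\geq 0.99$ given $\omega_\theta$), the global optimizer of $X_{2\theta}$ crosses $R_\theta^{5i+2}$ via a sub-path that is near-$U_i$-maximizing, so that
\begin{equation}
\E[X_{2\theta}\mid W_i,\omega_\theta] \;=\; U_i + R_i(W_i),
\end{equation}
with a remainder $R_i$ satisfying $\|R_i\|_{L^2(W_i\mid \omega_\theta)} \ll \sqrt{\Var(U_i\mid \omega_\theta)}$. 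Minkowski's inequality in $L^2(W_i\mid \omega_\theta)$ then yields $\Var(\E[X_{2\theta}\mid W_i,\omega_\theta]\mid \omega_\theta) \geq c\theta r^{2/3}$, as required.

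The main obstacle is quantitative control of the remainder $R_i$. This will follow by combining the good-event upper bound on the passage time across the five-stretch $R_{2\theta}^{5i},\ldots,R_{2\theta}^{5i+4}$ (which pins down entry and exit points on $A_i, B_i$ for any near-optimal path into a narrow range) with the transversal-fluctuation and barrier estimates of Section \ref{s:prelim} (which confine the relevant sub-paths to $R_\theta$), together with a path-splitting argument that expresses the $X_{2\theta}$-weight through $R_\theta^{5i+2}$ as $U_i$ plus a discrepancy controlled by the mismatch between the $U_i$-maximizers and the actual boundary points of the global geodesic on $A_i$ and $B_i$. Choosing $C_2$ sufficiently large ensures that this discrepancy is of smaller order than $\theta^{1/2}r^{1/3}$ in $L^2$, delivering the required per-index lower bound and completing the proof.
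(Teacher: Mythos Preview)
Your overall strategy---decompose $\Var(X_{2\theta}\mid\omega_\theta)$ over the middle sub-rectangles $R_\theta^{5i+2}$ for good $i$, then extract a contribution of order $\theta r^{2/3}$ from each---matches the paper's. The per-block argument, however, has a genuine gap. The claim $\E[X_{2\theta}\mid W_i,\omega_\theta]=U_i+R_i(W_i)$ with $R_i$ negligible in $L^2$ is not established, and the ingredients you invoke do not yield it. The good-index condition is only an \emph{upper bound} on passage times across the five-stretch $R_{2\theta}^{5i},\ldots,R_{2\theta}^{5i+4}$; it does not ``pin down entry and exit points on $A_i,B_i$ into a narrow range''. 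Worse, $R_\theta^{5i+2}$ is an on-scale box (height $\theta^{3/2}r$, width $\theta r^{2/3}$), so passage times between distinct endpoint pairs on $A_i,B_i$ differ on the scale $\theta^{1/2}r^{1/3}$---the \emph{same} scale as the fluctuations of $U_i$---and there is no separation making $R_i$ small. A related difficulty is that your conditioning reveals only $W_i$; the weights in the four neighboring sub-rectangles $R_\theta^{5i},R_\theta^{5i+1},R_\theta^{5i+3},R_\theta^{5i+4}$, which govern where the $X_{2\theta}$-geodesic enters and exits $R_\theta^{5i+2}$, are still averaged over, and that averaging over random endpoint choices can wash out the dependence on $U_i$.

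The paper sidesteps this entirely. Its filtration $\mathcal H_{j-1}$ has already revealed all of $R_\theta$ except the yet-unprocessed middles $R_\theta^{5i_{j'}+2}$, $j'\ge j$; in particular the four neighbors of $R_\theta^{5i_j+2}$ are known. Instead of approximating the conditional mean by $U_i$, the paper uses the elementary inequality $\Var(M_j\mid\mathcal H_{j-1})\ge \P(F)\bigl(\E[M_j\mid F,\mathcal H_{j-1}]-M_{j-1}\bigr)^2$ for the boost event $F=\{T^{R_\theta}_{e_j,f_j}\ge 4\theta^{3/2}r+2C_2\theta^{1/2}r^{1/3}\}$ (which has probability bounded below by Lemma~\ref{l:uppertail}), and compares $X_{2\theta}$ under a monotone coupling of the original and $F$-conditioned block environments (Lemma~\ref{l:keystepvar}). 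The good-index upper bound caps the geodesic's weight across the five-stretch at $2|d(u)-d(v)|+C_2\theta^{1/2}r^{1/3}$, while the boosted path through the midpoints $e_j,f_j$---connected using the \emph{revealed} neighbor weights---achieves at least $2|d(u)-d(v)|+\tfrac{11C_2}{10}\theta^{1/2}r^{1/3}$, forcing $X_{2\theta}$ up by $\ge\tfrac{C_2}{10}\theta^{1/2}r^{1/3}$. No endpoint pinning and no remainder control are needed.
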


Proof of Lemma \ref{l:realvar} is postponed. The next lemma deals with the conditional probability of $\ce_{*}$.
\begin{lemma}
\label{l:e8}
If $C_2$ is sufficiently large, then $\P(\ce_*\mid \ce_{\mathrm{dec}}\cap \ce_{\mathrm{bar}})\geq 0.9$, {for all $\theta$ sufficiently small.}
\end{lemma}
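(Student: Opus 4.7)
The plan is to reduce to a uniform unconditional tail bound for a single index $i$, and then invoke Markov's inequality twice, exploiting the independence of $\ce_{\mathrm{dec}}$ from the weights below $\L_r$. For each $i\in\{1,\ldots,\tfrac{1}{5}\theta^{-3/2}\}$, define
\[
G_i := \Big\{\sup_{u\in R_{2\theta}^{5i},\, v\in R_{2\theta}^{5i+4}} \big(T_{u,v} - 2|d(u)-d(v)|\big) \leq C_2\theta^{1/2}r^{1/3}\Big\},
\]
so that the event ``$i$ is good'' is precisely $\{\P(G_i^c\mid \omega_\theta)\leq 0.01\}$.

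The first step is to show that there exist absolute constants $C',c'>0$ with $\P(G_i^c)\leq C' e^{-c'C_2}$, uniformly in $i$ and in all sufficiently large $r$. This is a routine application of Theorem \ref{t:supinf}(1) at scale $\theta^{3/2}r$: the slab $\{d\in[5i\theta^{3/2}r,(5i+5)\theta^{3/2}r]\}\cap R_{2\theta}$ can be covered by an absolutely bounded number of translates of the parallelogram appearing in Theorem \ref{t:supinf}, and for any $u,v$ in the relevant adjacent pieces the explicit shape $\E T_{u,v}\approx(\sqrt{v_1-u_1}+\sqrt{v_2-u_2})^2$ yields $|\E T_{u,v}-2|d(u)-d(v)||=O(\theta^{1/2}r^{1/3})$ (transversal displacement $\leq 4\theta r^{2/3}$, longitudinal separation $\geq 4\theta^{3/2}r$). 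Applying the upper-tail estimate at deviation level $(C_2-O(1))(\theta^{3/2}r)^{1/3}$ yields the claimed bound.

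Next I exploit independence. Any directed path between $u$ and $v$ with $d(u),d(v)<2r$ lies strictly below $\L_r$, so $G_i$ is measurable with respect to the vertex weights below $\L_r$ and is therefore independent of $\ce_{\mathrm{dec}}\in\cf_r$; the same applies to $\ce_{\mathrm{bar}}$. Moreover $\P(G_i\mid\omega_\theta)$ depends on $\omega_\theta$ only through its restriction to below $\L_r$, so the event ``$i$ not good'' is itself independent of $\ce_{\mathrm{dec}}$. Combining these with $\P(\ce_{\mathrm{bar}})\geq \kappa=\kappa(L,\phi)>0$ from Lemma \ref{l:barlb}, and using Markov's inequality together with the identity $\E[\P(G_i^c\mid\omega_\theta)\mid \ce_{\mathrm{bar}}]=\P(G_i^c\mid \ce_{\mathrm{bar}})$ (valid because $\don_{\ce_{\mathrm{bar}}}$ is $\omega_\theta$-measurable),
\[
\P\big(i\text{ not good}\mid \ce_{\mathrm{bar}}\cap\ce_{\mathrm{dec}}\big)
= \P\big(\P(G_i^c\mid\omega_\theta)>0.01\mid \ce_{\mathrm{bar}}\big)
\leq 100\,\P(G_i^c\mid \ce_{\mathrm{bar}})
\leq \frac{100\,C' e^{-c'C_2}}{\kappa}.
\]

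Finally, let $N$ count the non-good indices in $\{1,\ldots,\tfrac{1}{5}\theta^{-3/2}\}$. By linearity,
\[
\E[N\mid\ce_{\mathrm{bar}}\cap\ce_{\mathrm{dec}}] \leq \tfrac{1}{5}\theta^{-3/2}\cdot \tfrac{100\,C' e^{-c'C_2}}{\kappa},
\]
and a further Markov inequality yields $\P(N>\tfrac{1}{10}\theta^{-3/2}\mid \ce_{\mathrm{bar}}\cap\ce_{\mathrm{dec}})\leq 200\,C'e^{-c'C_2}/\kappa$. Choosing $C_2$ sufficiently large in terms of $\kappa=\kappa(L,\phi)$ makes this at most $0.1$, and hence $\P(\ce_*\mid\ce_{\mathrm{dec}}\cap\ce_{\mathrm{bar}})\geq 0.9$. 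The principal technical obstacle is the uniform tail bound on $\P(G_i^c)$: the longitudinal slab length $5\theta^{3/2}r$ and the deviation scale $(\theta^{3/2}r)^{1/3}=\theta^{1/2}r^{1/3}$ coincide exactly, so the parabolic correction in $\E T_{u,v}-2|d(u)-d(v)|$ must be carefully isolated and absorbed into a constant adjustment of $C_2$ rather than allowed to spoil the exponential factor.
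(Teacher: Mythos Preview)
Your overall strategy—bound $\P(G_i^c)$ uniformly via Theorem \ref{t:supinf}, separate off $\ce_{\mathrm{dec}}$ by independence, then apply Markov twice—matches the paper's. The one substantive difference is how you handle the conditioning on $\ce_{\mathrm{bar}}$, and there the crude inequality
\[
\P(G_i^c\mid \ce_{\mathrm{bar}})\ \le\ \frac{\P(G_i^c)}{\P(\ce_{\mathrm{bar}})}\ \le\ \frac{C'e^{-c'C_2}}{\kappa}
\]
creates a real problem. It forces you to take $C_2$ large in terms of $\kappa=\kappa(L,\phi)$, and since $L,\phi$ are chosen as (huge) powers of $\theta^{-1}$, your $C_2$ depends on $\theta$. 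Downstream this is fatal: $C_2$ feeds into $c_1$ through Lemma \ref{l:realvar} and Lemma \ref{l:eachgood} (where the variance contribution behaves like $\P(F)\cdot C_2^2\,\theta r^{2/3}$ with $\P(F)\gtrsim e^{-cC_2^{3/2}}$), and Proposition \ref{p:covlb} requires $c_1$ to be an absolute constant independent of $\theta$ so that $c_1\theta^{-1/2}$ eventually dominates. With your choice $c_1\to 0$ as $\theta\to 0$, and the covariance lower bound collapses.

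The paper avoids this by using the FKG inequality in place of your division step: $G_i^c$ is an increasing event (since $T_{u,v}$ is increasing in all vertex weights) and $\ce_{\mathrm{bar}}$ is a decreasing event in the weights on $U_1\cup U_2$, so
\[
\P(G_i^c\mid \ce_{\mathrm{bar}})\ \le\ \P(G_i^c)\ \le\ C'e^{-c'C_2},
\]
with no $\kappa$ in sight. With this single replacement your argument goes through and yields an absolute $C_2$, as required. Everything else in your write-up (the scaling check that $|\E T_{u,v}-2|d(u)-d(v)||=O(\theta^{1/2}r^{1/3})$, the independence of $G_i$ and of $\{i\text{ not good}\}$ from $\ce_{\mathrm{dec}}$, and the two Markov steps) is correct.
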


\begin{proof} 
Let $N$ denote the fraction of indices $i$ for which we have 
$$\sup_{u\in R_{2\theta}^{5i}, v\in R_{2\theta}^{5i+4}} T_{u,v} - 2|d(u)-d(v)|  \geq C_2\theta^{1/2}r^{1/3}.$$
It follows from \eqref{e:mean} and Theorem \ref{t:supinf} (ii) that $\E[N]\leq 0.001$ for $C_2$ sufficiently large. Notice that $N$ is increasing in the configuration of weights outside $R_{\theta}$, and hence by the FKG inequality we have $$\E[N\mid \ce_{\mathrm{dec}}\cap \ce_{\mathrm{bar}}]\leq 0.001.$$ The conclusion follows from an application of Markov inequality.
\end{proof}

Let us now complete the proof of Proposition \ref{p:elb} using Lemmas \ref{l:12proof}, \ref{l:realvar} and \ref{l:e8}. 

\begin{proof}[Proof of Proposition \ref{p:elb}]
Observe first that Lemma \ref{l:realvar} and Lemma \ref{l:e8} implies \eqref{e:varlb}. This, together with \eqref{e:2nd1} and \eqref{e:2nd2} (established in Lemma \ref{l:12proof} for appropriate choices of parameters), a union bound and the definition of $\cE$ implies that $\P(\ce \mid \ce_{\mathrm{dec}}\cap \ce_{\mathrm{bar}})\geq 0.7$. This completes the proof by invoking Lemma \ref{l:declb} and Lemma \ref{l:barlb} and observing the by independence we get $$\P(\ce_{\mathrm{dec}}\cap \ce_{\mathrm{bar}})\geq cr^{2/3}n^{-2/3},$$ for some $c>0$ depending on the parameters.
\end{proof}

We shall prove Lemmas \ref{l:2nd1}, \ref{l:2nd2} and \ref{l:realvar} over the next three subsections, but before proceeding with these we need to specify the parameter choices.

\noindent
\textbf{Choice of Parameters:} {Throughout this section the constants $c, C_1,C_2$ etc.\ will be absolute constants independent of $\theta$. We shall finally choose $\theta$ sufficiently small depending on these and choose $\phi$ to be a large negative power of $\theta$, and $L$ to be a large  power of $\phi$. {For concreteness we take $\phi = \theta^{-30}$, and $L = \phi^{30}$ for the remaining of the text.} {The scales $r$ and $n$ are taken large enough depending on $\theta,\phi$ and $L$ and other constants.}}

\subsection{Proof of Lemma \ref{l:2nd1}}
Let us first explain the strategy for proving Lemma \ref{l:2nd1}. As any path with non-negligible intersection with the regions $U_1$ and $U_2$ (recall Figure \ref{f:defn})is heavily penalized under the event $\ce_{\mathrm{bar}}$, we shall show that conditional on $\ce_{\mathrm{bar}}$ the path attaining $X'$ will with probability close to 1 lie in $R_{2\theta}$ between the anti-diagonal lines $x+y=\theta^{3/2}r$ and $x+y=2r-\theta^{3/2}r$. This, in turn, would imply that on this large probability event, conditional on $\ce_{\mathrm{bar}}$, $X'-X_{2\theta}$ has small conditional second moment.  

We need the following result which shows that with large probability the best path between almost all pairs of points in $R_{\theta}$ restricted to not exit $R_{\theta}$ cannot be too small and none of the vertices in $R_{\theta}$ has very large weight (the later is needed to take care of certain boundary issues). Recall that for $u,v\in \R_{\theta}$, $T_{u,v}^{R_{\theta}}$ denotes the length of the best path from $u$ to $v$ constrained to not exit $R_{\theta}$. We have the following result. 

\begin{lemma}
\label{l:prep3-rtheta}
Let $\cA_{\theta, \mathrm{reg}}$ denote the event that for all $u,v\in R_{\theta}$  with $|d(u)-d(v)|\geq \frac{r}{L}$ we have $T_{u,v}^{R_{\theta}}\geq 2|d(u)-d(v)|-\theta^{-2}r^{1/3}$ and $\omega_v\leq r^{1/3}$ for each $v\in R_{\theta}$ ({recall that $\omega_v$ is the random environment}). Then for all $\theta$ sufficiently small and $r$ sufficiently large we have $$\P(\cA^{c}_{\theta, \mathrm{reg}})\leq {e^{-c\theta^{-1}}}$$
for some $c>0$. 
\end{lemma}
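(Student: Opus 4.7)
My plan would be to prove the two claims in Lemma \ref{l:prep3-rtheta} separately.

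For the uniform bound $\omega_v \leq r^{1/3}$: this is an elementary union bound. Since each $\omega_v$ is a standard exponential and $|R_\theta| = O(\theta r^{5/3})$, one has
\[ \P\bigl(\exists v \in R_\theta:\omega_v > r^{1/3}\bigr) \leq |R_\theta| e^{-r^{1/3}} = O\bigl(\theta r^{5/3} e^{-r^{1/3}}\bigr), \]
which is easily dominated by $\tfrac{1}{2} e^{-c/\theta^{1/4}}$ for $r$ sufficiently large depending on $\theta$.

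For the main uniform lower bound on $T_{u,v}^{R_\theta}$, the idea is to discretize $R_\theta$ into on-scale strips and apply Theorem \ref{t:supinf}(3) to each. Set $\ell := \theta^{3/2} r$; since $\ell^{2/3} = \theta r^{2/3}$ matches the width of $R_\theta$, each strip $S_k := R_\theta \cap \{k\ell \leq d(\cdot) < (k+1)\ell\}$ (for $k = 0, \ldots, M-1$ with $M = 2\theta^{-3/2}$) is on-scale. Applying (a suitably translated version of) Theorem \ref{t:supinf}(3) to each $S_k$ with deviation parameter $x = \theta^{-1/2}$ yields, with probability at least $1 - e^{-c\theta^{-1/4}}$,
\[ \inf_{(u,v)\in S(S_k)} \bigl[T_{u,v}^{S_k}-\E T_{u,v}\bigr] \geq -\theta^{-1/2}\ell^{1/3} = -r^{1/3}. \]
A union bound over the $M = 2\theta^{-3/2}$ strips gives total failure probability at most $2\theta^{-3/2} e^{-c\theta^{-1/4}} \leq \tfrac{1}{2} e^{-c'/\theta^{1/4}}$ for $\theta$ sufficiently small.

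On the complement, given any $u \prec v$ in $R_\theta$ with $d(v) - d(u) \geq r/L$, I would construct a chain $u = p_0, p_1, \ldots, p_N = v$ lying in $R_\theta$ whose consecutive points $(p_i, p_{i+1})$ lie in a single strip $S_{k_i}$ and form slope-$\in[1/10, 10]$ pairs; $N \leq M$. The slope condition is achievable because the assumption $d(v) - d(u) \geq r/L = \theta^{900}r$ combined with the transversal bound $|m-n| \leq 2\theta r^{2/3}$ (forced by $R_\theta$'s width) ensures that the slope from $u$ to $v$ is $\approx 1$ for $r$ large enough depending on $\theta$ (one checks $\theta^{899} r^{1/3} \gg 1$ suffices). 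Placing the $p_i$'s near the straight-line interpolant and using the strip-wise bounds gives
\[ T_{u,v}^{R_\theta} \geq \sum_{i=0}^{N-1} T_{p_i,p_{i+1}}^{S_{k_i}} \geq \sum_{i=0}^{N-1}\bigl(\E T_{p_i,p_{i+1}} - r^{1/3}\bigr). \]
Using \eqref{e:mean} with $\E T_{p_i,p_{i+1}} = (\sqrt{m_i}+\sqrt{n_i})^2 + O(\ell^{1/3})$ and telescoping $(\sqrt{m_i}+\sqrt{n_i})^2$ along the straight-line interpolant yields $\sum \E T_{p_i,p_{i+1}} \geq 2(d(v)-d(u)) - O(N\ell^{1/3})$. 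Combined with the $Nr^{1/3}$ cost from strip-wise deviations, the total error is $O(\theta^{-1}r^{1/3}) + O(\theta^{-3/2}r^{1/3}) = O(\theta^{-3/2}r^{1/3}) \leq \theta^{-2}r^{1/3}$ for $\theta$ small.

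The main obstacle, and the place where care is needed, is the chaining: one must (a) keep the accumulated error across the $M = \theta^{-3/2}$ strips under the $\theta^{-2} r^{1/3}$ budget, (b) verify uniform on-slope-ness of the intermediate $(p_i,p_{i+1})$ pairs throughout the construction, and (c) handle the technical mismatch between $S_k$ and the exact parallelogram in Theorem \ref{t:supinf} (whose width differs from that of $S_k$ by a multiplicative constant); the latter can be addressed by embedding $S_k$ in a slightly larger on-scale parallelogram of the required shape, or by applying the theorem with appropriate shifts of the midpoint $m$ and taking a further (constant) union bound.
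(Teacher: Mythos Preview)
Your approach is essentially the paper's: slice $R_\theta$ into on-scale sub-rectangles of height $\ell=\theta^{3/2}r$, invoke Theorem~\ref{t:supinf}(3) on each with deviation parameter $x=\theta^{-1/2}$ (failure probability $e^{-c\theta^{-1/4}}$ per slice), union-bound over the $O(\theta^{-3/2})$ slices, and chain. The vertex-weight bound via $|R_\theta|\,e^{-r^{1/3}}$ is also identical.

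There is, however, a genuine gap in your telescoping step, and the paper's own proof glosses over the same point. You assert $\sum_i \E T_{p_i,p_{i+1}} \geq 2(d(v)-d(u)) - O(N\ell^{1/3})$, but this ignores the parabolic curvature of the limit shape. Writing $m+n=d(v)-d(u)$ and $m-n$ for the anti-diagonal displacement of $(u,v)$, a straight-line chain gives $\sum_i(\sqrt{m_i}+\sqrt{n_i})^2 = (\sqrt{m}+\sqrt{n})^2 = 2(m+n)-(\sqrt{m}-\sqrt{n})^2$, and $(\sqrt{m}-\sqrt{n})^2\approx(m-n)^2/\bigl(2(m+n)\bigr)$ is not absorbed by your $O(N\ell^{1/3})$. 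Take $u,v\in R_\theta$ with $d(v)-d(u)$ at its minimum $r/L=\theta^{900}r$ and $|m-n|$ at its maximum $2\theta r^{2/3}$ (both extremes are simultaneously attainable once $r^{1/3}\gtrsim L\theta$, which is exactly the regime ``$r$ sufficiently large depending on $\theta$''): then $(\sqrt{m}-\sqrt{n})^2$ is of order $L\theta^2 r^{1/3}=\theta^{-898}r^{1/3}$, vastly exceeding both your claimed $O(N\ell^{1/3})=O(\theta^{-1}r^{1/3})$ and the lemma's budget $\theta^{-2}r^{1/3}$. For such a pair one in fact has $\E T_{u,v}<2|d(u)-d(v)|-\theta^{-2}r^{1/3}$, so the defining inequality of $\cA_{\theta,\mathrm{reg}}$ fails with overwhelming probability; the lemma as literally stated cannot hold. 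What your chaining honestly yields is the bound with $\theta^{-2}r^{1/3}$ replaced by an error of order $L\theta^2 r^{1/3}=o(L)r^{1/3}$, and that weakened form is what one should state and use (the downstream comparison in Lemma~\ref{l:key1} is against the barrier loss $Lr^{1/3}$).
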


\begin{proof}
Recall the $\theta^{3/2}r\times \theta r^{2/3}$ rectangles $R_{\theta}^i$ that $R_{\theta}$ is divided into. Let $A_{i}$ denote the event that for all $u,v\in R_{\theta}^{i}\cup R_{\theta}^{(i+1)}$ with $|d(u)-d(v)|\geq \frac{r}{L}$ we have $T_{u,v}^{R_{\theta}}\geq 2|d(u)-d(v)|-\theta^{-1/2}r^{1/3}$. Now notice that for every $u,v\in R_{\theta}$  with $|d(u)-d(v)|\geq \frac{r}{L}$ there is is a sequence of points $u=w_0,w_1,\ldots w_{k}=v$ with $k\leq \theta^{-3/2}$ and such that {$w_i,w_{i+1}\in R_{\theta}^{j}\cup R_{\theta}^{(j+1)}$ for some $j$} and also $|d(w_{i+1})-d(w_i)|\geq \frac{r}{L}$. It follows that on $\bigcap A_{i}$ we have $T_{u,v}^{R_{\theta}}\geq 2|d(u)-d(v)|-\theta^{-2}r^{1/3}$. It follows from Theorem \ref{t:supinf}(iii) that {$\P(A_{i}^c)\leq Ce^{-c\theta^{-1}}$} for each $i$. Finally notice that the probability that $\omega_{v}\geq r^{1/3}$ for some $v\in R_{\theta}$ is at most $r^{5/3}e^{-r^{1/3}}$ and the result follows by a union bound and taking $\theta$ sufficiently small and $r$ sufficiently large depending on $\theta$.
\end{proof}

The next result will be the key to the proof of Lemma \ref{l:2nd1}. It shows that because of the existence of the barrier region, the best path from $\L_0$ to $\L_{r, {\frac{1}{4}\phi r^{2/3}}}$ (i.e., the path attaining weight $X'$) is extremely unlikely to exit $R_{2\theta}$ except near its endpoints.
\begin{lemma}
\label{l:key1}
Let $\Gamma'$ denote the path from $\L_0$ to $\L_{r, {\frac{1}{4}\phi r^{2/3}}}$ that attains the weight $X'$. Let $\ce'$ denote the event that there exist points $v_0, v_1\in R_{\theta} \cap \Gamma'$ such that $d(v_0), 2r-d(v_1) \in [(\frac{\theta}{2})^{3/2}r, (\frac{\theta}{2})^{3/2}r + \frac{r}{L}]$, and such that the restrictions of $\Gamma'$ between $v_0$ and $v_1$ is contained in $R_{2\theta}$. Then for all $\theta$ sufficiently small and $r$ sufficiently large, we get $$\P((\ce')^c\mid \ce_{\mathrm{bar}})\leq {{e^{-\theta^{-2/5}}}}.$$
\end{lemma}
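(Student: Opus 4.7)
The plan is to show that on a high-probability regularity event $\mathcal R$, the intersection $\mathcal R \cap \ce_\mathrm{bar} \cap (\ce')^c$ is empty, so that $\P((\ce')^c \mid \ce_\mathrm{bar})$ is controlled by $\P(\mathcal R^c \mid \ce_\mathrm{bar})$. Take $\mathcal R := \cA_{\theta,\mathrm{reg}} \cap \{\Gamma' \subset W_\phi\}$ with $W_\phi := \{|u_1-u_2| \leq \phi r^{2/3}\}$. On $\cA_{\theta,\mathrm{reg}}$ the lower bound $X' \geq T^{R_\theta}_{\mathbf{0},\br} \geq 4r - \theta^{-2} r^{1/3}$ holds, and this exceeds $4r - c_1\phi^2 r^{1/3}$; a union-bound version of Proposition \ref{l: prep1-tf} over the $O(\phi)$ translates of $\L_{r,r^{2/3}}$ covering $\L_{r,\phi r^{2/3}/4}$ then forces $\Gamma'\subset W_\phi$ off an event of probability $\lesssim \phi e^{-c_2\phi}$. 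Combined with Lemma \ref{l:prep3-rtheta}, the independence of $\cA_{\theta,\mathrm{reg}}$ from $\ce_\mathrm{bar}$, and $\P(\ce_\mathrm{bar})\geq \kappa>0$ from Lemma \ref{l:barlb}, this gives $\P(\mathcal R^c \mid \ce_\mathrm{bar}) \leq \tfrac{1}{2} e^{-1/\theta^{1/5}}$ for $\theta$ small. It thus remains to show that on $\mathcal R \cap \ce_\mathrm{bar}$ the event $\ce'$ holds.

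Setting $D_0 := (\theta/2)^{3/2} r$ and $D_0' := D_0 + r/L$, I would decompose $(\ce')^c$ into three sub-cases: (ii) $\Gamma'$ avoids $R_\theta$ at every depth in $[D_0,D_0']$; (iii) the symmetric statement at the top boundary window; and (i) the residual case, in which $\Gamma'$ visits $R_\theta$ in both boundary windows yet exits $R_{2\theta}$ at some depth $d^*$ between them. In (ii) and (iii), on $\mathcal R$ the path $\Gamma'$ is confined to $U_1 \cup U_2$ throughout a depth range of length exactly $r/L$; since no directed path joins $U_1$ and $U_2$ without crossing $R_\theta$, the segment lies entirely in a single component, say $U_1$. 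Applying $\ce_{1,\mathrm{bar}}$ to $\Gamma'(D_0), \Gamma'(D_0') \in U_1$ (depth-difference $r/L$) gives $T^{U_1}_{\Gamma'(D_0),\Gamma'(D_0')} \leq \E T - L r^{1/3}$, and combining this with $\E T \leq 2(r/L)+O((r/L)^{1/3})$ (from AM-GM and \eqref{e:mean}) together with moderate-deviation upper bounds on the two remaining segments of $\Gamma'$ (each at most twice its depth-length plus $O(\phi r^{1/3})$, via Theorem \ref{t:onepoint} and Lemma \ref{l:l2lbd}) yields $X' \leq 4r - Lr^{1/3}/4$. Since $L=\theta^{-900}\gg\theta^{-2}$, this contradicts $X'\geq 4r-\theta^{-2}r^{1/3}$.

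The main obstacle is sub-case (i), because the excursion of $\Gamma'$ outside $R_{2\theta}$ may have depth-length much shorter than $r/L$. I would let $[d_\alpha,d_\beta]$ be the maximal depth interval containing $d^*$ on which $\Gamma'$ avoids $R_\theta$; transversal accessibility (one needs transversal displacement $\geq\theta r^{2/3}$ to move from $R_\theta$ to outside $R_{2\theta}$ and back) forces $d_\beta - d_\alpha \geq 2\theta r^{2/3}$. When $2\theta r^{2/3}\geq r/L$, i.e.\ $r\leq (2\theta L)^3$, the previous paragraph applies verbatim to $[d_\alpha,d_\beta]$. In the opposite regime I would instead study the window $I := [d^*-r/(2L), d^*+r/(2L)]$ of length $r/L$: either $\Gamma'|_I$ avoids $R_\theta$, in which case $\ce_{1,\mathrm{bar}}$ on the endpoints of $I$ again produces the $Lr^{1/3}$ deficit; or $\Gamma'|_I$ touches $R_\theta$, in which case the transversal constraint pins the last visit to $R_\theta$ before $d^*$ at depth-distance $\geq \theta r^{2/3}$ from $d^*$, and the resulting short-excursion configuration is ruled out by combining the moderate-deviation upper bound on the excursion weight $T^{U_1}$ with the $\cA_{\theta,\mathrm{reg}}$ lower bound $T^{R_\theta}_{u,v} \geq 2|d(u)-d(v)|-\theta^{-2}r^{1/3}$, which together show that $\Gamma'$ gains at most $O(\theta^{-2})r^{1/3}$ from any such excursion; a union bound over the $O(r^2/L)$ possible short-excursion configurations keeps this contribution well below $e^{-1/\theta^{1/5}}$, completing the argument.
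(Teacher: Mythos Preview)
Your overall plan---defining a regularity event $\mathcal R$ and showing $\mathcal R\cap\ce_{\mathrm{bar}}\cap(\ce')^c=\varnothing$---does not go through, and the gap is in your case~(i). Neither $\ce_{\mathrm{bar}}$ nor $\cA_{\theta,\mathrm{reg}}$ says anything about segments of depth-length $<r/L$, so nothing in $\mathcal R\cap\ce_{\mathrm{bar}}$ deterministically forbids $\Gamma'$ from making a short excursion (depth $\in[2\theta r^{2/3},\,r/L)$) outside $R_{2\theta}$. Your proposed rescue---a union bound over $O(r^2/L)$ short-excursion configurations---does not give a bound uniform in $r$: even granting a per-configuration failure probability of order $\exp(-c\theta L^{2/3})$, the sum $(r^2/L)\exp(-c\theta L^{2/3})$ diverges as $r\to\infty$ with $\theta$ (hence $L$) fixed. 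The comparison you invoke, ``$T^{U_1}$ versus the $\cA_{\theta,\mathrm{reg}}$ lower bound $T^{R_\theta}_{u,v}\ge 2|d(u)-d(v)|-\theta^{-2}r^{1/3}$'', is inapplicable here precisely because that lower bound requires $|d(u)-d(v)|\ge r/L$. There is also a smaller gap in your cases (ii)/(iii): the ``moderate-deviation upper bounds on the two remaining segments'' you quote are not part of $\mathcal R$, so you have not actually produced a deterministic contradiction on $\mathcal R\cap\ce_{\mathrm{bar}}$.

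The paper's argument differs in two places. For (ii)/(iii) it avoids bounding the outer segments altogether: if $\Gamma'$ misses $R_\theta$ throughout a slab $S_h$, take $w_0,w_1$ to be the last/first points of $\Gamma'\cap R_\theta$ before/after that slab (or the endpoints of $\Gamma'$), and compare $T_{w_0,w_1}$ both to the barrier upper bound and, via optimality of $\Gamma'$, to $T^{R_\theta}_{w_0',w_1'}$ at nearby points $w_0',w_1'\in R_\theta$; since $|d(w_1)-d(w_0)|\ge r/L$, both $\ce_{\mathrm{bar}}$ and $\cA_{\theta,\mathrm{reg}}$ apply and give a deterministic contradiction. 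This establishes, on the good events, that $\Gamma'$ meets $R_\theta$ in \emph{every} slab of depth $r/L$. For (i)---and this is the idea missing from your sketch---the paper does not union over short excursions at all. Instead, for each of the $O(L)$ slabs (a number depending only on $\theta$), if $\Gamma'$ exits $R_{2\theta}$ there, one picks $w_0,w_1\in\Gamma'\cap R_\theta$ in the two neighbouring slabs (they exist by the previous step) and \emph{extends} to $w_0',w_1'\in R_\theta$ at depth-separation $r_1\approx 5r/(2L)$, so that $\cA_{\theta,\mathrm{reg}}$ now lower-bounds $\ell(\Gamma_1)\ge 4r_1-3\theta^{-2}r^{1/3}$. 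The concatenated path $\Gamma_1$ has normalized transversal fluctuation $\theta r^{2/3}r_1^{-2/3}\approx\theta L^{2/3}$ at scale $r_1$, and Proposition~\ref{l: prep1-tf} gives per-slab probability $\lesssim\exp(-c\theta L^{2/3})$. The union bound is then over $O(L)$ terms, not $O(r^2/L)$, and the final estimate is uniform in $r$.
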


\begin{proof}
This proof is somewhat long and involved, so let us begin by presenting a roadmap of the proof. We shall first show by using Proposition \ref{l: prep1-tf} that a path from $\L_0$ to $\L_{r, {\frac{1}{4}\phi r^{2/3}}}$ that exits $R_{\phi}$ is unlikely to attain $X'$. The next step would be to show that the barrier event makes it unlikely for the path to have a long excursion outside $R_{\theta}$ (the penalty of passing through the barrier region is higher than that of passing constrained within $R_{\theta}$), in particular we show that $\Gamma'$ is likely to intersect $R_{\theta}$ in every interval (in the diagonal direction) of length $\frac{r}{L}$. The final step involves showing that if $\Gamma'$ only has short excursions outside $R_{\theta}$ then it is unlikely to exit $R_{2\theta}$ (except possibly near the endpoints of $\Gamma'$) as it would create a large transversal fluctuation. Now we proceed to formalize each of the above steps.

\medskip

\begin{figure}[htbp!]
\includegraphics[width=.9\textwidth]{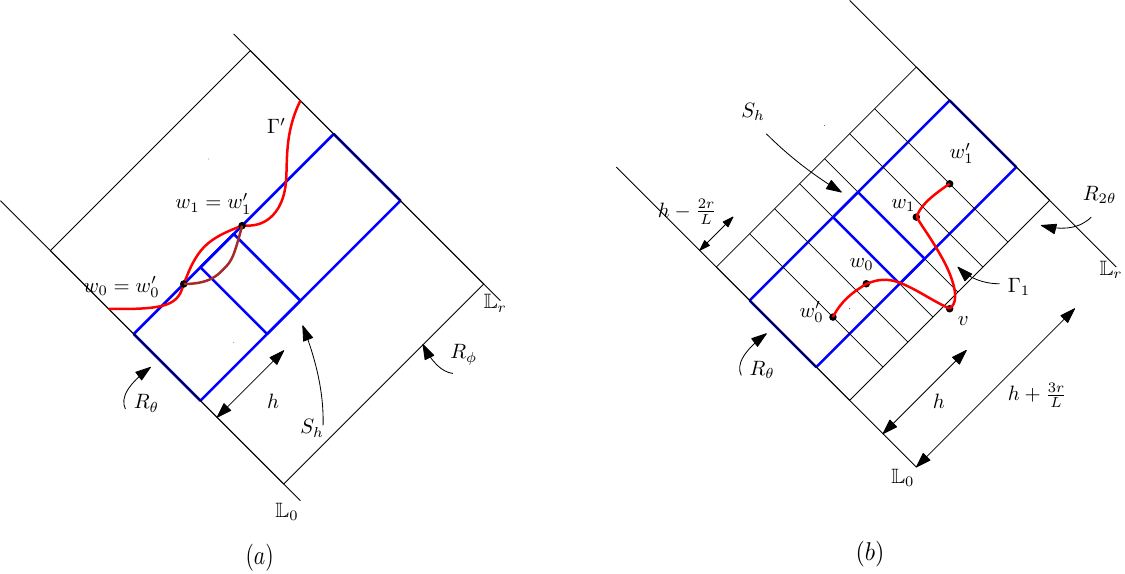} 
\caption{The figure illustrates the two key proof steps outlined above. (a) corresponds to the argument ruling out long excursions of $\Gamma'$ outside $R_{\theta}$.  Using (a) as input, (b) indicates the argument which shows that it is unlikely for the path to exit $R_{2\theta}$ except near the end points as the event creates uncharacteristic transversal fluctuations which can be ruled out with high probability. Note that both the figures use the same letters to denote different objects. However this will not create any confusion as further references to (a) and (b) made later in the relevant parts of the proof will clarify the context.}
\label{f:key0}
\end{figure}

Denote $\mathrm{TF}(\Gamma')$ to be the event where $\Gamma'$ exits $R_{\phi}$. Observe that on $\cA_{\theta, \mathrm{reg}}$ we have $\ell(\Gamma')\geq 4r-\theta^{-2}r^{1/3}$. Recall the event $\mathrm{LargeTF}$ defined in Proposition \ref{l: prep1-tf}. Using that {$\phi\gg \theta^{-1}$}, if $\theta$ is sufficiently small (and translation invariance) the probability that there exist a path from any $2r^{2/3}$ length sub-interval of $\L_{r, {\frac{1}{4}\phi r^{2/3}}}$ to $\L_0$ that exits $R_{\phi}$ and has length larger than $4r-\theta^{-2}r^{1/3}$ can be bounded by $\P(\mathrm{LargeTF}(\phi/4,r))$. Thus by Proposition \ref{l: prep1-tf}, and the FKG inequality (for $\ce_{\mathrm{bar}}$ is a negative event, while translations of $\mathrm{LargeTF}(\phi/4,r)$ are positive events, on the configuration on $\Z^2\setminus R_{\theta}$), we have
$$
\P(\mathrm{TF}(\Gamma')\cap \cA_{\theta, \mathrm{reg}}\mid \ce_{\mathrm{bar}}) \leq \lceil \phi/4 \rceil e^{-c_2(\phi/4)^3}.
$$
For each $h \in [0, 2r-\frac{r}{L}]$, let $$S_h:= \{v \in R_{\theta} : d(v) \in [h, h + \frac{r}{L}] \}.$$ 
Now we show that, on $\mathrm{TF}(\Gamma')^c\cap \cA_{\theta, \mathrm{reg}} \cap \ce_{\mathrm{bar}}$, we must have $\Gamma' \cap S_h \neq \emptyset$, for any $h \in [0, 2r-\frac{r}{L}]$.
Indeed, otherwise, we let $w_0$ be the vertex with the largest $d(w_0)$, such that $w_0 \in \Gamma'\cap R_{\theta}$, $d(w_0) < h$;
and if no such vertex exists, let $w_0$ be the intersection of $\Gamma'$ with $\L_0$.
Similarly, let $w_1$ be the vertex with the smallest $d(w_1)$, such that $w_1 \in \Gamma'\cap R_{\theta}$, $d(w_1) > h + \frac{r}{L}$; and if no such vertex exists, let $w_1$ be the intersection of $\Gamma'$ with $\L_r$.
Then on $\mathrm{TF}(\Gamma')^c$, the geodesic $\Gamma_{w_0, w_1}$, {which is a sub-segment of $\Gamma'$}, except possibly for the end points, lies entirely in $U_1 \cup U_2$ (recall the sets from the definition of $\ce_{\mathrm{bar}}$).
We take $w_0' = w_0$ if $w_0 \in R_{\theta}$, otherwise, take $w_0' \in \L_0 \cap R_{\theta}$ to be the {nearest} vertex to $w_0$ (i.e., it is the corner vertex of $R_{\theta}$ on $\L_0$ closer to $w_0$); and similarly, take $w_1' = w_1$ if $w_1 \in R_{\theta}$, otherwise, take $w_1' \in \L_r \cap R_{\theta}$ to be the nearest vertex to $w_1$, (see Figure \ref{f:key0} (a)). 
Then obviously, $d(w_0)=d(w_0')$, $d(w_1)=d(w_1')$. We also have, by definition of $\Gamma'$, that $$T_{w_0, w_1} \geq T_{w_0', w_1'} \geq T_{w_0', w_1'}^{R_{\theta}}.$$
By the definition of $\ce_{\mathrm{bar}}$ and $\cA_{\theta, \mathrm{reg}}$, we have $T_{w_0, w_1} \leq \E T_{w_0, w_1} - (L-2)r^{1/3}$ where we have used $\cA_{\theta, \mathrm{reg}}$ to take care of the endpoints.
However, on $\cA_{\theta, \mathrm{reg}}$, 
$$T_{w_0', w_1'}^{R_{\theta}} \geq 2|d(w_1')-d(w_0')| - \theta^{-2}r^{1/3}.$$
causing us to arrive at a contradiction when $\theta$ is small enough (we also use that $\E T_{w_0, w_1} \leq 2|d(w_1)-d(w_0)|+Cr^{1/3}=2|d(w_1')-d(w_0')|+Cr^{1/3}$ from \eqref{e:mean}).

For each $h \in [0, 2r-\frac{r}{L}]$, we denote $\mathrm{TF}_h$ to be the event that there exists some $v \in \Gamma'$, such that $d(v) \in [h, h+\frac{r}{L}]$, and $v \not\in R_{2\theta}$.
We then have
\begin{equation}\label{intersectionevents}
\mathrm{TF}(\Gamma')^c\bigcap \cA_{\theta, \mathrm{reg}} \bigcap \ce_{\mathrm{bar}} \bigcap (\ce')^c \subset \bigcup_{i=2}^{\lfloor 2L \rfloor - 3} \mathrm{TF}_{\frac{ir}{L}}.
\end{equation}
We next bound the following probability:
$$\P(\mathrm{TF}_h \cap \mathrm{TF}(\Gamma')^c\cap \cA_{\theta, \mathrm{reg}}\mid \ce_{\mathrm{bar}}),$$ for any $h \in [\frac{2r}{L}, 2r - \frac{3r}{L}]$. Under this event, we first take any $w_0, w_1 \in \Gamma'$, such that $w_0 \in S_{h-\frac{r}{L}}$, and $w_1 \in S_{h+\frac{r}{L}}$.
We further take $w_0' \in \L_{\lfloor (h-\frac{2r}{L})/2 \rfloor} \cap R_{\theta}$, and $w_1' \in \L_{\lceil (h+\frac{3r}{L})/2 \rceil} \cap R_{\theta}$, such that $w_0'-w_0$, $w_1'-w_1$ are parallel to $(1, 1)$ (see Figure \ref{f:key0} (b)).
Let $\Gamma_1$ be the concatenation of $\Gamma_{w_0', w_0}$, $\Gamma_{w_0, w_1}$, and $\Gamma_{w_1, w_1'}$. Noting that $d(w_0)-d(w'_0), d(w_1)-d(w_0), d(w'_1)-d(w_1)\geq \frac{r}{L}$, by $\cA_{\theta, \mathrm{reg}}$ we know that $\ell(\Gamma_1) \geq 2 (d(w_1')-d(w_0')) - 3\theta^{-2}r^{1/3}$.
Also, from its construction, $\Gamma_1$ exits $R_{2\theta}$.
Further denote $r_1:= \lceil (h+\frac{3r}{L})/2 \rceil - \lfloor (h-\frac{2r}{L})/2 \rfloor$. The event that such $\Gamma_1$ exists is covered by $\theta r^{2/3}r_1^{-2/3}$ many translations of the event $\mathrm{LargeTF}(\theta r^{2/3}r_1^{-2/3}, r_1)$, when $\theta$ is small enough.
By Proposition \ref{l: prep1-tf}, and the FKG inequality (for $\ce_{\mathrm{bar}}$ is a negative event, while translations of $\mathrm{LargeTF}(\theta r^{2/3}r_1^{-2/3}, r_1)$ are positive events, on the configuration on $\Z^2\setminus R_{\theta}$), we get
$$
\P(\mathrm{TF}_h \cap \mathrm{TF}(\Gamma')^c\cap \cA_{\theta, \mathrm{reg}} \mid \ce_{\mathrm{bar}}) \leq \theta r^{2/3}r_1^{-2/3} e^{-c_2 \theta r^{2/3}r_1^{-2/3}}.
$$
Note that since $\frac{2rL}{5r+7L} < rr_1^{-1} \leq \frac{2L}{5}$, we have by \eqref{intersectionevents} and Lemma \ref{l:prep3-rtheta}
\begin{eqnarray*}
\P((\ce')^c\mid \ce_{\mathrm{bar}}) &\leq &
\P(\cA_{\theta, \mathrm{reg}}^c)
+
\P(\mathrm{TF}(\Gamma')\cap \cA_{\theta, \mathrm{reg}}\mid \ce_{\mathrm{bar}})
+
\P\left((\cup_{i=2}^{\lfloor 2L \rfloor - 3} \mathrm{TF}_{\frac{ir}{L}}) \cap \mathrm{TF}(\Gamma')^c\cap \cA_{\theta, \mathrm{reg}} \mid \ce_{\mathrm{bar}}\right)
\\
&\leq &
{e^{-c\theta^{-1}}} + \lceil \phi/4\rceil e^{-c_2(\phi/4)^3} + 2L \theta \left(\frac{2L}{5}\right)^{2/3}e^{-c_2\theta\left(\frac{2rL}{5r+7L}\right)^{2/3}},
\end{eqnarray*}
so our conclusion follows for small enough $\theta$ and large enough $r$.
\end{proof}

We can now complete the proof of Lemma \ref{l:2nd1}. 
\begin{proof}[Proof of Lemma \ref{l:2nd1}]
Let $\ce'$ denote the event in Lemma \ref{l:key1}. We can write
$$\E[(X'-X_{2\theta})^2\mid \ce_{\mathrm{bar}}]=\E[(X'-X_{2\theta})^2\don_{\ce'}\mid \ce_{\mathrm{bar}}]+\E[(X'-X_{2\theta})^2\don_{(\ce')^c}\mid \ce_{\mathrm{bar}}].$$ 
We shall control the two terms separately. Notice that by Cauchy-Schwarz inequality, the second term is upper bounded by
$$\P((\ce')^c \mid \ce_{\mathrm{bar}})^{1/2}\E[(X'-X_{2\theta})^4\mid \ce_{\mathrm{bar}}]^{1/2}.$$
Using the fact that $X'\geq X_{2\theta}\geq X_{\theta}$ and the FKG inequality (and the fact that $\ce_{\mathrm{bar}}$ is a negative event on the configuration on $\Z^2\setminus R_{\theta}$) we get that 
$$\E[(X'-X_{2\theta})^4\mid \ce_{\mathrm{bar}}]\leq \E[X'-X_{\theta}]^4\leq 16( \E[(X'-4r)_{+}^4]+\E[(4r-X_{\theta})_{+}^4])\leq C\theta^{-4}r^{4/3}$$
where the final inequality follows from Lemma \ref{l:l2lbd} and Proposition \ref{p:constrained}. These together with Lemma \ref{l:key1} imply that 
$$\E[(X'-X_{2\theta})^2\don_{(\ce')^c}\mid \ce_{\mathrm{bar}}] \leq \frac{1}{2}r^{2/3}$$
for $\theta$ sufficiently small. 
For the first term notice that by definition of $\ce'$ we have on this event 
$$0\leq X'-X_{2\theta}\leq  \sup_{u\in U'} (X_{u}-T_{\mathbf{0},u}^{R_{\theta}})+ \sup_{u\in U''} (X^*_{u}-T_{u,\mathbf{r}}^{R_{\theta}})$$
where $U'$ (resp.\ $U''$) denotes the set of all points $u\in R_{\theta}$ with $$d(u)\in {\left[(\theta/2)^{3/2}r, (\theta/2)^{3/2}r + \frac{r}{L}\right]},  \left(\text{resp. }\ 2r-d(u)\in {\left[(\theta/2)^{3/2}r, (\theta/2)^{3/2}r + \frac{r}{L}\right]}\right)$$ and $X^*_{u}$ denotes the weight of the maximum weight path from $u$ to $\L_{r}$. Using the FKG inequality again and Lemma \ref{l:sideregularity} we get from the above that
$$\E[(X'-X_{2\theta})^2 \don_{\ce'} \mid \ce_{\mathrm{bar}}] = O(\theta r^{2/3})\leq \frac{1}{2}r^{2/3}$$
for $\theta$ sufficiently small. Putting together the two bounds completes the proof of the lemma.
\end{proof}

\subsection{Proof of Lemma \ref{l:2nd2}}
We shall prove Lemma \ref{l:2nd2} in this subsection. The strategy of the proof will be as follows: on a high probability event (conditional on $\ce_{\mathrm{dec}}\cap \ce_{\mathrm{bar}}$) we shall show that $X_{n}-X^r_{n}-X_{2\theta}$ has small conditional second moment whereas the other term is controlled by Cauchy-Schwarz inequality and an a priori control of the fourth moment as in the proof of Lemma \ref{l:2nd1}. We first start with the control on the fourth moment. 

\begin{lemma}  \label{l:covboundw}
There is absolute constant $C>0$ such that
$$
\E\left[(X_{n}-X_{n}^r-X_{2\theta})_+^4  \mid \cE_{\mathrm{bar}}\cap \cE_{\mathrm{dec}}\right] \leq C \theta^{-4}r^{4/3}.
$$
\end{lemma}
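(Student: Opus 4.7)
The plan is to reduce this fourth moment to one of a quantity supported on weights below $\L_r$, then apply FKG and existing moment bounds. First I will observe that $X_n^r \geq T_{\br,\bn}$ (taking the path starting at $\br$) and $X_n \geq T_{\mathbf{0},\bn} \geq T_{\mathbf{0},\br} + T_{\br,\bn} \geq X_{2\theta} + T_{\br,\bn}$, which together give
\[
0 \leq (X_n - X_n^r - X_{2\theta})_+ \leq X_n - T_{\br,\bn} - X_{2\theta} \leq X_n - T_{\br,\bn} - X_\theta,
\]
using $X_{2\theta} \geq X_\theta$ in the last step. Next I will partition $\L_r \cap \{u \preceq \bn\}$ into dyadic shells $S_0 = \{\br + (m,-m) : |m| < r^{2/3}\}$ and $S_j = \{\br + (m,-m) : 2^{j-1} r^{2/3} \leq |m| < 2^j r^{2/3}\}$ for $j \geq 1$. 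Setting $M_j := \sup_{u \in S_j} X_u$, $c_0 := 0$, and $c_j := 2\alpha \cdot 2^{j(1/2-\tau)}$ for $j \geq 1$, the events $H'_j$ defining $\cE_{\mathrm{dec}}$ imply that on $\cE_{\mathrm{dec}}$ one has $T_{u,\bn} - T_{\br,\bn} \leq 2\alpha^{-1} r^{1/3} - c_j r^{1/3}$ for every $u \in S_j$. Writing $X_n = \max_u (X_u + T_{u,\bn})$ and taking the maximum over shells then gives $X_n - T_{\br,\bn} \leq \tilde N + 2\alpha^{-1} r^{1/3}$ on $\cE_{\mathrm{dec}}$, where $\tilde N := \max_{j \geq 0}(M_j - c_j r^{1/3})$ depends only on weights strictly below $\L_r$.

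The next step is an FKG argument. The event $\cE_{\mathrm{dec}}$ depends only on weights on or above $\L_r$, hence is independent of $\tilde N$, $X_\theta$, and $\cE_{\mathrm{bar}}$. Moreover $X_\theta$ is determined by weights in $R_\theta$, while $\cE_{\mathrm{bar}}$ depends on weights in the disjoint region $U_1 \cup U_2$. Since $\tilde N \geq M_0 \geq X_\br \geq T_{\mathbf{0},\br} \geq X_\theta$, the quantity $\tilde N + 2\alpha^{-1} r^{1/3} - X_\theta$ is non-negative; it is also non-decreasing in the weights in $U_1 \cup U_2$ (as $X_\theta$ does not depend on these and $\tilde N$ is monotone in all weights), and hence so is its fourth power, whereas $\don_{\cE_{\mathrm{bar}}}$ is non-increasing in those weights. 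Conditioning on the remaining weights and applying FKG yields
\[
\E\bigl[(X_n - X_n^r - X_{2\theta})_+^4 \don_{\cE_{\mathrm{bar}} \cap \cE_{\mathrm{dec}}}\bigr] \leq \E\bigl[(\tilde N + 2\alpha^{-1} r^{1/3} - X_\theta)^4\bigr] \cdot \P(\cE_{\mathrm{bar}}) \P(\cE_{\mathrm{dec}}).
\]
Using $(a+b+c)^4 \leq 27(a^4+b^4+c^4)$, I then reduce to bounding $\E[(\tilde N - 4r)^4]$ and $\E[(4r - X_\theta)^4]$; the latter is at most $C\theta^{-4}r^{4/3}$ by Proposition \ref{p:constrained}, and the lower tail of $\tilde N - 4r$ is handled by $\tilde N \geq T_{\mathbf{0},\br}$ together with Theorem \ref{t:onepoint}.

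The hard part will be controlling the upper tail of $\tilde N$ uniformly in $n/r$. Applying Lemma \ref{l:l2lbd} to the interval $|m| \leq 2^j r^{2/3}$ yields $\P(M_j - 4r > y r^{1/3}) \leq Ce^{-cy/(j+1)}$, and hence $\P(M_j - c_j r^{1/3} - 4r > y r^{1/3}) \leq Ce^{-c(y+c_j)/(j+1)}$. Summing over the at most $O(\log((n-r)/r^{2/3}))$ non-empty shells, the crucial observation is that $c_j/(j+1) = 2\alpha \cdot 2^{j/4}/(j+1) \to \infty$ exponentially, so $\sum_j e^{-c c_j/(j+1)}$ is bounded by an absolute constant; splitting the sum at the balance point $j^* \sim 4\log_2 y$ yields $\P(\tilde N - 4r > y r^{1/3}) \leq Ce^{-cy/(\log y)^2}$ for large $y$, uniformly in $n/r$. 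This decays faster than any polynomial in $y$, giving $\E[(\tilde N - 4r)^4] \leq Cr^{4/3}$ for an absolute constant. Combining all the bounds and dividing by $\P(\cE_{\mathrm{bar}} \cap \cE_{\mathrm{dec}})$ delivers the claimed $C\theta^{-4}r^{4/3}$ estimate.
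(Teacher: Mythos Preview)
Your proof is correct and follows essentially the same strategy as the paper: decompose $\L_r$ into dyadic shells, use the decay encoded in $\cE_{\mathrm{dec}}$ to bound the contribution from each shell, remove $\cE_{\mathrm{bar}}$ via FKG, control the line-to-point profile with Proposition~\ref{p:l2l}/Lemma~\ref{l:l2lbd}, and handle the constrained path via Proposition~\ref{p:constrained}. The only differences are in packaging: the paper splits around $4r$, writing $(X_n-X_n^r-X_{2\theta})_+^4 \le 8(X_n-X_n^r-4r)_+^4 + 8(X_{2\theta}-4r)^4$, and then bounds the conditional tail $\P(X_n-X_n^r-4r\ge xr^{1/3}\mid \cE_{\mathrm{dec}})$ directly by the same shell sum you write down; you instead replace $X_n^r$ by $T_{\br,\bn}$ and $X_{2\theta}$ by $X_\theta$, and encode the shell decomposition in the auxiliary variable $\tilde N$, which makes the independence from $\cE_{\mathrm{dec}}$ explicit. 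Also, where you invoke Lemma~\ref{l:l2lbd} on the full interval of width $2^j r^{2/3}$ (yielding the slightly weaker tail $e^{-cy/(\log y)^2}$), the paper applies Proposition~\ref{p:l2l} to each $r^{2/3}$-subinterval and union-bounds, obtaining the cleaner exponential tail $\P(\cdot)\le Ce^{-cx}$; both are more than enough for a fourth moment.
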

\begin{proof}
First we write
$$\E\left[(X_{n}-X_{n}^r-X_{2\theta})_+^4  \mid \cE_{\mathrm{bar}}\cap \cE_{\mathrm{dec}}\right]
\leq
8\E\left[(X_{n}-X_{n}^r-4r)_+^4  \mid \cE_{\mathrm{bar}}\cap \cE_{\mathrm{dec}}\right]
+
8\E\left[(X_{2\theta} - 4r)^4  \mid \cE_{\mathrm{bar}}\cap \cE_{\mathrm{dec}}\right].$$
Noticing that $X_{2\theta}-4r$ is independent of $\cE_{\mathrm{dec}}$, we show, as in the proof of Lemma \ref{l:2nd1} that the second term above is $O(\theta^{-4}r^{4/3})$. So it suffices to provide a similar upper bound for the first term.  By the FKG inequality, we get 
$$\E\left[(X_{n}-X_{n}^r-4r)_+^4  \mid \cE_{\mathrm{bar}}\cap \cE_{\mathrm{dec}}\right]
\leq
\E\left[(X_{n}-X_{n}^r-4r)_+^4  \mid  \cE_{\mathrm{dec}}\right].$$

To control this, we write, using the definition of $\cE_{\mathrm{dec}}$,
\begin{eqnarray*}
\P\biggl(X_{n}-X_{n}^r-4r &\geq & xr^{1/3}\mid \cE_{\mathrm{dec}}\biggr) \leq  \P\biggl(\max_{|m|<r^{2/3}} X_{\br+(m,-m)}-4r\geq xr^{1/3}\biggr)\\
&+& \sum_{j=1}^{\infty} \P\left( \max_{2^{j-1}r^{2/3}\leq |m| < 2^jr^{2/3}} X_{\br+(m,-m)}-4r -2\alpha\cdot 2^{j\left(\frac{1}{2}-\tau\right)}r^{1/3} \geq xr^{1/3}\right).
\end{eqnarray*}

Apply Proposition \ref{p:l2l} to $\L_{r, r^{2/3}} + (i, -i)$, for $i \in \lfloor 2r^{2/3} \rfloor\Z$; we have for each $x > 0$ 
$$
\P\left[X_{\br+(m,-m)}-4r \geq xr^{1/3}\right] \leq C'\exp(-c'x),
$$
and for each $j \in \Z_+$,
\begin{multline*}
\P\left[\max_{2^{j-1}r^{2/3}\leq |m| < 2^jr^{2/3}} X_{\br+(m,-m)}-4r -2\alpha\cdot 2^{j\left(\frac{1}{2}-\tau\right)}r^{1/3} \geq xr^{1/3}\right] \leq 2^jC'\exp\left(-c'\left(x+2\alpha\cdot 2^{j\left(\frac{1}{2}-\tau\right)}\right)\right),
\end{multline*}
where $C', c' > 0$ are absolute constants. 
These imply that, for some absolute constants $C,c>0$, we have 
$$\P\biggl(X_{n}-X_{n}^r-4r \geq  xr^{1/3}\mid \cE_{\mathrm{dec}}\biggr)\leq Ce^{-cx},$$ 
for all $x>0$ and it follows in turn that 
$$\E\left[(X_{n}-X_{n}^r-4r)_+^4  \mid \cE_{\mathrm{bar}}\cap \cE_{\mathrm{dec}}\right]\leq C'r^{4/3},$$
for some $C'>0$ as required, completing the proof of the lemma. 
\end{proof}

Our next objective is to show that on a large probability event (conditional on $\cE_{\mathrm{bar}}\cap \cE_{\mathrm{dec}}$) we have 
$$X_{n}-X_{n}^r-X_{2\theta}\leq X'-X_{2\theta}.$$ By definition of $X'$, for this, it suffices to construct an event $A$ such that on $A\cap \cE_{\mathrm{bar}}\cap \cE_{\mathrm{dec}}$, for any point $u=(u_1,u_2)$ on $\L_{r}$ with $|u_1-u_2|\geq \phi r^{2/3}$ we have 
$$T_{u,\bn}+X_{u}\leq T_{\br,\bn}+ X_{2\theta}.$$
{As we have, on $\cE_{\mathrm{dec}}$, $X_{n}^r \leq T_{\br,\bn}+$}${2\alpha^{-1}r^{1/3}}$, this is ensured by the following two events (after observing that $X_{2\theta}\geq X_{\theta}$):

\begin{enumerate}
\item[(i)] Let $\ce_1:=\{X_{\theta} \geq 4r-(\theta^{-3}-{2\alpha^{-1}})r^{1/3}\}$.
\item[(ii)] For all $j$ such that $2^{j-1}\geq \frac{1}{4}\phi$, let 
$$\ce_{2,j}:=\left\{ \max_{2^{j-1}r^{2/3}|m|<2^{j}r^{2/3}} X_{\br+(m,-m)} \leq 4r -\theta^{-3}r^{1/3} + \alpha\cdot 2^{j\left(\frac{1}{2}-\tau\right)}r^{1/3}  \right\}$$ 
and let $\displaystyle{\ce_2:=\bigcap_{j} \ce_{2,j}}$ where the intersection is over all $j$ as above. 
\end{enumerate}

Let us set $\mathfrak{A}:=\ce_1\cap \ce_2$. The following lemma is an obvious consequence of the above discussion together with definition of $\ce_{\mathrm{dec}}$.

\begin{lemma}  
\label{l:condition}
On the event $\mathfrak{A}\bigcap \cE_{\mathrm{bar}}\bigcap \cE_{\mathrm{dec}}$, we have $(X_{n}-X_{n}^r-X_{2\theta})_+ \leq (X' - X_{2\theta})$. 
\end{lemma}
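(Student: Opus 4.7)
The plan is to prove the pointwise estimate $X_n - X_n^r \le X'$ on $\mathfrak{A} \cap \cE_{\mathrm{bar}} \cap \cE_{\mathrm{dec}}$. Since $X_{2\theta} \le X_r \le X'$ and hence $X' - X_{2\theta} \ge 0$, this will immediately yield $(X_n - X_n^r - X_{2\theta})_+ \le X' - X_{2\theta}$. The barrier event $\cE_{\mathrm{bar}}$ will play no role in this particular comparison; only $\mathfrak{A} = \cE_1 \cap \cE_2$ and $\cE_{\mathrm{dec}}$ are used, consistently with the way these events were designed in the discussion preceding the lemma.

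The first step is to decompose $X_n = \max_{v \in \L_r}(X_v + T_{v, \bn})$ and let $v^* = \br + (m^*, -m^*)$ be a maximizer. If $|m^*| \le \tfrac{1}{4}\phi r^{2/3}$, then $v^* \in \L_{r, \frac{1}{4}\phi r^{2/3}}$, so by the definition of $X'$ in \eqref{x'} one has $X_{v^*} \le X'$, while $T_{v^*, \bn} \le X_n^r$ is immediate; summing gives $X_n \le X' + X_n^r$. The remaining case $|m^*| > \tfrac{1}{4}\phi r^{2/3}$ is exactly where the events $\cE_1$, $\cE_2$, and $\cE_{\mathrm{dec}}$ enter essentially.

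In this case, let $j \ge 1$ be the integer with $2^{j-1} r^{2/3} \le |m^*| < 2^j r^{2/3}$, so that $2^{j-1} \ge \tfrac{1}{4}\phi$ and the event $\cE_{2,j}$ applies to yield
$$X_{v^*} \;\le\; 4r - \theta^{-3} r^{1/3} + \alpha \cdot 2^{j(\frac{1}{2} - \tau)} r^{1/3},$$
while on $\cE_{\mathrm{dec}}$ the events $H'_{j'}$ for $j' \ge 1$ pin $X_n^r$ to $\max_{|m| < r^{2/3}} T_{\br + (m, -m), \bn}$, whence
$$T_{v^*, \bn} \;\le\; X_n^r - 2\alpha \cdot 2^{j(\frac{1}{2} - \tau)} r^{1/3}.$$
Combining these with $\cE_1$, which provides $X' \ge X_\theta \ge 4r - \theta^{-3} r^{1/3} + 2\alpha^{-1} r^{1/3}$, the desired inequality $X_{v^*} + T_{v^*, \bn} \le X' + X_n^r$ reduces to the trivial bound $-\alpha \cdot 2^{j(\frac{1}{2} - \tau)} \le 2\alpha^{-1}$, completing the proof. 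The entire argument is essentially bookkeeping once the events are in place: the factor $2\alpha$ in the decay of $H'_j$ is designed to be exactly twice the growth $\alpha$ in $\cE_{2,j}$, leaving a surplus that trivially absorbs the constant $2\alpha^{-1}$ from $\cE_1$, so no delicate parameter juggling is required. The only minor caveat is that for $j$ with $2^{j-1} r^{2/3} > n - r$ the corresponding range of $m^*$ is forbidden by the directed constraint $v \preceq \bn$, so such cases are vacuous and the case analysis closes without difficulty.
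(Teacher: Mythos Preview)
Your proof is correct and follows essentially the same route as the paper, which simply records the lemma as an ``obvious consequence'' of the discussion immediately preceding it; you have made that discussion explicit. One small slip: from $|m^*| > \tfrac{1}{4}\phi r^{2/3}$ and $2^{j-1}r^{2/3}\le |m^*|<2^{j}r^{2/3}$ you only get $2^{j-1}>\tfrac{1}{8}\phi$, not $2^{j-1}\ge\tfrac{1}{4}\phi$, so $\cE_{2,j}$ may not be among the events defining $\cE_2$; the paper has the matching inconsistency between the threshold $\tfrac{1}{4}\phi$ in the definition of $X'$ and the threshold $|u_1-u_2|\ge \phi r^{2/3}$ (i.e.\ $|m|\ge\tfrac{1}{2}\phi r^{2/3}$) used in its discussion. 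This is a harmless bookkeeping issue---fix it by assuming without loss of generality that $\tfrac{1}{4}\phi$ is a power of $2$, or by splitting at $|m^*|=\tfrac{1}{2}\phi r^{2/3}$ and enlarging the interval in the definition of $X'$ accordingly.
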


The next lemma shows that $\mathfrak{A}$ has large conditional probability given $\cE_{\mathrm{bar}}\cap \cE_{\mathrm{dec}}$.

\begin{lemma} 
\label{l:goodevent}
When $\theta$ is small enough,
we have $\P[\mathfrak{A}^c|\cE_{\mathrm{bar}}\cap \cE_{\mathrm{dec}}] <{ \exp(-\theta^{-2/5})}$
for all sufficiently large $r$.
\end{lemma}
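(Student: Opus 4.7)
The plan is to decompose $\mathfrak{A}^c\subseteq \ce_1^c\cup\ce_2^c$ and bound the two resulting conditional probabilities separately, exploiting the nearly disjoint-support structure of the various events.

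Since $\ce_1$ depends only on the weights in $R_\theta$, $\ce_{\mathrm{bar}}$ only on those in $U_1\cup U_2$, and $\ce_{\mathrm{dec}}$ only on those on or above $\L_r$, and these three regions are pairwise disjoint, $\ce_1$ is independent of $\ce_{\mathrm{bar}}\cap\ce_{\mathrm{dec}}$ and so $\P(\ce_1^c\mid \ce_{\mathrm{bar}}\cap\ce_{\mathrm{dec}})=\P(\ce_1^c)$. The event $\ce_2$ is a function of the weights strictly below $\L_r$ (under the paper's convention that endpoint weights are excluded from path weights), and is therefore independent of $\ce_{\mathrm{dec}}$; since both $\ce_2$ and $\ce_{\mathrm{bar}}$ are decreasing events of the underlying weights, the FKG inequality yields $\P(\ce_2^c\mid\ce_{\mathrm{bar}}\cap\ce_{\mathrm{dec}})=\P(\ce_2^c\mid\ce_{\mathrm{bar}})\leq\P(\ce_2^c)$. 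It thus suffices to bound each of $\P(\ce_1^c)$ and $\P(\ce_2^c)$ by $\tfrac{1}{2}\exp(-\theta^{-1/2})$.

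For $\P(\ce_1^c)$, I will follow the partition argument in the proof of Proposition \ref{p:constrained}: slice $R_\theta$ by the anti-diagonals $\{x+y=2i\theta^{3/2}r\}$ for $i=0,1,\ldots,\theta^{-3/2}$ and set $Z_i:=\inf_{u\in A_i,\,v\in A_{i+1}}T^{R_\theta}_{u,v}$. These are i.i.d.\ by translation invariance, satisfy $\E Z_i\geq 4\theta^{3/2}r-C_1\theta^{1/2}r^{1/3}$, and by Theorem \ref{t:supinf}(3) obey the sub-Weibull lower tail $\P(Z_i-\E Z_i\leq -x\theta^{1/2}r^{1/3})\leq e^{-cx^{1/2}}$. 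Since $X_\theta\geq \sum_iZ_i$, on $\ce_1^c$ the mean-zero normalized sum $\sum_iW_i$, with $W_i:=(\E Z_i-Z_i)/(\theta^{1/2}r^{1/3})$, must exceed $\tfrac{1}{2}\theta^{-7/2}$ for $\theta$ small. A standard truncation of the $W_i$ at level $K=\Theta(\theta^{-7/3})$, combined with Bennett's inequality on the truncated part and a union bound on the rare event $\{\max_iW_i>K\}$, yields $\P(\sum_iW_i>\tfrac{1}{2}\theta^{-7/2})\leq e^{-c'\theta^{-7/6}}\ll \tfrac{1}{2}e^{-\theta^{-1/2}}$ for $\theta$ small.

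For $\P(\ce_2^c)$, I apply a union bound over $j$ with $2^{j-1}\geq\phi/4=\tfrac{1}{4}\theta^{-30}$. For each such $j$, the index set $\{m:2^{j-1}r^{2/3}\leq |m|<2^jr^{2/3}\}$ is covered by $O(2^j)$ translates of $\L_{r,r^{2/3}}$; applying Proposition \ref{p:l2l} on each translate via translation invariance gives $\P(\ce_{2,j}^c)\leq C\cdot 2^j\exp\bigl(-c(\alpha\cdot 2^{j/4}-\theta^{-3})\bigr)$. The constraint $2^{j-1}\geq \tfrac{1}{4}\theta^{-30}$ forces $2^{j/4}\geq c_0\theta^{-15/2}$, dominating $\theta^{-3}$ for $\theta$ small, so $\P(\ce_{2,j}^c)\leq C\cdot 2^j\exp(-\tfrac{c\alpha}{2}\cdot 2^{j/4})$; summing the super-exponentially decreasing series gives $\P(\ce_2^c)\leq \exp(-c''\theta^{-15/2})$, comfortably below $\tfrac{1}{2}e^{-\theta^{-1/2}}$, and combination completes the proof. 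The main obstacle is the concentration estimate for $\P(\ce_1^c)$, where a sum of $N=\theta^{-3/2}$ sub-Weibull-$(1/2)$ variables must be controlled at the scale $\theta^{-7/2}$, well above the Gaussian scale $\sqrt{N}=\theta^{-3/4}$; however, because the target probability $e^{-\theta^{-1/2}}$ is much coarser than the natural one-big-jump rate $e^{-\theta^{-7/6}}$ for such sums, a straightforward truncation-plus-Bennett argument suffices without delicate optimization.
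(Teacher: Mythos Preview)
Your proof is correct and follows essentially the same overall structure as the paper: the decomposition $\mathfrak{A}^c\subseteq \ce_1^c\cup\ce_2^c$, the independence of $\ce_1$ from $\ce_{\mathrm{bar}}\cap\ce_{\mathrm{dec}}$, the FKG reduction $\P(\ce_2^c\mid\ce_{\mathrm{bar}})\leq\P(\ce_2^c)$, and the union-bound treatment of $\ce_2^c$ via Proposition~\ref{p:l2l} are all exactly what the paper does.

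The only substantive difference is in how you bound $\P(\ce_1^c)$. The paper simply invokes Lemma~\ref{l:prep3-rtheta}, whose proof is a per-slab union bound: each of the $\theta^{-3/2}$ slabs satisfies $T^{R_\theta}_{u,v}\geq 2|d(u)-d(v)|-\theta^{-1/2}r^{1/3}$ with failure probability at most $e^{-c/\theta^{1/4}}$ (from Theorem~\ref{t:supinf}(3) with $x=\theta^{-1}$), and on the intersection of these events $X_\theta\geq 4r-\theta^{-2}r^{1/3}\geq 4r-(\theta^{-3}-2\alpha^{-1})r^{1/3}$. Your truncation-plus-Bennett argument for the sum $\sum_iW_i$ is a valid alternative that actually yields a sharper rate $e^{-c\theta^{-7/6}}$, but it is more work than needed here: since the target bound $e^{-\theta^{-1/2}}$ is so coarse, the cruder per-slab union bound (giving $\theta^{-3/2}e^{-c/\theta^{1/4}}$) already suffices. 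Either route is fine.
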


\begin{proof}
Observe first that both $\ce_1$ and $\ce_2$ are independent of $\cE_{\mathrm{dec}}$, and $\ce_1$ is also independent of $\cE_{\mathrm{bar}}$. Observe further that by the FKG inequality we have $\P(\ce^c_2\mid \cE_{\mathrm{bar}})\leq \P(\ce_2^c)$. It therefore suffices to upper bound
$\P(\ce_1^c)$ and $\P(\ce_2^c)$ separately. It follows {from Lemma \ref{l:prep3-rtheta} (or the proof of Proposition \ref{p:constrained}) that $\P(\ce^c_1)\leq e^{-c\theta^{-1}}$} for small enough $\theta$. Observing that $2^{j-1} \geq \frac{\phi}{4} \gg \theta^{-10}$ by our choice of parameters, it follows that for $\theta$ sufficiently small $\alpha\cdot 2^{j\left(\frac{1}{2}-\tau\right)} \geq 2\theta^{-3}$. Using Proposition \ref{p:l2l} as in the proof of Lemma \ref{l:covboundw} we get 
$$\P(\ce_{2,j}^c)\leq 2^j e^{-c2^{j(1/2-\tau)}}$$
for some $c>0$.  By summing over all $j$ (and using the bound on the minimum value of $j$) we get $\P(\ce_2^c)\leq e^{-1/\theta^{1/2}}$ for all $\theta$ small enough. We get the desired result by a union bound. 
\end{proof}

We are now ready to prove Lemma \ref{l:2nd2}.
\begin{proof}[Proof of Lemma \ref{l:2nd2}]
Clearly, it suffices to show, separately, that $\E[(X_{n}-X_{n}^r-X_{2\theta})_{-}^2\mid \cE_{\mathrm{bar}}\bigcap \cE_{\mathrm{dec}}]=O(r^{2/3})$ and $\E[(X_{n}-X_{n}^r-X_{2\theta})_{+}^2\mid \cE_{\mathrm{bar}}\bigcap \cE_{\mathrm{dec}}]=O(r^{2/3})$. {Notice that by definition, on $\cE_{\mathrm{dec}}$ we have $X_{n}\geq T_{\br, \bn}+X_{2\theta}$ which implies $X_{n}-X_{n}^r-X_{2\theta}\geq -{2\alpha^{-1}}r^{1/3}$. This gives the first desired inequality.}

To bound $\E\left[(X_{n}-X_{n}^r-X_{2\theta})_+^2  \mid \cE_{\mathrm{bar}}\bigcap \cE_{\mathrm{dec}}\right]$, we write 
\begin{eqnarray*}
\E\left[(X_{n}-X_{n}^r-X_{2\theta})_+^2  \mid \cE_{\mathrm{bar}}\bigcap \cE_{\mathrm{dec}}\right] &=&\E\left[\don_{\mathfrak{A}}(X_{n}-X_{n}^r-X_{2\theta})_+^2  \mid \cE_{\mathrm{bar}}\bigcap \cE_{\mathrm{dec}}\right]\\ &+&\E\left[\don_{\mathfrak{A}^c}(X_{n}-X_{n}^r-X_{2\theta})_+^2  \mid \cE_{\mathrm{bar}}\bigcap \cE_{\mathrm{dec}}\right].
\end{eqnarray*}
As in the proof of Lemma \ref{l:2nd1}, we bound the second term by 
$$\P(\mathfrak{A}^c\mid \cE_{\mathrm{bar}}\bigcap \cE_{\mathrm{dec}})^{1/2} \E\left[(X_{n}-X_{n}^r-X_{2\theta})_+^4  \mid \cE_{\mathrm{bar}}\bigcap \cE_{\mathrm{dec}}\right]^{1/2} \leq r^{2/3}$$
for $\theta$ sufficiently small where the last inequality follows from Lemma \ref{l:covboundw} and Lemma \ref{l:goodevent}. Using Lemma \ref{l:condition}, the first term is bounded by 
$$\E[(X'-X_{2\theta})^{2}\mid \cE_{\mathrm{bar}}] \leq r^{2/3}$$
where the inequality follows from Lemma \ref{l:2nd1}. The proof is completed combining all these. 
\end{proof}

\subsection{Proof of Lemma \ref{l:realvar}}
Now we move towards the proof of Lemma \ref{l:realvar} which will be done by a standard Doob Martingale variance decomposition. Recall the definition of $\ce_{*}$. Let us first fix $\omega_{\theta}\in \ce_*$. Let $J=\{i_1<i_2<\cdots < i_{|J|}\}$ an enumeration of good indices (notice that these are deterministic given $\omega_{\theta}$); i.e., for each $j\in J$
$$\P\left(\sup_{u\in R_{2\theta}^{5j}, v\in R_{2\theta}^{5j+4}} T_{u,v}- 2|d(u)-d(v)| \leq  C_2\theta^{1/2}r^{1/3}\mid \omega_{\theta}\right) \geq 0.99$$ 
By definition of $\ce_{*}$, we know that $|J|\geq \frac{1}{10}\theta^{-3/2}$. Let us now define a sequence of $\sigma$-fields, $\ch_0 \subseteq \ch_1 \subseteq \cdots \subseteq \ch_{|J|}$ where $\ch_0$ is generated by the configuration $\omega_{\theta}$ on $\Z^2\setminus R_{\theta}$ together with the configuration on $R_{\theta}^{i}$ for all $i$ not of the form $5j+2$ for some $j\in J$, and for $j\geq 0$, $\ch_{j+1}$ is the sigma algebra generated by $\ch_{j}$ and the configuration on $R_{\theta}^{5i_{j+1}+2}$. 
We shall consider the Doob Martingale $M_j:=\E[X_{2\theta}\mid \ch_{j}]$. Observe that by the standard variance decomposition of a Doob Martingale it follows that 
\begin{equation}
\label{l:vardecom}
{\rm Var}~(X_{2\theta}\mid \omega_{\theta})\geq \sum_{j=1}^{J} \E [(M_j-M_{j-1})^2].
\end{equation}
Lemma \ref{l:realvar} is immediately implied by \eqref{l:vardecom} together with the following lemma.
\begin{lemma}
\label{l:eachgood}
There exists $c>0$ such that given any sufficiently small $\theta,$ for all sufficiently large $r$ the following holds: for each $i_j$ in $J$, there is a $\ch_{j-1}$ measurable subset $G_j$ with probability at least $1/2$ such that we have
$$\don_{G_j} \E[(M_j-M_{j-1})^2\mid \ch_{j-1}] \geq c\theta r^{2/3}.$$ 
\end{lemma}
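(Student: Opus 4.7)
The plan is to use the Doob martingale structure to isolate the variance contribution from the middle strip $B := R_\theta^{5i_j+2}$ and show it is of order $\theta r^{2/3}$. Fix a good index $i_j\in J$ and work conditionally on $\ch_{j-1}$. Let the two diagonal lines bounding $B$ be denoted $\ell^{\top}:=\L_{(5i_j+2)\theta^{3/2}r}$ and $\ell^{\bot}:=\L_{(5i_j+3)\theta^{3/2}r}$ (with $\ell^{\top}$ closer to $\mathbf{0}$), and let $R^+, R^-$ be the portions of $R_{2\theta}$ above $\ell^{\top}$ and below $\ell^{\bot}$ respectively. Write $\omega^*$ for the configuration on $B$ (independent of $\ch_{j-1}$) and $\omega'$ for the configurations on the remaining unrevealed middle strips $R_\theta^{5i_k+2},\ k>j$. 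Then $M_j=g(\omega^*):=\E_{\omega'}[X_{2\theta}(\omega^*,\omega')]$ and $M_{j-1}=\E_{\omega^*}[g(\omega^*)]$, so $\E[(M_j-M_{j-1})^2\mid\ch_{j-1}]=\Var_{\omega^*}(g(\omega^*))$, and the task reduces to finding a $\ch_{j-1}$-measurable $G_j$ with $\P(G_j\mid\omega_\theta)\ge\tfrac12$ on which $\Var_{\omega^*}(g(\omega^*))\ge c\theta r^{2/3}$.

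First I would pick reference points $u^*$ and $v^*$ near the centers of $\ell^{\top}\cap R_\theta$ and $\ell^{\bot}\cap R_\theta$. Concatenating the geodesics from $\mathbf{0}$ to $u^*$ in $R^+$, from $u^*$ to $v^*$ in $B$, and from $v^*$ to $\mathbf{r}$ in $R^-$ gives the deterministic pointwise lower bound
$$X_{2\theta}(\omega^*,\omega') \;\ge\; T^{R^+}_{\mathbf{0},u^*} + T^{B}_{u^*,v^*}(\omega^*) + T^{R^-}_{v^*,\mathbf{r}}(\omega'),$$
whose three summands are respectively $\ch_{j-1}$-measurable, $\omega^*$-measurable, and $\omega'$-measurable. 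Define $G_j$ to be the $\ch_{j-1}$-measurable event on which (i) $i_j$ is good, (ii) $T^{R^+}_{\mathbf{0},u^*}$ and $T^{R^-}_{v^*,\mathbf{r}}$ are within $O(r^{1/3})$ of their conditional means, and (iii) the full boundary profiles $\{T^{R^+}_{\mathbf{0},u}\}_{u\in\ell^{\top}\cap R_{2\theta}}$ and $\{T^{R^-}_{v,\mathbf{r}}\}_{v\in\ell^{\bot}\cap R_{2\theta}}$ satisfy the fluctuation estimates of Proposition~\ref{p:l2l} and Theorem~\ref{t:supinf}. A union bound then yields $\P(G_j\mid\omega_\theta)\ge\tfrac12$ for suitable absolute constants.

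The core step is a matching approximate upper bound on $G_j$. The good-index condition caps the total five-strip passage time across $R_{2\theta}^{5i_j}$ to $R_{2\theta}^{5i_j+4}$ at $2|d(\cdot)-d(\cdot)|+C_2\theta^{1/2}r^{1/3}$ with $(\omega^*,\omega')$-conditional probability $\ge 0.99$, while any path exiting $R_\theta$ inside this five-strip block pays the barrier penalty $Lr^{1/3}\gg\theta^{1/2}r^{1/3}$ guaranteed by $\cE_{\mathrm{bar}}$. Combined with (iii), this pins the optimal path's crossings of $\ell^{\top},\ell^{\bot}$ to $u^*,v^*$ up to transversal distance $O(\theta^{1/2}r^{1/3})$, giving
$$X_{2\theta} \;=\; T^{R^+}_{\mathbf{0},u^*} + T^{B}_{u^*,v^*}(\omega^*) + T^{R^-}_{v^*,\mathbf{r}}(\omega') + E(\omega^*,\omega')$$
with $\E[E^2\mid\ch_{j-1}]=o(\theta r^{2/3})$. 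Averaging over $\omega'$ and using independence of $\omega^*$ and $\omega'$ yields $g(\omega^*)=c(\ch_{j-1}) + T^{B}_{u^*,v^*}(\omega^*) + \tilde E(\omega^*)$ with $\Var_{\omega^*}(\tilde E)=o(\theta r^{2/3})$. Since $B$ is an on-scale parallelogram of diagonal length $\Theta(\theta^{3/2}r)$, a matching lower variance bound $\Var(T^{B}_{u^*,v^*})\sim\theta r^{2/3}$ follows from Theorem~\ref{t:onepoint} combined with non-degeneracy of the Tracy--Widom limit, and a Cauchy--Schwarz argument then gives $\Var_{\omega^*}(g)\ge c\theta r^{2/3}$ as required.

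The hard part will be the linearization step: simultaneously quantifying the stability of the optimal $\ell^{\top},\ell^{\bot}$ crossings near $u^*, v^*$ and the smallness of the associated error $E$ in $L^2(\omega^*,\omega')$. This rests on balancing the three distinct penalty scales that appear --- the $Lr^{1/3}$ barrier pinning the path inside $R_\theta$ through the five-strip block, the $C_2\theta^{1/2}r^{1/3}$ good-index bound controlling total five-strip fluctuations, and the $\Theta(r^{1/3})$ fluctuations of individual boundary passage times --- to ensure the error is negligible on the $\sqrt{\theta r^{2/3}}$ scale of the target variance. Once this is established, the conclusion follows by comparing $\Var_{\omega^*}(g)$ to $\Var(T^B_{u^*,v^*})$.
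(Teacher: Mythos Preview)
Your linearization step has a genuine gap: the claim $\E[E^2\mid\ch_{j-1}]=o(\theta r^{2/3})$ cannot hold, because every quantity in play lives on the \emph{same} scale $\theta^{1/2}r^{1/3}$. The middle strip $B$ has diagonal length $\theta^{3/2}r$ and width $\theta r^{2/3}=(\theta^{3/2}r)^{2/3}$, so it is exactly on-scale; hence $T^B_{u^*,v^*}$ fluctuates by $\Theta(\theta^{1/2}r^{1/3})$, the good-index cap is $C_2\theta^{1/2}r^{1/3}$, and varying the crossing point over $\ell^{\top}\cap R_{2\theta}$ changes both $T^{R^+}_{\mathbf{0},\cdot}$ and $T^B_{\cdot,\cdot}$ by $\Theta(\theta^{1/2}r^{1/3})$ as well. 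Neither the barrier (which only penalizes long excursions into $U_1\cup U_2$, not motion within $R_\theta$) nor the good-index bound gives any mechanism to pin the true crossings to a sub-interval of width $o(\theta r^{2/3})$, so the error $E$ from replacing them by $u^*,v^*$ has $\E[E^2]=\Theta(\theta r^{2/3})$, the same order as the target variance, and Cauchy--Schwarz yields nothing. (Your ``transversal distance $O(\theta^{1/2}r^{1/3})$'' is a weight scale, not a spatial one; and condition (iii) applied to $\{T^{R^-}_{v,\mathbf{r}}\}$ is not $\ch_{j-1}$-measurable since $R^-$ contains the unrevealed strips $R_\theta^{5i_k+2}$ for $k>j$.)

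The paper bypasses any two-sided approximation via a monotone-coupling resampling argument (Lemma~\ref{l:keystepvar}). One couples $\omega_1$ with $\tilde\omega_1\sim(\omega_1\mid F)$, where $F=\{T^{R_\theta}_{e_j,f_j}\ge 4\theta^{3/2}r+2C_2\theta^{1/2}r^{1/3}\}$, so that $\tilde\omega_1\ge\omega_1$ pointwise, and shows that on a high-probability $\ch_{j-1}$-event $G_j$ (together with typical $\omega_1,\omega_2$) the pointwise gap $X^*(\underline{\tilde\omega})-X^*(\underline\omega)\ge\tfrac{C_2}{10}\theta^{1/2}r^{1/3}$ holds. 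The good-index bound is used only as an \emph{upper} bound on the five-strip passage in $\underline\omega$, while $F$ furnishes a \emph{lower} bound via the path through $e_j,f_j$ in $\underline{\tilde\omega}$; the two never need to cancel. This is fed into
\[
\E[(M_j-M_{j-1})^2\mid\ch_{j-1}]\ \ge\ \P(F)\bigl(\E[M_j\mid F,\ch_{j-1}]-M_{j-1}\bigr)^2,
\]
and Lemma~\ref{l:uppertail} (applied to the on-scale rectangle $B$) gives $\P(F)\ge c>0$ independent of $\theta$.
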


Before starting to prove this lemma we need to introduce some more notation. Fix $i_j\in J$. Let $e_j$ and $f_j$ denote the midpoints of two shorter sides of $R_{\theta}^{5i_j+2}$ (see Figure \ref{f:resampling}). Let $\omega_1$ denote a configuration on $R_{\theta}^{5i_j+2}$ drawn from the i.i.d.\ exponential distribution. 
Let $F$ be the event where
$$T^{R_{\theta}}_{e_j,f_j}\geq 4\theta^{3/2}r+ 2C_2\theta^{1/2}r^{1/3}.$$
Let $\tilde{\omega}_1$ denote a configuration on $R_{\theta}^{5i_j+2}$ drawn from the i.i.d.\ exponential distribution conditional on $F$. Observe that $F$ is an increasing event in the configuration on $R_{\theta}^{5i_{j}+2}$. As discussed before, the FKG inequality implies that $\don_{F}$ is positively correlated with every increasing function in the configuration on $R_{\theta}^{5i_{j}+2}$. Which further implies that the conditional distribution of the weights on $R_{\theta}^{5i_{j}+2}$ given $F$ (i.e., the law of $\tilde{\omega}_1$) is stochastically larger than the law of $\tilde{\omega}$. Strassen's Theorem (see e.g.\ \cite{L99}) therefore implies that there exists a coupling of $(\omega_1,\tilde{\omega}_1)$ such that $\tilde{\omega}_1 \geq \omega_1$ point wise. Let $\mu$ denote such a coupling. Let $\omega_0$ denote a configuration on all the vertices that are revealed in $\ch_{j-1}$ (in particular, the restriction of $\omega_0$ to the co-ordinates of $\Z^2\setminus R_{\theta}$ is $\omega_{\theta}$). Let $\omega_2$ denote a configuration on $\cup_{\ell >j} R_{\theta}^{5i_{\ell}+2}$. Let $\underline{\omega}= (\omega_0, \omega_1, \omega_2)$ and $\underline{\tilde{\omega}}= (\omega_0, \tilde{\omega}_1, \omega_2)$ denote the two environments and let $X^*(\underline{\omega})$ (resp.\ $X^*(\underline{\tilde{\omega}})$) denote the value of the statistic $X_{2\theta}$ computed in the environment $\underline{\omega}$ (resp.\ $\underline{\tilde{\omega}}$). Observe now that for the coupling $\mu$ described above we have 

\begin{equation}
\label{e:keystepvar0}
(\E[M_j\mid F, {\ch_{j-1}}]-M_{j-1})(\omega_0)=\int (X^*(\underline{\tilde{\omega}})-X^*(\underline{\omega}))~ d\mu~d\omega_2.
\end{equation}
Observe also that $F$ is independent of $\ch_{j-1}$ (and $M_{j-1}$), hence we get 
\begin{equation}
\label{e:keystepvar}
\begin{split}
\E[(M_j-M_{j-1})^2\mid \ch_{j-1}] &\geq 
\P(F)\E[(M_j-M_{j-1})^2\mid F, \ch_{j-1}]
\\
&=
\P(F)(\E[M_j^2\mid F, \ch_{j-1}] - 2\E[M_j\mid F, \ch_{j-1}]M_{j-1} + M_{j-1}^2)
\\
&\geq
\P(F)(\E[M_j\mid F, \ch_{j-1}]^2 - 2\E[M_j\mid F, \ch_{j-1}]M_{j-1} + M_{j-1}^2)
\\
&=\P(F)(\E[M_j\mid F, \ch_{j-1}]-M_{j-1})^{2}.
\end{split}
\end{equation}

\begin{figure}[htbp!]
\includegraphics[width=.5\textwidth]{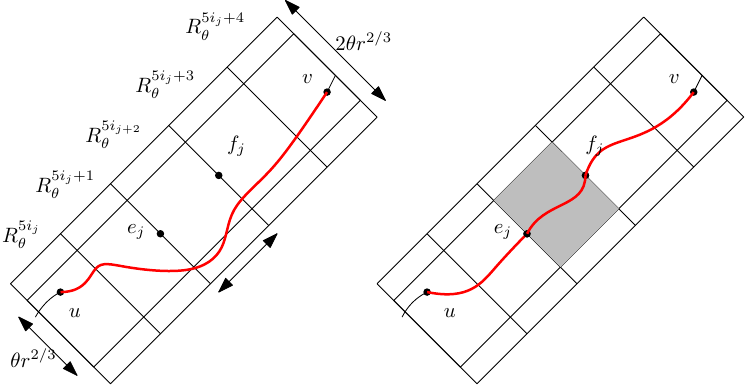} 
\caption{Figure illustrating the proof of Lemma \ref{l:eachgood} by construction of the Doob Martingale $M_j.$ The figure shows a particular instance where high values of the exponential variables in the rectangle $R_{\theta}^{5i_j+2}$ leads to a quantitative lower bound of the $j^{th}$ term in the RHS of \eqref{l:vardecom}. This is formulated precisely in Lemma \ref{l:keystepvar}.}
\label{f:resampling}
\end{figure}

\begin{lemma}
\label{l:keystepvar}
Given $C_2$ sufficiently large, for all $\theta$ sufficiently small, for all $r$ sufficiently large, and $\omega_{\theta}\in \ce_{*}$, there is a set $G_j$ of configurations $\omega_0$ with probability at least $0.9$ and a set $G^2_j$ of configurations $\omega_2$ with probability at least $0.9$ such that for each $\omega_0\in G_j$  there exists a set $G^3_j(\omega_0)$ of configurations $(\omega_1,\tilde{\omega}_1)$ with $\mu$-probability at least $0.9$ with the following property: for $\omega_0\in G_j$,  $(\omega_1,\tilde{\omega}_1)\in G^3_j(\omega_0)$ and $\omega_2\in G^2_j$ we have
$X^*(\underline{\tilde{\omega}})-X^*(\underline{\omega})\geq \frac{C_2}{10}\theta^{1/2}r^{1/3}$.
\end{lemma}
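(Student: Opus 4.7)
Set $D_0 := 2(5i_j)\theta^{3/2}r$ and $D_3 := 2(5i_j+5)\theta^{3/2}r$, so that the widened buffer $\bigcup_{i=0}^{4} R_{2\theta}^{5i_j+i}$ occupies the $d$-range $[D_0, D_3]$ of width $10\theta^{3/2}r$, while the resampled slab $R_\theta^{5i_j+2}$ occupies $[d(e_j), d(f_j)]$ of width $2\theta^{3/2}r$. Write $A_u := T^{R_{2\theta},\,d \le d(u)}_{\mathbf{0}, u}$ and $B_v := T^{R_{2\theta},\,d \ge d(v)}_{v, \mathbf{r}}$; both depend only on $(\omega_0, \omega_2)$ since $\omega_1$ is supported on $R_\theta^{5i_j+2}$ alone. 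The plan is to produce a pathwise lower bound on $X^*(\underline{\tilde\omega})$ and an upper bound on $X^*(\underline\omega)$ whose difference is at least $\tfrac{C_2}{10}\theta^{1/2}r^{1/3}$ on the prescribed good events.

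In $\underline{\tilde\omega}$, concatenating an optimal $R_{2\theta}$-path from $\mathbf{0}$ to $e_j$, the heavy $e_j$-to-$f_j$ segment in $R_\theta^{5i_j+2}$ of weight at least $4\theta^{3/2}r + 2C_2\theta^{1/2}r^{1/3}$ (guaranteed by $\tilde\omega_1 \in F$), and an optimal $R_{2\theta}$-path from $f_j$ to $\mathbf{r}$, yields
\[
X^*(\underline{\tilde\omega}) \;\ge\; A_{e_j} + 4\theta^{3/2}r + 2C_2\theta^{1/2}r^{1/3} + B_{f_j}.
\]
Take $G_j^2$ to be the $\omega_2$-set on which the good-index event has conditional $\omega_1$-probability at least $0.9$ (of $\omega_2$-measure $\ge 0.9$ by Fubini, since $i_j$ is a good index), and $G_j^3(\omega_0)$ the $\mu$-set on which $\omega_1$ satisfies the good-index bound and $\tilde\omega_1 \in F$ (of $\mu$-measure $\ge 0.9$). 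On $G_j^2 \cap G_j^3$, the good-index bound gives $T_{u,v}(\omega) \le 2(D_3-D_0) + C_2\theta^{1/2}r^{1/3} = 20\theta^{3/2}r + C_2\theta^{1/2}r^{1/3}$ uniformly for $u \in R_{2\theta} \cap \{d=D_0\}$ and $v \in R_{2\theta} \cap \{d=D_3\}$; decomposing any $R_{2\theta}$-path from $\mathbf{0}$ to $\mathbf{r}$ at its crossings of these two lines yields
\[
X^*(\underline\omega) \;\le\; \sup_{u} A_u + 20\theta^{3/2}r + C_2\theta^{1/2}r^{1/3} + \sup_v B_v.
\]

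Subtracting reduces the claim to showing, on a $(\omega_0,\omega_2)$-event of probability at least $0.9$,
\[
\bigl[A_{e_j} - \sup_u A_u\bigr] + \bigl[B_{f_j} - \sup_v B_v\bigr] \;\ge\; 16\theta^{3/2}r - \tfrac{9C_2}{10}\theta^{1/2}r^{1/3}.
\]
Picking $\hat u \in R_{2\theta}\cap\{d=D_0\}$ directly below $e_j$ in the directed sense, $A_{e_j} \ge A_{\hat u} + T^{R_{2\theta},\,d\in[D_0, d(e_j)]}_{\hat u, e_j}$, so the first bracket satisfies
\[
A_{e_j} - \sup_u A_u \;\ge\; T^{R_{2\theta}}_{\hat u, e_j} - \bigl(\sup_u A_u - A_{\hat u}\bigr).
\]
Theorem \ref{t:onepoint} applied at scale $d(e_j)-D_0 = 4\theta^{3/2}r$ gives $|T^{R_{2\theta}}_{\hat u, e_j} - 8\theta^{3/2}r| = O(\theta^{1/2}r^{1/3})$ with high probability, while the Brownian-scale fluctuations of the profile $u\mapsto A_u$ over the transversal interval $R_{2\theta}\cap\{d=D_0\}$ of length $\Theta(\theta r^{2/3})$ yield $\sup_u A_u - A_{\hat u} = O(\theta^{1/2}r^{1/3})$ with high probability. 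An identical argument controls the $B$-bracket. Define $G_j$ as those $\omega_0$ on which these regularity estimates hold jointly with $\omega_2$-probability $\ge 0.9$; Fubini then gives $\P(G_j) \ge 0.9$, and taking $C_2$ a sufficiently large absolute constant absorbs the $O(\theta^{1/2}r^{1/3})$ errors into $\tfrac{9C_2}{10}\theta^{1/2}r^{1/3}$, completing the proof.

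The main obstacle is justifying the Brownian-scale control $\sup_u A_u - A_{\hat u} = O(\theta^{1/2}r^{1/3})$ on the short transversal interval of length $\theta r^{2/3}$, which is strictly finer than what a union bound from the one-point estimates of Theorem \ref{t:onepoint} (or the second-moment bound of Lemma \ref{l:sideregularity}) provides. This sharper estimate is obtained by translating the Brownian-comparison estimates of Section \ref{sec:browncomp}, in particular Proposition \ref{prop:bdi} applied after a shift placing $e_j$ at the origin of the profile, from the Airy$_2$ limit back to the pre-limit setting via Theorem \ref{t:lpptoairy}.
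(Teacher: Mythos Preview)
Your overall plan — lower bound $X^*(\underline{\tilde\omega})$ via the path $\mathbf{0}\to e_j\to f_j\to\mathbf{r}$ and upper bound $X^*(\underline\omega)$ by splitting at the levels $d=D_0,D_3$ — is natural, but it leaves you needing the transversal oscillation estimate
\[
\sup_{u\in R_{2\theta}\cap\{d=D_0\}} A_u - A_{\hat u} \;=\; O(\theta^{1/2}r^{1/3})
\]
with high probability, and your proposed justification does not deliver it. Proposition~\ref{prop:bdi} concerns the \emph{unconstrained} line-to-point profile and bounds the probability of a \emph{two-peak} event (the max on a short interval being close to the global max); it says nothing about the modulus of continuity of the $R_{2\theta}$-\emph{constrained} point-to-point profile $u\mapsto A_u$. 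The Airy$_2$ comparison machinery of Section~\ref{sec:browncomp} does not apply to a profile constrained to a thin strip of width $2\theta r^{2/3}$, and none of the one-point estimates in Section~\ref{s:prelim} give fluctuation control at the $\theta^{1/2}r^{1/3}$ scale (Lemma~\ref{l:sideregularity} and Theorem~\ref{t:supinf} only give $O(r^{1/3})$). So as written this step is a genuine gap. A secondary issue: your allocation of the good sets is tangled — the good-index event depends only on $(\omega_0,\omega_1)$ and on the fixed $\omega_\theta$, not on $\omega_2$, so defining $G_j^2$ as an $\omega_2$-set governed by that event is vacuous; the Fubini argument belongs on the $\omega_0$ side.

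The paper sidesteps the oscillation problem entirely by using the barrier event (recall $\omega_\theta\in\ce_*\subset\ce_{\mathrm{bar}}$). Arguing as in Lemma~\ref{l:key1}, on suitable regularity events the geodesic $\gamma_1$ attaining $X^*(\underline\omega)$ must enter $R_\theta$ in every short $d$-window, in particular at some $u\in R_\theta^{5i_j}$ and $v\in R_\theta^{5i_j+4}$. Since the portions of $\gamma_1$ below $u$ and above $v$ do not see $\omega_1$, one can compare using the \emph{same} anchor points in both environments:
\[
X^*(\underline{\tilde\omega})-X^*(\underline\omega)\;\ge\; T^{R_\theta}_{u,e_j}+T^{R_\theta}_{e_j,f_j}(\tilde\omega_1)+T^{R_\theta}_{f_j,v}\;-\;T^{R_{2\theta}}_{u,v}(\underline\omega).
\]
The right-hand side is controlled entirely at the $\theta^{1/2}r^{1/3}$ scale: the good-index bound gives $T^{R_{2\theta}}_{u,v}\le 2|d(u)-d(v)|+C_2\theta^{1/2}r^{1/3}$, the event $F$ gives $T^{R_\theta}_{e_j,f_j}(\tilde\omega_1)\ge 4\theta^{3/2}r+2C_2\theta^{1/2}r^{1/3}$, and Theorem~\ref{t:supinf} at the natural aspect ratio handles $T^{R_\theta}_{u,e_j}$ and $T^{R_\theta}_{f_j,v}$ with errors $\tfrac{C_2}{4}\theta^{1/2}r^{1/3}$. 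No profile-oscillation estimate is needed.
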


Let us postpone the proof of Lemma \ref{l:keystepvar} for the moment and complete the proof of Lemma \ref{l:eachgood}.

\begin{proof}[Proof of Lemma \ref{l:eachgood}]
For $G_j$ as in Lemma \ref{l:keystepvar} and $\omega_0\in G_j$ we have from \eqref{e:keystepvar0} and \eqref{e:keystepvar}  and the fact that $X^*(\underline{\tilde{\omega}})\geq X^*(\underline{\omega})$ point wise that
$$\E[(M_j-M_{j-1})^2\mid \ch_{j-1}](\omega_0) \geq  \P(F)\left( \int_{G_j^2} \int_{G^3_j(\omega_0,\omega_2)} (X^*(\underline{\tilde{\omega}})-X^*(\underline{\omega}))~ d\mu~d\omega_2\right)^2.$$
It follows from Lemma \ref{l:uppertail} that $\P(F)\geq \tilde c$ for some $\tilde c>0$ and using the properties of $G^2_j$ and $G^3_j$ from Lemma \ref{l:keystepvar} we see that the right hand side above is lower bounded by $\frac{\tilde cC_2^2}{1000}\theta r^{2/3}$ and the lemma follows.
\end{proof}

It remains to complete the proof of Lemma \ref{l:keystepvar}.

\begin{proof}[Proof of Lemma \ref{l:keystepvar}]
Let $\omega_{\theta}\in \ce_{*}$ be fixed and let $j\in J$. Let us first define $G_j$ and $G^2_j$. Notice that $\omega_0$ and $\omega_2$ both refer to certain different  rectangles $R_{\theta}^{j}$. Let us call these disjoint sets of indices $J_1$ and $J_2$. Let  $\tilde{G}^1_j$ denote the event that for each $k\in J_1$ we have 
$$ T_{u,v}^{R^k_{\theta}} \geq \E T_{u,v}- \log ^{10}(\theta^{-1}) \theta^{1/2} r^{1/3}$$
for all $(u,v)\in S(R_{\theta}^{k})$. Further, we also ask for $k=5i_j, 5i_j+1, 5i_j+3, 5i_j+4$
$$ T_{u,v}^{R^k_{\theta}} \geq \E T_{u,v}-\frac{C_2}{4}\theta^{1/2} r^{1/3}$$
for all $(u,v)\in S(R_{\theta}^{k})$.
Clearly for $C_2$ sufficiently large we have $\P(\tilde{G}^1_j)\geq 0.999$ using Theorem \ref{t:supinf}. Let $\tilde{G}_j^4$ denote the set of all configurations $\omega_1$ such that $$ T_{u,v}^{R^{5i_j+2}_{\theta}} \geq \E T_{u,v}- \log ^{10}(\theta^{-1}) \theta^{1/2} r^{1/3}$$ for all $(u,v)\in S(R_{\theta}^{k})$.  {It follows by Fubini's theorem and the fact that $j$ is a good index that there is a subset $G_j$ of $\tilde{G}^1_j$ with probability at least $0.9$ such that for each $\omega_0\in G_j$, there exists a subset $\tilde{G}_j^3(\omega_0)$ of $\tilde{G}_j^4$ such that for all $\omega_1\in \tilde{G}_j^3(\omega_0)$ we have $T_{u,v}(\underline{\omega})\leq 2|d(u)-d(v)|+C_2\theta^{1/2} r^{1/3}$ for each $u\in R_{2\theta}^{5i_j}$ and $v\in R_{2\theta}^{5i_j+4}$} (notice that this event does not depend on $\omega_2$). Let $G_j^3(\omega_0)$ denote the set of all configurations $(\omega_1, \tilde{\omega}_1)$ under the coupling $\mu$ such that $\omega_1\in \tilde{G}_j^3(\omega_0)$. Clearly the sets $G_j$ and $G_j^3(\omega_0)$ satisfy the required probability bounds. Let $G_j^2$ denote the event that for each $k\in J_2$ we have 
$$ T_{u,v}^{R^k_{\theta}} \geq \E T_{u,v}- \log ^{10}(\theta^{-1}) \theta^{1/2} r^{1/3}$$
for all $(u,v)\in S(R_{\theta}^{k})$. Again, by Theorem \ref{t:supinf} we have $\P(G_j^2)\geq 0.9$, as required. 
 
It remains to prove that for $\underline{\omega}, \underline{\tilde{\omega}}$ constructed as above from the events $G_j, G_j^2$ and $G_j^3$ as described in the statement of the lemma, we have 
$X^*(\underline{\tilde{\omega}})-X^*(\underline{\omega})\geq \frac{C_2}{2}\theta^{1/2}r^{1/3}$. Let $\gamma_1$ denote the path attaining $X_{2\theta}$ in the environment $\underline{\omega}$. Observe that, by using the definition of the events we conclude as in the proof of Lemma \ref{l:key1} that $\gamma_1$ must intersect $R_{\theta}^{5i_j}$ and $R_{\theta}^{5i_j+4}$, say at points $u$ and $v$. Observe that by definition 
$T_{u,v}^{R_{2\theta}}(\underline{\omega})\leq 2|d(u)-d(v)|+C_2\theta^{1/2} r^{1/3}$. On the other hand by definition of $\tilde{\omega}_1$ we have
$$T^{R_{\theta}}_{u,e_j}(\underline{\tilde{\omega}})+T^{R_{\theta}}_{e_j,f_j}(\underline{\tilde{\omega}})+T^{R_{\theta}}_{f_j,v}(\underline{\tilde{\omega}}) \geq 2|d(u)-d(v)|+\frac{11C_2}{10}\theta^{1/2} r^{1/3}$$
concluding the proof. 
\end{proof}

\bibliography{aging-flat}
\bibliographystyle{plain}

\appendix
\section{Brownian Calculations}
\label{s:appa}
As promised before, we provide the remaining details of some of the straightforward computations using Brownian motions that were omitted from the main text. 

\begin{proof}[Proof of \eqref{bcomp}]
Recall that we need to show
$$\P\left[\max_{x \in I} W(x) > \max_{x \in [-2M, 2M]} W(x)-\sqrt{\varepsilon}\right] \leq C_1 \varepsilon.
$$
{Let the end points of $I$ be $-M \leq x_1 < x_2 \leq M$,
and let $m:=x_2-x_1=|I|$.
We have
\begin{align*}
&\P\left[\max_{x \in I} W(x) > \max_{x \in [-2M, 2M]} W(x)-\sqrt{\varepsilon}\right]
\\
&\leq
\iint_{[0,\infty)^2} \P\left[ \max_{x \in [x_1, x_2]} W(x) - W(x_1) = h_1, \max_{x \in [x_1, x_2]} W(x) - W(x_2) = h_2\right]
\\
&\times
\P \left[ \max_{x \in [x_1-M, x_1]} W(x) - W(x_1) < h_1 + \sqrt{\varepsilon}\right]
\P \left[ \max_{x \in [x_2, x_2+M]} W(x) - W(x_2) < h_2 + \sqrt{\varepsilon}\right]
dh_1 dh_2.
\end{align*}
(Above, the first term in the second line denotes the probability density)}.
Using reflection principle, this equals to
\begin{align*}
&\iint_{[0,\infty)^2}
\frac{(h_1+h_2)\exp(-(h_1+h_2)^2/4m)}{\sqrt{4\pi}m^{3/2}}
\\
&\times
 2 \P\left[|W(x_1-M) - W(x_1)| < h_1 + \sqrt{\varepsilon}\right] \cdot 2 \P\left[|W(x_2+M) - W(x_2)| < h_2 + \sqrt{\varepsilon}\right]
dh_1 dh_2
\\
&\leq
\iint_{[0,\infty)^2}
\frac{2(h_1+h_2)\exp(-(h_1+h_2)^2/4m)}{\sqrt{4\pi}m^{3/2}}
\cdot
\frac{(h_1 + \sqrt{\varepsilon})(h_2 + \sqrt{\varepsilon})}{4\pi M}
dh_1 dh_2 
\\
&=
\iint_{[0,\infty)^2}
\frac{2(h_1+h_2)\exp(-(h_1+h_2)^2/4m)}{\sqrt{4\pi}m^{3/2}}
\cdot
\frac{h_1h_2 + \sqrt{\varepsilon}(h_1+h_2) + \varepsilon}{4\pi M}
dh_1 dh_2 
\end{align*}
We note that (by change of variables)
\begin{equation*}
\begin{split}
\iint_{[0,\infty)^2}
\frac{h_1h_2(h_1+h_2)\exp(-(h_1+h_2)^2/4m)}{m^{5/2}}
dh_1 dh_2, \\
\iint_{[0,\infty)^2}
\frac{(h_1+h_2)^2\exp(-(h_1+h_2)^2/4m)}{m^{2}}
dh_1 dh_2, \\
\iint_{[0,\infty)^2}
\frac{(h_1+h_2)\exp(-(h_1+h_2)^2/4m)}{m^{3/2}}
dh_1 dh_2,
\end{split}
\end{equation*}
are finite and independent of $m$.
This implies that for some constant $C_2$ depending on $M$,
$$
\P\left[\max_{x \in I} W(x) > \max_{x \in [-2M, 2M]} W(x)-\sqrt{\varepsilon}\right]
\leq
C_2(m + \sqrt{\varepsilon}\sqrt{m} + {\varepsilon}),
$$
and our conclusion follows since $m \leq \varepsilon$.
\end{proof}

\begin{proof}[Completion of Proof of Lemma \ref{l:error}] 
Denote
\begin{align*}
\mathscr{C}_*' &:= \left\{\sup_{|x|\leq \lambda} W(x)= \sup_{|x|\leq \sqrt{2\mathscr{M}}} W(x) \leq \sup_{2^{k-1}\lambda'\leq |x|\leq 2^{k}\lambda'} W(x)+2^{k(\frac{1}{2} - \tau)} \alpha\sqrt{\lambda'}~\text{for some}~k\leq k_* \right\}, \text{ and },\\
\mathscr{C}_{\#}' &:= \left\{\sup_{|x|\leq \lambda} W(x)= \sup_{|x|\leq \sqrt{2\mathscr{M}}} W(x) \geq W(0) + \alpha^{-1}\sqrt{\si'} \right\}.
\end{align*}
Recall the event $\mathscr{C}':= \mathscr{C}_*'\cup \mathscr{C}_{\#}'$,
and that it was left to show that  $\lim_{\alpha \to 0} \lambda^{-1}\P(\mathscr{C}')= 0$
uniformly in $0<\lambda<\lambda'<1$. 
{We first study $\mathscr{C}_*'$.
Take any $k\in \N$, $k \leq k_*$.
Let $y_1:=\sup_{-\sqrt{2\mathscr{M}} \leq x \leq -\si}W(x)-W(-\si)$,
$y_2:=\sup_{|x| < \si}W(x)-W(-\si)$,
and $y_3:=\sup_{|x| < \si}W(x)-W(\si)$.
Then we have
\begin{multline}   \label{eq:lbae:cla1}
\P\left[\sup_{x: |x| \leq \si} W(x) = \sup_{x: |x| \leq \sqrt{2\mathscr{M}}} W(x) <
\sup_{2^{k-1}\si'\leq x\leq 2^{k}\si' } W(x) + 2^{k(\frac{1}{2} - \tau)} \alpha\sqrt{\si'}\right]
\\
=
\int \P[y_1 < y_2, y_3=t]\P\left[ \sup_{0\leq x \leq \sqrt{2\mathscr{M}}-\si} W(x)<t, 
\sup_{2^{k-1}\si'-\si\leq x\leq 2^{k}\si'-\si} W(x)
> t - 2^{k(\frac{1}{2} - \tau)} \alpha\sqrt{\si'}\right] dt.
\end{multline}
For any $t_1, t_2, t>0$, by reflection principle we have
\begin{align*}
\P[y_1=t_1] = \frac{2\exp(-t_1^2/(4(\sqrt{2\mathscr{M}}-\si)))}{\sqrt{4\pi(\sqrt{2\mathscr{M}}-\si)}};
\P[y_2=t_2, y_3=t] = \frac{(t_2+t)\exp(-(t_2+t)^2/8\si)}{\sqrt{8\pi}\si^{3/2}}, \text{and thus}
\end{align*}

\begin{align*}
\P[y_1 < y_2, y_3=t]
&=
\int_{0<t_1<t_2}
\frac{2\exp(-t_1^2/(4(\sqrt{2\mathscr{M}}-\si)))}{\sqrt{4\pi(\sqrt{2\mathscr{M}}-\si)}}
\cdot
\frac{(t_2+t)\exp(-(t_2+t)^2/8\si)}{\sqrt{8\pi}\si^{3/2}} dt_1 dt_2
\\
&=
\int_{0<t_1} \frac{2\exp(-t_1^2/(4(\sqrt{2\mathscr{M}}-\si)))}{\sqrt{4\pi(\sqrt{2\mathscr{M}}-\si)}}
\cdot
\frac{4\exp(-(t_1+t)^2/8\si)}{\sqrt{8\pi\si}} dt_1
\\
&\leq
\frac{8}{\sqrt{4\pi(\sqrt{2\mathscr{M}}-\si)}}
\int_{0<t_1} 
\cdot
\frac{\exp(-(t_1^2+t^2)/8\si)}{\sqrt{8\pi\si}} dt_1
=
\frac{4\exp(-t^2/8\si)}{\sqrt{4\pi(\sqrt{2\mathscr{M}}-\si)}}.
\end{align*}
For the other factor of the integrand in the RHS of \eqref{eq:lbae:cla1}, consider the stopping time:
$$
x_* := \inf\{x \in [2^{k-1}\si'-\si, 2^k \si' - \si]: W(x)>t - 2^{k(\frac{1}{2} - \tau)} \alpha\sqrt{\si'}\}\bigcup\{\sqrt{2\mathscr{M}}-\si\}.
$$
Then we have that
\begin{multline*}
\P\left[ \sup_{0\leq x \leq \sqrt{2\mathscr{M}}-\si} W(x)<t, 
\sup_{2^{k-1}\si'-\si\leq x\leq 2^{k}\si'-\si} W(x)
> t - 2^{k(\frac{1}{2} - \tau)} \alpha\sqrt{\si'}\right]
=
\P\left[ \sup_{0\leq x \leq \sqrt{2\mathscr{M}}-\si} W(x)<t, 
x_* < 8\right].
\end{multline*}
Using the fact that $x\mapsto W(x+x_*)-W(x_*)$ is again a Brownian motion and has the same law as $W$, we bound this by
\begin{multline*}
\P\left[ \sup_{0 \leq x \leq 2^{k-1}\si'-\si)} W(x)<t \right]
\P\left[ \sup_{0 \leq x \leq \sqrt{2\mathscr{M}}-\si-8} W(x)<2^{k(\frac{1}{2} - \tau)} \alpha\sqrt{\si'} \right]
\\
\leq
\frac{2t}{\sqrt{4\pi\cdot 2^{k-1}\si'}} \cdot \frac{2\cdot 2^{k(\frac{1}{2} - \tau)}\alpha\sqrt{\si'}}{\sqrt{4\pi\cdot (\sqrt{2\mathscr{M}}-\si-8)}}
\leq 4t\alpha 2^{-k\tau},
\end{multline*}
where in the last inequality we assume that $\mathscr{M}$ is large enough.
In conclusion, we have
\begin{align}   \label{eq:lbae:cla5}
\P[\mathscr{C}_*']\leq 2\sum_{k=1}^{k_*}\int_{t>0}
\frac{4\exp(-t^2/8\si)}{\sqrt{4\pi(\sqrt{2\mathscr{M}}-\si)}} \cdot 4t\alpha 2^{-k\tau} dt
&\leq
\int_{t>0}
\frac{8\exp(-t^2/8\si)}{\sqrt{4\pi(\sqrt{2\mathscr{M}}-\si)}} \cdot \frac{4t\alpha}{1-2^{-\tau}} dt
\\
\nonumber
&=
\frac{128\si\alpha}{\sqrt{4\pi(\sqrt{2\mathscr{M}}-\si)}(1-2^{-\tau})},
\end{align}
thus $\si^{-1}\P[\mathscr{C}_*'] \rightarrow 0$ as $\alpha \rightarrow 0$, uniformly for $\si < \si' \in (0, 1)$.}
{We next consider $\mathscr{C}_{\#}'$.
We let $z_1:= \sup_{-\sqrt{2\mathscr{M}}\leq x \leq 0}W(x) - W(0)$,
$z_2:=\sup_{0< x \leq \si}W(x) - W(0)$,
and $z_3:=\sup_{0< x \leq \si}W(x) - W(\si)$,
$z_4:=\sup_{\si< x \leq \sqrt{2\mathscr{M}}}W(x) - W(\si)$.
By symmetry, we have
\begin{align}
\label{eq:lbae:cla6}
&\P\left[\sup_{|x|\leq \lambda} W(x)= \sup_{|x|\leq \sqrt{2\mathscr{M}}} W(x) \geq W(0) + \alpha^{-1}\sqrt{\si'}\right]\\
\nonumber
&=
2
\P\left[\sup_{0\leq x\leq \lambda} W(x)= \sup_{|x|\leq \sqrt{2\mathscr{M}}} W(x) \geq W(0) + \alpha^{-1}\sqrt{\si'}\right]
=2\P[z_1 < z_2, z_2 > \alpha^{-1}\sqrt{\si'}, z_3 > z_4]
\\
\nonumber
&=2\iint_{t_2 > \alpha^{-1}\sqrt{\si'}, t_3 > 0} \P[z_1 < t_2]\P[z_4 < t_3]\P[z_2 = t_2, z_3 = t_3]  dt_2 dt_3
\end{align}
For any $t_2, t_3 > 0$, by reflection principle we have
\begin{align*}
\P[z_1 < t_2] &= \int_{0 < t_1 < t_2} \frac{2\exp(-t_1^2/(4\sqrt{2\mathscr{M}}))}{\sqrt{4\pi\sqrt{2\mathscr{M}}}} dt_1
\leq
\frac{2t_2}{\sqrt{4\pi\sqrt{2\mathscr{M}}}},\\
\P[z_4 < t_3] &= \int_{0 < t_4 < t_3} \frac{2\exp(-t_1^2/(4(\sqrt{2\mathscr{M}}-\si)))}{\sqrt{4\pi(\sqrt{2\mathscr{M}}-\si)}} dt_4
\leq
\frac{2t_3}{\sqrt{4\pi(\sqrt{2\mathscr{M}}-\si)}},\\
\P[z_2=t_2, z_3=t_3] &= \frac{(t_2+t_3)\exp(-(t_2+t_3)^2/4\si)}{\sqrt{\pi}\si^{3/2}},
\end{align*}
so \eqref{eq:lbae:cla6} can be bounded by
\begin{align*}
&2\iint_{t_2 > \alpha^{-1}\sqrt{\si'}, t_3 > 0} \P[z_1 < t_2]\P[z_4 < t_3]\P[z_2 = t_2, z_3 = t_3]  dt_2 dt_3
\\
&\leq
2\iint_{t_2 > \alpha^{-1}\sqrt{\si'}, t_3 > 0}
\frac{2t_2}{\sqrt{4\pi\sqrt{2\mathscr{M}}}}
\cdot
\frac{2t_3}{\sqrt{4\pi(\sqrt{2\mathscr{M}}-\si)}}
\cdot
\frac{(t_2+t_3)\exp(-(t_2+t_3)^2/4\si)}{\sqrt{\pi}\si^{3/2}}
dt_2 dt_3
\\
&=
\frac{8\si}{\sqrt{4\pi\sqrt{2\mathscr{M}}}\sqrt{4\pi(\sqrt{2\mathscr{M}}-\si)}}\iint_{t_2 > \alpha^{-1}\sqrt{\si'/\si}, t_3 > 0}
\frac{t_2t_3(t_2+t_3)\exp(-(t_2+t_3)^2/4)}{\sqrt{\pi}}
dt_2 dt_3
\\
&\leq
\frac{8\si}{\sqrt{4\pi\sqrt{2\mathscr{M}}}\sqrt{4\pi(\sqrt{2\mathscr{M}}-\si)}}\iint_{t_2 > \alpha^{-1}, t_3 > 0}
\frac{t_2t_3(t_2+t_3)\exp(-(t_2+t_3)^2/4)}{\sqrt{\pi}}
dt_2 dt_3.
\end{align*}
Note that the integral is independent of $\si, \si'$, and converges to $0$ as $\alpha \rightarrow 0$,
so $\si^{-1}\P[\mathscr{C}_{\#}']\rightarrow 0$ as $\alpha \rightarrow 0$, uniformly for $\si < \si' \in (0, 1)$.
Thus our conclusion follows.
}
\end{proof}

\section{Convergence to Airy\texorpdfstring{$_2$}{2} Process and Consequences}
\label{s:appb}
We proceed to providing the previously omitted  proofs of Theorem \ref{t:lpptoairy}, Proposition \ref{prop:bdi} and Proposition \ref{p:maxdecay}.  We start with the latter two results assuming the first one. 

\begin{proof}[Proof of Proposition \ref{prop:bdi}]
Take $N \in \N$ such that $N\iota > 2$.
We consider a finite collection of intervals:
$$
\Theta := \left\{ \left[\frac{N_1}{N}, \frac{N_2}{N}\right] : -MN \leq N_1 < N_2 \leq MN, N_1, N_2 \in \Z \right\},
$$
and a finite collection of values:
$$
\Upsilon := \left\{ 0 < \varepsilon \leq 1 : \varepsilon N \in \Z \right\} .
$$
By Theorem \ref{t:lpptoairy}, for any $\mathcal{I} \in \Theta$ and $\varepsilon \in \Upsilon$, 
\begin{multline*}
\lim_{n\rightarrow \infty}\P\left[\max_{u\in n^{2/3}\mathcal{I}\cap \Z} T_{(u,-u),\mathbf{n}} > \max_{u\in \llbracket-2Mn^{2/3}, 2Mn^{2/3}\rrbracket} T_{(u,-u),\mathbf{n}}-\sqrt{\varepsilon}n^{1/3}\right]
\\
=
\P\left[\sup_{x\in 2^{-2/3}\mathcal{I}} \sL(x) > \sup_{x\in [-2^{-2/3}\cdot 2M, 2^{-2/3}\cdot 2M]} \sL(x) - 2^{-4/3}\sqrt{\varepsilon}\right].
\end{multline*}
If $4|\mathcal{I}| < \varepsilon < \frac{1}{2}$, by Proposition \ref{p:twopeaks} the right hand side is bounded by $C_1 \varepsilon\exp\left(C_1|\log (\varepsilon)|^{5/6}\right)$,
for some $C_1$ depending only on $M$.
Hence by taking $n_0(\iota, M)$ large, we have that
$$
\P\left[\max_{u\in n^{2/3}\mathcal{I}\cap \Z} T_{(u,-u),\mathbf{n}} > \max_{u\in \llbracket-2Mn^{2/3}, 2Mn^{2/3}\rrbracket} T_{(u,-u),\mathbf{n}}-\sqrt{\varepsilon}n^{1/3}\right]
\leq 
C_1 \varepsilon\exp\left(C_1|\log (\varepsilon)|^{5/6}\right),
$$
for any $n\geq n_0(\iota, M)$, and $\mathcal{I}\in \Theta$, $\varepsilon \in \Upsilon$ with $4|\mathcal{I}| < \varepsilon$.
Now for any $I \subset \llbracket-Mn^{2/3},Mn^{2/3}\rrbracket$ and $\varepsilon \in \left(\iota, \frac{1}{18}\right)$, with $\iota n^{2/3}\leq |I|\leq \varepsilon n^{2/3}$, we can find $\mathcal{I} \in \Theta$, such that
$$I \subset n^{2/3}\mathcal{I},\,\, |\mathcal{I}| \leq n^{-2/3}|I| + \frac{2}{N};
\text{ and, } \varepsilon' \in \Upsilon,$$ such that
$8\varepsilon < \varepsilon' \leq  8\varepsilon + \frac{1}{N}$.
Then as $\frac{2}{N}<\iota \leq n^{-2/3}|I|$, we have that
$|\mathcal{I}| \leq 2n^{-2/3}|I|$
and $\varepsilon' \leq 9 \varepsilon$,
so
$4|\mathcal{I}| \leq 8n^{-2/3}|I| \leq 8\varepsilon < \varepsilon' < \frac{1}{2}$,
and
\begin{multline*}
\P\left[\max_{u\in I} T_{(u,-u),\mathbf{n}} > \max_{u\in \llbracket-2Mn^{2/3}, 2Mn^{2/3}\rrbracket} T_{(u,-u),\mathbf{n}}-\sqrt{\varepsilon}n^{1/3}\right]
\\
\leq
\P\left[\max_{u\in n^{2/3}\mathcal{I}\cap \Z} T_{(u,-u),\mathbf{n}} > \max_{u\in \llbracket-2Mn^{2/3}, 2Mn^{2/3}\rrbracket} T_{(u,-u),\mathbf{n}}-\sqrt{\varepsilon}n^{1/3}\right]
\leq 
C_1 \varepsilon'\exp\left(C_1|\log (\varepsilon')|^{5/6}\right),
\end{multline*}
and our conclusion follows by taking $C = 9C_1$.
\end{proof}

\begin{proof}[Proof of Proposition \ref{p:maxdecay}]
Recall the event 
$$
H_j=\left\{\max_{2^{j-1} r^{2/3} \leq |u| < 2^{j} r^{2/3}} T_{(u,-u), \bn} < \max_{|u| < r^{2/3}} T_{(u,-u), \bn} - 2\alpha\cdot 2^{j\left(\frac{1}{2}-\tau\right)}r^{1/3}.\right\},
$$
and let $C_{\delta, \theta} \in \N$ be chosen depending $\delta, \theta$ (to be chosen appropriately large later).
We first claim that, for any $n, r$ with $\delta n < r < n$, and $n$ large enough, we have
\begin{multline}  \label{eq:lbe:pf1}
\P\left[\bigcap_{j=0}^{C_{\delta, \theta}}H_j\right] 
> \frac{1}{2}\P\biggl[ 
\sup_{|x|<r^{2/3}(2n)^{-2/3}} \sL(x) = \sup_{|x|<\theta r^{2/3}(2n)^{-2/3}} \sL(x){ < \sL(0) + 2\alpha^{-1}r^{1/3}\cdot 2^{-4/3}n^{-1/3},}
\\
\sup_{2^{j-1} r^{2/3}(2n)^{-2/3} \leq |x| < 2^{j} r^{2/3}(2n)^{-2/3}} \sL(x)
< \sup_{|x|<r^{2/3}(2n)^{-2/3}} \sL(x)- 2\alpha \cdot 2^{j\left(\frac{1}{2}-\tau\right)}r^{1/3}\cdot 2^{-4/3}n^{-1/3}
\\
\forall 1 \leq j \leq C_{\delta, \theta}
\biggr].
\end{multline}
We argue by contradiction.
Assume otherwise and hence there are sequences of integers $\{n_k\}_{k=1}^{\infty}$ and $\{r_k\}_{k=1}^{\infty}$, with $\lim_{k\rightarrow \infty}n_k = \infty$, such that for each $k$, $\delta n_k < r_k < n_k$, $n_k > n_0(\delta,\theta)$, and 
\eqref{eq:lbe:pf1} does not hold for each $n=n_k$, $r=r_k$.
By taking a subsequence, we can assume that $\iota:=\lim_{k\rightarrow \infty}\frac{r_k}{n_k}$ exists.
By Theorem \ref{t:lpptoairy},
we have that by taking $n=n_k$, $r=r_k$ and $k\rightarrow \infty$, the left hand side of \eqref{eq:lbe:pf1} converges to
\begin{multline*}
\P\left[
\sup_{|x|<2^{-2/3}\iota^{2/3}} \sL(x) = \sup_{|x|<2^{-2/3}\theta \iota^{2/3}} \sL(x) {< \sL(0) + 2\alpha^{-1}\cdot 2^{-4/3}\iota^{1/3},}
\right.
\\
\left.
\forall 1 \leq j \leq C_{\delta, \theta},\;\;
\sup_{2^{j-1} 2^{-2/3}\iota^{2/3} \leq |x| < 2^{j} 2^{-2/3}\iota^{2/3}} \sL(x)
< \sup_{|x|<2^{-2/3}\iota^{2/3}} \sL(x)- 2\alpha\cdot 2^{j\left(\frac{1}{2}-\tau\right)} 2^{-4/3}\iota^{1/3}
\right],
\end{multline*}
while the right hand side of \eqref{eq:lbe:pf1} converges to half of this. By Proposition \ref{prop:lbae}, the right hand side of \eqref{eq:lbe:pf1} is at least $\alpha \theta r^{2/3}(2n)^{-2/3}> 2^{-2/3}\alpha \theta \delta^{2/3}>0$ and thus we get a contradiction.

Next we consider $H_j$ for $j > C_{\delta, \theta}$.
If $H_j$ does not hold, clearly either
\begin{equation}   \label{eq:lbe:pf5}
\max_{2^{j-1} r^{2/3} \leq |u| < 2^{j} r^{2/3}} T_{(u,-u), \bn} \geq 4n-2^jr^{1/3}
\end{equation}
or
\begin{equation}   \label{eq:lbe:pf6}
\max_{|u| < r^{2/3}} T_{(u,-u), \bn} \leq 4n + 2\alpha\cdot 2^{j\left(\frac{1}{2}-\tau\right)}r^{1/3} -2^jr^{1/3}
\end{equation}
holds.

By Theorem \ref{t:onepoint}, and by lower bounding  $\max_{|u| < r^{2/3}} T_{(u,-u), \bn}$ by $T_{\mathbf{0}, \bn}$, we have that the event \eqref{eq:lbe:pf6} has probability at most $C \exp(-c 2^jr^{1/3}n^{-1/3})$, (in fact it provides a stronger probability bound which we do not use)
for some universal constants $c, C$.
For the event \eqref{eq:lbe:pf5}, we can divide the interval $2^{j-1} r^{2/3} \leq |u| < 2^{j} r^{2/3}$ into $2^{j}r^{2/3}n^{-2/3}$ sub-intervals of length $n^{2/3}$. Provided $2^{j}r^{2/3} \leq n/2$, 
for any $2^{j-1} r^{2/3} \leq |u| < 2^{j} r^{2/3}$ we have using \eqref{e:mean} that
$$
\E T_{(u, -u), \bn} \leq 4n - 2^{2(j-1)}r^{4/3}n^{-1}  + C'n^{1/3}
< 4n - 2\cdot 2^jr^{1/3},
$$
where the second inequality holds by choosing $C_{\delta, \theta}$ large enough.
Then Theorem \ref{t:supinf} applies again for each of the subintervals and we get an upper bound of $C 2^jr^{2/3}n^{-2/3} \exp(-c 2^jr^{1/3}n^{-1/3})$,
for some universal constants $c, C$.
On the other hand if $2^{j}r^{2/3}\geq n/2,$ the crude bound in \eqref{e:steep} and an union bound over all points yield an upper bound of $C2^{j}r^{2/3}\exp(-c 2^jr^{1/3}n^{-1/2})$.
Combining all of these and using $r>\delta n$ we get that 
$$
\sum_{j > C_{\delta, \theta}} \P[H_j^c] < \sum_{j > C_{\delta, \theta}}
2C 2^j \exp(-c 2^j\delta^{1/3})+ \sum_{j:2^{j}r^{2/3}\geq n/2} C2^{j}r^{2/3}\exp(-c 2^jr^{1/3}n^{-1/2}) < \alpha \theta \delta^{2/3}/2,
$$
where the last inequality holds by taking $C_{\delta, \theta}$ large enough and $n$ sufficiently large.
Thus our conclusion follows by letting $c_0 := (2^{-2/3}-2^{-1})\alpha$.
\end{proof}

We end with a discussion of the proof of Theorem \ref{t:lpptoairy}. As this has appeared in the literature before, we shall not provide a complete proof, instead sketching how to obtain the finite dimensional convergence, and then the necessary equi-continuity to establish uniform convergence. 

\begin{proof}[Proof of Theorem \ref{t:lpptoairy}] 
Finite dimensional convergence of the (appropriately scaled) TASEP height functions is well known (see e.g.\ \cite{BF08}). In the language of exponential LPP this translates to the following: for any $x_1, \cdots x_k, h_1, \cdots, h_k \in \R$, we have that as $n\rightarrow \infty$,
\begin{equation*}
\P\left[
T_{(\lfloor x(2n)^{2/3}\rfloor+\lfloor 2^{-2/3}n^{1/3}h \rfloor, -\lfloor x(2n)^{2/3}\rfloor+\lfloor 2^{-2/3}n^{1/3}h \rfloor) , \bn} < 4n,\;
\forall 1 \leq i \leq k \right]
\rightarrow \P\left[\sL(x_i) < h_i, \forall 1 \leq i \leq k \right] .
\end{equation*}
It is also standard that the above equation, using the phenomenon of so-called \emph{slow decorrelation} can be used to establish the finite dimensional convergence of $\sL_n$ to $\sL$ (see, e.g.\ \cite{FO17}). Indeed, it can be proved that
for any fixed $h, x \in \R$, as $n\rightarrow \infty$,
\begin{equation}  
\label{e:lpptoairy1}
\left|T_{(\lfloor x(2n)^{2/3}\rfloor+\lfloor 2^{-2/3}n^{1/3}h \rfloor, -\lfloor x(2n)^{2/3}\rfloor+\lfloor 2^{-2/3}n^{1/3}h \rfloor) , \bn} + 2^{4/3}n^{1/3}h
- T_{(\lfloor x(2n)^{2/3}\rfloor, -\lfloor x(2n)^{2/3}\rfloor), \bn}  \right| {\rightarrow} 0
\end{equation}
in probability. Clearly this suffices for the finite dimensional convergence. We shall omit the proof of \eqref{e:lpptoairy1}.

To upgrade to weak convergence in the uniform convergence topology, it remains show equicontinuity of $\sL_n$, i.e.
given any $M, \varepsilon, \lambda>0$, there is $\delta>0$ \footnote{This is a local use of the symbol $\delta$ and should not be confused with the same symbol used in the main theorem statements.} and $n_0 \in \Z_+$, such that
\begin{equation} \label{e:lpptoairy2}
\P\left[\sup_{|x_1|, |x_2| < M, |x_1-x_2|<\delta} |\sL_n(x_1)-\sL_n(x_2)| > \lambda\right] < \varepsilon
\end{equation}
for any $n>n_0$. 
To prove this we rely on the Brownian type fluctuation upper bounds of the weight profile in exponential LPP  proved in \cite{BG18}. To proceed, we divide $[-M, M]$ into intervals of length $c'\delta^3$.
For each such interval $I$, by \cite[Theorem 3]{BG18}, we have
$$
\P\left[\sup_{x_1, x_2\in I} |\sL_n(x_1)-\sL_n(x_2)| > \lambda\right] \leq C'\exp(-c'\lambda^{4/9}\delta^{-2/9}).
$$
Note that the above tail bounds are sub-optimal and one does expect Gaussian tail behavior as has been established for the pre-limiting model of Brownian LPP in \cite{HHJ+}. However the above bound suffices for our purpose, since the total number of such intervals $I$ is $\lceil M\delta^{-3} \rceil$, the left hand side of \eqref{e:lpptoairy2} can be bounded by $C'\lceil M\delta^{-3} \rceil\exp(-c'\lambda^{4/9}\delta^{-2/9})$, which is made less than $\varepsilon$ by taking $\delta$ small enough. The above equicontinuity, and by standard results (see e.g.  \cite[Theorem 7.1, 7.3]{billingsley1999convergence}), the desired conclusion follows.
\end{proof}

\section{Passage Times across Parallelograms and Transversal Fluctuation} \label{s:appc}
As indicated before, in this appendix we provide the proofs of the estimates on last passage times across parallelograms (Theorem \ref{t:supinf}), and the proof of the fact that paths with large transversal fluctuation are likely to have significantly smaller weights than geodesics (Proposition \ref{l: prep1-tf}). As pointed out before, a version of Theorem \ref{t:supinf} for Poissonian LPP was obtained in \cite{BSS14} where \cite[Proposition 10.1, 10.5, 12.2]{BSS14} are the analogous versions of Theorem \ref{t:supinf} (i), (ii), and (iii) respectively. The proofs there use moderate deviation estimates for the passage time (a weaker version of Theorem \ref{t:onepoint} for Poissonian LPP) and we essentially repeat the arguments in the context of exponential LPP.  However, we have tightened up the calculations therein using the optimal estimates in Theorem \ref{t:onepoint} and hence we get better exponents in our results (optimal ones for parts (i) and (ii)). Proposition \ref{l: prep1-tf} has not appeared before in the form stated, but the proof uses the same idea as in \cite[Proposition 11.1]{BSS14} involving Theorem \ref{t:supinf} (ii) and a chaining argument. 

The basic structure of the section is as follows: In Section \ref{s.thm4.2(i)} we prove Theorem \ref{t:supinf} (i), while Section \ref{s.thm4.2(ii)} handles Theorem \ref{t:supinf} (ii). Using the latter and Theorem \ref{t:onepoint}, the proof of Proposition \ref{l: prep1-tf} is completed in Section \ref{s.ppn4.7}, and finally Theorem \ref{t:supinf} (iii) is proved in Section \ref{s.thm4.2(iii)}.

\subsection{Minimum passage time in a parallelogram}\label{s.thm4.2(i)}
In the course of proving Theorem \ref{t:supinf}(i), we shall first prove a weaker version. We start with a few definitions first. For $m,h$ such that $|m|+h<\psi r^{1/3}$, let $U_0$ be the parallelogram whose one pair of opposite sides are parallel to the line $x+y=0$, have midpoints $(mr^{2/3},-mr^{2/3})$ and $\br$ respectively and length $2hr^{2/3}$. Let $\hat{U}$ be the sub-parallelogram of $U_0$ restricted to the strip $\{0\leq x+y\leq \frac{r}{16}\}$. The main technical work goes into proving the following result. 
\begin{lemma}
\label{l:p2sidethick}
For each $\psi<1$ and $h>0$, there exists $C,c>0$ depending only on $\psi, h$ such that for $\hat{U}$ as above with $|m|+h\leq \psi r^{1/3}$, we have for all $x>0$ and all $r\geq 1$
$$\P\left( \inf_{u\in \hat{U}}  (T_{u,\br}-\E T_{u,\br}) \leq -xr^{1/3}\right)\leq Ce^{-cx^3}.$$
\end{lemma}
We shall come back to the proof of Lemma \ref{l:p2sidethick} at the end of this subsection; first let us show how this leads to Theorem \ref{t:supinf}(i). 
Let $U_*, U_{*,1}$ and $U_{*,2}$ be the sub-parallelograms of $U_0$ restricted to the strips $\{0\leq x+y\leq \frac{9r}{5}\},$ $\{0\leq x+y\leq \frac{9r}{10}\},$ and $\{\frac{11r}{10}\leq x+y\leq 2r\}$ respectively.
We upgrade Lemma \ref{l:p2sidethick} to the following one.
\begin{lemma}
\label{l:p2sidethick-upgraded}
For each $\psi<1$ and $h>0$, there exists $C,c>0$ depending only on $\psi, h$ such that for $U_*$ as above with $|m|+{10}h\leq \psi r^{1/3}$, we have for all $x>0$ and all $r\geq 1$
$$\P\left( \inf_{u\in U_*}  (T_{u,\br}-\E T_{u,\br}) \leq -xr^{1/3}\right)\leq Ce^{-cx^3}.$$
\end{lemma}
The constraint  $|m|+{10}h\leq \psi r^{1/3}$ will allow us to apply Lemma \ref{l:p2sidethick}.
\begin{proof}
For each $0\le i \le 359,$ we denote
$\hat{U}_i$ as the subparallelogram of $U_0$ restricted to the strip $\{\frac{ir}{200} \le x+y \le \frac{(i+1)r}{200}\}$ (the choice of $360$ is guided by the fact that we are tiling $U_*$ by parallelograms of height $\frac{r}{200},$ and $360=(9/5)200,$ where the $9/5$ factor appears in the definition of $U_*,$ while the latter choice of $200$ is somewhat arbitrary and can be replaced by all large enough numbers).

Now to apply Lemma \ref{l:p2sidethick} we define $U_i$ as the intersection of $U_0$ and the region $\{x+y\ge\frac{ir}{200}\}$. Then $\hat{U}_i$ is a subset of $U_i$, sharing the bottom face and having height at most $1/32$th of the latter. Thus we could apply Lemma \ref{l:p2sidethick}, with $U_i$ in the place of $U_0$ and $\hat{U}_i$ in the place of $\hat{U}$. Also note that the necessary slope condition is satisfied by the hypothesis $|m|+10h\le \psi r^{1/3}$, since one pair of opposite sides of $U_i$ have length $2hr^{2/3}$ and midpoints  $\frac{i}{400}\br +\frac{(400-i)}{400}  (mr^{2/3},-mr^{2/3})$ and $\br$ respectively. The proof is now completed by taking a union bound over $i$.
\end{proof}

Applying Lemma \ref{l:p2sidethick-upgraded} twice we get the following result.
\begin{lemma}
\label{l:side2sidethick}
For each $\psi<1$ and $h>0$, there exists $C,c>0$ depending only on $\psi, h$ such that for $U_{*,1}, U_{*,2}$ as above with $|m|+20h\leq \psi r^{1/3}$, we have for all $x>0$ and $r\geq 1$
$$\P\left( \inf_{u\in U_{*,1}, v\in U_{*,2}} (T_{u,v}-\E T_{u,v}) \leq -xr^{1/3}\right)\leq Ce^{-cx^3}.$$
\end{lemma}
\begin{proof}
Let $w=(w_1,w_2)=\frac{\br+(mr^{2/3}, -mr^{2/3})}{2}$, which is the center of $U_0$.
Consider the events 
$$\cA_1=\left\{\inf_{u\in U_{*,1}} (T_{u,w}-\E T_{u,w}) \geq -\frac{x}{4}r^{1/3}\right\};$$
$$\cA_2=\left\{\inf_{v\in U_{*,2}} (T_{w,v}-\E T_{w,v}) \geq -\frac{x}{4}r^{1/3}\right\}.$$

Simple algebra now shows that for all $u=(u_1,u_2)\in U_{*,1}$ and $v=(v_1,v_2)\in U_{*,2}$ we have 
$$0 \leq (\sqrt{v_1-u_1}+\sqrt{v_2-u_2})^2 - (\sqrt{w_1-u_1}+\sqrt{w_2-u_2})^2 - (\sqrt{v_1-w_1}+\sqrt{v_2-w_2})^2 \leq C_1 r^{1/3}$$
for some constant $C_1$ depending on $\psi, h$. It follows from \eqref{e:mean} that for $x$ sufficiently large we have for all $u\in U_{*,1}$ and $v\in U_{*,2}$
$$|\E T_{u,w} + \E T_{w,v} - \E T_{u,v}|\leq \frac{x}{2}r^{1/3}$$
and hence on $\cA_1\cap \cA_2$ we have $\inf_{u\in U_{*,1}, v\in U_{*,2}} (T_{u,v}-\E T_{u,v}) \geq -xr^{1/3}$. The proof is completed by using a union bound and Lemma \ref{l:p2sidethick-upgraded} to upper bound $\P(\cA_1^c\cup \cA_2^{c})$.
\end{proof}

Using Lemma \ref{l:side2sidethick}, we can now complete the proof of Theorem \ref{t:supinf}(i). Recall the parallelogram $U$ from the statement of the theorem whose one pair of sides lie on $\L_0$ and $\L_r$ with length $2r^{2/3}$ and midpoints $(mr^{2/3},-mr^{2/3})$ and $\br$ respectively. Let $U'$ be the parallelogram whose one pair of sides also lie on $\L_0$ and $\L_r$ with midpoints $(mr^{2/3},-mr^{2/3})$ and $\br$ respectively, but the side length is now $20r^{2/3}$. For notational convenience, let us first define the following. Consider a parallelogram which is a translate (in $\Z^2$) of a parallelogram which has, for some $r\in \N$, a pair of opposite sides along $\L_0$ and $\L_r$ with length at most $2r^{2/3}$ and midpoints $(mr^{2/3},-mr^{2/3})$ and $\br$ respectively. If $|m|+20<\psi r^{1/3}$ for some $\psi<1$ we say that such a parallelogram satisfied the $\psi$-slope condition. 

Without loss of generality we shall assume that $r$ and $r/L$ are sufficiently large powers of $2$, and omit the floor and ceiling signs in the following argument. For $i\in \llbracket 0, 2r \rrbracket$,
let $A_{i}$ denote the line segment of the intersection of the line $x+y=i$ and $U'$.
For $j\in \N$, and $i_1, i_2\in \llbracket 0, 2r \rrbracket$, with $i_1<i_2$,
let us divide the lines segments $A_{i_1}$ and $A_{i_2}$ into $10\cdot2^{2j/3}$ equal sub-segments of length $2(2^{-j}r)^{2/3}$. Consider all possible parallelograms formed by taking one pair of opposite sides from these sub-segments, one on $A_{i_1}$ the other on $A_{i_2}$. Call this family of parallelograms $\mathcal{P}_{i_1,i_2,j}$. We next record the following  straightforward deterministic facts.

\begin{observation}
\label{o:para1}
We have the following:
\begin{enumerate}
\item[(i)] If $|m|<\psi r^{1/3}$ for some $\psi<1$ then for any $\psi<\psi'<1,$ and each $j\in \N$ and each $r$ sufficiently large (compared to $2^{j}$ and $\psi$),
and each $i_1, i_2 \in \llbracket 0, 2r \rrbracket$ with $i_2-i_1 \geq 2^{-j}(2r)$, 
every parallelogram in $\mathcal{P}_{i_1,i_2,j}$ satisfies the $\psi'$-slope condition.
\item[(ii)] For $j\in \N$, and $i_1,i_2\in \llbracket 0, 2r \rrbracket$ with $i_1<i_2$, let $u,v\in U$ be such that $i_1\leq d(u)\leq i_1+\frac{9(i_2-i_1)}{20}$ and $i_1+\frac{11 (i_2-i_1)}{20}\leq d(v)\leq i_2$. Then for $r$ sufficiently large there exists a parallelogram in $\mathcal{P}_{i_1,i_2,j}$ containing both $u$ and $v$.
\end{enumerate}
\end{observation}

While the first observation is rather simple to verify, 
the second observation follows from the fact that the straight line passing through such $u,v$ intersects $A_{i_1}$ and $A_{i_2}$.

We can now complete the proof of Theorem \ref{t:supinf}(i).
\begin{proof}[Proof of Theorem \ref{t:supinf}(i)]
For $j=1,2,\ldots  \log_2 L + 1$, let us consider the family of parallelograms $\mathcal{P}_{i_1,i_2,j}$ for $i_1, i_2\in \{0, 2^{-j}(2r), 2\times  2^{-j}(2r), 3\times 2^{-j}(2r), \ldots, 2r\}$,
satisfying $i_2-i_1 \in \{3\times  2^{-j}(2r), 4\times  2^{-j}(2r), 5\times  2^{-j}(2r)\}$. For a fixed $j$, let $\cB_{j}$ denote the event that for any parallelogram $U_0$ as above we have 
$$\inf_{u\in U_{*,1},v\in U_{*,2}} (T_{u,v}-\E T_{u,v}) \geq -xr^{1/3}$$
where $U_{*,1}$ and $U_{*,2}$ are defined as follows: suppose $U_0\in \mathcal{P}_{i_1,i_2,j}$ for some $i_1,i_2$, then $U_{*,1}$ and $U_{*,2}$ are sub-parallelograms of $U_0$ restricted to the strips $\{i_1\le x+y \le \frac{11i_1+9i_2}{20}\}$ and $\{\frac{9i_1+11i_2}{20}\le x+y \le i_2\}$, respectively.
By Observation \ref{o:para1} (i), for $r$ sufficiently large depending on $L$, Lemma \ref{l:side2sidethick} applies; and since $|\mathcal{P}_{i_1,i_2,j}|=O(2^{4j/3})$ it follows that  $\P(\cB_j^c)=O(2^{7j/3}e^{-c2^{j}x^3})$.

Now we consider any $u, v \in U$ with $d(v)-d(u)\geq \frac{r}{L}$.
Suppose that $2^{-j}(2r)\leq d(v)-d(u)<2^{-j+1}(2r)$ for some $j\leq \log_2 L$.
Then we can find some $i_1,i_2 \in \{0, 2^{-j-1}(2r), 2\times  2^{-j-1}(2r), 3\times 2^{-j-1}(2r), \ldots, 2r\}$, with
$i_2-i_1 \in \{3\times 2^{-j-1}(2r), 4\times 2^{-j-1}(2r), 5\times 2^{-j-1}(2r)\}$, 
such that $i_1\le d(u) < i_1+ 2^{-j-1}(2r) < i_2- 2^{-j-1}(2r) \le d(v) < i_2$. 
By Observation \ref{o:para1} (ii), there exists a parallelogram {in $U_0\in \mathcal{P}_{i_1,i_2,j}$ such that $u\in U_{*,1}$ and $v\in U_{*,2}$}. It follows that on $\cap_{j}\cB_j $ we have 
$$\inf_{d(v)-d(u)\geq \frac{r}{L}} (T_{u,v}-\E T_{u,v}) \geq -xr^{1/3}.$$ 
The proof is completed by taking a union bound over $\cB_j^c$. 
\end{proof}

\subsubsection{Proof of Lemma \ref{l:p2sidethick}}
As already noted, the proof of Lemma \ref{l:p2sidethick} is rather technical. For ease of exposition we first present a weaker version.
For $|m|+h<\psi r^{1/3}$, let $A'$ denote the line segment of length $2hr^{2/3}$ on $\L_0$ with midpoint at $(mr^{2/3},-mr^{2/3})$ For $|m|+h<\psi r^{1/3}$. Then we have the following.

\begin{lemma}
\label{l:infpointtoside}
For each $\psi<1$ and $h>0$, there exists $C,c>0$ depending only on $\psi, h$ such that for $A'$ as above, we have for all $x>0$ and all $r\geq 1$
$$\P\left( \inf_{u\in A'}  (T_{u,\br}-\E T_{u,\br}) \leq -xr^{1/3}\right)\leq Ce^{-cx^3}.$$
\end{lemma}

We are going to present a full proof of Lemma \ref{l:p2sidethick} and hence, for notational convenience, we shall write the proof of Lemma \ref{l:infpointtoside} only for the special case $h=1$ and $m=0$. The reader will notice that the same proof will apply to the general case with {minor adjustments}. Before proceeding with the proof, let us present the basic idea. We shall construct a tree $\mathcal{T}$ whose vertices are a subset of vertices of $\Z^2$; in particular root of $\mathcal{T}$ will be the vertex $\br$ and the leaves of $\mathcal{T}$ are close to vertices on $A'$. The tree will be constructed such that if $T_{u,v}-\E T_{u,v}$ is not too small for every edge $(u,v)\in \mathcal{T}$ then we shall have 
$\inf_{u\in A'}  (T_{u,\br}-\E T_{u,\br}) \geq -xr^{1/3}$. Taking a union bound of the complements of the above events over all edges of $\mathcal{T}$ will then yield the result.  

Let us now formally construct the tree $\mathcal{T}$. Let $r$ be sufficiently large so that there exists $J$ such that $r^{1/4}< 8^{-J}(2r)\leq r^{1/3}$.
For smaller $r$ the lemma follows by taking $C$ large and $c$ small enough.
We shall be ignoring the rounding issues for notational convenience. For $j=0,1,2,\ldots, J$, there will be $4^{j}$ vertices of $\mathcal{T}$ at level $j$ (let us denote this set by $\mathcal{T}_j$) on the line $x+y=8^{-j}(2r)$, such that these $4^{j}$ vertices divide the line joining $8^{-j}\br +(-r^{2/3},r^{2/3})$ and $8^{-j}\br -(-r^{2/3},r^{2/3})$ into $4^{J}+1$ equal length intervals. Notice that, for each $j$, the vertices in $\mathcal{T}_j$ are ordered naturally from left to right. The vertex set of $\mathcal{T}$ is $\cup_{0\leq j\leq J} \mathcal{T}_j$, and the $k$-th vertex at level $j$ from the left is connected to the four vertices in level $(j+1)$ which are labelled $4k-3,4k-2,4k-1$ and $4k$ from the left. 

Recall that for any $u=(u_x,u_{y})\in \Z^2$, we denote $d(u)=u_{x}+u_{y}$. It would be also convenient to  let  ${\rm{ad}}(u)=u_{x}-u_{y}$ (where $\rm{ad}$ is used to denote the anti-diagonal deviation of $u$).
By the construction of $\mathcal{T}$ we have for each $j\leq J$ 
\begin{align}\label{treeprop11}
{d(u_{j})-d(u_{j+1})}&=\frac{14r}{8^{j+1}},\\
\label{treeprop12}
| {\rm{ad}}(u_{j+1})-{\rm{ad}}(u_j)|&\leq C_1\frac{r^{2/3}}{4^{j}} \text{ for some }C_1>0.
\end{align}

Noticing that it suffices to prove Lemma \ref{l:infpointtoside} for $x$ sufficiently large,
let $\cA_j$ denote the event that for all $u\in \mathcal{T}_j$ and for all $v\in \mathcal{T}_{j+1}$ such that the edge $(u,v)$ is present in $\mathcal{T},$ we have $$T_{v,u}\geq \E T_{v,u}- x2^{-(9j/10+10)}r^{1/3}.$$ We first have the following lemma. 
\begin{lemma}
\label{l:infprobest}
In the above set-up, there exists $C,c>0$ such that for all $x$ sufficiently large
$$\P(\cup_{j}\cA_j^c) \leq Ce^{-cx^3}.$$
\end{lemma}
\begin{proof}
Notice that, by our construction of $\mathcal{T}$, for each edge between a vertex  $u\in \mathcal{T}_j$ and a vertex $v\in \mathcal{T}_{j+1}$, Theorem \ref{t:onepoint} applies to $T_{v,u}$ (where the slope condition is satisfied for all large $r$ by \eqref{treeprop11}, \eqref{treeprop12}) and hence we have that 
$$\P(T_{v,u}-\E T_{v,u}\leq -y (8^{-j}r)^{1/3})\leq Ce^{-cy^3}$$
for some $C,c>0$ and all $y>0$. Applying this with $y=2^{j/10-10}x$ we obtain that 
$$\P(T_{v,u}-\E T_{v,u}\leq -x{2^{-(9j/10+10)}}r^{1/3})\leq Ce^{-cx^32^{3j/10}}$$
for some $C,c>0$. Now taking a union bound over all $4^{j+1}$ such edges gives that
$$\P(\cA_j^{c})\leq Ce^{-cx^32^{j/10}}$$
for $C,c>0$ and all $j=0,1,2,\ldots, J-1$. Taking another union bound over $j$ completes the proof of the lemma. 
\end{proof}

The proof of Lemma \ref{l:infpointtoside} is completed by using the next lemma. 
\begin{lemma}
\label{l:ptosidelb}
On $\cap_{0\leq j\leq J} \cA_j$, we have $\inf_{u\in A'}  (T_{u,\br}-\E T_{u,\br}) \geq -xr^{1/3}$ for all $x$ sufficiently large. 
\end{lemma}
\begin{proof}
Let us fix $u\in A'$ and let $u_J$ be the vertex in $\mathcal{T}_J$ such that the difference between ${\rm{ad}}(u)$ and ${\rm{ad}}(u_J)$ is smallest. Let $u_J, u_{J-1},\ldots, u_0=\br$ denote the path to $\br$ in $\mathcal{T}$. By our construction of $\mathcal{T}$, $u$ is coordinate-wise smaller than $u_0$ and hence we have 
$$T_{u,\br} \geq \sum_{j=0}^{J-1} T_{u_{j+1},u_j}.$$
By definition, we have that on $\cap_{0\leq j\leq J} \cA_j$ 
$$\sum_{j=0}^{J-1} T_{u_{j+1},u_j}- \E T_{u_{j+1},u_j} \geq -\frac{x}{2} r^{1/3},$$
for $x$ sufficiently large. Observe also that by our definition of $\mathcal{T}$ and \eqref{e:mean} we have 
\begin{equation}\label{treeapprox}\E T_{u,u_{J}}\leq \frac{x}{4}r^{1/3},
\end{equation}
and hence it suffices to show that 
$$ \sum_{j=0}^{J} \E T_{u_{j+1},u_j} \geq \E T_{u,\br}-\frac{x}{4}r^{1/3},$$
where we write $u=u_{J+1}$ for convenience of writing. 

Recalling \eqref{treeprop11}, \eqref{treeprop12}, and  \eqref{e:mean} (observe again that it applies to each $T_{u_{j+1},u_j}$) we get

$$\E T_{u_{j+1},u_{j}} \geq 2(d(u_j)-d(u_{j+1}))-C_22^{-j}r^{1/3}$$
for each $j\leq J$. Summing over $j$ from $0$ to $J$, along with the  bound $\E T_{u,\br}\le  4r+O(r^{1/3})$  (which has been used several times already and in particular follows from \eqref{e:mean}), and \eqref{treeapprox} we get that 
$$ \sum_{j=0}^{J} \E T_{u_{j+1},u_j} \geq \E T_{u,\br}-\frac{x}{4}r^{1/3}$$
for $x$ sufficiently large, as required. 
This completes the proof.
\end{proof}

In the above tree construction, each level of the tree made a deterministic progress along the diagonal direction with each vertex splitting into four offsprings spread in the anti-diagonal direction.
To strengthen Lemma \ref{l:infpointtoside} to Lemma \ref{l:p2sidethick}, we similarly construct a tree, which, in addition to branching in the anti-diagonal direction, also branches in the diagonal direction.
For this tree, its leaves are dense in the parallelogram $\hat U$, allowing us to simultaneously lower bound the passage time from each vertex in $\hat U$ to $\br$.
\begin{proof}[Proof of Lemma \ref{l:p2sidethick}]
We assume that $r$ and $x$ are large enough (depending on $\psi, h$), since otherwise the result follows by taking $C$ large and $c$ small enough.
In this proof we use $C$ and $c$ to denote large and small constants depending on $\psi$ and $h$, and the specific values can change from line to line.
Without loss of generality we also assume $m\geq 0$.

Similarly to the proof of Lemma \ref{l:infpointtoside}, we construct a tree $\mathcal{T}$ as follows.
Take $J$ such that $r^{1/4}< 8^{-J}(2r)\leq r^{1/3}$.
Let $\mathcal{T}_0=\{\br\}$.
For each $j=1,2,\ldots, J$, there are $32^{j}$ vertices of $\mathcal{T}$ at level $j$ (denoted as $\mathcal{T}_j$), given as follows.
For each $i=1,2,\ldots, 8^j$, consider the intersection of the line $x+y=\frac{2i+1}{32}8^{-j}(2r)$ with $U_0$;
this is also the line segment joining
$$\frac{2i+1}{32}8^{-j}\br +(m(1-\frac{2i+1}{32}8^{-j})+h)(r^{2/3},-r^{2/3})$$
and
$$\frac{2i+1}{32}8^{-j}\br +(m(1-\frac{2i+1}{32}8^{-j})-h)(r^{2/3},-r^{2/3}).$$
On this line segment there are $4^j$ level $j$ vertices, which divide this line segment into $4^{j}+1$ equal length intervals.
From this construction we see that the sets $\mathcal{T}_0,\ldots,\mathcal{T}_J$ are mutually disjoint, since the lines $x+y=\frac{2i+1}{32}8^{-j}(2r)$ are mutually different for different $i, j$. 
The vertex set of $\mathcal{T}$ is $\cup_{0\leq j\leq J} \mathcal{T}_j$.
We can label the vertices in $\mathcal{T}_{j}$ using $\{(i,k):1\leq i \leq 8^j, 1\leq k \leq 4^j\}$,
for $i$ indexing the lines and $k$ indexing vertices in a line from left to right. 
For $0\leq j < J$, the vertex in $\mathcal{T}_{j}$ labelled $(i,k)$ is connected to $32$ vertices in level $(j+1)$, which are labelled
$\{(8i-i',4k-k'):0\leq i' \leq 7, 0\leq k' \leq 3\}$.
Then each vertex in $\mathcal{T}_{j+1}$ is connected to exactly one vertex in $\mathcal{T}_j$, and the graph we construct is a tree.

We record now statements analogous to \eqref{treeprop11}, \eqref{treeprop12}. 
Let us set $\rho=mr^{-1/3}$ which will parametrize the role of the slope in the subsequent calculations. For each $j< J$ 
\begin{align}
\label{treeprop21}
d(u_j)-d(u_{j+1}) = 2r 8^{-j-1}(7+2i')/32, &\text{ for some } i'=0,..., 7\\
\label{treeprop22}
|({\rm{ad}}(u_{j})-{\rm{ad}}(u_{j+1})) + \rho(d(u_{j})-d(u_{j+1}))|&\leq C\frac{r^{2/3}}{4^{j}}.
\end{align} 

Since by choice $2r^{2/3}\le 8^{J}\le 2r^{3/4},$
as a consequence, we see that the slope of each tree edge is bounded away from $0$ and $\infty$ uniformly in $m,$ for all large enough $r$ which will allow us to apply Theorem \ref{t:onepoint}.

Again, following the same strategy as in the proof of Lemma \ref{l:infpointtoside}, we let $\cA_j$ denote the event that for all $u\in \mathcal{T}_j$ and for all $v\in \mathcal{T}_{j+1}$ such that the edge $(u,v)$ is present in $\mathcal{T}$, we have $T_{v,u}\geq \E T_{v,u}- x2^{-(9j/10+10)}r^{1/3}$.
As in Lemma \ref{l:infprobest}, we show that 
$$\P(\cup_{j}\cA_j^c) \leq Ce^{-cx^3}.$$
Indeed, by our construction of $\mathcal{T}$, for each edge between a vertex  $u\in \mathcal{T}_j$ and a vertex $v\in \mathcal{T}_{j+1}$, by Theorem \ref{t:onepoint} applied to $T_{v,u}$ we have for all $y>0$,
$$\P(T_{v,u}-\E T_{v,u}\leq -y (8^{-j}r)^{1/3})\leq Ce^{-cy^3}.$$
Using this with $y=2^{j/10-10}x$ we obtain that 
$$\P(T_{v,u}-\E T_{v,u}\leq -x{2^{-(9j/10+10)}}r^{1/3})\leq Ce^{-cx^32^{3j/10}}.$$
Now taking a union bound over all $32^{j+1}$ such edges, and then over all $j=0,1,2,\ldots, J-1$, we get $\P(\cup_{j}\cA_j^c) \leq Ce^{-cx^3}$.

It remains to show that on the event $\cap_{0\leq j\leq J} \cA_j$, we must have $\inf_{u\in \hat{U}}  (T_{u,\br}-\E T_{u,\br}) \geq -xr^{1/3}$.
Now let's take any $u\in \hat{U}$,
and take $u_J$ such that 
$$
d(u_J) - \frac{3\cdot 8^{-J}(2r)}{32} \leq d(u) \leq d(u_J) - \frac{8^{-J}(2r)}{32}
$$
and
$$| ({\rm{ad}}(u_J)-{\rm{ad}}(u)) + \rho(d(u_{J})-d(u))|\leq C\frac{r^{2/3}}{4^{J}}.$$
Such $u_J$ exists by our construction of the tree. It is evident from the above two displays that for $r$ large enough, $|{\rm{ad}}(u_J)-{\rm{ad}}(u)| < d(u_{J})-d(u)$, so $u_J$ is coordinate-wise greater than $u$.
Let $u_J, u_{J-1},\ldots,u_0=\br$ be the unique path from $u_J$ to $\br$ in $\mathcal{T}$.
We have that
$T_{u,\br} \geq \sum_{j=0}^{J-1} T_{u_{j+1},u_j}$.
On $\cap_{0\leq j\leq J} \cA_j$ we also have that
$$\sum_{j=0}^{J-1} T_{u_{j+1},u_j}- \E T_{u_{j+1},u_j} \geq -\frac{x}{2} r^{1/3}.$$
So we have $T_{u,\br}\geq \sum_{j=0}^{J-1} \E T_{u_{j+1},u_j} -\frac{x}{2}r^{1/3}$, and we just need to show that
\begin{equation}   \label{eq:p2sidethick:pf1}
\sum_{j=0}^{J-1} \E T_{u_{j+1},u_j} \geq \E T_{u,\br}-\frac{x}{2}r^{1/3}.
\end{equation}
As in the the proof of Lemma \ref{l:ptosidelb}, we use \eqref{e:mean} to estimate each expectation to establish \eqref{eq:p2sidethick:pf1}. In particular, we will use the following  bound.
$$
\E T_{u_{j+1},u_{j}} \geq 
(d(u_j)-d(u_{j+1}))(1+\sqrt{1-\rho^2})+{\frac{\rho( ({\rm{ad}}(u_{j})-{\rm{ad}}(u_{j+1})) + \rho(d(u_{j})-d(u_{j+1})) )}{\sqrt{1-\rho^2}}}-C 2^{-j}r^{1/3}.
$$

This follows from \eqref{e:mean} and the following calculation:
For each $v\in \Z^2$ we have $v=\frac{1}{2}(d(v)+{\rm{ad}}(v), d(v)-{\rm{ad}}(v))$.
When $d(v)>0$, and $\frac{|{\rm{ad}}(v)|}{d(v)}<(\psi+1)/2<1$,
we have
\begin{equation}  \label{eq:p2sidethick:pf2}
\begin{split}
\frac{1}{2} \left(\sqrt{d(v)+{\rm{ad}}(v)}+\sqrt{d(v)-{\rm{ad}}(v)} \right)^2 &= d(v) + \sqrt{d(v)^2-{\rm{ad}}^2(v)}
\\
&=d(v)(1+\sqrt{1-\rho^2}) + \frac{\rho({\rm{ad}}(v) +\rho d(v) )}{\sqrt{1-\rho^2}} + E(v),
\end{split}    
\end{equation}
where $E(v)$ is an error term satisfying $|E(v)|<C\frac{({\rm{ad}}(v)+\rho d(v) )^2}{d(v)}$.
For {$v=u_{j}-u_{j+1}$,} and all large $r$, by \eqref{treeprop21} and \eqref{treeprop22} we get the bound on the error term which gives the sought lower bound. 

Summing over $j$ from $0$ to $J-1$, we get 
$$\sum_{j=0}^{J-1} \E T_{u_{j+1},u_j} \geq 
(2r-d(u_J))(1+\sqrt{1-\rho^2})+\frac{\rho( -{\rm{ad}}(u_{J}) + \rho(2r-d(u_J)))}{\sqrt{1-\rho^2}} -
C r^{1/3}.$$

On the other hand, using \eqref{e:mean} and \eqref{eq:p2sidethick:pf2} for $T_{u,\br}$, we have
$$
\E T_{u,\br} \leq 
(2r-d(u))(1+\sqrt{1-\rho^2})+\frac{\rho (-{\rm{ad}}(u)+ \rho(2r-d(u)) )}{\sqrt{1-\rho^2}}+C r^{1/3}.
$$
By our choice of $u_J$ we have $d(u_J)-d(u)\leq Cr^{1/3}$ and 
$| ({\rm{ad}}(u_J)-{\rm{ad}}(u)) + \rho(d(u_{J})-d(u))|\leq C\frac{r^{2/3}}{4^{J}}$.
Thus we get \eqref{eq:p2sidethick:pf1} as $x$ is large enough which completes the proof.
\end{proof}

\subsection{Maximum Passage time in a parallelogram}\label{s.thm4.2(ii)}
\begin{proof}[Proof of Theorem \ref{t:supinf}(ii)]
Observe first that it suffices to prove the result for $x$ sufficiently large. 
Denote $u_- = -\br + 2(mr^{2/3},-mr^{2/3})$ and $u_+ = 2\br - (mr^{2/3},-mr^{2/3})$, {i.e., these are the points where the straight line joining $\br$ and $(mr^{2/3},-mr^{2/3})$ intersects $\L_{-r}$ and $\L_{2r}$ respectively}.
Consider the following events:
$$\cA_1=\{\inf_{u\in U_1} T_{u_-,u}-\E T_{u_-,u} \geq -\frac{xr^{1/3}}{10}\};$$
$$\cA_2=\{\inf_{v\in U_2} T_{v,u_+}-\E T_{v,u_+} \geq -\frac{xr^{1/3}}{10}\};$$
$$\cA_3=\{\sup_{u\in U_1,v\in U_2}  T_{u,v}-\E T_{u,v} \geq xr^{1/3}\}.$$
It follows from \eqref{e:mean} that for $x$ sufficiently large we have for any $u\in U_1$ and $v\in U_2$
\begin{equation}
\label{e:meancomp}
\E T_{u_-,u}+\E T_{u,v}+ \E T_{v,u_+} \geq \E T_{u_-,u_+} -\frac{xr^{1/3}}{10}.
\end{equation}
It therefore follows that 
$\cA\supseteq \cA_1\cap \cA_2 \cap \cA_3$ where 
$$\cA=\{T_{u_-,u_+}-\E T_{u_-,u_+} \geq \frac{xr^{1/3}}{2}\}.$$
Since $\cA_1,\cA_2,\cA_3$ are all increasing events, it follows by the FKG inequality that 
$$\P(\cA)\geq \P(\cA_1\cap \cA_2\cap \cA_3)\geq \P(\cA_1)\P(\cA_2)\P(\cA_3)$$
by the FKG inequality. The result follows by noting that we have $\P(\cA_1), \P(\cA_2)\geq \frac{1}{2}$ for $x$ sufficiently large by Lemma \ref{l:p2sidethick-upgraded}, and $\P(\cA)\leq Ce^{-c\min\{x^{3/2},xr^{1/3}\}}$ by Theorem \ref{t:onepoint}.
\end{proof}

\subsection{Transversal Fluctuation Estimates}\label{s.ppn4.7}
Using Theorem \ref{t:onepoint} and Theorem \ref{t:supinf}(ii) one can show that paths with large transversal fluctuations are likely to have significantly smaller weights than geodesics and hence geodesics are unlikely to have large transversal fluctuation.

We state the following result, which is a stronger variant Proposition \ref{l: prep1-tf} dealing with general slopes.
Let $A'$ be a segment of length $2r^{2/3}$ on $\L_0$ with midpoint $(mr^{2/3},-mr^{2/3})$.
Let $U_{m,\phi}$ be the parallelogram whose one pair of opposite sides of length $\phi r^{2/3}$ lie on the lines $\L_0$ and $\L_r$ respectively with respective midpoints $(mr^{2/3},-mr^{2/3})$ and $\br$. 
Denoted by $\mathrm{LargeTF}{(m, \phi, r)}$ the event where there exists a path $\gamma$ from $u\in A'$ to $v \in \L_{r,r^{2/3}}$ which exits $U_{m,\phi}$ and has $\ell(\gamma)\ge \E T_{u,v}-c_1\phi^2 r^{1/3}$.
\begin{proposition}
\label{l: prep1-tf-s}
For each $\psi<1$, there exist constants $c_1,c_2>0$ such that for all $|m|\leq \psi r^{1/3}$, sufficiently large $\phi$, and all $r\geq 1$,
$$\P(\mathrm{LargeTF}{(m, \phi, r)})\leq e^{-c_2\phi^3}.$$ 
\end{proposition}
From this proposition together with Theorem \ref{t:onepoint}, we can immediately deduce the following result, which is essentially \cite[Proposition 11.1]{BSS14} and will be used in proving Theorem \ref{t:supinf}(iii).
\begin{proposition}
\label{p:tf}
Let $\cA_\phi$ denote the event that the geodesic from $(mr^{2/3},-mr^{2/3})$ to $\br$ exits $U_{m,\phi}$.
For each $\psi<1$, there exist $C,c>0$ such that for all $|m|\leq \psi r^{1/3}$ and $\phi>0$, $r\geq 1$,
$$\P(\cA_{\phi})\leq Ce^{-c\phi^3}.$$
\end{proposition}
\begin{proof}
It is immediate that for $c_1$ as in Proposition \ref{l: prep1-tf-s} we have
$$ \P(\cA_{\phi})\leq \P(\mathrm{LargeTF}{(m, \phi, r)})+ \P\left(T_{(mr^{2/3},-mr^{2/3}), \br} \leq \E T_{(mr^{2/3},-mr^{2/3}), \br}-c_1\phi^2 r^{1/3}\right).$$
The result is then immediate from Proposition \ref{l: prep1-tf-s} and Theorem \ref{t:onepoint}.
\end{proof}
We now prove Proposition \ref{l: prep1-tf-s} using a chaining argument introduced in \cite{BSS14}.
\begin{proof}[Proof of Proposition \ref{l: prep1-tf-s}] 
We shall assume that $r$ is large (depending on $\psi$), since otherwise the conclusion obviously holds. 
In this proof we use $c$ to denote a small constant depending on $\psi$, and the value can change from line to line.

Let $\{a_j\}_{j\geq 2}$ be a sequence of positive real constants going to $0$ satisfying $\prod_{j=2}^{\infty} (1+a_j) < \infty$, and to be chosen appropriately later.
Let $h_1= \frac{1}{2\prod_{j=2}^{\infty} (1+a_j)}$, and for $j>1$ let us set $h_j=h_{j-1}(1+a_{j})$.

For $j\geq 1$, and for $\ell=1,2,\ldots, 2^{j}-1$, let $\cB_{\ell,j}$ denote the event that there exists a path $\gamma$ from $u \in A'$ to $v \in \L_{r,r^{2/3}}$ with 
$\ell(\gamma)\geq \E T_{u,v} -c_1\phi^2 r^{1/3}$ which intersects line $x+y= \ell 2^{-j} (2r)$ outside the parallelogram $U_{m,h_j\phi}$. Let us set $\cB_{j}=\cup_{\ell=1}^{2^j-1} \cB_{\ell,j}$. 
Let $j_0:=\log_{2}(r^{1/3}).$ To avoid rounding issues we will assume $r$ is a power of $8$. To treat a general $8^j\le r< 8^{j+1},$ one can instead use the result for $r'=8^{j+2},$ along with the event that no geodesic from $\br'$ to a point in $\L_{r,r^{2/3}}$ has a weight deficit from mean of order $\phi^{2}r^{1/3}$, which occurs with probability $1-e^{-c\phi^6}$ by Theorem \ref{t:supinf} (i). It can be checked that the intersection of the latter event and $\mathrm{LargeTF}{(m, \phi, r)}$  with $c_1$ in the definition of the event replaced by $c_1/64$, which are independent, implies $\mathrm{LargeTF}{(m', \frac{\phi}{16}, r')}$ where $m'=m(\frac{r}{r'})^{2/3}$, allowing the upper bound on the latter to yield a similar upper bound for $\mathrm{LargeTF}{(m, \phi, r)}$ with $c_1/64$ in place of $c_1$. Above $r'$ is chosen to be $8^{j+2}$ and not $8^{j+1}$ to ensure enough room between $\br$ and $\br'$. Thus throughout the remaining proof, $r$ will be a power of $8.$

It is clear from our definition of $a_j$'s that $h_{j_0}\leq \frac{1}{2}$.
By the directed nature of the paths, for $\phi$ large, on the event $\cap_{j=1}^{j_0}\cB^c_{j}$ every path $\gamma$ from $u\in A'$ to $v\in \L_{r,r^{2/3}}$  that exits $U_{m,\phi}$ satisfies $\ell(\gamma)< \E T_{u,v} -c_1\phi^2 r^{1/3}$; i.e. we have $\mathrm{LargeTF}{(m, \phi, r)} \subset \cup_{j=1}^{j_0}\cB_{j}.$
It is now immediate that Proposition \ref{l: prep1-tf-s} will follow from the next two lemmas.
\end{proof}

\begin{lemma}
\label{l:0}
In the above set-up  we have $\P(\cB_1)\leq e^{-c\phi^3}$ for all $\phi$ sufficiently large. 
\end{lemma}

\begin{lemma}
\label{l:j}
In the above set up (for some appropriate choice of $a_j$), for $j_0\geq j\geq 2$, and $1\leq \ell < 2^j$, we have $\P(\cB_{\ell,j}\cap \cB_{j-1}^{c})\leq 4^{-j}e^{-c\phi^3}$ when $\phi$ is sufficiently large.
\end{lemma}

Both Lemma \ref{l:0} and Lemma \ref{l:j} follow from the following general technical result. Let $A_-$ and $A_+$ be segments in $\L_{-r}, \L_r$ with length $2r^{2/3}$, and centers $-\br + (mr^{2/3}, -mr^{2/3})$ and $\br + (-mr^{2/3}, mr^{2/3})$. Let $B=\L_0\setminus \{(u_1,-u_1): |u_1|<tr^{2/3}\}$. We have the following lemma.

\begin{lemma}
\label{l:transversal-general}
For each $\psi<1$
there exist $C,c>0$ such that when $|m|\leq \psi r^{1/3}$, $t>0$ and $r\geq 1$ we have 
$$\P(\sup_{u\in A_-, v\in B, w\in A_+} T_{u,v} + T_{v,w} - \E T_{u,w} \geq -ct^{2}r^{1/3})\leq Ce^{-ct^3}.$$
\end{lemma}
For the proof we would need to assume a certain largeness of $t,$ which then can made into any $t$ by choosing $c$ small enough. 
\begin{proof}
Let $B_{+}$ denote the part of $B$ contained in the fourth quadrant. By the directed nature of the model and symmetry, it suffices to prove $$\P(\sup_{u\in A_-, v\in B_+, w\in A_+} T_{u,v} + T_{v,w} - \E T_{u,w} \geq -ct^{2}r^{1/3})\leq Ce^{-ct^3}.$$

We will rely on Theorem \ref{t:supinf}(ii). To this end, we first prove the following bounds  for the corresponding expectations.

For $j\geq 0$, let $B_{j}$ denote the line segment joining $((t+j)r^{2/3}, -(t+j)r^{2/3})$ and $((t+j+1)r^{2/3}, -(t+j+1)r^{2/3})$. It follows from \eqref{e:mean} and \eqref{e:steep} that for some $c'>0$ depending on $\psi$, we have
$$\sup _{u\in A_-, v\in B_j, w\in A_+} \E T_{u,v} + \E T_{v,w} - \E T_{u,w} \leq - c'(t+j)^2r^{1/3},$$
for all $j\geq 0$ with $t+j+|m|\leq \frac{(1+\psi)}{2} r^{1/3}$.

Note that for $j$ such that $\frac{(1+\psi)}{2} r^{1/3} < t+j+|m| \leq r^{1/3}+1,$ Theorem \ref{t:supinf}(ii) is not directly applicable, since the needed slope condition is violated by  points in $A_-$ and $B_j$. While, given the slack, there are several ways to address this, what we do is simply  translate the points in $A_{-}$ and $A_+$ by $\mp c_0\br$  for a small $\psi$ dependent constant $c_0.$  Note that this can only increase the passage times and their expectations. Now, while $c_0$ is large enough so that the points in $A_- - {c_0\br}$ and $B_j$ (and similarly for $A_+ +{c_0\br}$) satisfy the slope condition, the following bound on their expectation still holds,

$$\sup _{u\in A_-, v\in B_j, w\in A_+} \E T_{u-{{c_0\br}},v} + \E T_{v,w+{c_0\br}} - \E T_{u,w} \leq - c'(t+j)^2r^{1/3}.$$

Then it follows from Theorem \ref{t:supinf}(ii) (applied to $A_-,B_j$ and $B_j,A_+$, or $A_- - {c_0\br},B_j$ and $B_j,A_+ + {c_0\br}$) that for $j\geq 0$ we have
$$\P\left(\sup _{u\in A_-, v\in B_j, w\in A_+} T_{u,v} + T_{v,w} - \E T_{u,w} \geq -c''t^2r^{1/3}\right) \leq e^{-c''(t+j)^3},$$
for some $c''>0$ depending on $\psi.$
Note that above, in the case $t+j+|m|> \frac{(1+\psi)}{2} r^{1/3}$  we use Theorem  \ref{t:supinf}(ii) to $A_- - {c_0\br},B_j$ and $B_j,A_+ + {c_0\br}$ along with the deterministic bound that for all ${u\in A_-, v\in B_j, w\in A_+}$ 
$$T_{u,v} + T_{v,w} \le T_{u-{c_0\br},v} +  {T_{v,w+{c_0\br}}}.
$$ 
Taking a union bound over $j$ gives the desired result.
\end{proof}

{Lemma \ref{l:0} is an immediate consequence of Lemma \ref{l:transversal-general}}. We now complete the proof of Lemma \ref{l:j}.

Recall from before that $A'$ is a segment of length $2r^{2/3}$ on $\L_0$ with midpoint $(mr^{2/3},-mr^{2/3})$, and $U_{m,\phi}$ is the parallelogram whose one pair of opposite sides of length $\phi r^{2/3}$ lie on the lines $\L_0$ and $\L_r$ respectively with respective midpoints $(mr^{2/3},-mr^{2/3})$ and $\br$. 
\begin{proof}[Proof of Lemma \ref{l:j}]
Fix $2\leq j\leq j_0$. 
It is clear that for even $\ell$, $\cB_{\ell,j}\subseteq \cB_{j-1}$ and hence it is only required to prove the lemma for odd $\ell$. Let us assume that $\ell=2s+1$. 
Let $B_1$ and $B_2$ be the line segments given by the intersection of the lines $x+y=2s 2^{-j}(2r)$ and  $x+y=(2s+2) 2^{-j}(2r)$ with the parallelogram $U_{m,h_{j-1}\phi}$, respectively. 
For any $u, v$ with $d(u)<(2s+1) 2^{-j} 2r < d(v)$, let $T^*_{u,v}$ denote the maximum passage time of a path from $u$ to $v$ that intersects the line $x+y=(2s+1) 2^{-j} 2r$ at a point outside the 
parallelogram $U_{m,h_{j}\phi}$.
We will divide the journey from $A'$ to $\L_{r,r^{2/3}}$  into three parts and accordingly we define the following events where instead of centering by the expectation, we use centering by certain approximations of the expectation that will be convenient to work with. 

For a fixed number $K>0$ depending on $\psi$, let 
\begin{align*}
\mathcal{D}_1&:=\{\sup_{u\in A', v\in B_1} T_{u,v} - S_1(v-u) - S_2(v-u) \geq (c_1\phi^2+Kj)r^{1/3}\};\\
\mathcal{D}_2&:=\{\sup_{u\in B_2, v\in \L_{r,r^{2/3}}} T_{u,v} - S_1(v-u) - S_2(v-u) \geq (c_1\phi^2+Kj)r^{1/3}\};\\
\mathcal{D}_3&:= \{\sup_{u\in B_1, v\in B_2} T^*_{u,v} - S_1(v-u) - S_2(v-u) \geq -(3c_1\phi^2+3Kj)r^{1/3}\},
\end{align*}
where $S_1, S_2:\Z^2 \to \R$ are the following functions: for any $u=(u_1,u_2)\in \Z_{\geq 0}^2$ we have $S_1(u)=(\sqrt{u_1}+\sqrt{u_2})^2$; and $S_2(u)=c_1d(u)$ if $|u_1-u_2| \geq (\psi+1)d(u)/2$, and $S_2(u)=0$ otherwise;
for $u \not\in \Z_{\geq 0}^2$ we set $S_1(u)=S_2(u)=0$.
The idea is that in these definitions of $\mathcal{D}_1, \mathcal{D}_2, \mathcal{D}_3$, the $S_1(v-u)$ term is approximately $\E T_{u,v}$ due to \eqref{e:mean}, and $S_2(v-u)$ is the penalty term when the slope of the line connecting $u, v$ is too large or too small.
Observe that for any $u'\in A', u\in B_1, v\in B_2, v'\in\L_{r,r^{2/3}}$, when $r$ is large (depending on $\psi$) the slope of the line connecting $u'$ and $v'$ is between $\frac{1-\psi}{2}$ and $\frac{2}{1-\psi}$. Thus  when $c_1$ is small enough (depending on $\psi$) we have 
$$
S_1(u-u') + S_1(v-u) + S_1(v'-v) + S_2(u-u') + S_2(v-u) + S_2(v'-v) < S_1(v'-u').
$$
This is a simple consequence of super-additivity of passage times and the concavity of the function $(\sqrt{x}+\sqrt{y})^2$ on the line $x+y=1.$

Now, by \eqref{e:mean},  $|S_1(v'-u')-\E (T_{u',v'})|\le C'r^{1/3}$ where $C'$ depends only on $\psi,$ and hence for $\phi, K$ sufficiently large, we have $\cB_{2s+1,j}\cap \cB^c_{j-1}\subset\mathcal{D}_1 \cup \mathcal{D}_2 \cup \mathcal{D}_3$, so
$$\P(\cB_{2s+1,j}\cap \cB^c_{j-1})\leq \P(\mathcal{D}_1)+\P(\mathcal{D}_2)+\P(\mathcal{D}_3).$$

We next bound $\P(\cD_1)$.
When $s=0$ we have $\P(\cD_1)=0$, so below we assume $s\ge 1$.
We divide each of $A'$ and $B_1$ into $O(2^{2j/3})$ and $O(2^{2j/3}\phi)$ many intervals of length $(s2^{-j}r)^{2/3}$.
At this point we again run into a similar issue as in the proof of Lemma 
\ref{l:transversal-general}, namely having to deal with pairs of points who do not satisfy the slope condition in Theorem \ref{t:supinf}(ii). {To address this we use a similar strategy of backing up to perturb the end points slightly to make the theorem applicable, and in the process only increasing the weights by a tolerable amount.}

For each pair of the intervals $I_1\subset A'$ and $I_2\subset B_1$, if the slope of the line connecting the midpoints of $I_1, I_2$ is between $\frac{1-\psi}{4}$ and $\frac{4}{1-\psi}$, Theorem  \ref{t:supinf}(ii) is applicable.
If the slope of the line connecting the midpoints of $I_1, I_2$ is not between $\frac{1-\psi}{4}$ and $\frac{4}{1-\psi}$, we take $I_1'=I_1-cs2^{-j}\br$. 
This makes the slope of any line connecting any point of $I_1$ with any point of $I_2$  between $c_2$ and $c_2^{-1}$, where $c_2$ is a constant depending only on $c,$ allowing us  to apply Theorem \ref{t:supinf}(ii). Now for any $u\in I_1, v\in I_2$, $T_{u-cs2^{-j}\br, v} \geq T_{u,v}$, and $S_1(v-u)+S_2(v-u)>S_1(v-u+cs2^{-j}\br)$ when $c$ is small enough (depending on $\psi, c_1$).
Together we conclude that
$$
\P\left(
\sup_{u\in I_1, v\in I_2} T_{u,v} - S_1(v-u) - S_2(v-u) \geq (c_1\phi^2+Kj)r^{1/3}
\right) \leq e^{-2c\phi^3-2cKj}.
$$
Then by a union bound of all such intervals, we conclude that if $K$ is sufficiently large, $\P(\cD_1)\leq 8^{-j}e^{-c\phi^3}$; and by symmetry we also have $\P(\cD_2)\leq 8^{-j}e^{-c\phi^3}$. 

Fix such a $K$ and it remains to bound $\P(\mathcal{D}_3)$.
Recall that  by our definition of $h_j$, we have $(h_{j}-h_{j-1})\phi r^{2/3}=a_{j}h_{j-1}2^{2j/3} \phi (2^{-j}r)^{2/3}$. Notice that for small enough $c_1$ and large enough $\phi$ (depending on $K$) and setting $a_{j}=2^{-j/3}$,
$$\dfrac{a^2_{j}h^2_{j-1}2^{4j/3}\phi^2}{(3c_1\phi^2+3Kj)2^{j/3}}$$
can be made arbitrarily large.
We divide each of $B_1$ and $B_2$ into $O(2^{2j/3}\phi)$ many intervals of length $(2^{-j}r)^{2/3}$.
Again as above, for a pair of such intervals $I_1\subset B_1, I_2\subset B_2$, if the slope of the {line connecting their midpoints is between $\frac{1-\psi}{4}$ and $\frac{4}{1-\psi}$}, 
we invoke Lemma \ref{l:transversal-general}. If the slope of the line connecting the midpoints of $I_1, I_2$ is not between $\frac{1-\psi}{4}$ and $\frac{4}{1-\psi}$, we take $I_1'=I_1-c2^{-j}r(1,1)$ and $I_2'=I_2+c2^{-j}r(1,1)$ which ensures that the slope for the midpoints of $I_1'$ and $I_2'$ is bounded away from $0$ and $\infty$; and for any $u\in I_1, v\in I_2$, we have $T_{u-c2^{-j}r(1,1), v+c2^{-j}r(1,1)} \geq T_{u,v}$, and $S_1(v-u)+S_2(v-u)>S_1(v-u+2c2^{-j}r(1,1))$ when $c$ is small enough (depending on $\psi, c_1$).
This allows us to again invoke Lemma \ref{l:transversal-general} for $I_1', I_2'$.
In summary, we get that
$$\P\left(\sup_{u\in I_1, v\in I_2}T^*_{u,v} - S_1(v-u)-S_2(v-u)\geq -(3c_1\phi^2+3Kj)2^{j/3} (2^{-j}r)^{1/3}\right) \leq 2^{-4j/3}8^{-j}e^{-2c\phi^3}.$$
Then by a union bound for all such intervals, we have $\P(\cD_3)\leq 8^{-j}e^{-c\phi^3}$ when $\phi$ is large.
This completes the proof by summing up $\P(\mathcal{D}_1)$, $\P(\mathcal{D}_2)$ and $\P(\mathcal{D}_3)$.
\end{proof}

Finally we deduce Proposition \ref{l: prep1-tf} from Proposition \ref{l: prep1-tf-s}.
\begin{proof}[Proof of Proposition \ref{l: prep1-tf}] 
Let $\phi$ be large.
Applying  Proposition \ref{l: prep1-tf-s} and union bound allows us to handle all $\{(u_1,-u_1): |u_1|<\frac{\phi}{8} r^{2/3}\}$.  The remaining cases will be handled if we show that for some small constant $c$,
$$\P\left(\sup_{u\in \L_0\setminus \{(u_1,-u_1): |u_1|<\frac{\phi}{8} r^{2/3}\}, v\in \L_{r,r^{2/3}}} T_{u,v}\geq 4r-c\phi^{2}r^{1/3}\right)\leq e^{-c\phi^3}.$$
For $j\geq 0$, let $B_{j}$ denote the line segment joining $((\frac{\phi}{8}+j)r^{2/3}, -(\frac{\phi}{8}+j)r^{2/3})$ and $((\frac{\phi}{8}+j+1)r^{2/3}, -(\frac{\phi}{8}+j+1)r^{2/3})$. To handle slope issues, as multiple times done, 
when $\frac{\phi}{8}+j> 0.9 r^{1/3}$ we also denote $B_j'=B_j-(c_0r,0)$, for some small enough $c_0$.
It follows from \eqref{e:mean} that for some constant $c'>0$,
$$\sup _{u\in B_j, v\in \L_{r,r^{2/3}}} \E T_{u,v} \leq 4r- c'(\frac{\phi}{8}+j)^2r^{1/3},$$
for all $j\geq 0$ with $\frac{\phi}{8}+j\leq 0.9 r^{1/3}$,
and
$$\sup _{u\in B_j', v\in \L_{r,r^{2/3}}} \E T_{u,v} \leq 4r- c'(\frac{\phi}{8}+j)^2r^{1/3},$$
for all $j$ with $0.9r^{1/3}< \frac{\phi}{8}+j \leq 2r^{1/3}$.
Then it follows from Theorem \ref{t:supinf}(ii) (applied to $B_j, \L_{r,r^{2/3}}$ or $B_j', \L_{r,r^{2/3}}$) that for any $j$ and some constant $c''>0$ we have
$$\P\left(\sup _{u\in B_j, v\in \L_{r,r^{2/3}}} T_{u,v} \geq 4r-c''\phi^2r^{1/3}\right) \leq e^{-c''(\phi+2j)^3}.$$
Taking a union bound over $j$, and using symmetry, we get the desired result.
\end{proof}

\subsection{Paths Constrained to be in a Parallelogram}\label{s.thm4.2(iii)}
Proof of Theorem \ref{t:supinf}(iii) is similar to that of Theorem \ref{t:supinf}(i); the proof proceeds through the same steps except versions of Lemma \ref{l:p2sidethick} and Lemma \ref{l:side2sidethick} involving constrained passage times need to be used, which in turn are established using Proposition \ref{p:tf}. We first need a one point estimate for the constrained passage times which is presented next. 

Let $U_{t}$ denote the parallelogram with one pair of opposite sides of length $2tr^{2/3}$ on the lines $\L_0$ and $\L_r$ respectively with respective midpoints $u_0:=(-mr^{2/3},mr^{2/3})$ and $\br$. We have the following lemma.

\begin{lemma}
\label{l:constonepoint}
For each $\psi<1$ and $t>0$, there exists $C,c>0$ depending on them, such that for all $|m|\leq \psi r^{1/3}$, we have for all $x>0$ and $r\geq 1$
{$$\P\left(T^{U_t}_{u_0,\br}-\E T_{u_0,\br} \leq -xr^{1/3}\right)\leq Ce^{-cx^{3/2}}.$$}
\end{lemma}

A couple of remarks are in order. Although we have not stated explicitly the dependence of $C$ and $c$ on $t,$ the reader might observe that the arguments in Lemma \ref{l:realvar} indicates that for small $t$,  we have $\E T^{U_{t}}_{u_0,\br}=\E T_{u_{0},\br}-\Theta(t^{-1}r^{1/3})$ and $\mbox{Var} T^{U_{t}}_{u_0,\br}=\Theta(t^{-1/2}r^{2/3})$ {(see also \cite[Prop 7.5 arXiv Version 2]{BG18})}.  Further, the tail exponent above, though suffices, is not optimal. Since the initial posting of this paper, optimal exponents for deviations of constrained geodesic weights have been derived by the second named author along with Milind Hegde in \cite{bootstrapping}.
\begin{proof}[Proof of Lemma \ref{l:constonepoint}]
Without loss of generality, let us assume that $x$ is sufficiently large and fix it. For $J$, to be chosen appropriately later, let us set 
$(-mr^{2/3},mr^{2/3})=u_0, u_1, \ldots, u_{J}=\br$ to be $J+1$ equispaced points on the straight line joining $(-mr^{2/3},mr^{2/3})$ and $\br$. By ignoring rounding issues, we also assume that each $u_i\in \Z^2$.  By Theorem \ref{t:onepoint} and Proposition \ref{p:tf} it follows that for each $i$ we have

\begin{eqnarray*}
\P\left(T^{U_{t}}_{u_i,u_{i+1}}-\E T_{u_i,u_{i+1}} \leq -2\frac{x}{J^{2/3}} (r/J)^{1/3} \right) &\leq & \P\left(T_{u_i,u_{i+1}}-\E T_{u_i,u_{i+1}} \leq -2\frac{x}{J^{2/3}} (r/J)^{1/3} \right)\\ &+& \P\left(T^{U_{t}}_{u_i,u_{i+1}}\neq T_{u_i,u_{i+1}}\right)\\
&\leq & Ce^{-cx^3/J^{2}}+Ce^{-cJ^2}\leq Ce^{-cx^3/J^2}=Ce^{-cx^{3/2}} 
\end{eqnarray*}
for some $C,c>0$ depending on $t$, {where the penultimate inequality and the final equality is guaranteed by choosing $J=\delta x^{3/4}$ for some $\delta>0$ small and $x$ sufficiently large.} 
For the term $\P\left(T^{U_{t}}_{u_i,u_{i+1}}\neq T_{u_i,u_{i+1}}\right)$, we invoked Proposition \ref{p:tf} for the parallelogram such that 
the midpoints of one pair of edges are $u_i$, $u_{i+1}$ respectively which are $\frac{J-i}{J} (-mr^{2/3},mr^{2/3}) + \frac{i}{J} \br$, and $\frac{J-i-1}{J} (-mr^{2/3},mr^{2/3}) + \frac{i+1}{J} \br$ and side lengths $2tr^{2/3}$. Namely, this is just $U_t$ between the lines $x+y=2ir/J$ and $x+y=2(i+1)r/J$. 
Thus the $\phi$ in the application of Proposition \ref{p:tf} would be $2tJ^{2/3}$.

By taking a union bound over all $i$ and using that 
$$T^{U_t}_{u_0,\mathbf{r}}\geq  \sum_{i} T^{U_{t}}_{u_i,u_{i+1}}$$
it follows that for $x$ sufficiently large we have 
$$\P\left(T^{U_t}_{u_0,\br}-\sum_{i} \E T_{u_i,u_{i+1}} \leq -2xr^{1/3}\right)\leq Ce^{-cx^{3/2}}.$$
{It follows from \eqref{e:mean}} that $\E T_{u_i,u_{i+1}}\ge J^{-1}\E T_{u_0,\br} - CJ^{-1/3}r^{1/3}$ and hence
$$\sum_{i} \E T_{u_i,u_{i+1}}- \E T_{u_0,\br}\geq -CJ^{2/3}r^{1/3} \geq -xr^{1/3}$$
for $x$ sufficiently large and our choice of $J$. The last two displays, combined together, complete the proof. 
\end{proof}

Using Lemma \ref{l:constonepoint} we can now prove an analogue of Lemma \ref{l:p2sidethick-upgraded} for constrained passage times. Recall the set-up of Lemmas \ref{l:p2sidethick} and \ref{l:p2sidethick-upgraded}: in particular recall that for $m,h$ such that $|m|+h<\psi r^{1/3}$, $U_0$ denotes the parallelogram whose one pair of opposite sides are parallel to the lines $x+y=0$, have midpoints $(mr^{2/3},-mr^{2/3})$ and $\br$ respectively and length $2hr^{2/3}$; and $\hat{U}$ is the sub-parallelogram of $U_0$ restricted to the strip $\{0\leq x+y\leq \frac{r}{16}\}$. Finally $U_*, U_{*,1}$ and $U_{*,2}$ denote the sub-parallelograms of $U_0$ restricted to the strips $\{0\leq x+y\leq \frac{9r}{5}\},$ $\{0\leq x+y\leq \frac{9r}{10}\},$ and $\{\frac{11r}{10}\leq x+y\leq 2r\}$ respectively.

We have the following result. 
\begin{lemma}
\label{l:p2sidethickconst}
For each $\psi<1$ and $h>0$, there exists $C,c>0$ depending only on $\psi, h$ such that for $U_*$ as above with $|m|+{10}h\leq \psi r^{1/3}$, we have for all $x>0$ and all $r\geq 1$
$$\P\left( \inf_{u\in U_*}  (T^{U_0}_{u,\br}-\E T_{u,\br}) \leq -xr^{1/3}\right)\leq Ce^{-cx^{3/2}}.$$
\end{lemma}
This proof will be almost identical to the proof of Lemma \ref{l:p2sidethick-upgraded}, except that we shall use Lemma \ref{l:constonepoint} instead of Theorem \ref{t:onepoint} for the lower tail of constrained passage times. 
\begin{proof}[Proof of Lemma \ref{l:p2sidethickconst}]
We assume that $r$ and $x$ are large enough, since otherwise the result follows by taking $C$ large and $c$ small enough.
In this proof we also use $C$ and $c$ to denote large and small constants depending on $\psi$ and $m$, and the specific values can change from line to line.

We first prove that, when $|m|+h\leq \psi r^{1/3}$, we have
\begin{equation}  \label{eq:p2sidethickconst-weak}
\P\left( \inf_{u\in \hat{U}}  (T^{U_0}_{u,\br}-\E T_{u,\br}) \leq -xr^{1/3}\right)\leq Ce^{-cx^{3/2}}.    
\end{equation}
We consider the same tree $\mathcal{T}$ as in the proof of Lemma \ref{l:p2sidethick}. Let $\cA_j$ now denote the event that for all $u\in \mathcal{T}_j$ and for all $v\in \mathcal{T}_{j+1}$ such that the edge $(u,v)$ is present in $\mathcal{T}$, we have 
$$T^{U_0}_{v,u}\geq \E T_{v,u}- x2^{-(9j/10+10)}r^{1/3}.$$
We claim that there exists $C,c>0$ such that for all $x$ sufficiently large
$$\P(\cup_{j}\cA_j^c) \leq Ce^{-cx^{3/2}}.$$
Notice that we took care in constructing $\mathcal{T}$ so that none of the vertices are too close to the boundary of $U_0$. 
Indeed, by our construction of $\mathcal{T}$, it follows that for all $u_j\in \mathcal{T}_j$ and for all $u_{j+1}\in \mathcal{T}_{j+1}$ such that $(u_j,u_{j+1})$ is present in $\mathcal{T}$, the distance from $u_{j+1}$ and $u_j$ to the boundary of $U_0$ in the anti-diagonal direction is at least {$h(4^{(j+1)}+1)^{-1}r^{2/3}$}. 
Thus, the parallelogram with one pair of parallel sides parallel to $\L_0$ with length {$2h r^{2/3}(4^{(j+1)}+1)^{-1}$} and midpoints $u_{j+1}$ and $u_j$ respectively is contained in $U_0$. Hence Lemma \ref{l:constonepoint} applies (the slope condition is satisfied for all large $r$ by \eqref{treeprop21}, \eqref{treeprop22}) showing that for each $(u_{j+1},u_{j})$ as above we have 
$$\P(T^{U_0}_{u_{j+1},u_{j}}-\E T_{u_{j+1},u_j}\leq -y (8^{-j}r)^{1/3})\leq Ce^{-cy^{3/2}}$$
for some $C,c>0$ all $y>0$ and all $r$ sufficiently large.
Using this with $y=2^{j/10-10}x$ we obtain that 
$$\P(T^{U_0}_{u_{j+1},u_j}-\E T_{u_{j+1},u_{j}}\leq -x{2^{-(9j/10+10)}}r^{1/3})\leq Ce^{-cx^{3/2}2^{3j/20}}.$$
Now taking a union bound over all $32^{j+1}$ such edges, and then over all $j=0,1,2,\ldots, J-1$, we get $\P(\cup_{j}\cA_j^c) \leq Ce^{-cx^{3/2}}$.

To establish \eqref{eq:p2sidethickconst-weak}, it remains to show that on $\cap_{0\leq j\leq J} \cA_j$, $\inf_{u\in \hat{U}}  (T^{U_0}_{u,\br}-\E T_{u,\br}) \geq -xr^{1/3}$.
Let's take any $u\in \hat{U}$. Let $u_{J}$ be as in the proof of Lemma \ref{l:p2sidethick} and $u_J, u_{J-1},\ldots,u_0=\br$ be the path from $u_J$ to $\br$ in $\mathcal{T}$.
We have that
$T^{U_0}_{u,\br} \geq \sum_{j=0}^{J-1} T^{U_0}_{u_{j+1},u_j}$.
On $\cap_{0\leq j\leq J} \cA_j$ we also have that
$$\sum_{j=0}^{J-1} \left( T^{U_0}_{u_{j+1},u_j}- \E T_{u_{j+1},u_j} \right) \geq -\frac{x}{2} r^{1/3}.$$
It was already shown in \eqref{eq:p2sidethick:pf1} that
$$\sum_{j=0}^{J-1} \E T_{u_{j+1},u_j} \geq \E T_{u,\br}-\frac{x}{2}r^{1/3}.$$
Adding these two inequalities up we get \eqref{eq:p2sidethickconst-weak}.

Finally, as in the proof of Lemma \ref{l:p2sidethick-upgraded}, 
for each $0\le i \le 359,$ we  consider the same $U_i$ and 
$\hat{U}_i$ and
then the conclusion follows by applying the above result to each $\hat{U}_i$ and $\br$, and taking a union bound.
\end{proof}

Using the above, we have the constrained version of Lemma \ref{l:side2sidethick}. Recall the set-up of Lemma \ref{l:side2sidethick} and in particular the parallelograms $U_0,  U_{*,1}, U_{*,2}$. We have the following result. 
\begin{lemma}
\label{l:side2sidethickconst}
For each $\psi<1$ and $h>0$, there exists $C,c>0$ depending only on $\psi, h$ such that for $U_0, U_{*,1}, U_{*,2}$ as above with $|m|+20h\leq \psi r^{1/3}$, we have for all $x>0$ and $r\geq 1$
$$\P\left( \inf_{u\in U_{*,1}, v\in U_{*,2}} (T^{U_0}_{u,v}-\E T_{u,v}) \leq -xr^{1/3}\right)\leq Ce^{-cx^{3/2}}.$$
\end{lemma}
\begin{proof}The proof of Lemma \ref{l:side2sidethickconst} is identical to that of Lemma \ref{l:side2sidethick} except that all passage times are now replaced by the passage times constrained in the parallelogram $U_0$ (e.g., in the definition of the events $\cA_1$, $\cA_2$) and the application of Lemma \ref{l:p2sidethick-upgraded} is replaced by that of Lemma \ref{l:p2sidethickconst}. The weaker probability bound of $Ce^{-cx^{3/2}}$ in the input (Lemma \ref{l:p2sidethickconst}) as opposed to $Ce^{-cx^{3}}$ in the former (Lemma \ref{l:p2sidethick-upgraded}) is the reason why we obtain the same weaker bound here compared to Lemma \ref{l:side2sidethick}. \end{proof}

We can now provide the proof of Theorem \ref{t:supinf}(iii). We first need an estimate about constrained passage times between point on the sides of the parallelogram $U$, whose one pair of sides lie on $\L_0$ and $\L_r$ with length $2r^{2/3}$ and midpoints $(mr^{2/3},-mr^{2/3})$ and $\br$ respectively.
Let $A_l, A_r$ be the taller `left and right sides', specifically, let $A_l$ be the collection of all vertices $u$, such that $u\in U$ while $u+(-1,1) \not\in U$;
and let $A_r$ be the collection of all vertices $u$, such that $u\in U$ while $u+(1,-1) \not\in U$.
We have the following result concerning passage times between vertices in $A_l$ or $A_r$.
\begin{lemma}  \label{l:boundary-pass}
For each $\psi<1$, there exists $C,c>0$ depending only on $\psi$ such that for all $x, r$ sufficiently large, $|m|<\psi r^{1/3}$ and $U$ as above we have
$$
\P\left(
\inf_{u,v\in A_*, d(v)>d(u)} (T_{u,v}^U - T_{u,v}) \leq -xr^{1/3}
\right) \leq Ce^{-cx}.
$$
Here $A_*$ is $A_l$ or $A_r$.
\end{lemma}

Postponing the proof of the above, we first finish the proof Theorem \ref{t:supinf}(iii) using Theorem \ref{t:supinf}(i) and Lemma \ref{l:boundary-pass}.

\begin{proof}[Proof of Theorem \ref{t:supinf}(iii)]
For $A_l, A_r$ as in Lemma \ref{l:boundary-pass}, we consider the intersection of the events
$$
\inf_{u,v\in A_l, d(v)>d(u)} (T_{u,v}^U - T_{u,v})> -\frac{xr^{1/3}}{3},\quad
\inf_{u,v\in A_r, d(v)>d(u)} (T_{u,v}^U - T_{u,v})
> -\frac{xr^{1/3}}{3},
$$
and
$$
\inf_{u,v\in U: d(v)-d(u)\geq \frac{r}{L}}  (T_{u,v}-\E T_{u,v}) > -\frac{xr^{1/3}}{3}.
$$
By Theorem \ref{t:supinf}(i) and Lemma \ref{l:boundary-pass}, the probability of the intersection is at least $1-Ce^{-cx}$.

Now on the intersection of these events, we consider any $u, v\in U$ with $d(v)-d(u)\geq \frac{r}{L}$.
Let $\gamma$ be the geodesic from $u$ to $v$.
If $\gamma\cap A_l=\emptyset$ let $\gamma'=\gamma$; otherwise, let $u_{l,-}, u_{l,+} \in \gamma \cap A_{\ell}$ be such that $d(u_{l,-})\leq d(u') \leq d(u_{l,+})$ for any $u'\in\gamma\cap A_l$.
Consider the maximum weight path constrained within $U$ from $u_{l,-}$ to $u_{l,+}$, and 
replace the part of $\gamma$ between $u_{l,-}, u_{l,+}$ by it, and we denote this new path from $u$ to $v$ by $\gamma'$.
Next, from $\gamma'$ we construct $\gamma''$ similarly, this time replacing the excursion outside $A_r$.
Specifically, if $\gamma'\cap A_r=\emptyset$ let $\gamma''=\gamma'$; otherwise, let $u_{r,-}, u_{r,+} \in \gamma'\cap A_{r}$ such that $d(u_{r,-})\leq d(u') \leq d(u_{r,+})$ for any $u'\in\gamma'\cap A_r$.
Consider the maximum weight path constrained within $U$ from $u_{r,-}$ to $u_{r,+}$, and 
replace the part of $\gamma'$ between $u_{r,-}, u_{r,+}$ by it, and we denote this new path from $u$ to $v$ as $\gamma''$.

From this construction we have that $\gamma''$ is constrained in $U$, so we have
\[
\begin{split}
T^U_{u,v} - \E T_{u,v} &\geq T^U_{u,v} - T_{u,v} + T_{u,v} - \E T_{u,v} \\
&= (T^U_{u_{l,-}, u_{l,+}} - T_{u_{l,-}, u_{l,+}}) + (T^U_{u_{r,-}, u_{r,+}} - T_{u_{r,-}, u_{r,+}}) + (T_{u,v} - \E T_{u,v})
\\
& > -xr^{1/3}.
\end{split}
\]
with probability at least $1-Ce^{-cx}$. Thus the conclusion follows.
\end{proof}

We finish with the proof of Lemma \ref{l:boundary-pass}.
\begin{proof}[Proof of Lemma \ref{l:boundary-pass}]
We prove for $A_*=A_l$, and by symmetry the other case also follows. In this proof we let $C,c>0$ be constants depending on $\psi$, and their values can change from line to line.

We start by defining parallelograms ``supported" on $A_l.$
Namely,  for $i_1, i_2\in \llbracket 0, 2r \rrbracket$, $i_1<i_2$, we denote by $P_{i_1,i_2}$ the following parallelogram:
it is contained in $U$, with one pair of sides on $x+y=i_1$, $x+y=i_2$ respectively with  length $2^{1/3}(i_2-i_1)^{2/3}$; and it contains $i_2-i_1+1$ vertices in $A_l$ i.e.,  it is the set of all lattice points inside the $(i_2-i_1+1)$ by $2^{1/3}(i_2-i_1)^{2/3}$ continuous parallelogram supported on an interval of size $(i_2-i_1+1)$ on the taller side of $U_0$ thought of as a real interval (whose lattice version is $A_l$).

We next have a comparison estimate between constrained and unconstrained passage times between points in a family of such parallelograms defined in the following way.

Let $j_0$ be the maximum integer satisfying $2^{j_0}<r^{3/4}$.
For each $j=1,2,\ldots, j_0$, consider parallelograms $P_{i_1,i_2}$ for $i_1, i_2\in \{0, 2^{-j}(2r), 2\times  2^{-j}(2r), 3\times 2^{-j}(2r), \ldots, 2r\}$,
satisfying $i_2-i_1 \in \{3\times  2^{-j}(2r), 4\times  2^{-j}(2r), 5\times  2^{-j}(2r)\}$. Let $\cB_{j}$ denote the event that for a fixed $j$ and any {parallelogram $U_0$} in this family, we have 
$$\inf_{u\in U_{1},v\in U_{2}} (T_{u,v}^U-T_{u,v}) \geq -xr^{1/3}$$
where $U_{1}$ and $U_{2}$ are defined as follows: suppose $U_0= P_{i_1,i_2}$ for some $i_1,i_2$, then $U_{1}$ and $U_{2}$ are $U_0$ restricted to the strips $\{i_1\le x+y \le \frac{2i_1+i_2}{3}\}$ and $\{\frac{i_1+2i_2}{3}\le x+y \le i_2\}$, respectively. Notice that for $u\in U_1, v\in U_2$ we have
$$\left\{(T_{u,v}^U-T_{u,v}) \leq -xr^{1/3}\right\}\subseteq  \left\{(T_{u,v}^U-\E T_{u,v}) \leq -\frac{xr^{1/3}}{2}\right\}\cup \left\{(T_{u,v}-\E T_{u,v}) \geq \frac{xr^{1/3}}{2}\right\}.$$
For $r$ sufficiently large, Theorem \ref{t:supinf}(ii) and Lemma \ref{l:side2sidethickconst} apply (while the slope conditions are satisfied for a slightly larger $\psi$), and it follows that  $\P(\cB_j^c)=O(2^{j}e^{-c2^{j}x})$,
and $\P(\cup_j\cB_j^c)=O(e^{-cx})$.

Given the above, consider any $u, v \in A_l$ with $d(u)<d(v)$. We now have the two following cases:

\begin{itemize}
\item
We first assume that $d(v)-d(u)>2^{-j_0+1}(2r)$.
Suppose that $2^{-j}(2r)\leq d(v)-d(u)<2^{-j+1}(2r)$ for some $j\leq j_0-1$.
Then we can find some $i_1,i_2 \in \{0, 2^{-j-1}(2r), 2\times  2^{-j-1}(2r), 3\times 2^{-j-1}(2r), \ldots, 2r\}$, with
$i_2-i_1 \in \{3\times 2^{-j-1}(2r), 4\times 2^{-j-1}(2r), 5\times 2^{-j-1}(2r)\}$, 
such that $i_1\le d(u) < i_1+ 2^{-j-1}(2r) < i_2- 2^{-j-1}(2r) \le d(v) < i_2$.
Thus we can apply the above estimate to conclude that on $\cap_{j}\cB_j $ we have 
$$\inf_{u,v\in A_l, d(v)-d(u)>2^{-j_0+1}(2r)} (T_{u,v}^U - T_{u,v}) \geq -xr^{1/3}.$$ 
\item
Finally we consider those $u,v \in A_l$ with $0<d(v)-d(u)\leq 2^{-j_0+1}(2r)\leq 8r^{1/4}$.
We have $\P(T_{u,v}^U-T_{u,v} \leq -xr^{1/3})< \P(T_{u,v}>xr^{1/3}) <Ce^{-cxr^{1/4}}$ by Theorem \ref{t:onepoint}.
The number of such $u,v$ is at most  $O(r^{5/4})$ since $A_l$ contains $O(r)$ points. By taking a union bound the conclusion follows.
\end{itemize}
\end{proof}

\end{document}